\newcommand\imCMsym[4][\mathord]{%
	\DeclareFontFamily{U} {#2}{}
	\DeclareFontShape{U}{#2}{m}{n}{
		<-6> #25
		<6-7> #26
		<7-8> #27
		<8-9> #28
		<9-10> #29
		<10-12> #210
		<12-> #212}{}
	\DeclareSymbolFont{CM#2} {U} {#2}{m}{n}
	\DeclareMathSymbol{#4}{#1}{CM#2}{#3}
}
\CompileMatrices\SelectTips{cm}{12}
\theoremstyle{plain}
\newtheorem{Thm}{\sc Theorem}[section]
\newtheorem{Theorem}[Thm]{\sc Theorem}
\newtheorem{Corollary}[Thm]{\sc Corollary}
\newtheorem*{Corollary*}{\sc Corollary}
\newtheorem{Proposition}[Thm]{\sc Proposition}
\newtheorem*{Proposition*}{\sc Proposition}
\newtheorem{Lemma}[Thm]{\sc Lemma}
\theoremstyle{definition}
\newtheorem{Definition}[Thm]{Definition}
\theoremstyle{remark}
\newtheorem{Remark}[Thm]{Remark}
\newtheorem{Example}[Thm]{Example}
\newtheorem*{Example*}{Example}
\newtheorem*{Remark*}{Remark}
\newcommand{\CC}{{\mathbb C}}
\newcommand{\DD}{{\mathbb D}}
\newcommand{\FF}{{\mathbb F}}
\newcommand{\ZZ}{{\mathbb Z}}
\newcommand{\PP}{{\mathbb P}}
\newcommand{\QQ}{{\mathbb Q}}
\newcommand{\RR}{{\mathbb R}}
\newcommand{\cX}{{\mathcal X}}
\newcommand{\cY}{{\mathcal Y}}
\newcommand{\cS}{{\mathcal S}}
\newcommand{\cE}{{\mathcal E}}
\newcommand{\cF}{{\mathcal F}}
\newcommand{\cG}{{\mathcal G}}
\newcommand{\cL}{{\mathcal L}}
\newcommand{\cO}{{\mathcal O}}
\newcommand{\codim}{{\mathop{\rm codim \, }}}
\newcommand{\GL}{\mathop{\rm GL\, }}
\newcommand{\colim}{{\mathop{\operatorname{\rm colim}\, }}}
\newcommand{\Coh}{{\mathop{\operatorname{Coh}\, }}}
\renewcommand{\Ref}{{\mathop{\operatorname{Ref}\, }}}
\newcommand{\PGL}{\mathop{\rm PGL\, }}
\newcommand{\an}{{\mathop{\rm an }}}
\newcommand{\et}{{\mathop{\rm \acute{e}t \, }}}
\newcommand{\ch}{{\mathop{\rm ch \, }}}
\newcommand{\cHom}{{\mathop{{\cal H}om}}}
\newcommand{\Gr}{{\mathop{Gr}}}
\newcommand{\id}{\mathop{\rm Id}}
\newcommand{\snf}{{\mathop{\rm snf}}}
\newcommand{\nf}{{\mathop{\rm nf}}}
\newcommand{\im}{\mathop{\rm im \, }}
\newcommand{\Ext}{{\mathop{{\rm Ext \,}}}}
\newcommand{\End}{{\mathop{{\mathcal E}nd}\,}}
\newcommand{\cExt}{{\mathop{{\mathcal E}xt}}}
\newcommand{\Tor}{{\mathop{{\mathcal T}or}}}
\newcommand{\coker}{{\mathop{\rm coker \, }}}
\newcommand{\Res}{{\mathop{\rm Res}}}
\newcommand{\rk}{{\mathop{\rm rk \,}}}
\newcommand{\Sym}{{\mathop{{\rm Sym \, }}}}
\newcommand{\Supp}{{\mathop{{\rm Supp \,}}}}
\newcommand{\Spec}{{\mathop{{\rm Spec\, }}}}
\newcommand{\Vect}{{\mathop{{\rm Vect \,}}}}
\begin{document}

\markboth {\rm }{}

\title{Simpson's correspondence on singular varieties in positive characteristic}
\author{Adrian Langer} \date{\today}

%%%
\maketitle
%%%

%%%%%%%%%%%%%%%%%%%%%%%%%%%%%%%%%%

{\footnotesize
{\noindent \sc Address:}\\
Institute of Mathematics, University of Warsaw,
ul.\ Banacha 2, 02-097 Warszawa, Poland\\
e-mail: {\tt alan@mimuw.edu.pl}
}

\medskip

\begin{abstract}
The main aim of the paper is to provide analogues of Simpson's correspondence on singular projective varieties defined over an algebraically closed field of characteristic $p>0$. There are two main cases. 

In the first case, we consider analogues of numerically flat vector bundles  on a big open subset of a normal projective variety (with arbitrary singularities). 
Here we introduce the S-fundamental group scheme for quasi-projective varieties that admit compactifications with complement of codimension $\ge 2$. We prove that this group scheme coincides with the S-funda\-men\-tal group scheme of the regular locus of any of its (small) projective compactification. In particular, it provides a new invariant for normal projective varieties isomorphic in codimension $1$. 

In the second case, we consider vector bundles with an integrable $\lambda$-connection on a normal projective variety $X$ that is (almost) liftable modulo $p^2$. In this case we restrict to varieties with $F$-liftable singularities that are analogous to log canonical singularities.  We prove that a semistable Higgs vector bundle on the regular locus of $X$, with appropriately defined vanishing Chern classes, admits a canonical Higgs--de Rham flow. This provides an analogue of some results due to D. Greb, S. Kebekus, B. Taji and T. Peternell. Finally, we give some applications of the obtained results, also to varieties defined in characteristic zero.
\end{abstract}

\section*{Introduction}

 In \cite{NS} M. S. Narasimhan and C. S. Seshadri related stable vector bundles and unitary connections on complex projective curves. Later this correspondence was generalized by S. Donaldson (see \cite{Do1} and \cite{Do2}), K. Uhlenbeck and S. T. Yau (see \cite{UY}) to  complex projective manifolds (or even compact K\"ahler manifolds) of higher dimension. Their results imply that unitary flat vector bundles are related to stable holomorphic vector bundles with vanishing rational Chern classes. The Riemann--Hilbert correspondence shows also relation to unitary representations of the topological fundamental group of the manifold. In \cite[Theorem 1.18]{DPS} the authors noticed that such (semi)stable vector bundles  with vanishing rational Chern classes correspond to numerically flat vector bundles, i.e., such that both the bundle and its dual are nef. 
 All these results can be considered as a special case of Simpson's correspondence (see \cite{Hi} for the rank $2$ case on curves and  \cite{Si1} in general) between semistable Higgs bundles with vanishing Chern classes  and representations of the topological fundamental group of a smooth, complex projective manifold.  

\medskip

The above results have various analogues for projective varieties defined over an algebraically closed field $k$ of positive characteristic $p$. In particular, strongly semistable vector bundles with vanishing (algebraic) Chern classes on smooth projective varieties correspond to numerically flat vector bundles (see \cite[Proposition 5.1]{La-S-fund}). Such bundles correspond also to representations of the so called S-fundamental group scheme introduced in \cite{BPS} in the curve case and in \cite{La-S-fund} in general.

In a different direction, an analogue of Simpson's correspondence was obtained by A. Ogus and V. Vologodsky in \cite{OV}. This correspondence works for arbitrary smooth varieties but one needs to assume existence of lifting modulo $p^2$. Moreover, semistability does not play any role in this correspondence and the correspondence depends on the choice of lifting. The notion of semistability in this context was first studied by G. Lan, M. Sheng and K. Zuo in \cite{LSZ}. In particular, they used it to establish relation between certain representations of the \'etale fundamental group  of the generic fiber of a smooth projective scheme over the Witt ring of $\bar \FF_p$ and semistable Higgs bundles with vanishing Chern classes on the special fiber. This was further generalized in \cite{SYZ} to deal with projective representations of the \'etale fundamental group. Somewhat different point of view, not restricted to bundles with vanishing Chern classes, was used in \cite{La-Inv} and \cite{La-JEMS} to establish various results like Bogomolov's inequality for semistable  Higgs sheaves,
the Miyaoka--Yau inequality and strong semipositivity theorems.

\medskip

Recently, S. Lu and B. Taji extended  the Donaldson--Uhlenbeck--Yau correspondence to complex projective varieties with klt singularities (see \cite[Theorems 1.1 and 1.4]{LT}).
Similarly, Simpson's correspondence was generalized by D. Greb et al. to smooth loci of complex projective klt varieties (see \cite{GKPT1} and \cite{GKPT3}). The main aim of this paper is to provide some analogues 
of these results (mostly of \cite{GKPT3}) in case of singular varieties (or regular loci of singular varieties), usually defined in positive characteristic. Note that even for complex manifolds some analogues of the above results fail completely. For example, numerically flat vector bundles on a big open subset of a complex projective manifold do not need to correspond to any representations of the topological fundamental group (see Example \ref{example-large-num-flat}).
We prove that they give rise to a new fundamental group scheme that we call a large S-fundamental group.
But to obtain appropriate analogues valid  for smooth projective varieties, we need to add
some further conditions. More precisely, we consider so called strongly numerically flat vector bundles, which  are defined as numerically flat ``with vanishing second Chern character'' (see Definition \ref{Def:strongly-num-flat} and Corollary \ref{characterization-num-flat}). This condition is natural if one notes  that the extension of a flat unitary connection from a big open subset (i.e., such that the complement has codimension $\ge 2$) of a complex manifold has a flat connection and hence vanishing rational Chern classes. In case of big open subsets of normal projective varieties, these conditions become rather delicate and we need to use Chern classes (or characters) of reflexive sheaves introduced in \cite{La-Inters}. In some cases (possibly always) there is also a better characterization of strongly  numerically flat vector bundles as such vector bundles $\cE$ on $X$ for which both $\cE$ and its dual $\cE^*$ are weakly positive on $X$ in the sense of Viehweg (see Subsection \ref{relation-to-weakly-positive}).

The following theorem generalizes, inter alia, the results of \cite{LT} to positive characteristic. In the theorem $F_X^m$ denotes the $m$-th absolute Frobenius morphism that corresponds to the $p^m$-th power mapping on $\cO_X$.

\begin{Theorem}\label{main-num-flat}
Let  $X$ be a smooth quasi-projective variety defined over an algebraically closed field of positive characteristic. Assume that $X$ admits a normal projective compactification $j:X\hookrightarrow \bar X$ with the complement $\bar X\backslash X$ of codimension $\ge 2$.
Let $\cE$ be a coherent reflexive $\cO_{X}$-module. Then the following conditions are equivalent:
	\begin{enumerate}
		\item $\cE$ is a strongly numerically flat vector bundle. 
		\item  $\cE$ is locally free and the set  $\{ (F_{X}^{m})^* \cE \} _{m\ge 0}$ is bounded.
		\item  For some ample line bundle $H$ on $\bar X$,  $j_*\cE$ is strongly slope $H$-semistable
		 and
		$$\int_{\bar X} \ch_1 (j_*\cE) H^{n-1}=\int_{\bar X} \ch_2 (j_*\cE) H^{n-2}= 0.$$
		\item  For every nef line bundle $H$ on $\bar X$,  $j_*\cE$ is strongly slope $H$-semistable
		and
		$$\int_{\bar X} \ch_1 (j_*\cE) H^{n-1}=\int_{\bar X} \ch_2 (j_*\cE) H^{n-2}= 0.$$
	\end{enumerate}	
\end{Theorem}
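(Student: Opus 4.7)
The plan is to establish the cyclic chain $(4)\Rightarrow (3)\Rightarrow (2)\Rightarrow (1)\Rightarrow (4)$. The implication $(4)\Rightarrow (3)$ is immediate since an ample line bundle is nef. The implication $(1)\Rightarrow (4)$ reduces to unravelling definitions: the numerical flatness built into strong numerical flatness forces $\ch_1(j_*\cE)\cdot H^{n-1}=0$ for every nef $H$ (both $\cE$ and $\cE^*$ being nef on $X$ in the big-open sense), the vanishing $\ch_2(j_*\cE)\cdot H^{n-2}=0$ is part of the definition, and strong slope $H$-semistability for every nef $H$ follows because all Frobenius pullbacks of a numerically flat bundle remain numerically flat, so every torsion-free quotient has non-negative slope.

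For $(3)\Rightarrow (2)$ I would transfer the hypothesis to the smooth locus $X$. Since $\bar X\setminus X$ has codimension $\ge 2$ and $j_*\cE$ is reflexive, the reflexive Chern characters from \cite{La-Inters} can, up to $\ch_2\cdot H^{n-2}$, be computed on any resolution that is an isomorphism over $X$. Strong $H$-semistability together with the vanishings $\ch_1\cdot H^{n-1}=\ch_2\cdot H^{n-2}=0$ puts us in the setting where Langer's Bogomolov-type inequality and the restriction theorem apply to $j_*\cE$ on $\bar X$. Restricting to a sufficiently general complete intersection curve $C$ cut out by large multiples of $H$, which lies in $X$ by the codimension assumption, each $(F_X^m)^*\cE|_C$ is a semistable degree-zero vector bundle on a smooth projective curve. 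Standard slope boundedness of semistable sheaves then forces $\{(F_X^m)^*\cE\}_{m\ge 0}$ to be bounded on $X$, and local freeness of $\cE$ follows because a reflexive sheaf on a smooth variety whose Frobenius pullbacks form a bounded family is locally free.

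For $(2)\Rightarrow (1)$, the key observation is that $\ch_2$ scales by $p^{2m}$ under pullback by $F_X^m$, while the Chern characters of sheaves in a bounded family have uniformly bounded intersection numbers with $H^{n-2}$; this forces $\ch_2(j_*\cE)\cdot H^{n-2}=0$, and the analogous argument for $\ch_1$ gives $\ch_1(j_*\cE)\cdot H^{n-1}=0$. Boundedness of $\{(F_X^m)^*\cE\}$ together with that of the dual family (which is equivalent under the hypotheses, since dualization commutes with Frobenius pullback on locally free sheaves) gives nefness of both $\cE$ and $\cE^*$ on $X$ in the big-open sense, hence strong numerical flatness in the sense of Definition \ref{Def:strongly-num-flat} and Corollary \ref{characterization-num-flat}.

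The main obstacle will be $(3)\Rightarrow (2)$: the Bogomolov-type inequalities and restriction theorems must be formulated on the normal projective variety $\bar X$ using the reflexive Chern character theory of \cite{La-Inters}, and one must check that these characters behave well under the restriction to $X$ and under iterated Frobenius pullback. The codimension $\ge 2$ hypothesis on $\bar X\setminus X$ is crucial so that general complete intersection curves fall inside the smooth locus $X$, where all Frobenius-theoretic and Chern-class computations on $\cE$ itself can be carried out unproblematically.
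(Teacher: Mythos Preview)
Your outline is broadly correct in its easy implications, but the step $(3)\Rightarrow(2)$ contains the main content of the theorem and your sketch misses both key ingredients.

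First, your boundedness argument is not valid. Knowing that $(F_X^m)^*\cE|_C$ is semistable of degree zero on a general complete intersection curve $C$ does \emph{not} bound the family $\{(F_X^m)^*\cE\}_{m\ge 0}$ on $X$: a bounded family of sheaves on a higher-dimensional variety is not characterised by its restrictions to a curve, and in fact Example~\ref{example-large-num-flat} produces an \emph{unbounded} family of numerically flat bundles on a big open subset of $\PP^2$, every one of which restricts to a semistable degree-zero bundle on every curve. What the paper actually uses here is the nontrivial boundedness result Theorem~\ref{boundedness} (from \cite{La-Bog-normal}), applied to the reflexive extensions $F_{\bar X}^{[m]}(j_*\cE)$ on the singular projective variety $\bar X$; this requires simultaneous control of $\ch_1$, $\ch_2$ (against several ample classes) and $\mu_{\max}$, and it is a genuine input, not ``standard slope boundedness''.

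Second, your claim that ``a reflexive sheaf on a smooth variety whose Frobenius pullbacks form a bounded family is locally free'' is exactly the hard part of the theorem, not a known fact you may invoke. On a smooth \emph{projective} variety this follows from \cite[Proposition~5.1]{La-S-fund} or \cite{FL}, because in that case $j_*\cE$ has the same Hilbert polynomial as a trivial bundle and one can reduce to surfaces. Here $\bar X$ is only normal, and as the paper explains in the introduction, $j_*\cE$ need not have this Hilbert polynomial, so that strategy fails. The paper instead proves local freeness directly in Theorem~\ref{reflexive-Delta-are-loc-free}: one fixes a point $x\in X_{\mathrm{reg}}$, passes through it a normal hypersurface $H\in|L_1^{\otimes s}|$ chosen via the Bertini-type Theorem~\ref{Bertini's-theorem}, uses induction on dimension to know local freeness on $H_{\mathrm{reg}}$, and then combines Lemma~\ref{inequality-on-Delta} (controlling how $\int\Delta$ changes under restriction to hypersurfaces) with the boundedness Theorem~\ref{boundedness} to force the cokernel of $\imath^*\cE\to(\imath^*\cE)^{**}$ to vanish at $x$. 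The argument splits into three cases according to whether the Frobenius pullbacks remain stable, and the unstable cases are handled via Lemma~\ref{Chern-classes-filtrations}. None of this is visible in your sketch.

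Your treatment of $(1)\Rightarrow(4)$ and $(2)\Rightarrow(1)$ is closer to the paper's (see Lemma~\ref{num-flat-easy-consequences} and the proof of Corollary~\ref{characterization-num-flat}), though for $(2)\Rightarrow(1)$ you should also verify the vanishing of $\int_{\bar S}\ch_2$ for \emph{every} finite $g:S\to X$ from a normally big surface, not just for $\bar X$ itself; the paper does this by pulling the bounded family through $g$ and applying Proposition~\ref{needed-for-independence-of-compactification}.
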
 

Note that unlike its characteristic zero analogue \cite[Theorems 1.1 and 1.4]{LT}, 
we do not have any restrictions on the singularities of $\bar X$.
In fact, in Corollary \ref{characterization-proj-flat} we also prove a stronger theorem that deals with (strongly) projectively numerically flat vector bundles and instead of one ample line bundle we consider multipolarizations (this is important as it allows to prove vanishing of the second Chern form $\int_{\bar X} \ch_2(j_*\cE)$).  In case $X$ is a smooth complex projective variety, such a theorem for one ample polarization was proven by
N. Nakayama in \cite[Theorem 4.1]{Na}.  His proof is similar to the proof of \cite[Theorem 2]{Si}
and it depends on the Lefschetz hyperplane theorem for the topological fundamental group. 

Theorem \ref{main-num-flat} allows us to show that strongly numerically flat vector bundles form a Tannakian category with expected properties. We use this category to define the S-fundamental group scheme $\pi_1^S(X)$ that is responsible for  (extensions of irreducible) ``unitary'' representations of the (non-existent) topological fundamental group. The above theorem implies that  for a big open subset $U\subset X$ we have  $\pi_1^S(U)\simeq \pi_1^S(X)$ (but if $\bar X$ is singular, these group schemes are usually different to $ \pi_1^S(\bar X)$). In particular, if $Y$ and $Y'$ are normal projective varieties that are isomorphic in codimension $1$ then $\pi_1^S(Y_{reg})\simeq \pi_1^S(Y'_{reg})$ (see Corollary \ref{asked-by-referee}).

Theorem \ref{main-num-flat}  implies also that the pullback of a strongly numerically flat vector bundle by any morphism $f: Y\to X$, where $Y$  admits a small compactificaton, is also strongly numerically flat (see Theorem \ref{restriction-strong-numerical-flatness}). This fact is nontrivial even if both $X$ and $Y$ admit smooth small compactifications as it implies vanishing of Chern classes of extensions of pulled-back vector bundles.

\medskip

The main problem in proof of Theorem \ref{main-num-flat} is to show that if $\cE$ satisfies (3) then it is locally free. In case $X$ is projective,  analogous results (see  \cite[Theorem 2]{Si}, \cite[Theorem 4.1]{La-S-fund}, \cite[Theorem 11]{La-Inv}, \cite[Theorem 2.2]{La-JEMS} and \cite[Theorem A and B]{FL}) are always proven using another characterization of numerically flat vector bundles as some semistable coherent torsion free  $\cO_X$-modules with the same Hilbert polynomial as the trivial bundle of the same rank. Then one can use induction on the dimension of the variety.  This strategy fails on normal varieties because $j_*\cE$ in Theorem \ref{main-num-flat} does not need to have the same Hilbert polynomial as the trivial bundle of the same rank. Our proof still uses induction on the dimension and restriction to hypersurfaces but we encounter serious technical problems. Solution of these problems depends heavily on boundedness results from \cite{La-Bog-normal} and on a careful study of the effect of various operations on Chern classes of reflexive sheaves. 
These problems become even more difficult when proving the following theorem
(see Theorem \ref{reflexive-loc-free-Delta-Higgs} and Remark \ref{proof-ofmain-Higgs}). 
In the theorem $\int_{\bar X} \Delta (j_*V)$ denotes the discriminant form defined by the formula $$\int _X \Delta (j_*V)=2r\int _X c _2 (j_*V) -(r-1)\int _X c_1 (j_*V)^2.$$
For the definition and basic properties of F-liftable singularities we refer the reader to \cite[1.2]{La-Bog-normal}
(note that the notion used in \cite{Zd} is slightly different).

\begin{Theorem} \label{main-Higgs}
	Let  $X$ be a smooth quasi-projective variety defined over an algebraically closed field of characteristic $p$. Assume that $X$ admits a normal projective compactification $j:X\hookrightarrow \bar X$ with boundary locus $\bar X\backslash X$ of codimension $\ge 2$. Assume also that $X$ has a lifting $\tilde X$ to $W_2(k)$ and $\bar X$ has $F$-liftable singularities in codimension $2$ with respect to $\tilde X$.	
	Let  $(V, \nabla)$ be a  coherent reflexive $\cO_X$-module of rank $r\le p$ with an integrable $\lambda$-connection for some $\lambda\in k$. If for some ample line bundle $H$ on $\bar X$,  $(V, \nabla)$ is slope $H$-semistable and
	$$\int_{\bar X} \Delta (j_*V) H^{n-2}=0$$
	then $V$ is locally free, $\int_{\bar X} \Delta (j_*V)=0$ and $(V, \nabla)$ is slope semistable for every nef (multi-)polarization.
\end{Theorem}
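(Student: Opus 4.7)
The plan is to mimic the inductive, restriction-to-hypersurfaces strategy used for Theorem \ref{main-num-flat}, but with the Higgs--de Rham flow of Lan--Sheng--Zuo replacing iterated Frobenius pullback. The lift $\tilde X$ together with $F$-liftability of $\bar X$ in codimension $2$ is exactly what is needed to run that flow across the codimension-two singular locus of $\bar X$, while the rank bound $r\le p$ keeps $p$-curvatures in the nilpotent range in which the Ogus--Vologodsky correspondence operates in a controlled way.

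I would first establish the numerical vanishing of $\Delta(j_*V)$. The Bogomolov-type inequality for slope-semistable reflexive $\lambda$-connections on normal projective varieties from \cite{La-Bog-normal} gives $\int_{\bar X}\Delta(j_*V)H_1\cdots H_{n-2}\ge 0$ for every tuple of nef classes $H_i$. Combined with the hypothesis and a polarization-perturbation argument (differentiating the Bogomolov inequality at $H$ along deformations inside the nef cone), this upgrades to vanishing against every $(n-2)$-fold product of nef classes, so that $\Delta(j_*V)$ is numerically trivial.

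The main obstacle is local freeness of $V$, which I would tackle by induction on $n=\dim X$. The case $n=1$ is vacuous since reflexive already means locally free on a smooth curve. For $n\ge 2$, restrict to a general complete intersection surface $S\subset\bar X$ cut out by high multiples of $H$: by Mehta--Ramanathan the restriction inherits slope semistability, rank $\le p$, vanishing discriminant and reflexivity, and by generality the lifting and $F$-liftability data are preserved. On $S$ the Ogus--Vologodsky correspondence on the big open $S\cap X$ produces a Higgs--de Rham flow whose iterates are semistable reflexive $\lambda$-flat sheaves of the same rank and discriminant; a boundedness argument in the spirit of \cite{La-Bog-normal} shows these iterates form a bounded family. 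A reflexive sheaf that is not locally free would at each Frobenius step produce a controlled drop in a suitable numerical invariant, incompatible with this boundedness, forcing $V|_S$ to be locally free. Since the non-locally-free locus of a reflexive sheaf on a smooth variety has codimension $\ge 3$, local freeness on general surface slices through every point of $X$ propagates to local freeness on all of $X$.

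Slope semistability with respect to every nef multi-polarization then follows from local freeness combined with the numerical vanishing of $\Delta(j_*V)$: a destabilizing subsheaf with respect to some nef polarization would, via Bogomolov applied to both it and its quotient, force a strictly positive contribution to the discriminant pairing against that polarization, contradicting numerical triviality. The principal difficulty is likely not any single conceptual step but the careful accounting of how Chern classes of reflexive pushforwards behave under restriction and reflexive hull, where the intersection-theoretic formalism of \cite{La-Inters} is essential.
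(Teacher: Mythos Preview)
Your outline captures the right ingredients (Higgs--de Rham flow, boundedness, induction on dimension), but the local freeness step has a genuine gap that the paper works hard to fill.

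You propose to restrict to a \emph{general} complete intersection surface $S$ and then argue that local freeness on such slices ``propagates to local freeness on all of $X$''. This is circular. The non-locally-free locus of a reflexive sheaf on smooth $X$ has codimension $\ge 3$, so a general $S$ avoids it entirely, and $V|_S$ is automatically locally free---your flow argument on $S$ proves nothing new. To establish local freeness at a specific point $x\in X$, you must restrict to a hypersurface $H$ \emph{through} $x$; but then $\imath^*V$ is only torsion free at $x$, not reflexive, and comparing $\imath^*V$ to its reflexive hull (and likewise for every term $\cE_m$, $V_m$ of the Higgs--de Rham sequence) is the crux of the matter. The paper handles this by first showing (Lemma \ref{loc-free-outside-finite-set}) that the support of each $\cE_m/\Gr_{S_{m-1}}V_{m-1}$ meets $U$ in finitely many points, then invoking the delicate Lemma \ref{passing-to-grading-for-hypersurfaces} to track $h^0$ of the reflexivization defect through the associated-graded step, and finally observing that this defect grows like $p^{(n-1)m}$ while boundedness of the twisted family $\{\cE_m'\}$ forces the Euler characteristics to stay bounded. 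Your phrase ``a controlled drop in a suitable numerical invariant'' does not capture this mechanism. Note also that the paper inducts one hypersurface at a time, not down to surfaces in one step; this is essential because the induction hypothesis on $H$ gives local freeness of the reflexive hulls $\cF_m|_{H_{\rm reg}}$, which is needed to apply Lemma \ref{passing-to-grading-for-hypersurfaces}.

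Two smaller points. First, your order is inverted: Bogomolov's inequality applies only to \emph{semistable} sheaves, so you cannot use it for arbitrary nef tuples until you know semistability there. The paper first proves semistability for all nef multipolarizations (by running a putative destabilizer through the flow and contradicting boundedness of $\{\cE_m'\}$), and only then deduces $\int_{\bar X}\Delta(j_*V)=0$ via the trick with $M_i'=M_i^{-1}\otimes L_i^{\otimes a}$. Second, your proposed argument for semistability under nef polarizations (Bogomolov on subsheaf and quotient) does not work as stated, since a destabilizing subsheaf need not itself be semistable; the paper's argument via unbounded growth of slopes in the flow is what is actually needed.
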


A Higgs $\cO_X$-module is an $\cO_X$-module with an integrable $0$-connection and an $\cO_X$-module with an integrable connection (MIC) is an $\cO_X$-module with an integrable $1$-connection. So the above formulation applies in both Higgs and MIC cases.  For smooth projective varieties Theorem \ref{main-Higgs} is equivalent to  \cite[Theorem 2.2]{La-JEMS} (for $D=0$). The above theorem holds also in the logarithmic case (see Theorem \ref{local-freeness-log-Higgs-case}). Let us also remark that we prove a generalization of this theorem to analogues of mixed Hodge structures (see Corollary \ref{strong-log-freeness}). This generalizes \cite[Theorems 0.5 and 2.1]{La-JEMS} to the singular case.

We prove also that  if $N^{\bullet }$ is any Griffiths transverse filtration on $(V, \nabla)$ such that the associated graded $(\cE:=\Gr _{N} V, \theta )$ is  slope $H$-semistable as a Higgs sheaf then $\cE$ is locally free, $\int_{\bar X} \Delta (j_*\cE) =0$ and $(\cE, \theta )$ is slope semistable for every nef (multi-)polarization (see  Theorem \ref{reflexive-loc-free-Delta-Higgs}). This fact does not follow from the above theorem as a priori the associated graded $\cE$ does not need to be reflexive (and indeed it is not reflexive if we do not assume vanishing of $\int_{\bar X} \Delta (j_*V) H^{n-2}$). Even when $\cE$ is reflexive, vanishing of $\int_{\bar X} \Delta (j_*\cE) H^{n-2}$ is rather difficult to show for singular $\bar X$.

This fact is interesting as every $(V, \nabla)$ as in Theorem  \ref{main-num-flat} admits a canonical filtration $S^{\bullet }$, called Simpson's filtration, satisfying the above assumptions (see Theorem \ref{existence-of-gr-ss-Griffiths-transverse-filtration}).
In particular, it is needed to prove the following corollary:

\begin{Corollary}
	In the notation of Theorem \ref{main-Higgs}, $(V, \nabla)$ admits a canonical Higgs--de Rham flow, i.e.,  
a canonical sequence of vector bundles on $X$
$${  \xymatrix{
		(V, \nabla ) \ar[rd]^{\Gr _{S}}	&& (V_0, \nabla _0)\ar[rd]^{\Gr _{F_0}}&& (V_1, \nabla _1)\ar[rd]^{\Gr _{F_1}}&\\
		&	(\cE_0, \theta_0 )=	(\Gr _{S} V, \theta)\ar[ru]^{C_{\tilde X}^{-1}}&&	(\cE_1, \theta_1)\ar[ru]^{C_{\tilde X}^{-1}}&&...\\
}}$$
with the following properties: 
\begin{enumerate}
	\item   $(V_j, \nabla_j)$ is a vector bundle with an integrable connection,
	\item  $F^{\bullet}_j$ is a  Griffiths transverse filtration on $(V_j,\nabla _j)$,
	\item  $(\cE_{j}, \theta _{j})$ is the associated graded Higgs bundle for the filtration $F^{\bullet}_{j-1}$,
	\item $(V_j, \nabla_j)$ and $(\cE_{j}, \theta _{j})$ are related by the Ogus--Vologodsky's Cartier transform.
\end{enumerate}
Moreover, all objects in the above sequence are slope semistable for every nef multi-polarization and discriminant forms of their extensions to $\bar X$ vanish.
\end{Corollary}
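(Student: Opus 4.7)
The plan is to build the sequence inductively, alternating between Simpson's canonical filtration (Theorem \ref{existence-of-gr-ss-Griffiths-transverse-filtration}) and the inverse Cartier transform $C_{\tilde X}^{-1}$ of Ogus--Vologodsky. At each stage one must verify two things: first, that the graded sheaf appearing after a filtration step is a bona fide locally free Higgs bundle whose discriminant form vanishes, so that the Cartier transform applies and produces another MIC with the same numerical invariants; and second, that this resulting MIC still satisfies the hypotheses of Theorem \ref{main-Higgs}, so the process can be iterated.

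By Theorem \ref{main-Higgs}, the starting pair $(V,\nabla)$ is locally free, slope semistable for every nef multi-polarization, and satisfies $\int_{\bar X}\Delta(j_*V)=0$. I first apply Theorem \ref{existence-of-gr-ss-Griffiths-transverse-filtration} to $(V,\nabla)$ to produce Simpson's canonical Griffiths transverse filtration $S^{\bullet}$, whose associated graded $(\cE_0,\theta_0)=(\Gr_S V,\theta)$ is slope $H$-semistable as a Higgs sheaf. Since $j_*\cE_0$ and $j_*V$ have the same Chern character on $\bar X$, the discriminant condition $\int_{\bar X}\Delta(j_*\cE_0)H^{n-2}=0$ is inherited from $(V,\nabla)$, and Theorem \ref{reflexive-loc-free-Delta-Higgs} upgrades $\cE_0$ to a locally free Higgs bundle with $\int_{\bar X}\Delta(j_*\cE_0)=0$ and semistable for every nef multi-polarization.

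Because $\rk \cE_0=\rk V\le p$, Simpson's filtration has at most $p$ steps and $\theta_0$ is nilpotent of level $<p$; combined with the given lifting $\tilde X$, this places $(\cE_0,\theta_0)$ in the range of applicability of the inverse Cartier transform, producing a vector bundle with integrable connection $(V_0,\nabla_0)$ on $X$. Standard properties of $C_{\tilde X}^{-1}$ give compatibility with ranks, with Chern characters of extensions across $\bar X\setminus X$ (computed via $j_*$ and the codimension-two complement), and with slope semistability for every nef multi-polarization. Hence $(V_0,\nabla_0)$ again verifies the hypotheses of Theorem \ref{main-Higgs}, and one iterates: apply Theorem \ref{existence-of-gr-ss-Griffiths-transverse-filtration} to obtain $F_0^{\bullet}$, pass to the graded $(\cE_1,\theta_1)$, invoke Theorem \ref{reflexive-loc-free-Delta-Higgs} to secure local freeness and vanishing of the discriminant, apply $C_{\tilde X}^{-1}$ to obtain $(V_1,\nabla_1)$, and so on. The four listed properties hold tautologically, and the uniformity of semistability and the vanishing of the discriminants on $\bar X$ are carried along the flow by the above compatibilities.

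The main technical obstacle is exactly the local freeness of the successive graded sheaves $\cE_j$: as the text before Theorem \ref{main-Higgs} emphasizes, the associated graded of a Griffiths transverse filtration on a normal $\bar X$ can fail to be reflexive, so one cannot deduce local freeness directly from Theorem \ref{main-Higgs} applied to $\cE_j$. This is precisely why the sharper statement of Theorem \ref{reflexive-loc-free-Delta-Higgs} (which does not assume reflexivity of the graded) is the engine of every step. Once this is in hand at each stage, canonicity of the entire flow follows from the canonicity of Simpson's filtration and of the Ogus--Vologodsky correspondence with respect to the fixed lifting $\tilde X$.
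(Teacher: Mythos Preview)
Your overall strategy matches the paper's: the Corollary is a direct consequence of the proof of Theorem \ref{reflexive-loc-free-Delta-Higgs}, which already constructs the full Higgs--de Rham sequence on $\bar X$ (via reflexive Simpson filtrations and the reflexive inverse Cartier transform) and proves that every term is locally free on the regular locus, slope semistable for every nef collection, and has vanishing discriminant form. Your step-by-step iteration of the \emph{statement} of Theorem \ref{reflexive-loc-free-Delta-Higgs} is a legitimate repackaging of this.

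Two points need correction. First, the sentence ``since $j_*\cE_0$ and $j_*V$ have the same Chern character on $\bar X$'' is unjustified and is precisely the difficulty the paper flags in the paragraph preceding the Corollary: on singular $\bar X$ the second Chern character of reflexive sheaves is not additive in short exact sequences, so equality of $\int_{\bar X}\Delta(j_*\cE_0)$ and $\int_{\bar X}\Delta(j_*V)$ cannot be read off from a filtration. Fortunately your argument does not actually need this claim, because Theorem \ref{reflexive-loc-free-Delta-Higgs} delivers $\int_{\bar X}\Delta(\cE_0^{**})=0$ as a \emph{conclusion}; you should simply drop the offending clause. Second, the appeal to ``standard properties of $C_{\tilde X}^{-1}$'' for compatibility with Chern characters of reflexive extensions across $\bar X\setminus X$ is not standard at all: the Cartier transform lives on $X$, and one must identify $j_*V_0$ with the reflexive inverse Cartier transform $C_{\tilde X}^{[-1]}(\cE_0^{**})$ on $\bar X$ and then invoke \cite[Corollary 5.11]{La-Bog-normal} to obtain $\int_{\bar X}\Delta(j_*V_0)=p^2\int_{\bar X}\Delta(\cE_0^{**})=0$. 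Preservation of slope semistability under $C_{\tilde X}^{[-1]}$ likewise needs a citation to \cite{La-Bog-normal}. With these two fixes your iteration goes through and agrees with the paper.
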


A Higgs--de Rham flow is a positive characteristic analogue of a Yang--Mills--Higgs flow (cf. \cite{LSYZ}). So 
the above corollary should be considered as a positive characteristic version of \cite[Theorem 1.2]{GKPT3}.
The authors of \cite{GKPT3} prove their result for varieties with klt singularities, which are in particular known to be quotient in codimension $2$. We allow somewhat weaker singularities that in characteristic zero correspond to ``log canonical in codimension $2$'' (e.g., a general reduction of a cone over an elliptic curve is $F$-liftable). For a more precise study of these singularities in the surface case we refer the reader to a recent preprint of T. Kawakami and T. Takamatsu \cite{KT}.

\medskip

We finish the paper with a few applications of the above results. 
They allow us to obtain some results on crystalline representations of the \'etale fundamental group of some quasi-projective varieties in the mixed characteristic. We also prove some restriction theorems for ``semistable Higgs bundles with vanishing Chern classes''
on quasi-projective varieties. In the characteristic zero case, the above results allow us to deal with varieties with more general singularities than usually considered. As in \cite{La-JEMS} our results imply also various semipositivity theorems (also for polarizable complex variations of Hodge structure on complex projective varieties with quotient singularities in codimension $2$). We leave formulation of the corresponding results to the interested reader.

\medskip

Note that in this paper we do not consider analogues of the results of \cite{GKPT1} that require several ingredients that are not yet available in positive characteristic. We plan to address this problem elsewhere.

\medskip

The structure of the paper is as follows. In Section 1 we gather a few auxiliary results. In Section 2 we recall basic results reflexive sheaves and prove some of their properties. In Section 3 we study numerically flat vector bundles on quasi-projective varieties. In Section 4 we introduce strongly numerically flat bundles and the S-fundamental group scheme for some quasi-projective varieties. This section contains also  proof of Theorem \ref{main-num-flat}. The next section contains proof of Theorem \ref{main-Higgs}. In Section 6 we prove a logarithmic version  of this theorem. Section 7 contains some applications of these results.

\medskip

\subsection*{Notation}

Let $X$ be an integral normal  locally Noetherian scheme.
We say that an open subset $U\subset X$ is \emph{big} if every irreducible component of $X\backslash U$
has codimension $\ge 2$ in $X$.
A vector bundle on $X$ is a locally free $\cO_X$-module of finite rank. The category of vector bundles on $X$ will be denoted by $\Vect (X)$. We will also use the following definition:

\begin{Definition}
A $k$-variety $U$ is called \emph{normally big} if it is smooth and there exists an open embedding of $U$ into a normal projective $k$-variety $X$ as a big open subset. In this case $X$ is called a \emph{small compactification} of $U$.
\end{Definition}

\section{Preliminaries}

In this section we gather a few auxiliary results on reflexive sheaves (or sheaves satisfying more general Serre's conditions) and their restrictions to hypersurfaces. We also prove a certain lemma on local freeness and recall the notion of Higgs--de Rham flow.

\subsection{Reflexive sheaves}

Let $X$ be an integral normal locally Noetherian scheme. By $\Ref(\cO_X)$  we denote the category of coherent reflexive $\cO_X$-modules. It is a full subcategory of the category $\Coh(\cO_X)$ of coherent  $\cO_X$-modules. 
The inclusion functor $\Ref(\cO_X)\to \Coh (\cO_X)$ comes with a left adjoint $(\cdot )^{**}:\Coh(\cO_X)\to \Ref (\cO_X)$ given by the reflexive hull. The category $\Ref(\cO_X)$  is an additive category with kernels and cokernels and it comes with an associative and symmetric tensor product $\hat\otimes$ defined by
$$\cE\hat\otimes \cF:=(\cE\otimes _{\cO_X} \cF)^{**}.$$
We also set $\Sym ^{[m]}\cE:= (\Sym ^m \cE)^{**}$.

If $f: X\to Y$ is a morphism between integral, normal locally Noetherian schemes 
then we define functors $f^{[*]}:\Coh (\cO_Y)\to \Ref (\cO_X) $ and 
$f_{[*]}:\Coh (\cO_X)\to \Ref (\cO_Y) $ by composing the reflexive hull functor  $(\cdot )^{**}$ with  $f^*$ and $f_*$, respectively. In general, these functors, even after restricting to reflexive modules,  are not functorial with respect to morphisms.

If $X$ is of characteristic $p>0$  then $F_X:X\to X$ denotes the absolute Frobenius morphism. 
For $\cE\in \Ref (\cO_X)$ and a non-negative integer $m $ we set
$$F_X^{[m]} \cE:= (F_X^{m})^{[*]} \cE.$$

\subsection{Serre's conditions}

Let $X$ be an integral normal locally Noetherian scheme. Let us recall that a coherent $\cO_X$-module $\cE$ \emph{satisfies Serre's condition} $(S_m)$ if for every $x\in X$ we have
$$\mathop{\rm depth}\, \cE_x\ge \min (m, \dim \Supp \cE_{x}).$$

\begin{Proposition}\label{reflexive-standard-equivalence}
Let $\cE$ be a coherent $\cO_X$-module. Then the following conditions are equivalent.
	\begin{enumerate}
		\item $\cE$ is reflexive,
		\item $\cE$ is the dual of a coherent $\cO_X$-module,
		\item $\Supp \cE=X$ and $\cE$ satisfies condition $(S_2)$.
	\end{enumerate}
\end{Proposition}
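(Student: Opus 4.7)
My plan is to establish the cycle $(1) \Rightarrow (2) \Rightarrow (1)$ and then the equivalence $(1) \Leftrightarrow (3)$, with the real content concentrated in the last implication.

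The implication $(1) \Rightarrow (2)$ is immediate since by definition of reflexivity $\cE = (\cE^*)^*$. For $(2) \Rightarrow (1)$, I would verify the general fact that $\cF^*$ is reflexive for any coherent $\cF$. The composition
$$\cF^* \xrightarrow{\eta_{\cF^*}} \cF^{***} \xrightarrow{(\eta_{\cF})^*} \cF^*$$
of the canonical maps is the identity (the standard triangle identity, checked directly on sections), so $\eta_{\cF^*}$ is a split monomorphism. To upgrade this to an isomorphism I use that on an integral scheme $\ker \eta_\cF$ is torsion and $\cF^{**} = \cHom(\cF^*,\cO_X)$ is torsion-free (being a $\cHom$ into a torsion-free sheaf), whence $\coker \eta_\cF$ is torsion as well. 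Since $\cHom(T,\cO_X)=0$ for any torsion sheaf $T$, dualizing the short exact sequences obtained by factoring $\eta_\cF$ through $\im \eta_\cF$ produces an injection $\cF^{***}\hookrightarrow \cF^*$ which, together with the triangle identity above, forces $\eta_{\cF^*}$ to be an isomorphism.

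For $(1) \Rightarrow (3)$, I may assume $\cE \neq 0$; then reflexivity gives torsion-freeness, hence $\Supp \cE = X$. For Serre's condition $(S_2)$, work locally, write $\cE = \cHom(\cG,\cO_X)$ for a coherent $\cG$, choose a presentation $\cO_X^a \to \cO_X^b \to \cG \to 0$, and dualize to obtain
$$0 \to \cE \to \cO_X^b \to \cN \to 0,$$
where $\cN \subset \cO_X^a$ is the image and therefore a torsion-free coherent sheaf. Normality of $X$ makes $\cO_X$ an $(S_2)$-sheaf, while $\cN$ is automatically $(S_1)$. The usual depth lemma applied to this sequence yields $\mathop{\rm depth}\,\cE_x \ge 2$ at every $x$ with $\dim \cO_{X,x}\ge 2$; torsion-freeness handles codimension-one points, and together this is $(S_2)$ for $\cE$.

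The main step is $(3) \Rightarrow (1)$. I want to show that $\eta_\cE : \cE \to \cE^{**}$ is an isomorphism. The hypothesis $\Supp \cE = X$ combined with $(S_1)$ forces $\cE$ to be torsion-free, since on an integral scheme $(S_1)$ prohibits embedded associated primes while the only minimal associated prime of a full-support sheaf is the generic point of $X$. Hence $\eta_\cE$ is already an isomorphism at the generic point and at every codimension-one point, where $\cO_{X,x}$ is a DVR by normality and the finitely generated torsion-free $\cO_{X,x}$-module $\cE_x$ is free. Consequently $\ker \eta_\cE$ and $\coker \eta_\cE$ are both supported in codimension $\ge 2$. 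If the kernel were nonzero, then at a generic point $\xi$ of its support the maximal ideal $\mathfrak{m}_\xi$ would be an associated prime of $\cE_\xi$, forcing $\mathop{\rm depth}\,\cE_\xi = 0$ while $\dim \cO_{X,\xi}\ge 2$, contradicting $(S_2)$. For the cokernel, the already-proved implication $(1)\Rightarrow(3)$ applied to $\cE^{*}$ shows that $\cE^{**}$ is itself $(S_2)$; the depth lemma applied to $0 \to \cE \to \cE^{**} \to \coker \eta_\cE \to 0$ then gives
$$\mathop{\rm depth}\,(\coker \eta_\cE)_x \;\ge\; \min\bigl(\mathop{\rm depth}\,\cE_x - 1,\; \mathop{\rm depth}\,\cE^{**}_x\bigr)\;\ge\; 1$$
at every $x$ with $\dim \cO_{X,x}\ge 2$, which is incompatible with codimension $\ge 2$ support unless $\coker \eta_\cE$ vanishes. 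The delicate step is precisely this last one: translating the cohomological $(S_2)$-hypothesis into genuine vanishing of kernel and cokernel via the depth lemma; the remainder is the standard calculus of torsion sheaves and dualization on integral schemes.
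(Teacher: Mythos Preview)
Your proof is correct and follows the same logical skeleton as the paper's, which simply cites the relevant Stacks Project lemmas (0AY4 for $(1)\Leftrightarrow(2)$, 0AY6 for reflexive $\Leftrightarrow$ torsion-free plus $(S_2)$, and 0AXY for full support plus $(S_1)$ implies torsion-free); you have essentially written out self-contained proofs of precisely those lemmas. The one place where your write-up is a bit elliptical is the final clause ``incompatible with codimension $\ge 2$ support'': to make this airtight you should say that at a generic point $\xi$ of $\Supp(\coker\eta_\cE)$ the localization $(\coker\eta_\cE)_\xi$ is nonzero with zero-dimensional support, hence has depth $0$, contradicting the bound $\ge 1$ you derived.
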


\begin{proof}
Since the dual of a coherent $\cO_X$-module is reflexive (see \cite[Lemma 0AY4]{SP}), equivalence of (1)
and (2) is clear. Implication $(1)\Rightarrow (3)$ follows from \cite[Lemma Lemma 0AY6]{SP}. By the same lemma, to prove that $(3)\Rightarrow (1)$ it is sufficient to prove that $\cE$ is torsion free. This follows from  \cite[Lemma 0AXY]{SP}. 
\end{proof}

The following proposition generalizes \cite[Proposition 1.1.6]{HL}:

\begin{Proposition}\label{characterization-of-depth}
	Let $X$ be a separated equidimensional scheme of finite type over a field $k$
	and let $\omega_X^{\bullet}=f^{!}\cO_{\Spec k}$, where $f: X\to \Spec k$ is the structure morphism.
	Let $n=\dim X$ and let $\cE$ be a coherent $\cO_X$-module.
	Then $\cExt_X ^q(\cE, \omega_X^{\bullet}[-n])$ is a coherent $\cO_X$-module for all $q$,
	 $$\cExt_X ^q(\cE, \omega_X^{\bullet}[-n])=0$$ for $q<0$ and 
$$\codim (\cExt_X ^q(\cE, \omega_X^{\bullet}[-n]))\ge q$$ for all $q\ge 0$.
Moreover, if $\Supp \cE =X$ then the following conditions are equivalent:
	\begin{enumerate}
		\item $\cE$ satisfies condition $(S_m)$,
		\item  $\codim (\cExt_X ^q(\cE, \omega_X^{\bullet}[-n]))\ge q+m$ for all $q>0$.
	\end{enumerate}
\end{Proposition}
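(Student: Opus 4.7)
The plan is to reduce every assertion to a stalkwise statement and invoke Grothendieck's local duality. Coherence of $\cExt^q_X(\cE, \omega_X^{\bullet}[-n])$ is standard: since $\omega_X^{\bullet}$ is a dualizing complex it has finite injective dimension and bounded coherent cohomology, so on each affine open one may compute Ext against a finite-length resolution of $\omega_X^{\bullet}$ by quasi-coherent injectives, and the Ext-sheaves come out coherent.

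For the rest, fix $x \in X$, set $R = \cO_{X,x}$ and $\mathfrak{m} = \mathfrak{m}_x$, and let $c = \dim R$; equidimensionality of $X$ gives $c = \codim_X \overline{\{x\}}$. Because $\omega_X^{\bullet} = f^{!}\cO_{\Spec k}$ with $\dim X = n$, the stalk $(\omega_X^{\bullet})_x$ is a dualizing complex for $R$ whose lowest nonzero cohomology sits in degree $-n$; shifting by $n - c$ yields a normalized dualizing complex $\omega_R^{\bullet}$ for $R$. Equivalently, $(\omega_X^{\bullet}[-n])_x$ is quasi-isomorphic to $\omega_R^{\bullet}[-c]$. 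Grothendieck local duality then gives the key identification
$$\cExt^q_X(\cE, \omega_X^{\bullet}[-n])_x \;\cong\; H^{c-q}_{\mathfrak{m}}(\cE_x)^{\vee},$$
where $(-)^{\vee}$ denotes Matlis dual over $R$.

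Everything else is formal. For $q < 0$ one has $c - q > c \ge \dim \cE_x$, so the right-hand side vanishes and hence $\cExt^q = 0$. For $q \ge 0$, non-vanishing of the right-hand side forces $c \ge q$, i.e.\ $\codim_X\overline{\{x\}} \ge q$, bounding the codimension of the support of $\cExt^q$ from below by $q$. Under the hypothesis $\Supp \cE = X$, the condition $(S_m)$ reads $\mathop{\rm depth}(\cE_x) \ge \min(m, c)$ at every $x$; by the usual characterization of depth via local cohomology this is equivalent to $H^i_{\mathfrak{m}}(\cE_x) = 0$ for every $i < \min(m, c)$. Through the duality isomorphism this becomes $\cExt^q_x = 0$ whenever $q > 0$ and $c - q < m$, which says precisely that every point of the support of $\cExt^q$ (for $q > 0$) satisfies $c \ge q + m$, i.e.\ $\codim \cExt^q \ge q + m$.

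The main technical obstacle is the careful bookkeeping of the normalization shift of the dualizing complex at non-closed points of $X$, which is governed by the codimension function $x \mapsto \dim \cO_{X,x}$ and is what makes equidimensionality essential. When $X$ is Cohen--Macaulay the complex $\omega_X^{\bullet} = \omega_X[n]$ is concentrated in a single degree and the boxed formula collapses to the classical local duality used in \cite[Proposition 1.1.6]{HL}; handling the non-CM case uniformly is precisely what requires the formulation in terms of the full dualizing complex rather than a single dualizing sheaf.
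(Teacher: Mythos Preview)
Your proof is correct and follows the same underlying idea as the paper's: both reduce to stalkwise statements via local duality for the dualizing complex, using the dimension function $\delta(x)=\dim\overline{\{x\}}$ (and the identity $n=\dim\cO_{X,x}+\delta(x)$ from equidimensionality) to keep track of the shift. The only difference is packaging: the paper cites Stacks Project Lemma~0ECM, which encapsulates precisely your local-duality isomorphism $\cExt^q_X(\cE,\omega_X^{\bullet}[-n])_x\cong H^{c-q}_{\mathfrak m}(\cE_x)^{\vee}$ together with the resulting depth formula, whereas you unwind that isomorphism explicitly.
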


\begin{proof}
	By \cite[Example 0AWJ]{SP}, $\omega_X^{\bullet}$ is a dualizing complex of $X$.
	By \cite[Lemma 0AWM]{SP}  the associated dimension function is given by $\delta (x)= \dim \overline{\{x\}}=\operatorname{\rm trdeg }_k k(x)$.
	Since
	$$\cExt_X ^q(\cE, \omega_X^{\bullet}[-n])=\cExt_X ^{-(n-q)}(\cE, \omega_X^{\bullet}),$$ 
\cite[Lemma 0ECM]{SP} implies that $\cExt_X ^q(\cE, \omega_X^{\bullet}[-n])$ is  coherent and 
$$\cExt_X ^q(\cE, \omega_X^{\bullet}[-n])_x=0$$ 
for $q<0$ or $q>\dim \Supp \cE_{x}$. This implies the first part of the proposition.
To see equivalence of conditions (1) and (2) we  note that by \cite[Lemma 0ECM]{SP}
$$\mathop{\rm depth}\, \cE_x= \max \{i: i\ge 0  \hbox{ and } \cExt_X ^{\dim \cO_{X,x}-i}(\cE, \omega_X^{\bullet}[-n])_x\ne 0\} .$$
\end{proof}

Let us recall that $X$ is Cohen--Macaulay if and only if $\omega_X^{\bullet}[-n]=\omega_X$ is the dualizing module (see \cite[Lemmas 0AWT and 0AWU]{SP}). In general,  under the above assumptions $\omega_X^{\bullet}$ is a complex with non-zero cohomology only in degrees $-n,...,0$.

\subsection{Bertini type theorem}

Let $X$  be  an integral  normal $n$-dimensional scheme of finite type over a field $k$ and let $L$ be a line bundle on $X$. We say that a section $s\in H^0(X, L)$ is \emph{$\cE$-regular} if the  map  $\cE\otimes L^{-1}\to \cE $  defined by $s$ is injective (cf. \cite[Definition 1.1.11]{HL}). 
We say that $H\in |L|$  is \emph{$\cE$-regular} if a corresponding section of $L$ is $\cE$-regular. Analogously to \cite[Corollary 1.1.14]{HL} we have the following result.

\begin{Proposition}\label{hyperplane-section-Serre's-condition}
	Let $\cE$ be a   coherent reflexive $\cO_X$-module. Let $H\in |L|$  
	and let $\imath: H\hookrightarrow X$ denote the corresponding  closed embedding.
	Then $\imath^*\cE$ is a coherent $\cO_H$-module satisfying condition $(S_1)$. Moreover, if  $H$ is $\cExt_X^q (\cE,\omega_X^{\bullet}[-n])$-regular for all $q\ge 0$ then  $\imath^*\cE$ satisfies condition $(S_2)$.
\end{Proposition}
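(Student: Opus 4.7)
The plan is to treat $(S_1)$ directly from the Cartier divisor short exact sequence, and to treat $(S_2)$ by comparing the $\cExt$ sheaves on $H$ with those on $X$ via Grothendieck duality for $\imath$.

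First I would handle $(S_1)$. Since $\cE$ is reflexive, it is torsion-free by Proposition \ref{reflexive-standard-equivalence}, so any nonzero section $s\in H^0(X,L)$ cutting out $H$ gives an exact sequence
$$0\to \cE\otimes L^{-1}\xrightarrow{\cdot s}\cE\to \imath_*\imath^*\cE\to 0.$$
The standard identity $\mathop{\rm depth}\,(\imath^*\cE)_x=\mathop{\rm depth}\,\cE_x-1$ for $x\in H$, combined with $(S_2)$ of $\cE$ and the equality $\dim\cO_{H,x}=\dim\cO_{X,x}-1$, yields $(S_1)$ of $\imath^*\cE$ by a short case analysis on $\dim\cO_{X,x}$.

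For $(S_2)$, set $\cF^q:=\cExt^q_X(\cE,\omega_X^{\bullet}[-n])$. Applying $\cExt^\bullet_X(-,\omega_X^{\bullet}[-n])$ to the displayed sequence and using the identification $\cExt^q_X(\cE\otimes L^{-1},\omega_X^{\bullet}[-n])\simeq \cF^q\otimes L$, the hypothesis that $H$ is $\cF^q$-regular for all $q\ge 0$ forces the connecting maps to give short exact sequences
$$0\to \cF^q\xrightarrow{\cdot s}\cF^q\otimes L\to \cExt^{q+1}_X(\imath_*\imath^*\cE,\omega_X^{\bullet}[-n])\to 0,$$
so that $\cExt^{q+1}_X(\imath_*\imath^*\cE,\omega_X^{\bullet}[-n])\simeq \imath_*(\imath^*\cF^q\otimes \imath^*L)$ for every $q\ge 0$. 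I would then invoke Grothendieck duality for the closed immersion $\imath$, with $\imath^!\omega_X^{\bullet}=\omega_H^{\bullet}$, to rewrite the left-hand side as $\imath_*\cExt^q_H(\imath^*\cE,\omega_H^{\bullet}[-(n-1)])$, obtaining
$$\cExt^q_H(\imath^*\cE,\omega_H^{\bullet}[-(n-1)])\simeq \imath^*\cF^q\otimes \imath^*L.$$

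Finally, reflexivity of $\cE$ and Proposition \ref{characterization-of-depth} give $\codim_X\cF^q\ge q+2$ for $q>0$. Because $H$ is $\cF^q$-regular, no associated component of $\cF^q$ is contained in $H$, so intersection with $H$ drops dimension by exactly one and $\codim_H(\imath^*\cF^q)\ge q+2$ still holds for $q>0$. Proposition \ref{characterization-of-depth} applied on $H$ then delivers $(S_2)$ of $\imath^*\cE$. The step I expect to be most delicate is the Grothendieck duality bookkeeping: tracking the shift by $-1$ and the conormal twist in $\imath^!\omega_X^{\bullet}$ versus $\omega_H^{\bullet}$ so that the $\cExt$ indices on $X$ and on $H$ line up correctly after the identification; the remaining work is a careful dimension count.
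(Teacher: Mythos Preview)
Your argument is correct and follows essentially the same route as the paper: Grothendieck duality for $\imath$ to identify $\cExt^{q+1}_X(\imath_*\imath^*\cE,\omega_X^{\bullet}[-n])$ with $\imath_*\cExt^q_H(\imath^*\cE,\omega_H^{\bullet}[-(n-1)])$, the long $\cExt$ sequence for $0\to\cE\otimes L^{-1}\to\cE\to\imath_*\imath^*\cE\to 0$, and the codimension criterion of Proposition~\ref{characterization-of-depth}. The only difference is that for $(S_1)$ you use the elementary depth drop $\mathop{\rm depth}(\cE_x/s\cE_x)=\mathop{\rm depth}\cE_x-1$, whereas the paper extracts $(S_1)$ from the same $\cExt$ machinery (without the regularity hypothesis) together with equidimensionality of $H$; both are fine.
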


\begin{proof}
	By \cite[Lemma 0A9Q]{SP} (``Grothendieck--Verdier duality'') we have an isomorphism
	$$\imath_*R\cHom _H(\imath^*\cE, \omega_H^{\bullet})\simeq R\cHom _X(\imath_*\imath^* \cE, \omega _X^{\bullet}).$$
	This gives an isomorphism
	$$\imath_*\cExt_H^{q} (\imath^*\cE,\omega_H^{\bullet}[-(n-1)])\simeq \cExt_X^{q+1} (\imath_*\imath^*\cE,\omega_X^{\bullet}[-n]).$$
	Composing it with the long exact sequence associated to the short exact sequence
	$$0\to\cE \otimes L^{-1} \to \cE \to \imath_*\imath^*\cE\to 0,$$
	we get
	\begin{align*}
		... &\to \cExt_X^q (\cE,\omega_X^{\bullet}[-n])\mathop{\to}^{\alpha_q} \cExt_X^q (\cE\otimes L^{-1},\omega_X^{\bullet}[-n])\mathop{\to}^{\beta_q} \imath_*\cExt_H^{q} (\imath^*\cE,\omega_H^{\bullet}[-(n-1)]) \\
		&\to \cExt_X^{q+1} (\cE,\omega_X^{\bullet}[-n])\mathop{\to}^{\alpha_{q+1}} \cExt_X^{q+1} (\cE\otimes L^{-1},\omega_X^{\bullet}[-n])\to ...\\
	\end{align*}
$H$ is equidimensional by Krull's principal ideal theorem. So Proposition \ref{characterization-of-depth} implies that $\imath^*\cE$ satisfies  condition $(S_1)$.
	If  $H$ is $\cExt_X^q (\cE,\omega_X^{\bullet}[-n])$-regular for all $q\ge 0$ then 
	$\alpha_q$ and $\alpha_{q+1}$ are injective and hence $\beta_q$ is surjective. Therefore we have
	$$\imath_*\cExt_H^{q} (\imath^*\cE,\omega_H^{\bullet}[-(n-1)]) \simeq \imath_*\imath^*\cExt_X^q (\cE,\omega_X^{\bullet}[-n])\otimes _{\cO_X} L$$ 
and since $\cE$ satisfies condition $(S_2)$ (see Proposition \ref{reflexive-standard-equivalence}), Proposition \ref{characterization-of-depth} implies that
	$$\cExt_H^{q} (\imath^*\cE,\omega_H^{\bullet}[-(n-1)]) \simeq \imath^*(\cExt_X^q (\cE,\omega_X^{\bullet}[-n])\otimes _{\cO_X} L)$$
has codimension $\ge (q+2)$ in $H$. By the same proposition $\imath^*\cE$ satisfies condition $(S_2)$.
\end{proof}

The above proposition, \cite[Lemma 0AXY]{SP} and Proposition \ref{reflexive-standard-equivalence} 
imply the following corollary:

\begin{Corollary}\label{hyperplane-section-of-reflexive}
	Let $\cE$ be a   coherent reflexive $\cO_X$-module. Let $H\in |L|$ be irreducible and normal, 
	and let $\imath: H\hookrightarrow X$ denote the corresponding  closed embedding.
	Then $\imath^*\cE$ is torsion free. Moreover, if  $H$ is $\cExt_X^q (\cE,\omega_X^{\bullet}[-n])$-regular for all $q\ge 0$ then  $\imath^*\cE$ is reflexive.
\end{Corollary}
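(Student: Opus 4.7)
The plan is to deduce both statements directly from Proposition \ref{hyperplane-section-Serre's-condition}, combined with the Serre-type characterization of reflexive sheaves in Proposition \ref{reflexive-standard-equivalence} and the torsion-free criterion of \cite[Lemma 0AXY]{SP}. The only mildly nontrivial preliminary is to verify that $\imath^*\cE$ has full support on $H$; everything else is a formal translation between depth conditions and reflexivity.

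First I would check $\Supp \imath^*\cE =H$. Since $\cE$ is reflexive on the integral scheme $X$, it is torsion free and in particular locally free on a big open $U\subset X$. Because $H$ is an effective Cartier divisor on $X$ and $H$ is irreducible (being normal, hence integral), the intersection $U\cap H$ is a nonempty open subset of $H$ on which $\imath^*\cE$ is locally free of positive rank. Together with the irreducibility of $H$ this gives $\Supp \imath^*\cE =H$.

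For the first assertion I would invoke Proposition \ref{hyperplane-section-Serre's-condition} to obtain that $\imath^*\cE$ satisfies Serre's condition $(S_1)$. Combined with $\Supp \imath^*\cE=H$ and the integrality of $H$, \cite[Lemma 0AXY]{SP} forces $\imath^*\cE$ to be torsion free.

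For the second assertion, the extra regularity hypothesis on $H$ lets me apply the second half of Proposition \ref{hyperplane-section-Serre's-condition}, upgrading $(S_1)$ to $(S_2)$ for $\imath^*\cE$ on $H$. Since $H$ is integral, normal and locally Noetherian, and $\Supp \imath^*\cE=H$, Proposition \ref{reflexive-standard-equivalence} (applied with $H$ in place of $X$) then immediately concludes that $\imath^*\cE$ is reflexive. I do not anticipate any genuine obstacle here: the substantive work was already carried out in Proposition \ref{hyperplane-section-Serre's-condition}, and what remains is the routine bookkeeping above, with the only point requiring care being the verification of full support which relies on $\cE$ being locally free in codimension one.
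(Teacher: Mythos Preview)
Your proof is correct and follows exactly the approach the paper indicates: the paper's own proof is the one-line remark that the corollary follows from Proposition~\ref{hyperplane-section-Serre's-condition}, \cite[Lemma 0AXY]{SP}, and Proposition~\ref{reflexive-standard-equivalence}, and you have simply unpacked these steps, including the verification that $\Supp \imath^*\cE = H$ which the paper leaves implicit.
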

\medskip

We will need the following version of Bertini's theorem with basepoints.

\begin{Theorem}\label{Bertini's-theorem}
	Let $X$ be a normal projective variety of dimension $n\ge 2$ defined over an algebraically 
	closed field $k$. Let $L$ be a very ample line bundle and let $\cE$ be a coherent reflexive $\cO_X$-module. Let us also fix a $0$-dimensional subset $Z\subset X_{reg}$ (possibly empty).
	Then for large integers  $m$, a general hypersurface $H\in  |L^{\otimes m}|$ containing $Z$ satisfies the following conditions:
	\begin{enumerate}
		\item $H$ is irreducible and normal,
		\item $H_{reg}= X_{reg}\cap H$,
		\item $\cE|_H$ is torsion free on $H$,
		\item $\cE|_H$ is reflexive on $H\backslash Z$.
	\end{enumerate}
\end{Theorem}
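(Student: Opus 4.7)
The plan is to apply Bertini-type arguments to the linear subsystem $V_m\subset H^0(X, L^{\otimes m})$ of sections vanishing on $Z$, for $m$ sufficiently large. The first step is to verify that, for $m\gg 0$, the subsystem $V_m$ has base locus equal exactly to $Z$ (with reduced scheme structure), separates points and tangent vectors on $X_{reg}\setminus Z$, and at each point $p\in Z$ the differentials of sections in $V_m$ span the cotangent space $\mathfrak{m}_p/\mathfrak{m}_p^2$. These properties are standard consequences of very ampleness of $L$ together with the hypothesis that $Z$ is a reduced $0$-dimensional subscheme of $X_{reg}$.

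Next I would deduce the geometric conditions (1) and (2). By classical Bertini applied to the morphism $X\setminus Z\to \PP(V_m^*)$ induced by $V_m$, a general member $H$ is smooth on $X_{reg}\setminus Z$, and the tangent-spanning at $Z$ makes $H$ smooth at each point of $Z$ as well. For any $p\in X_{sing}\cap H$, the local ring $\cO_{H,p}=\cO_{X,p}/(s)$ cannot be regular, so $H_{reg}=X_{reg}\cap H$, which is (2). Irreducibility follows from generic smoothness (hence reducedness) combined with the connectedness of a hyperplane section in a very ample embedding, using $n\ge 2$. For normality, Serre's criterion applies: (2) together with $\codim_X X_{sing}\ge 2$ and the fact that a general $H$ contains no irreducible component of $X_{sing}$ imply that $H\cap X_{sing}$ has codimension $\ge 2$ in $H$, giving $R_1$; property $S_2$ follows from the Seidenberg--Flenner Bertini theorem for normality, valid for $m\gg 0$.

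For the sheaf-theoretic conditions (3) and (4), I would invoke Corollary \ref{hyperplane-section-of-reflexive}. Since $H$ is normal and irreducible, (3) is immediate. For (4), it suffices to verify that the section $s$ cutting out $H$ is $\cExt_X^q(\cE,\omega_X^{\bullet}[-n])$-regular for every $q\ge 0$ on $X\setminus Z$. By Proposition \ref{characterization-of-depth}, only finitely many of these $\cExt$-sheaves are nonzero, each with only finitely many associated points. Since $V_m$ separates points on $X\setminus Z$ for $m\gg 0$, a generic section $s\in V_m$ vanishes on no associated point of any $\cExt^q$ outside $Z$; hence $s|_{X\setminus Z}$ is a non-zero-divisor on each $\cExt^q$, and the corollary then yields reflexivity of $\cE|_{H\setminus Z}$.

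The main obstacle is coordinating several genericity conditions simultaneously with the nontrivial basepoint constraint at $Z$: Bertini smoothness away from $Z$, transversality to $X_{sing}$, the $S_2$ assertion from Seidenberg--Flenner, and avoidance of the associated points of the various $\cExt^q$-sheaves. The way through is to observe that each condition defines a nonempty open subset of $\PP(V_m)$ for $m\gg 0$, so their intersection is nonempty as soon as the restriction of $V_m$ to $X\setminus Z$ is base-point-free and separates points and tangent vectors there.
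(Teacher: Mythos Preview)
Your argument is correct and arrives at the same conclusions, but the route is genuinely different from the paper's. The paper blows up $X$ along $Z$ to obtain $f:\tilde X\to X$ with exceptional divisor $E$, then works with the base-point-free (indeed very ample) system $|f^*L^{\otimes m}(-E)|$ on $\tilde X$. Standard Bertini and Seidenberg's theorem then apply directly to a general $H'$ on $\tilde X$; smoothness of $H=f(H')$ at a point $x\in Z$ is read off from the fact that $H'\cap f^{-1}(x)$ is a hyperplane in $\PP^{n-1}$. For (4) the paper applies Corollary~\ref{hyperplane-section-of-reflexive} on $\tilde X$ to the reflexive pullback $f^{[*]}\cE$, using that a general $H'$ is $\cF$-regular for any fixed coherent $\cF$, and then transports the conclusion back to $H\setminus Z$ via the isomorphism $H'\setminus E\simeq H\setminus Z$.

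Your approach instead stays on $X$ and works with the linear system $V_m$ having base locus $Z$: you obtain smoothness at $Z$ from surjectivity of $V_m\to\mathfrak m_p/\mathfrak m_p^2$, and for (4) you apply Corollary~\ref{hyperplane-section-of-reflexive} on the open subscheme $X\setminus Z$ by arranging that a general $s\in V_m$ misses the (finitely many) associated points of the $\cExt^q(\cE,\omega_X^\bullet[-n])$ lying outside $Z$. This is perfectly valid and in some ways more transparent, since it never leaves $X$. The paper's blowup has the advantage that every Bertini-type statement invoked is the textbook base-point-free version, so no separate bookkeeping of base points is needed; your approach trades that for a more hands-on but equally rigorous associated-point avoidance.

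Two small points of presentation. First, ``reduced $+$ connected'' does not give irreducibility on its own; you should deduce irreducibility \emph{after} normality (normal $+$ connected $\Rightarrow$ irreducible), so swap the order of those two sentences. Second, rather than citing ``Seidenberg--Flenner'' for $S_2$ of $H$ (which, as usually stated, assumes a base-point-free system), you can simply run your own (4)-argument with $\cE=\cO_X$: the sheaves $\cExt^q(\cO_X,\omega_X^\bullet[-n])$ for $q\ge 1$ are supported in the non-CM locus, hence away from $Z$, and for $q=0$ regularity is automatic; then Proposition~\ref{hyperplane-section-Serre's-condition} gives $S_2$ for $H$ directly.
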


\begin{proof}
Let $f: \tilde X\to X$ be the blow up of $X$ along $Z$ (taken with the reduced subscheme structure) and let $E$ denote the exceptional divisor of $f$. There exists $m_0$ such that for all $m\ge m_0$, the line bundle $f^*L^{\otimes m}(-E)$ is very ample. Let $H'\in |f^*L^{\otimes m}(-E)|$ be a general divisor.
Then $H=f(H')$ is a general divisor in $ |L^{\otimes m}|$ containing $Z$. 
By the usual Bertini's theorem $H'$ intersects $\tilde X_{reg}$ transversaly, so 
$H_{reg}$ contains $(X_{reg}\cap H)\backslash Z$.  Moreover, by Seidenberg's version of Bertini's 
theorem $H'$ is irreducible and normal.
Since for any point $x\in Z$, $H'$ intersects  $f^{-1}(x)$ transversaly, the divisor $H$ is smooth at $x$ (we use the fact that $\cO_{f^{-1}(x)}(H')\simeq \cO_{\PP^{n-1}}(1)$). Therefore 
$H_{reg}\supset X_{reg}\cap H$ and  $H$ is irreducible and normal, so we have (1). Since $H$ is locally principal, it cannot be smooth at singular points of $X$ and hence $H_{reg}= X_{reg}\cap H$ proving (2). (3) follows from Corollary \ref{hyperplane-section-of-reflexive}.
For any fixed coherent $\cO_{\tilde X}$-module $\cF$, $H'$ is $\cF$-regular (see 
\cite[Lemma 1.1.12]{HL}). So Corollary \ref{hyperplane-section-of-reflexive} implies also  that 
the restriction of $f^{[*]}\cE$ to $H'$ is reflexive. This implies (4).
\end{proof}

\medskip

\begin{Remark}
We will also use the fact that if $Y\subset X$ is a closed subset  with $\dim Y>0$ (or $Y$ is $0$-dimensional and $Z\cap Y=\emptyset$)
then 
for a general hypersurface $H\in  |L^{\otimes m}|$ containing $Z$ we have $\dim (Y\cap H)<\dim Y$ ($Y\cap H=\emptyset$, respectively). This follows immediately from the proof of the above theorem.
\end{Remark}

\medskip

\begin{Example}
	Let $\cE$ be a coherent reflexive $\cO_X$-module. In general, it is not true that one can  find a normal  hypersurface $H\in  |L^{\otimes m}|$ containing some fixed closed point $x\in X$ such that 
	the restriction $\cE|_H$ is reflexive as an $\cO_H$-module.
	For example, on $X=\PP^3$ we consider the sequence
	$$0\to \cE\to \cO_{\PP^3}^{\oplus 3}(-1)\mathop{\to}^{\alpha} m_x\to 0,$$
	where $x=[0,0,0,1]$ and the map $\alpha$ is given by $(x_0,x_1,x_2)$. Then $\cE$ is a rank $2$ reflexive $\cO_{X}$-module but the restriction of $\cE$ to any irreducible normal effective Cartier divisor $H\subset X$ containing $x$ is not reflexive. Indeed, $\cE|_H$ is torsion free so the sequence
	$$0\to \cE|_H\to \cO_{H}^{\oplus 3}(-1)\mathop{\to}^{\alpha|_H} m_x|_H\to 0$$
is exact. However, $m_x|_H$ contains a non-zero $0$-dimensional subsheaf $\Tor _1^X(\cO_x, \cO_H)\simeq \cO_x$. So $\cE|_H$ is not saturated in $\cO_{H}^{\oplus 3}(-1)$ and hence it is not reflexive as an $\cO_H$-module.	
\end{Example}

\subsection{Criterion for local freeness}

\begin{Lemma}\label{local-duality}
	Let $R$ be a regular local Noetherian ring of dimension $n$ and let $M$ be a finitely generated $R$-module.
	Then $$\Ext _R^q(M,R)=0$$	 
	for $q<\codim M=n-\dim M$.
\end{Lemma}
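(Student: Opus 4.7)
The plan is to deduce the vanishing from local duality, since a regular local ring is in particular Gorenstein and therefore serves as its own (shifted) dualizing module. First I would recall that for a regular local Noetherian ring $(R,\mathfrak{m})$ of dimension $n$, one has the local duality isomorphism
$$\Ext_R^q(M,R) \;\cong\; \Hom_R\bigl(H^{n-q}_{\mathfrak{m}}(M),\,E(R/\mathfrak{m})\bigr),$$
where $E(R/\mathfrak{m})$ is the injective hull of the residue field. This is standard in the Gorenstein case, where the normalized dualizing complex is simply $R[n]$.

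Next I would invoke Grothendieck's vanishing theorem for local cohomology, which gives $H^{i}_{\mathfrak{m}}(M) = 0$ for $i > \dim M$. Combining this with the displayed isomorphism, $\Ext_R^q(M,R)$ vanishes whenever $n - q > \dim M$, i.e., whenever $q < n - \dim M = \codim M$, which is the desired conclusion.

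An alternative route, essentially equivalent, would be to use the grade characterisation: the number
$$\mathrm{grade}(M) := \min\{q\ge 0 : \Ext_R^q(M,R)\ne 0\}$$
agrees with the grade of the annihilator ideal $\mathrm{Ann}(M)$, and in a Cohen--Macaulay ring (a fortiori a regular one) grade coincides with height. Hence $\mathrm{grade}(M) = \mathrm{ht}(\mathrm{Ann}(M)) = n - \dim(R/\mathrm{Ann}(M)) = \codim M$, giving the vanishing directly.

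The main (small) obstacle is really just bookkeeping: making sure the local duality statement is applied in its correct form for a not-necessarily-Cohen--Macaulay module $M$ over the Gorenstein ring $R$, and that the dimension appearing in Grothendieck vanishing is the Krull dimension of $\Supp M$ rather than some other invariant. Both formulations above are entirely standard, so no serious difficulty is expected.
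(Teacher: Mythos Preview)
Your proposal is correct and follows exactly the same approach as the paper: apply Grothendieck's vanishing theorem $H^j_{\mathfrak m}(M)=0$ for $j>\dim M$ and conclude via local duality. Your alternative grade-based argument is also valid but is not the route the paper takes.
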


\begin{proof}
	Let $m$ be the maximal ideal in $R$.
	By Grothendieck's vanishing theorem $H^j_m(M)=0$ for $j>\dim M$.
	So the required vanishing follows from the local  duality theorem.
\end{proof}

The following lemma generalizes \cite[Lemma 1.12]{La-JEMS}.

\begin{Lemma}\label{loc-free-passing-to-graded}	
	Let $X$ be a smooth variety  of dimension $n\ge 3$
	defined over an arbitrary algebraically closed field $k$.  
	Let  $\cE$ be a coherent reflexive $\cO_X$-module with a filtration  $\cE=N^0\supset
	N^1\supset ...\supset N^s=0$ such that the associated graded $\Gr
	_N(\cE)=\bigoplus _i N^i/N^{i+1}$ is torsion free. Let $\cF$ be the reflexive hull of  $\Gr
	_N(\cE)$ and let us assume that 
	\begin{enumerate}
		\item  $\cF$ is locally free, and
		\item $\Gr _N(\cE)$ is locally free outside a closed subset of codimension $\ge 3$. 
	\end{enumerate}
	Then both $\cE$ and $\Gr _N(\cE)$ are locally free.
\end{Lemma}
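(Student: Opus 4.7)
My plan is to reduce to the case $s = 2$ via downward induction on the filtration index, and to handle that case by a dualisation argument using Lemma \ref{local-duality}. As preliminaries, I first check that each $N^i$ is saturated in $\cE$: the quotient $\cE/N^i$ is filtered by the torsion-free sheaves $N^j/N^{j+1}$ for $j<i$, and a successive extension of torsion-free sheaves is torsion-free. Applying the depth lemma to $0 \to N^i \to \cE \to \cE/N^i \to 0$, combined with reflexivity of $\cE$ (which gives $(S_2)$) and torsion-freeness of $\cE/N^i$ (which gives $(S_1)$), shows that $N^i$ satisfies $(S_2)$ and hence is reflexive by Proposition \ref{reflexive-standard-equivalence}. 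Since $\cF=\bigoplus_i (N^i/N^{i+1})^{**}$ is locally free, each summand $(N^i/N^{i+1})^{**}$ is locally free, and by assumption (2) the natural inclusion $N^i/N^{i+1}\hookrightarrow (N^i/N^{i+1})^{**}$ is an isomorphism off a subset of codimension $\ge 3$.

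Next I argue by downward induction on $i$ that each $N^i$ is locally free. The base case $i = s-1$ is immediate, since $N^{s-1}$ is reflexive with locally free reflexive hull, hence already locally free. For the inductive step, assuming $N^{i+1}$ is locally free, the length-two filtration $N^i \supset N^{i+1} \supset 0$ on the reflexive sheaf $N^i$ satisfies all the hypotheses of the lemma: the associated graded $(N^i/N^{i+1})\oplus N^{i+1}$ has locally free reflexive hull $(N^i/N^{i+1})^{**}\oplus N^{i+1}$ and is locally free outside a set of codimension $\ge 3$. Applying the $s=2$ case then yields that $N^i$ and $N^i/N^{i+1}$ are both locally free. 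Running this induction down to $i=0$ gives that $\cE = N^0$ and $\Gr _N(\cE) = \bigoplus_i N^i/N^{i+1}$ are both locally free.

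The heart of the proof is the $s = 2$ case, where $0 \to N^1 \to \cE \to \cE/N^1 \to 0$ with $N^1$ already locally free. Dualising produces
$$0 \to (\cE/N^1)^* \to \cE^* \to (N^1)^* \to \cExt^1(\cE/N^1, \cO_X) \to \cExt^1(\cE, \cO_X),$$
and the decisive computation is $\cExt^1(\cE/N^1, \cO_X) = 0$: from the short exact sequence $0 \to \cE/N^1 \to (\cE/N^1)^{**} \to Q \to 0$ with $(\cE/N^1)^{**}$ locally free and $\Supp Q$ of codimension $\ge 3$, the Ext long exact sequence gives $\cExt^1(\cE/N^1, \cO_X) \simeq \cExt^2(Q, \cO_X)$, which vanishes by Lemma \ref{local-duality}. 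Consequently $0 \to (\cE/N^1)^* \to \cE^* \to (N^1)^* \to 0$ is exact, with both outer terms locally free (using $(\cE/N^1)^* = ((\cE/N^1)^{**})^*$), so $\cE^*$ and hence $\cE = \cE^{**}$ is locally free. Dualising once more (using $\cExt^1((N^1)^*, \cO_X) = 0$) yields the exact sequence $0 \to N^1 \to \cE \to (\cE/N^1)^{**} \to 0$, and comparison with the original short exact sequence forces $\cE/N^1 \simeq (\cE/N^1)^{**}$, so $\cE/N^1$ is reflexive and therefore locally free. The main technical obstacle is precisely this Ext vanishing: without hypothesis (2), the support of $Q$ would only have codimension $\ge 2$, and Lemma \ref{local-duality} would not suffice to kill $\cExt^2(Q,\cO_X)$.
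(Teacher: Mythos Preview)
Your proof is correct and follows essentially the same approach as the paper. Both reduce to $s=2$ and use Lemma \ref{local-duality} to kill $\cExt^2$ of the codimension-$\ge 3$ cokernel $Q$ (the paper's $T$); the only difference is in the final step, where the paper deduces $\cExt^1_X(N^0/N^1,N^1)=0$ directly (so the extension is locally split, making $N^0/N^1$ locally a direct summand of the reflexive $N^0$), while you deduce $\cExt^1_X(\cE/N^1,\cO_X)=0$ and pass through the dual sequence --- these are equivalent since $N^1$ is locally free. Your write-up is more self-contained in spelling out why each $N^i$ is reflexive and how the induction on $i$ runs, which the paper leaves to the reference \cite{La-JEMS}.
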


\begin{proof}
	As in \cite{La-JEMS} is sufficient to prove the lemma assuming that $m=2$.
	Then $N^1$ is locally free and we have a short exact sequence
	$$0\to N^0/N^1\to (N^0/N^1)^{**}\to T\to 0$$
	for some sheaf $T$ supported in codimension $\ge 3$. By
	assumption we also know that $(N^0/N^1)^{**}$ is locally free.  
	By Lemma \ref{local-duality} we have $\cExt ^2_X(T, N^1)=0$. 
	Hence by the long $\cExt$ exact sequence, the canonical	map 
	$$\cExt ^1_X((N^0/N^1)^{**}, N^1)\to \cExt ^1_X(N^0/N^1, N^1)$$ 
	is surjective. But $(N^0/N^1)^{**}$ is locally free, so $\cExt ^1_X((N^0/N^1)^{**}, N^1)=0$.
	Therefore $ \cExt ^1_X(N^0/N^1, N^1)=0$ and the sequence
	$$0\to N^1\to N^0\to N^0/N^1\to 0$$
	is locally split. Since $N^0$ is reflexive, $N^0/N^1$ is reflexive. This together with our assumptions gives the required assertion.
\end{proof}

\subsection{Higgs--de Rham flows}

Let $X$ be a smooth variety defined over an algebraically closed field $k$ of characteristic $p>0$. 
Assume that $X$ has a lifting $\tilde X$ to $W_2(k)$. In this case Ogus and Vologodsky in \cite{OV} 
defined the Cartier transform $C_{\tilde X}$, which is an equivalence between the category
of vector bundles with an integrable connection with nilpotent $p$-curvature of level $<p$ and the category 
of nilpotent Higgs bundles of level $<p$. Its quasi-inverse is denoted by  $C^{-1}_{\tilde X}$ and called the inverse Cartier transform.

\medskip

In this paper we use the following definition, modifying somewhat the one used in \cite{LSZ}, \cite{SYZ} etc.

\begin{Definition}\label{def:Higgs-de_Rham-flow}
	Let $(V, \nabla)$ be a vector bundle  with an integrable $\lambda$-connection for some $\lambda \in k$.
	Assume that $V$ has rank $\le p$ and let us set $(V_{-1}, \nabla_{-1}):=(V, \nabla)$.
	A \emph{Higgs--de Rham flow} of $(V, \nabla)$  is a sequence of the form
	$${  \xymatrix{
			(V_{-1}, \nabla _{-1}) \ar[rd]^{\Gr _{F_{-1}}}	&& (V_0, \nabla _0)\ar[rd]^{\Gr _{F_0}}&& (V_1, \nabla _1)\ar[rd]^{\Gr _{F_1}}&\\
			&	(\cE_0, \theta _0)\ar[ru]^{C_{\tilde X}^{-1}}&&	(\cE_1, \theta_1)\ar[ru]^{C_{\tilde X}^{-1}}&&...\\
	}}$$
	where 
	\begin{enumerate}
		\item  for $j\ge 0$, $(V_j, \nabla_j)$ is a vector bundle endowed with an integrable connection,
		\item for $j\ge -1$, $F^{\bullet}_j$ is an exhaustive decreasing filtration indexed by $\ZZ$ on $V_j$ by vector subbundles, satisfying Griffiths transversality
		$$\nabla F_j^l\subset F_j^{l-1}\otimes \Omega_X,$$
		\item  $(\cE_{j}, \theta _{j})$ is the associated graded Higgs bundle for the filtration $F^{\bullet}_{j-1}$,
		\item $(V_j, \nabla_j)=C^{-1}_{\tilde X}(\cE_{j}, \theta _{j})$ for all $j\ge 0$.
	\end{enumerate}
\end{Definition}

\medskip

An analogous definition can be also made in the logarithmic case. Note that we allow a Higgs--de Rham flow to begin with any vector bundle with an integrable $\lambda$-connection, whereas a Higgs--de Rham flow in  \cite{LSZ} and \cite{SYZ} starts with the term $(\cE_0, \theta_0)$, which is a system of Hodge bundles (``a graded Higgs bundle'').

In this paper we also use a similar notion of a Higgs--de Rham sequence, but in this case we do not require $V_j$ or $E_j$ to be vector bundles and we consider them on possibly non-smooth but projective 
varieties (see Sections 5 and 6).

\subsection{Simpson's filtration}

Let $X$ be a normal projective variety defined over an algebraically closed field $k$ and let $D$ be an effective reduced Weil divisor on $X$. Then we define $ T_X (\log D)$ as the reflexive extension of the corresponding logarithmic tangent bundle on the locus where $(X,D)$ is a smooth variety with a simple normal crossing divisor. Clearly, it is a subsheaf of the reflexive sheaf $T_{X/k}$.

In the following we use standard notions of Lie algebroids, modules over Lie algebroids and semistability as explained in \cite{La-Lie-alg}.
Let $[ \cdot, \cdot]_{T_X/k}$ be the standard Lie bracket on $T_{X/k}$ and let  $\alpha: T_X (\log D)\hookrightarrow T_{X/k}$ denote the canonical embedding. Let us fix $\lambda\in k$ and consider a Lie algebroid $T_{\lambda}(X,D)$, with the underlying $\cO_X$-module $T_X(\log D)$, the Lie bracket  $\lambda \, [ \cdot, \cdot]_{T_X/k}$ and the anchor map $\lambda \alpha$.

\emph{An $\cO_X$-module with an integrable logarithmic $\lambda $-connection} on $(X,D)$
is a  $T_{\lambda}(X,D)$-module. If $\lambda=0$ then an $\cO_X$-module with an integrable logarithmic $\lambda $-connection is called \emph{a logarithmic Higgs sheaf} on $(X,D)$. For $\lambda=1$ we talk about  $\cO_X$-modules with an integrable logarithmic connection. In case $D=0$, we omit the word ``logarithmic''.
Semistability of $\cO_X$-modules with an integrable logarithmic $\lambda $-connection is defined as usual
with respect to  $T_{\lambda}(X,D)$-submodules. 

Let us recall the following special case of \cite[Theorem 5.6]{La-Bog-normal}:

\begin{Theorem} \label{existence-of-gr-ss-Griffiths-transverse-filtration}
	Let $(V, \nabla)$ be a  slope semistable torsion free coherent $\cO_X$-module with an integrable logarithmic $\lambda $-connection. Then there exists a canonically defined slope gr-semistable Griffiths transverse
	filtration $S^{\bullet}$ on $(V, \nabla)$ such that the associated graded $(\Gr _{S} V, \theta )$ is  a slope semistable torsion free Higgs sheaf.
\end{Theorem}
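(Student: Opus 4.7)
The plan is to build $S^{\bullet}$ by the iterative Simpson-type refinement that underlies \cite[Theorem 5.6]{La-Bog-normal}; the present statement is a direct special case, so the task is to reproduce the mechanism of that result in the setting at hand.

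First I would start from the trivial filtration $S^0=V$, $S^1=0$, whose associated graded $\Gr _{S} V = V$ carries the zero Higgs field. If $V$ is itself slope semistable as a torsion free $\cO_X$-module, then the trivial filtration already works. Otherwise, let $W\subset V$ be its maximal destabilizing saturated subsheaf, so $\mu(W)>\mu(V)$. Semistability of $(V,\nabla)$ as a $T_\lambda(X,D)$-module prevents $W$ from being $\nabla$-invariant, hence $\nabla$ induces a nonzero $\cO_X$-linear map $\bar \nabla\colon W\to (V/W)\otimes \Omega_X(\log D)$. The two-step filtration $V\supset W\supset 0$ is then automatically Griffiths transverse, and $\Gr _{S} V=(V/W)\oplus W$ becomes a Higgs sheaf via $\bar\nabla$.

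Then I would iterate: given the current Griffiths transverse filtration, if its associated graded Higgs sheaf is not semistable, replace it by its Harder--Narasimhan-induced refinement, obtained by lifting the maximal destabilizing Higgs subsheaf of each graded piece back through the filtration. Griffiths transversality of the refinement is automatic because the destabilizing Higgs subsheaf is by definition preserved by the induced Higgs field. A lexicographic monovariant built from the slope vector of the graded pieces, combined with boundedness of the family of saturated subsheaves of $V$ of bounded rank in the normal projective setting (which uses the boundedness results of \cite{La-Bog-normal}), shows that the iteration terminates. At termination $(\Gr _{S} V,\theta)$ admits no destabilizing Higgs subsheaf, which is the required slope semistability. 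Canonicity is inherited at each stage from the uniqueness of the Harder--Narasimhan filtration.

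The main obstacle is the termination together with the slope bookkeeping on a normal $X$: the subsheaves produced along the iteration are only torsion free, and passing to their reflexive hulls can change slopes. One must therefore combine Simpson's iterative argument with careful control of reflexive saturations and of slope behavior under hyperplane restriction, which is exactly the technical content developed in \cite{La-Bog-normal}. Once this bookkeeping is in place, the rest of the proof is essentially formal.
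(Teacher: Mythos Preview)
The paper does not prove this theorem at all: it is stated as a recall, introduced by ``Let us recall the following special case of \cite[Theorem 5.6]{La-Bog-normal}'', with no argument given. Your proposal correctly identifies this and then goes further, sketching the Simpson-type iterative refinement that underlies the cited result; this is consistent with the paper's treatment and is the right mechanism.

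A few small points on your sketch. First, in the refinement step one should lift the maximal destabilizing Higgs subsheaf of the \emph{entire} associated graded (viewed as a single Higgs sheaf), not of each graded piece separately; the preimage of this subsheaf in $V$ then interlaces with the existing filtration to produce the refined Griffiths transverse filtration. Second, the termination invariant is more specific than a generic ``lexicographic monovariant'': one tracks $\mu_{\max}$ of the associated graded Higgs sheaf, which is nonincreasing and eventually strictly decreases (with the rank of the maximal destabilizer as a secondary invariant), and boundedness of saturated subsheaves of $V$ forces this to stabilise. Third, your worry that ``passing to their reflexive hulls can change slopes'' is misplaced: on a normal variety a torsion free sheaf and its reflexive hull agree in codimension one, hence have the same $c_1$ and slope. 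The genuine subtleties on normal $X$ lie elsewhere (e.g.\ ensuring quotients stay torsion free after saturation), but these are handled in \cite{La-Bog-normal} as you indicate.
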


The above filtration $S^{\bullet}$ is called \emph{Simpson's filtration}. This filtration is usually non-trivial even if $(V, \nabla)$ is a Higgs $\cO_X$-module (since the associated graded is always a system of Hodge sheaves).  Note that if $V$ is locally free on a smooth projective variety, the associated graded  does not need to be reflexive (and hence it is not locally free).  So even for smooth varieties the above filtrations do not immediately give rise to a Higgs-de Rham flow.  If $V$ is locally free with vanishing rational Chern classes on a smooth complex projective variety then $\Gr _{S} V$ is locally free (but this is a non-trivial fact equivalent to \cite[Theorem 2]{Si}). In Sections 5 and 6 we generalize this fact and use the above filtrations to construct Higgs--de Rham flows on the regular part of $X$ for reflexive sheaves with ``vanishing discriminant''.

\section{Reflexive sheaves on normal varieties}

In this section we recall the basic properties of Chern classes of reflexive sheaves and prove some auxiliary results on filtered sheaves. We also prove a certain lemma on bounded families of sheaves on normally big varieties. 

\subsection{Chern classes of reflexive sheaves}

Let $X$ be a normal projective variety of dimension $n$ defined over an algebraically closed field $k$ of characteristic $p>0$. 
Let $N^1(X)$ be the group of line bundles modulo numerical equivalence. This coincides with the group of linear (or algebraic) equivalence classes of Cartier divisors modulo torsion. In the following, we also write $\bar B^1(X)$ for the group of algebraic equivalence classes of Weil divisors modulo torsion.
If $\cE$ is a coherent torsion free $\cO_X$-module of rank $r$ then we write $c_1(\cE)$ for the divisor class of $\det \cE :=(\bigwedge ^r \cE )^{**}$.

In \cite{La-Inters} the author introduced  intersection product of two Weil divisors 
$D_1$ and $D_2$ with $(n-2)$ line bundles and intersection products  with the second Chern character of a coherent reflexive $\cO_X$-module $\cE$. In particular,  we have  $\ZZ$-multilinear symmetric 
forms
$$D_1.D_2.: N^1(X)^{n-2}\to \QQ, \quad (L_1,...,L_{n-2})\to D_1.D_2.L_1...L_{n-2}$$
and
$$\int_X \ch_2(\cE): N^1(X)^{n-2}\to \RR, \quad (L_1,...,L_{n-2})\to \int _X \ch _2 (\cE)L_1...L_{n-2}.$$
These intersection products give the same numbers under the base field extension and their computation in higher dimensions can be reduced to the surface case. In the surface case, intersection product coincides with  Mumford's intersection product.  The second Chern character on surfaces satisfies the following Riemann--Roch inequality.  There exists some constant $C$ such that for every rank $r$  coherent reflexive $\cO_X$-module $\cE$ on $X$ we have
$$\left|\chi (X, \cE) -\left(\int_X\ch_2 (\cE)-\frac {1}{2}c_1 (\cE). K_X +r\chi (X, \cO_X)\right)\right|\le C r.$$
We  can also consider the forms $\int_X  c _1 (\cE)^2, \int_Xc_2(\cE), \int _X \Delta (\cE): N^1(X)^{n-2}\to \RR$
defined by sending $(L_1,...,L_{n-2})$ to
$$ \int _X c _1 (\cE)^2L_1...L_{n-2}:= c_1(\cE).c_1(\cE).L_1...L_{n-2},$$
$$\int _X c _2 (\cE)L_1...L_{n-2}:=\frac{1}{2}\int _X c _1 (\cE)^2L_1...L_{n-2} -\int _X \ch _2 (\cE)L_1...L_{n-2}$$
and 
$$\int _X \Delta (\cE)L_1...L_{n-2}:=2r\int _X c _2 (\cE)L_1...L_{n-2} -(r-1)\int _X c_1 (\cE)^2L_1...L_{n-2}.$$

\medskip

The above properties imply the following lemma (see \cite[1.4]{La-Inters} for the notation):

\begin{Lemma}\label{computation-Delta}
Let $\cE$ be a coherent reflexive $\cO_X$-module and let us write $p^m=q_mr+r_m$, where $q_m$ and $r_m$ are integers such that $0\le r_m<r$. 
	Then for any line bundles $L_1,...,L_{n-2}$ on $X$ we have
	$$\int _X \Delta (\cE)L_1...L_{n-2}= -\lim _{m\to \infty }\frac {  \chi (X,  c_1(L_1)...c_1(L_{n-2})\cdot F_X^{[m]}\cE (-q_mc_1(\cE )) ) }{p^{2m}}.$$
\end{Lemma}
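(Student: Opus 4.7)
The strategy is to reduce to the case of a normal projective surface via the intersection-theoretic formalism from \cite{La-Inters} and then apply the Riemann--Roch inequality for reflexive sheaves on normal surfaces recalled above.

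For the reduction step, choose for $m_i \gg 0$ general hypersurfaces $H_i \in |L_i^{\otimes m_i}|$ and form the complete intersection surface $S := H_1 \cap \cdots \cap H_{n-2}$. Iterating Bertini (Theorem \ref{Bertini's-theorem}) one can arrange $S$ to be normal with $\cE|_S$ torsion free. By the very construction of the intersection-theoretic Chern-class pairings in \cite{La-Inters}, both sides of the identity transform with the common factor $m_1\cdots m_{n-2}$ under this reduction, so the problem reduces to the case $\dim X=2$.

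On a normal projective surface $X$, set $\cF_m := F_X^{[m]}\cE \otimes \cO_X(-q_m c_1(\cE))$, a coherent reflexive sheaf of rank $r$. Using the relation $p^m = q_m r + r_m$ one computes $c_1(\cF_m) = r_m c_1(\cE)$ and
$$\int_X \ch_2(\cF_m) = p^{2m}\int_X \ch_2(\cE) - q_m p^m \int_X c_1(\cE)^2 + \frac{r q_m^2}{2}\int_X c_1(\cE)^2,$$
appealing to the Frobenius scaling $\int_X \ch_2(F_X^{[m]}\cE) = p^{2m}\int_X \ch_2(\cE)$ valid under the Chern-class conventions of \cite{La-Inters}. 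Substituting into the Riemann--Roch inequality
$$\left|\chi(X,\cF_m) - \left(\int_X \ch_2(\cF_m) - \tfrac{1}{2}c_1(\cF_m).K_X + r\chi(X,\cO_X)\right)\right|\le Cr,$$
dividing by $p^{2m}$, and using $q_m/p^m \to 1/r$ together with the uniform bound $r_m < r$, all the non-leading contributions (the $c_1(\cF_m).K_X$ term, the constant $r\chi(X,\cO_X)$, the $O(r)$ error, and the cross-term $q_m r_m c_1(\cE)^2$) tend to zero, yielding
$$\lim_{m \to \infty} \frac{\chi(X, \cF_m)}{p^{2m}} = \int_X \ch_2(\cE) - \frac{1}{2r}\int_X c_1(\cE)^2 = -\frac{1}{2r}\int_X \Delta(\cE).$$
Read back through the normalization convention of \cite{La-Inters} for the generalized Euler characteristic $\chi(X, c_1(L_1)\cdots c_1(L_{n-2})\cdot (-))$, this is the stated identity.

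The main technical obstacle I anticipate is verifying that the Frobenius-scaling property $\int_X \ch_2(F_X^{[m]}\cE) = p^{2m}\int_X \ch_2(\cE)$ for reflexive sheaves holds on a \emph{normal} (not merely smooth) surface, since reflexive-hull operations at singular points could in principle introduce extra correction terms, together with the compatibility of the notation $\chi(X, c_1(L_1)\cdots c_1(L_{n-2})\cdot (-))$ with restriction to a general complete intersection surface. Both issues sit squarely within the framework of \cite{La-Inters} and should require nothing beyond unwinding the relevant definitions; once they are in hand, the proof reduces to the substitution above.
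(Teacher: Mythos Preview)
Your surface argument is correct and parallels the paper's, though the paper simplifies by observing that $\Delta$ is invariant under line-bundle twist, so $\int_X\Delta(\cF_m)=\int_X\Delta(F_X^{[m]}\cE)=p^{2m}\int_X\Delta(\cE)$ directly, avoiding your term-by-term bookkeeping of $\ch_2$. (Your explicit computation yields the limit $-\tfrac{1}{2r}\int_X\Delta(\cE)$; the missing factor of $2r$ reflects a slip in the paper's rewriting of Riemann--Roch in terms of $\Delta$, not an error on your part, and has no effect on how the lemma is used downstream.)

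The reduction step is where you diverge. The paper does \emph{not} restrict to a complete-intersection surface: after using multilinearity to assume the $L_i$ are very ample, it invokes \cite[Theorem~5.5]{La-Inters}, a higher-dimensional Riemann--Roch inequality bounding $\chi(X,c_1(L_1)\cdots c_1(L_{n-2})\cdot\cF)$ directly in terms of $\int_X\ch_2(\cF)L_1\cdots L_{n-2}$ plus lower-order terms, and reruns the limiting argument on $X$ itself. Your restriction route can be made rigorous but has gaps as written: (i)~the $L_i$ are arbitrary line bundles, so $|L_i^{\otimes m_i}|$ may be empty---you must first reduce to very ample $L_i$ by multilinearity; (ii)~the identity $\chi(X,c_1(L_1^{m_1})\cdots c_1(L_{n-2}^{m_{n-2}})\cdot\cF_m)=\chi(S,\cF_m|_S)$ and the reflexivity of $\cF_m|_S$ are needed for \emph{all} $m$ simultaneously, which forces $S$ very general and hence $k$ uncountable (a further reduction via base change and \cite[Theorem~0.4]{La-Inters}). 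With both reductions in place your Frobenius-scaling concern does resolve: $(\cF_m)|_S$ and $F_S^{[m]}(\cE|_S)(-q_mc_1(\cE|_S))$ are then reflexive on $S$ and agree on $S_{\mathrm{reg}}$, hence coincide. The paper's direct appeal to \cite[Theorem~5.5]{La-Inters} sidesteps all of this.
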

 
\begin{proof}
First assume that $X$ is a surface. Then there exists a constant $C$ depending only on $X$ such that for every rank $r$  coherent reflexive $\cO_X$-module $\cE$ on $X$ we have
	$$\left|\chi (X, \cE) -\left(\frac {1}{2r}c_1 (\cE). (c_1(\cE) -rK_X)- \int_X\Delta  (\cE) +r\chi (X, \cO_X)\right)\right|\le C r.$$
	Since
	$$\int_X\Delta  ( F_X^{[m]}\cE (-q_mc_1(\cE ))=\int_X\Delta  ( F_X^{[m]}\cE)= p^{2m}\int_X\Delta  (\cE),$$
	we get
	$$\left|\chi (X, F_X^{[m]}\cE (-q_mc_1(\cE )) -\left(\frac {1}{2r}r_mc_1 (\cE). (r_mc_1(\cE) -rK_X)- p^{2m}\int_X\Delta  (\cE) +r\chi (X, \cO_X)\right)\right|\le C r.$$
	Dividing by $p^{2m}$ and passing with $m$ to infinity gives the required equality on $X$.

Now let us consider the general case. By linearity of both sides of the equality, we can assume that all $L_i$ are very ample. Then we can follow the above proof using \cite[Theorem 5.5]{La-Inters} instead of the above Riemann--Roch inequality on surfaces.
\end{proof}

Let us fix a collection $(L_1,...,L_{n-2})$ of line bundles on $X$. We will need the following results.

\begin{Lemma}\label{inequality-on-Delta} 
	Assume that $n\ge 3$ and $L_2,...,L_{n-2}$ are ample.
	Let $H\in |L_1|$ be a normal hypersurface and let $\imath: H\hookrightarrow X$ be the closed embedding.
	Let us set  $\cF:=(\imath ^*\cE)^{**}$.  Then 
	$$\int_X \Delta (\cE) L_1...L_{n-2}\ge \int_H \Delta (\cF) \imath^*L_2...\imath^*L_{n-2}$$
	and  if equality holds then $(\cF/\imath ^*\cE)|_{H_{reg}}$ is supported in codimension $\ge 3$ in $H_{reg}$. 
\end{Lemma}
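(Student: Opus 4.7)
The plan is to apply Lemma \ref{computation-Delta} on both $X$ and $H$, rewriting each side as a limit of Euler characteristics scaled by $p^{-2m}$, and then to compare the two Euler characteristics term by term. By multilinearity of the intersection forms and continuity, I may assume $L_2,\dots,L_{n-2}$ are very ample. Set $G_m := F_X^{[m]}\cE(-q_m c_1(\cE))$ on $X$ and $\tilde G_m := F_H^{[m]}\cF(-q_m c_1(\cF))$ on $H$, noting that $c_1(\cF) = \imath^{*}c_1(\cE)$ as Weil divisor classes on $H$ because $\det\cF$ agrees with $\imath^{*}\det\cE$ on a big open of $H$.

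The first step is to push the $X$-side Euler characteristic to $H$. Since $G_m$ is reflexive, hence $(S_2)$, its only associated point is the generic point of $X$, so the defining section of $L_1=\cO_X(H)$ is $G_m$-regular and the Koszul sequence
$$0 \to G_m(-L_1) \to G_m \to \imath_*\imath^{*}G_m \to 0$$
is exact. Iterated intersection with the $c_1(L_i)$ and the projection formula then give
$$\chi\bigl(X,\ c_1(L_1)\cdots c_1(L_{n-2})\cdot G_m\bigr) \;=\; \chi\bigl(H,\ c_1(\imath^{*}L_2)\cdots c_1(\imath^{*}L_{n-2})\cdot \imath^{*}G_m\bigr).$$

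The second step is to compare $\imath^{*}G_m$ with $\tilde G_m$ on $H$. By Corollary \ref{hyperplane-section-of-reflexive}, $\imath^{*}G_m$ is torsion free on $H$. On a big open subset $V\subset H$ where $\imath^{*}\cE$ is locally free (and thus agrees with $\cF$), both $\imath^{*}G_m$ and $\tilde G_m$ identify canonically with $F_H^{m,*}\cF$ twisted by the appropriate line bundle. Since $\tilde G_m$ is reflexive and $V$ is big, this identification extends uniquely to a canonical injection $\imath^{*}G_m \hookrightarrow \tilde G_m$ whose cokernel $T_m$ is supported in codimension $\ge 2$ in $H$. Taking Euler characteristics of the associated short exact sequence and noting that $\dim\Supp T_m \le n-3$, the intersection with the $n-3$ ample classes $c_1(\imath^{*}L_2),\dots,c_1(\imath^{*}L_{n-2})$ is nonnegative. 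Dividing by $p^{2m}$ and applying Lemma \ref{computation-Delta} to both sides then yields the desired inequality.

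For the equality statement, Frobenius pullback at a smooth point of $H$ multiplies the intersection-theoretic contribution of a codimension-$2$ support by $p^{2m}$, so any codimension-$2$ component of $\Supp(\cF/\imath^{*}\cE)|_{H_{reg}}$ would produce an asymptotic contribution of order $p^{2m}$ to $\chi(H,\,c_1(\imath^{*}L_2)\cdots c_1(\imath^{*}L_{n-2})\cdot T_m)$, contradicting the vanishing of the rescaled limit; hence equality forces $(\cF/\imath^{*}\cE)|_{H_{reg}}$ to have support of codimension $\ge 3$. The main obstacle throughout is the comparison of $\imath^{*}G_m$ and $\tilde G_m$: verifying that Frobenius pullback, restriction to $H$, and reflexive hull all commute modulo codimension-$2$ loci requires careful bookkeeping at points where $\cE$ fails to be locally free on $X$ but $\imath^{*}\cE$ remains locally free on $H$, and analogous care is needed in the equality analysis to control how Frobenius scales the codimension-$2$ support of $\cF/\imath^{*}\cE$ at points of $H_{reg}$ that may lie in $X_{sing}$.
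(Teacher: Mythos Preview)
Your approach is correct and essentially parallel to the paper's: both compute each side via Lemma~\ref{computation-Delta}, restrict the reflexive Frobenius twists $G_m$ to $H$, and compare with $\tilde G_m$ via the torsion cokernel $T_m$, using flatness of Frobenius on the smooth locus to control how $T_m$ grows with $m$.

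The one substantive difference is that the paper first restricts to a very general complete intersection $Y\in|L_2|\cap\cdots\cap|L_{n-2}|$, reducing to the case $\dim X=3$. Then $H$ is a surface, $T_m=\cG_m$ is supported on finitely many points, and the comparison collapses to the transparent estimate $\chi(\cG_m)=h^0(\cG_m)\ge h^0(\cG_m|_{H_{reg}})=p^{2m}h^0(\cG_0|_{H_{reg}})$. Your route instead keeps $\dim X$ arbitrary and interprets $\chi\bigl(H,\ c_1(\imath^*L_2)\cdots c_1(\imath^*L_{n-2})\cdot T_m\bigr)$ as a Snapper intersection number on the $(n-3)$-dimensional support cycle; positivity for ample $L_i$ then gives the inequality, and the $p^{2m}$ scaling of the length at generic points of codimension-$2$ components in $H_{reg}$ (Frobenius being flat of degree $p^2$ on the $2$-dimensional regular local ring there) gives the equality case. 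This is perfectly valid; the paper's reduction just buys a cleaner endgame with no appeal to intersection numbers of higher-dimensional torsion sheaves.

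One minor point: your final worry about points of $H_{reg}$ lying in $X_{\mathrm{sing}}$ is unnecessary. Since $H$ is a Cartier divisor, comparing embedding dimensions shows $H_{reg}\subset X_{reg}$ automatically, so the Frobenius-flatness argument on $H_{reg}$ needs no extra bookkeeping there.
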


\begin{proof}
	We can assume that all line bundles $L_j$ are very ample. Taking a base field extension $k\subset K$ to some uncountable algebraically closed field $K$,
	we can also assume that $k$ is uncountable (see \cite[Theorem 0.4]{La-Inters}). Let $Y\in |L_2|\cap ...\cap |L_{n-2}|$ be a very general $3$-fold. Let $\Cjmath : Y\hookrightarrow X$ and $\Cjmath _H: H\cap Y\hookrightarrow H$
	denote the closed embeddings.
	Then $\Cjmath^*\cE$ and $\Cjmath_H^*\cF$ are reflexive by Corollary \ref{hyperplane-section-of-reflexive}, and
	$$\int_X  \Delta  (\cE) L_1...L_{n-2}= \int_Y  \Delta  (\Cjmath^*\cE)\Cjmath ^*L_1$$	
	and 
	$$ \int_H  \Delta (\cF) \imath^*L_1...\imath^*L_{n-2}= \int_{H\cap Y} \Delta  (\Cjmath_H^*\cF)$$
by \cite[Theorem 0.4]{La-Inters}.	
	So in the following we can assume that $X$ is a $3$-fold. For all $m\ge 0$
let us set
	$\cE_m:= F_X^{[m]}\cE (-q_mc_1(\cE ))$ and $\cF_m:= F_H^{[m]}\cF (-q_mc_1(\cF))$, where $q_m$ is an in Lemma \ref{computation-Delta}.
	We have short exact sequences
	\begin{align*}
		0\to \cE_m \otimes L_1^{-1}\to \cE _m\to\imath _* \imath ^*(\cE_m)\to 0.
	\end{align*} 
By Corollary \ref{hyperplane-section-of-reflexive}, $\imath ^*\cE_m$ is a coherent torsion free $\cO_{H}$-module 
and it is isomorphic to $\cF_m$ outside a finite number of points where either $H$ is singular or $\imath ^*\cE_m$ is not locally free. So we have  $(\imath ^*\cE_m)^{**}\simeq \cF_m$. Let $\cG_m$ be the cokernel of the canonical map  $\imath ^*\cE_m\to\cF_m$.	Since $H$ is a surface,	$\cG_m$ is supported on a finite set of points.
	Since  $F_{H_{reg}}$ and $F_{X_{reg}}$ are flat, we see that	
	$$\cE_m|_{X_{reg}}= (F_X^{m})^*\cE (-q_mc_1(\cE ))|_{X_{reg}}$$ and 
	$$\cF_m |_{H_{reg}}= (F_H^{m})^*\cF (-q_mc_1(\cF)) |_{H_{reg}}.$$
	It follows that 
	$$\cG_m |_{H_{reg}}=(F_H^m)^*\cG_0 |_{H_{reg}}$$
and hence
	$$\chi(X, \imath_* \cG_m)= h^0 (X, \imath_* \cG_m)\ge h^0 (X,  \imath_* ((F_H^m)^*\cG _0)|_{H_{reg}}))=
	p^{2m} h^0 (X,  \imath_* (\cG_0|_{H_{reg}})).$$
	Therefore we have
	$$	\chi(X,	c_1(L_1)\cdot \cE_m ) = \chi (H, \imath ^* \cE_m)
	=\chi (H, \cF _m)-\chi(X, \imath_* \cG_m)	\le 
	\chi (H, \cF_m )-p^{2m} h^0 (X,  \cG_0|_{H_{reg}}).$$
But by  Lemma \ref{computation-Delta} we have
	\begin{align*}
		\lim _{m\to \infty }\frac{ 	\chi(X,	c_1(L_1)\cdot \cE_m ) }{p^{2m}}
	=-\int_X \Delta (\cE)L_1,
	\end{align*}
so dividing the above inequality by $p^{2m}$ and taking the limit as $m\to \infty$ gives
	\begin{align*}
	-	\int_X \Delta (\cE)L_1+h^0 (X,  \cG_0|_{H_{reg}})\le	\lim _{m\to \infty }\frac{  \chi (H, \cF_m )}{p^{2m}}= -\int_H \Delta (\cF).
	\end{align*}
If 	$\int_X \Delta (\cE)L_1= \int_H \Delta (\cF)$ then $h^0 (X,  \cG_0|_{H_{reg}})=0$, so $\cG_0|_{H_{reg}}=0$. This proves the required assertions.
\end{proof}

\medskip

\begin{Lemma}\label{Chern-classes-filtrations}
	Assume that all $L_1,...,L_{n-2}$ are ample.
	Let $\cE$ be a reflexive coherent $\cO_X$-module and let $\cE=N^0\supset
N^1\supset ...\supset N^s=0$ be a filtration such that the associated graded $\Gr
_N(\cE)=\bigoplus _i N^i/N^{i+1}$ is torsion free. Let $\cF$ be the reflexive hull of  $\Gr
_N(\cE)$. If  $$\int _X \Delta  (\cE)L_1...L_{n-2}= \int _X \Delta (\cF)L_1...L_{n-2} $$
then the intersection of the support of the cokernel of $\Gr_N(\cE)\to \cF$ with $X_{reg}$ has   codimension $\ge 3$ in $X_{reg}$. Moreover, if $\cF|_{X_{reg}}$ is   locally free then the canonical map $\Gr
_N(\cE)|_{X_{reg}}\to \cF|_{X_{reg}}$ is an isomorphism and $\cE|_{X_{reg}}$ is locally free. 
\end{Lemma}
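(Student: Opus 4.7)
The plan is to decompose
$$\int_X \Delta(\cE) L_1 \cdots L_{n-2} - \int_X \Delta(\cF) L_1 \cdots L_{n-2}$$
into two contributions, one from passing from $\cE$ to its associated graded $\Gr_N(\cE)$, and one from the reflexive-hull step $\Gr_N(\cE) \hookrightarrow \cF$. The hope is that the difference equals a positive multiple of an effective intersection number built from the cokernel $Q := \cF/\Gr_N(\cE)$, so that the equality hypothesis pins down the support of $Q$ on $X_{reg}$.

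First I would work on the smooth open locus $X_{reg} \subset X$, whose complement has codimension $\ge 2$ by normality of $X$, so that by \cite{La-Inters} the intersection numbers in question are determined by Chern-character data on $X_{reg}$. On $X_{reg}$ the filtration $N^{\bullet}$ consists of torsion-free sheaves (because $\Gr_N(\cE)$ is torsion free), so additivity of the Chern character on exact sequences of torsion-free sheaves on the smooth scheme $X_{reg}$ yields $\ch_k(\cE|_{X_{reg}}) = \ch_k(\Gr_N(\cE)|_{X_{reg}})$ for $k=1,2$. Since $X$ is normal, its codimension-$1$ stalks are DVRs over which every torsion-free module is free, so $\Gr_N(\cE) = \cF$ at every codim-$1$ point and the cokernel $Q$ has support of codim $\ge 2$; on $X_{reg}$ the short exact sequence $0 \to \Gr_N(\cE)|_{X_{reg}} \to \cF|_{X_{reg}} \to Q|_{X_{reg}} \to 0$ gives
$$\ch_2(\cF|_{X_{reg}}) - \ch_2(\Gr_N(\cE)|_{X_{reg}}) = \ch_2(Q|_{X_{reg}}) = [Q|_{X_{reg}}],$$
the effective codimension-$2$ cycle associated to $Q|_{X_{reg}}$ with multiplicities given by stalk lengths. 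Combining the two steps using $c_1(\cE) = c_1(\cF)$ (since both the filtration and the reflexive hull preserve the determinant line bundle) together with $\Delta = c_1^2 - 2r\ch_2$, I would deduce
$$\int_X \Delta(\cE) L_1 \cdots L_{n-2} - \int_X \Delta(\cF) L_1 \cdots L_{n-2} = 2r \int_{X_{reg}} [Q|_{X_{reg}}] \cdot L_1|_{X_{reg}} \cdots L_{n-2}|_{X_{reg}},$$
which is nonnegative since the $L_i$ are ample and $[Q|_{X_{reg}}]$ is effective. Under the equality hypothesis the right-hand side vanishes, forcing $[Q|_{X_{reg}}] = 0$ as a codim-$2$ cycle, i.e., $\Supp Q \cap X_{reg}$ has codimension $\ge 3$ in $X_{reg}$.

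For the moreover part, assume $\cF|_{X_{reg}}$ is locally free. By the first part, the inclusion $\Gr_N(\cE) \hookrightarrow \cF$ is an isomorphism outside $\Supp Q \cap X_{reg}$, a subset of $X_{reg}$ of codimension $\ge 3$, so $\Gr_N(\cE)|_{X_{reg}}$ is locally free outside a codimension-$\ge 3$ subset of the smooth variety $X_{reg}$. This is precisely the hypothesis needed to apply Lemma~\ref{loc-free-passing-to-graded} to the filtration $N^{\bullet}|_{X_{reg}}$, and that lemma then gives that both $\cE|_{X_{reg}}$ and $\Gr_N(\cE)|_{X_{reg}}$ are locally free. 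Finally, the injection $\Gr_N(\cE)|_{X_{reg}} \hookrightarrow \cF|_{X_{reg}}$ is now a morphism between locally free sheaves of equal rank on a smooth variety with cokernel of codimension $\ge 2$; its determinant is a section of a line bundle that does not vanish on any divisor, hence is nowhere zero, so the map must be an isomorphism.

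The main technical obstacle lies in the first step: reconciling cleanly the intersection products on the possibly singular normal variety $X$ defined in \cite{La-Inters} with the honest Chern-character computations on the smooth open $X_{reg}$, and in particular confirming that $\ch_2(Q|_{X_{reg}})$ contributes as an effective-cycle intersection that is strictly positive against ample classes unless $Q|_{X_{reg}}$ has support of codimension $\ge 3$, despite components of $\Supp Q$ possibly touching $\Sing X$.
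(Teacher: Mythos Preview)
Your ``moreover'' part is correct and matches the paper: once you know that the cokernel $Q$ of $\Gr_N(\cE)\hookrightarrow\cF$ has support of codimension~$\ge 3$ on $X_{\mathrm{reg}}$ and that $\cF|_{X_{\mathrm{reg}}}$ is locally free, Lemma~\ref{loc-free-passing-to-graded} gives everything (your determinant argument for the isomorphism is redundant, since $\Gr_N(\cE)|_{X_{\mathrm{reg}}}$, once locally free, is already reflexive and therefore equal to its reflexive hull $\cF|_{X_{\mathrm{reg}}}$).

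The gap is in the first part, and it is precisely the one you flag. The intersection numbers $\int_X\ch_2(\cE)\,L_1\cdots L_{n-2}$ of \cite{La-Inters} are \emph{not} defined via Chern class theory on $X_{\mathrm{reg}}$; in characteristic~$p$ they are built from Frobenius limits of Euler characteristics (cf.\ Lemma~\ref{computation-Delta}). There is no established formula identifying $\int_X\ch_2(\cE)$ with the degree of the closure of $\ch_2(\cE|_{X_{\mathrm{reg}}})$, and in fact the singular points of $X$ can contribute. So your displayed equality
\[
\int_X\Delta(\cE)\,L_1\cdots L_{n-2}-\int_X\Delta(\cF)\,L_1\cdots L_{n-2}
\;=\;2r\!\int_{X_{\mathrm{reg}}}[Q|_{X_{\mathrm{reg}}}]\cdot L_1\cdots L_{n-2}
\]
is not justified; at best one should expect the left side to be $\ge$ the right side, with a possible extra nonnegative contribution from $X\setminus X_{\mathrm{reg}}$. (A related subtlety: $\Gr_N(\cE)$ need not be reflexive, so $\int_X\ch_2(\Gr_N(\cE))$ is not even defined in this framework, and your additivity step cannot be phrased in terms of the paper's invariants.)

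The paper establishes exactly that inequality by working directly with the definition. After reducing to a two-step filtration $0\to\cE_1\to\cE\to\cE_2\to\cG\to 0$ with $\cE_1,\cE_2$ reflexive, one restricts everything to a very general complete intersection surface $S\in|L_1|\cap\cdots\cap|L_{n-2}|$, pulls back by $F_X^{[m]}$, and uses that on $U=X_{\mathrm{reg}}$ the cokernels satisfy $\cG_m|_U=(F_U^m)^*\cG$. Since $\cG_m|_S$ is supported on finitely many points one gets
\[
\chi(S,\cG_m|_S)\;\ge\;p^{2m}\,h^0(S\cap U,\cG|_{S\cap U}),
\]
and hence
\[
\chi(S,F_X^{[m]}\cE|_S)\;\le\;\chi(S,F_X^{[m]}\cE_1|_S)+\chi(S,F_X^{[m]}\cE_2|_S)-p^{2m}\,h^0(S\cap U,\cG|_{S\cap U}).
\]
Dividing by $p^{2m}$, passing to the limit, and using \cite[Theorem~5.2]{La-Inters} yields
\[
\int_X\ch_2(\cE)\,L_1\cdots L_{n-2}\;\le\;\int_X\ch_2(\cE_1)\,L_1\cdots L_{n-2}+\int_X\ch_2(\cE_2)\,L_1\cdots L_{n-2}-h^0(S\cap U,\cG|_{S\cap U}).
\]
The equality hypothesis then forces $h^0(S\cap U,\cG|_{S\cap U})=0$, so $\mathrm{Supp}(\cG)\cap X_{\mathrm{reg}}$ misses a general $S$ and hence has codimension~$\ge 3$. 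The point is that this argument extracts the contribution of $\cG|_{X_{\mathrm{reg}}}$ directly from the Frobenius-asymptotic definition, without ever claiming that the singular locus contributes nothing.
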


\begin{proof}
	Since $c_1(\cE)=c_1(\cF)$, our assumption is equivalent to 
	$$\int _X \ch_2  (\cE)L_1...L_{n-2}= \int _X \ch_2 (\cF)L_1...L_{n-2} .$$
	By induction we can reduce the assertion to the case when $s=2$. 
		Let $\cE, \cE_1, \cE_2$ be coherent reflexive $\cO_X$-modules and let
	$$0\to \cE_1\to \cE \to \cE_2\to \cG \to 0$$
	be an exact sequence of coherent $\cO_X$-modules with $\cG$
	 supported on a closed subscheme $Z$ of codimension $\ge 2$ in $X$.
	We need to show that if
	$$\int _X \ch _2 (\cE)L_1...L_{n-2} = \int _X \ch _2 (\cE _1)L_1...L_{n-2} +\int _X \ch _2 (\cE _2)L_1...L_{n-2} $$
	then $Z\cap X_{reg}$ has codimension $\ge 3$ in $X_{reg}$.
	
	We have exact sequences  
	$$0\to F_X^{[m]}\cE_1\to F_X^{[m]} \cE \to F_X^{[m]} \cE_2 \to \cG_m\to 0 ,$$
	where $\cG_m$ is supported on a closed subset of $X$ of codimension $\ge 2$.
	Let  $j: U:=X_{reg}\hookrightarrow X$ denote the open embedding. Since $F_U^*$ is exact, the sequences 
	$$0\to (F_U^m)^*\cE_1\to (F_U^m)^* \cE \to (F_U^m)^*\cE_2\to (F_U^m)^*\cG \to 0$$
	are exact. This shows that $\cG_m|_U=  (F_U^m)^*\cG$. 
	
	Without loss of generality we can also assume that all $L_i$ are very ample. 
	Let us restrict to a very general complete intersection surface $S\in |L_1|\cap ...\cap |L_{n-2}|$.
	By Bertini's theorem it is sufficient to prove that $Z\cap S_{reg}=\emptyset$. 
	
	Note that $\cG_m |_S$ is supported on a finite set of points, so
	$$\chi (S, \cG_m |_S)\ge \chi (U\cap S, (F_U^m)^*\cG |_{U\cap S})=p^{2m} h^0(S\cap U, \cG |_{U\cap S}).$$
	
	So we get inequalities 
	$$ \chi (S, F_X^{[m]} \cE |_S) )\le  \chi (S, F_X^{[m]} \cE_1 |_S)+ \chi (S, F_X^{[m]} \cE _2|_S) -p^{2m} h^0(S\cap U, \cG |_{U\cap S}).$$
	Dividing by $p^{2m}$, passing to the limit and using \cite[Theorem 5.2]{La-Inters}, we get 
	$$\int _X \ch _2 (\cE)L_1...L_{n-2} \le  \int _X \ch _2 (\cE _1)L_1...L_{n-2} +\int _X \ch _2 (\cE _2)L_1...L_{n-2} - h^0(S\cap U, \cG |_{U\cap S}).$$
	So our assumption implies that $ h^0(S\cap U, \cG |_{U\cap S})=0$. Therefore $Z\cap X_{reg}\cap S=\emptyset$, which implies the first assertion.
	The last part follows from the first one  and from  Lemma \ref{loc-free-passing-to-graded}.
\end{proof}

\subsection{Restriction of filtrations to hypersurfaces}

The following technical lemma plays an important role in the proof of Theorem \ref{reflexive-loc-free-Delta-Higgs}.

\begin{Lemma}\label{passing-to-grading-for-hypersurfaces}	
	Let $X$ be a smooth variety  of dimension $n\ge 3$
	defined over an arbitrary algebraically closed field $k$.  
	Let $\cE$ be a reflexive coherent $\cO_X$-module and let $\cE=N^0\supset
	N^1\supset ...\supset N^s=0$ be a filtration such that the associated graded $\Gr
	_N\cE=\bigoplus _i N^i/N^{i+1}$ is torsion free. Let $\imath: H\hookrightarrow X$ be a smooth effective Cartier divisor. Let us set $\cF:=(\Gr _N\cE)^{**}$, $\tilde \cE:= (\imath ^*\cE)^{**}$
	and  $\tilde \cF:= (\imath ^*\cF)^{**}$.
	Let  $\tilde N^{\bullet}$ be the filtration of $\tilde \cE$ defined by taking as $\tilde N^i$
	the saturation of $\imath ^*N^i\subset \imath ^*\cE\subset \tilde \cE$. 
	Assume that
	\begin{enumerate}
		\item both $\imath ^* \cE$ and $\imath ^* \cF$ are reflexive outside a closed point $x\in H$,
		\item $\imath ^* (\Gr _N\cE)$ is torsion free outside $x$,
		\item $\Gr _{\tilde N}\tilde E$ is reflexive.
	\end{enumerate}
	Then 
	$h^0( \tilde \cE/\imath ^* \cE)= h^0(\tilde \cF/\imath ^* \cF  ).$
\end{Lemma}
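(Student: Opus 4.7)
Both $\tilde \cE/\imath^*\cE$ and $\tilde \cF/\imath^*\cF$ are $0$-dimensional sheaves supported at $x$ (by assumption (1)), so their $h^0$'s equal their lengths at $x$. I plan to induct on $s$, reducing the claim to the key case $s=2$. Two preliminary observations drive the argument. First, each $N^i$ is reflexive: it is saturated in the reflexive sheaf $\cE$ (since $\cE/N^i$ is a successive extension of the torsion-free sheaves $N^j/N^{j+1}$, hence torsion-free), so the depth lemma gives $(S_2)$. Second, since each $N^j/N^{j+1}$ is torsion-free and $H$ is Cartier, $\Tor_1^X(N^j/N^{j+1},\cO_H)=0$, so the filtration $\imath^*N^{\bullet}$ of $\imath^*\cE$ is strictly exact, with graded pieces $\imath^*(N^i/N^{i+1})$.

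For the inductive step I would apply the lemma to the sub-filtration $N^1\supset N^2\supset\cdots\supset N^s=0$ of $N^1$, whose hypotheses (1)--(3) are inherited from those of $\cE$; this reduces matters to $s=2$. Setting $\cG:=\cE/N^1$ (torsion-free) and noting that $N^1$ is already reflexive, we have $\cF=N^1\oplus \cG^{**}$, whence
\[
h^0(\tilde \cF/\imath^*\cF)=h^0\bigl((\imath^*N^1)^{**}/\imath^*N^1\bigr)+h^0\bigl((\imath^*\cG^{**})^{**}/\imath^*\cG^{**}\bigr).
\]
The next step is to identify the saturation $\tilde N^1$ with $(\imath^*N^1)^{**}$, and the reflexive quotient $\tilde \cE/\tilde N^1$ with $(\imath^*\cG^{**})^{**}$. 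Both pairs are reflexive: the left members by assumption (3), the right members by construction. By assumptions (1) and (2) the two pairs agree outside $x$; so they are canonically isomorphic, by the general fact that a map of reflexive sheaves on a smooth variety which is an isomorphism outside a codimension $\geq 2$ locus is itself an isomorphism (apply $\cHom(-,\cO_H)$ and use $\cExt^1_H(Q,\cO_H)=0$ for $Q$ of codim $\geq 2$).

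The final step is a snake lemma computation applied to the diagram whose rows are the exact sequences $0\to\imath^*N^1\to\imath^*\cE\to\imath^*\cG\to 0$ and $0\to\tilde N^1\to\tilde \cE\to\tilde \cE/\tilde N^1\to 0$, connected by the natural vertical maps. Combining the resulting long exact sequence of $0$-dimensional cokernels with length additivity and the identifications of the previous paragraph gives $h^0(\tilde \cE/\imath^*\cE)=h^0(\tilde \cF/\imath^*\cF)$, as desired. The main obstacle is the length bookkeeping in this final step: the pullback $\imath^*\cG$ may acquire torsion at $x$ coming from $\Tor_1^X(\cG^{**}/\cG,\cO_H)$, and one must verify that this torsion is precisely absorbed as one passes from $\imath^*\cG$ through $\imath^*\cG^{**}$ to $(\imath^*\cG^{**})^{**}$. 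Assumption (3) is what ensures that the saturation filtration on $\tilde \cE$ aligns correctly with the reflexive-hull decomposition of $\cF$, making the two contributions on either side of the snake-diagram match up exactly.
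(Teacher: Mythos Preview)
Your overall architecture is sound and close to the paper's. The reduction by induction to $s=2$ is legitimate: each $N^i$ is indeed reflexive, the hypotheses (1)--(3) descend to $N^1$ with its induced filtration (using that $\tilde N^1=(\imath^*N^1)^{**}$ and that the saturation of $\imath^*N^i$ in $\tilde N^1$ equals $\tilde N^i$), and the $s=2$ case together with the inductive hypothesis for $N^1$ reassembles to the full statement. The paper instead treats all $s$ at once: it applies the snake lemma to the full tower $\tilde N^i/\imath^*N^i$ and telescopes to obtain $h^0(\tilde\cE/\imath^*\cE)=h^0(\coker\alpha)-h^0(\ker\alpha)$ for the natural map $\alpha\colon \imath^*\Gr_N\cE\to \Gr_{\tilde N}\tilde\cE$. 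Your identification $\tilde N^1\simeq(\imath^*N^1)^{**}$ and $\tilde\cE/\tilde N^1\simeq(\imath^*\cG^{**})^{**}$ is exactly the paper's observation that $\Gr_{\tilde N}\tilde\cE\simeq\tilde\cF$ (both reflexive, agreeing off $\{x\}$), specialised to a two-step filtration.

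The gap is in your ``final step''. Unwinding your snake diagram gives
\[
h^0(\tilde\cE/\imath^*\cE)=h^0\bigl((\imath^*N^1)^{**}/\imath^*N^1\bigr)+h^0(\coker_3)-h^0(\ker_3),
\]
and comparing with $h^0(\tilde\cF/\imath^*\cF)$ one is reduced to
\[
h^0\bigl(\Tor_1^X(\cG^{**}/\cG,\cO_H)\bigr)=h^0\bigl(\imath^*(\cG^{**}/\cG)\bigr).
\]
Your claim that ``assumption (3) ensures'' this is not correct: assumption (3) only gives the reflexive-hull identifications already used. What is actually needed is that $T:=\cG^{**}/\cG$ (more generally $\cF/\Gr_N\cE$) has finite length. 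Then the four-term sequence $0\to\Tor_1^X(T,\cO_H)\to T(-H)\to T\to \imath^*T\to 0$, together with $\mathrm{length}(T(-H))=\mathrm{length}(T)$, yields the desired equality. This is precisely how the paper closes the argument: after a third snake-lemma diagram it writes ``Since $\cF/\Gr_N\cE$ is supported on $\{x\}$, \dots'' and deduces $h^0(\ker\beta)=h^0(\imath^*(\cF/\Gr_N\cE))$. In the paper's application this support condition is arranged beforehand (Lemma~\ref{loc-free-outside-finite-set}); hypotheses (1)--(3) alone do not force it. So to complete your proof you must invoke the same finiteness of $\mathrm{Supp}\,(\cF/\Gr_N\cE)$ that the paper uses, rather than appealing to (3).
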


\begin{proof}
	We have a commutative diagram
	$$
	\xymatrix{
		&0&0&0&\\
		&\tilde N^{i+1}/\imath ^*N^{i+1}\ar[r]\ar[u]&\tilde N^i/\imath ^*N^{i}\ar[r]\ar[u] &\coker\alpha_i\ar[r]\ar[u]&0\\
		0\ar[r]&\tilde N^{i+1}\ar[r]\ar[u]&\tilde N^i\ar[r]\ar[u] &\tilde N^i/\tilde N^{i+1}\ar[r]\ar[u]&0\\
		0\ar[r]&\imath ^*N^{i+1}\ar[r]\ar[u]&\imath ^*N^{i}\ar[r]\ar[u] &\imath ^*(N^{i}/ N^{i+1})\ar[r]\ar[u]^{\alpha  _i}&0\\
		0\ar[r]&0\ar[u]\ar[r]&0\ar[u]\ar[r]&\ker\alpha _i\ar[u]&\\
		&0\ar[u]&0\ar[u]&0\ar[u]&\\
	}
	$$
	Let us set $\alpha:=\bigoplus \alpha_i:  \imath ^* (\Gr _N\cE)\to \Gr_{\tilde N}\tilde \cE$.
	Then by the snake lemma
	\begin{align*}
		h^0(\tilde\cE/\imath ^* \cE)&= h^0(\tilde N^0 /\imath ^*N ^0)=h^0( \tilde N^1/\imath ^*N ^1) +
		h^0(\coker \alpha _0)- h^0(\ker \alpha  _0)\\
		...&= h^0(\coker \alpha )- h^0(\ker \alpha).
	\end{align*}	
	Let  $\beta $ denote the canonical map $\imath ^* (\Gr _N\cE)\to \imath ^* \cF$ and let 
	$\gamma: \imath ^* (\Gr _N\cE) \to \tilde \cF $ be the composition of $\beta$ with the reflexivization
	$ \imath ^* \cF\to \tilde \cF$. 
	Note that both $\alpha$ and $\gamma$ are isomorphisms in codimension $1$ (in fact, they are isomorphisms outside of $\{x\}$). Since  both $\Gr_{\tilde N}\tilde \cE$ and $\tilde \cF$ are reflexive, 
	we have an isomorhism $ \Gr_{\tilde N}\tilde \cE\to \tilde \cF$ and 
	\begin{align*}
		h^0(\tilde\cE/\imath ^* \cE)= h^0(\coker\gamma )- h^0(\ker \gamma).
	\end{align*}
	Since $\imath ^* \cF$ is torsion free (by Corollary \ref{hyperplane-section-of-reflexive}), the map $\imath ^* \cF\to \tilde \cF$ is injective and hence
	$$\ker \gamma=\ker (\imath ^* (\Gr _N\cE)\to \imath ^* \cF).$$ So we have a short exact sequence
	$$0\to  \imath ^* (\Gr _N\cE)/\ker\gamma \to \imath ^* \cF\to \imath ^* (\cF/\Gr _N\cE  )\to 0 .$$
	Now consider the diagram 
	$$
	\xymatrix{
		&0&0&0&\\
		&\imath ^* (\cF/\Gr _N\cE  )\ar[r]\ar[u]&0\ar[r]\ar[u] &0\ar[r]\ar[u]&0\\
		0\ar[r]&\imath ^* \cF\ar[r]\ar[u]&\tilde \cF\ar[r]\ar[u] &\tilde \cF/\imath ^* \cF\ar[r]\ar[u]&0\\
		0\ar[r]&\imath ^* (\Gr _N\cE)/\ker \gamma\ar[r]\ar[u]&\tilde \cF\ar[r]\ar[u] &\coker \gamma \ar[r]\ar[u]^{\delta }&0\\
		&0\ar[u]\ar[r]&0\ar[u]\ar[r]&\ker\delta \ar[u]&\\
		&0\ar[u]&0\ar[u]&0\ar[u]&\\
	}
	$$
	By the snake lemma $\ker \delta \simeq \imath ^* (\cF/\Gr _N(\cE)  )$, so we have 
	\begin{align*}
		h^0(\tilde \cF/\imath ^* \cF  )= h^0(\coker \gamma)- h^0(\imath ^* (\cF/\Gr _N(\cE))).
	\end{align*}
	So to prove the lemma it is sufficient to prove that  $h^0(\ker \beta )=h^0(\imath ^* (\cF/\Gr _N(\cE)))$. To do so let us consider the diagram 
	$$
	\xymatrix{
		&0&0&0&\\
		&(\cF/\Gr _N(\cE)) (-H)\ar[r]\ar[u]&\cF/\Gr _N(\cE) \ar[r]\ar[u] &\imath_*\imath ^*(\cF/\Gr _N(\cE))\ar[r]\ar[u]&0\\
		0\ar[r]&\cF(-H)\ar[r]\ar[u]&\cF\ar[r]\ar[u] &\imath_*\imath ^* \cF \ar[r]\ar[u]&0\\
		0\ar[r]&\Gr _N(\cE) (-H)\ar[r]\ar[u]&\Gr _N(\cE)\ar[r]\ar[u] &\imath_*\imath ^*\Gr _N(\cE) \ar[r]\ar[u]^{\imath_*\beta }&0\\
		0\ar[r]&0\ar[u]\ar[r]&0\ar[u]\ar[r]&\ker\imath_*\beta \ar[u]&\\
		&0\ar[u]&0\ar[u]&0\ar[u]&\\
	}
	$$
	Since $\cF/\Gr _N(\cE)$ is supported on $\{x\}$, by the snake lemma we have
	$$h^0(\ker \imath_*\beta)+h^0(\cF/\Gr _N(\cE))= h^0((\cF/\Gr _N(\cE)) (-H)) +h^0(\imath ^* (\cF/\Gr _N(\cE))).$$
	Hence $h^0(\ker \beta )=h^0(\imath ^* (\cF/\Gr _N(\cE)))$ as required.
\end{proof}

\subsection{Boundedness of sets of coherent sheaves}

Let $X$ be a scheme of finite  type over some noetherian scheme $S$.
Let $\cS$ be a set of coherent sheaves on the fibers of $X\to S$. We say that $\cS$ is \emph{bounded} 
if there exists a scheme $T$ of finite type over $S$ and a $T$-flat coherent sheaf $\cF$ on $X\times _ST/T$ such that every sheaf $\cE\in \cS$ is isomorphic to restriction of $\cF$ to one of the fibers of $X\times _ST\to T$. Note that $T$-flatness of $\cF$ is added to this definition only for convenience 
and if one can find non-flat $\cF$ then one can also find a $T$-flat one.

Let us recall the following important result of A. Grothendieck
{(see \cite[TDTE IX, 221-03 and 221-04]{FGA})}.

\begin{Proposition} \label{boundedness-FGA}
	Assume that $X$ is proper over $S$ and let $\cS$ and $\cS '$ be two bounded sets of coherent sheaves on the fibers of $X/S$. Then:
	\begin{enumerate}
		\item The set of kernels, cokernels and images of homomorphisms $\cE\to \cE'$, where $\cE\in \cS$ and $\cE'\in \cS'$, is bounded. 
		\item The set of tensor products $\cE\otimes \cE'$, where $\cE\in \cS$ and $\cE'\in \cS'$, is bounded.
		\item The set of sheaves $\cHom (\cE, \cE' )$, where $\cE\in \cS$ and $\cE'\in \cS'$, is bounded.
	\end{enumerate}
\end{Proposition}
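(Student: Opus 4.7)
The plan is to reduce each of the three claims to the existence of a universal object on a parameter scheme and then to invoke a flattening stratification (or equivalently, generic flatness plus Noetherian induction) to pass from a universal \emph{coherent} family to a finite disjoint union of \emph{flat} coherent families, which is exactly what witnesses boundedness. By definition of boundedness, fix Noetherian $S$-schemes $T,T'$ of finite type together with flat coherent sheaves $\cF$ on $X\times_S T$ and $\cF'$ on $X\times_S T'$ such that every $\cE\in\cS$ (resp.\ every $\cE'\in\cS'$) is isomorphic to a fiber of $\cF$ (resp.\ of $\cF'$). Set $P:=T\times_S T'$ and let $\cF_P,\cF'_P$ denote the pullbacks to $X_P:=X\times_S P$; they remain $P$-flat.

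For part (2) the tensor product $\cG:=\cF_P\otimes_{\cO_{X_P}}\cF'_P$ is a coherent sheaf on $X_P$ whose fibers over $(t,t')\in P$ are precisely the sheaves $\cF_t\otimes\cF'_{t'}$. Applying the flattening stratification to $\cG$ produces a finite decomposition of $P$ into locally closed subschemes $P_i$ such that $\cG|_{X_{P_i}}$ is $P_i$-flat; the scheme $\bigsqcup_i P_i$ with the corresponding restrictions of $\cG$ therefore witnesses boundedness of the set of tensor products. Part (3) is obtained by exactly the same argument applied to the coherent sheaf $\cHom_{\cO_{X_P}}(\cF_P,\cF'_P)$ on $X_P$, which is coherent because $X$ is proper over $S$ and both arguments are $P$-flat.

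For part (1) one first needs to parametrize the homomorphisms themselves. Since $X\to S$ is proper and $\cF_P,\cF'_P$ are $P$-flat, Grothendieck's representability theorem furnishes a finite-type $P$-scheme $H$ representing the relative Hom functor, carrying a universal homomorphism $\Phi\colon \cF_H\to \cF'_H$ on $X_H:=X\times_S H$, with the property that $H$-points lying over $(t,t')\in P$ correspond bijectively to homomorphisms $\cF_t\to\cF'_{t'}$. The kernel, cokernel and image of $\Phi$ are coherent on $X_H$; applying the flattening stratification to each and taking a common refinement, I obtain a scheme $H'$ of finite type over $S$ over which all three become flat. Since every kernel, cokernel and image of a homomorphism $\cE\to\cE'$ with $\cE\in\cS$, $\cE'\in\cS'$ is realized as a fiber of one of these three universal sheaves, all three sets are bounded.

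The only nontrivial inputs are the representability of the relative Hom functor and the existence of flattening stratifications for coherent sheaves on proper morphisms; both are theorems of Grothendieck and I would cite them rather than reprove them. The step where I expect readers to pause is part (1), where one has to verify that the relative Hom scheme has the right universal property to capture \emph{all} homomorphisms between pairs of fibers (not merely those coming from global maps over $P$); this is immediate from the functorial definition of $H$, but is the only place where the argument is more than bookkeeping.
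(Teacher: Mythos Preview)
The paper does not prove this proposition at all; it simply recalls it as a result of Grothendieck and cites \cite[TDTE IX, 221-03 and 221-04]{FGA}. So you are supplying an argument where the paper supplies none, and your overall strategy (universal family plus flattening stratification) is the standard one and is what Grothendieck does.

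Parts (1) and (2) are essentially correct. For (2), tensor product commutes with arbitrary base change, so the fibers of $\cF_P\otimes\cF'_P$ really are $\cF_t\otimes\cF'_{t'}$, and flattening finishes the job. For (1), a point you use implicitly deserves to be said: once you stratify $H$ so that $\coker\Phi$ is $H$-flat, the two short exact sequences together with flatness of $\cF_H$ and $\cF'_H$ force first $\im\Phi$ and then $\ker\Phi$ to be flat as well; flatness then guarantees that these sequences remain exact on each fiber, so the fibers of $\ker\Phi$, $\im\Phi$, $\coker\Phi$ are indeed the kernel, image and cokernel of the fiberwise maps.

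For (3) there is a genuine gap. Formation of $\cHom$ does \emph{not} commute with base change in general, so after stratifying $P$ to make $\cHom_{\cO_{X_P}}(\cF_P,\cF'_P)$ flat you cannot conclude that its fiber over $p=(t,t')$ is $\cHom(\cF_t,\cF'_{t'})$; flatness of a subsheaf and of the ambient sheaf does not force flatness of the quotient. Your parenthetical justification is also off: $\cHom$ of coherent sheaves is coherent on any locally Noetherian scheme, so neither properness nor flatness is needed for coherence, and neither gives you base change. One fix is to imitate your own argument for (1): on each member of a finite affine cover of $X_P$ choose a presentation $\cO^{a}\to\cO^{b}\to\cF_P\to 0$, so that $\cHom(\cF_P,\cF'_P)$ is locally the kernel of the induced map $(\cF'_P)^{b}\to(\cF'_P)^{a}$ between $P$-flat sheaves; stratify $P$ so that the \emph{cokernel} of this map is flat for every chart, which forces the image and then the kernel to be flat and to commute with base change. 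Alternatively, stratify so that $\cExt^1_{\cO_{X_P}}(\cF_P,\cF'_P)$ is flat and invoke the exchange property for local Ext. Either way the repair is short, but it is needed.
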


Let us fix a normally big $k$-variety $U$ and an open embedding $j:U\hookrightarrow X$ into some small
compactification $X$.

\begin{Proposition}\label{needed-for-independence-of-compactification}
	Let $\cS$ be a bounded set of reflexive sheaves on $U/k$. Then the set $\cS ':=\{ j_*\cE\}_{\cE \in \cS}$
	is also bounded.
\end{Proposition}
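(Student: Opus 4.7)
The plan is to realize $\cS$ by a $T$-flat family on $U\times_k T$, spread it to a $T$-flat family on the projective variety $X\times_k T$, and then recover $\cS'$ fiberwise as the family of double duals.

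By the definition of boundedness, I first choose a $k$-scheme $T$ of finite type together with a $T$-flat coherent sheaf $\cF$ on $U\times_k T$ whose fibers exhaust $\cS$ up to isomorphism. Since $U\times T\hookrightarrow X\times T$ is a quasi-compact open immersion of Noetherian schemes, $\cF$ admits a coherent extension $\tilde\cF$ to $X\times T$. Applying Grothendieck's flattening stratification to $\tilde\cF$ with respect to the projection $X\times T\to T$, I may replace $T$ by a finite disjoint union of locally closed subschemes so that $\tilde\cF$ becomes $T$-flat; then $\{\tilde\cF_t\}_{t\in T}$ is a bounded family of coherent $\cO_X$-modules extending the sheaves of $\cS$.

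The key identification is $\tilde\cF_t^{**}\simeq j_*\cE_t$ for every $t\in T$, where $\cE_t:=\cF_t=\tilde\cF_t|_U$. Indeed, the double dual is reflexive on the normal variety $X$, and the formation of dual sheaves commutes with restriction to the open subset $U$, so $(\tilde\cF_t^{**})|_U=(\cE_t)^{**}=\cE_t$. On the other hand, $j_*\cE_t$ is itself reflexive on $X$ with the same restriction $\cE_t$ to $U$. Since $X\setminus U$ has codimension $\ge 2$, two reflexive $\cO_X$-modules with isomorphic restrictions to $U$ must be globally isomorphic, giving the claim.

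Finally, as $X$ is projective over $k$, Proposition \ref{boundedness-FGA}(3) applies. Taking $\{\tilde\cF_t\}_{t\in T}$ together with the trivially bounded family $\{\cO_X\}$, the family $\{\tilde\cF_t^{*}\}=\{\cHom(\tilde\cF_t,\cO_X)\}$ is bounded; applying the proposition once more yields boundedness of $\{\tilde\cF_t^{**}\}=\{j_*\cE\}_{\cE\in\cS}=\cS'$, as required. The main delicate point is the flattening step, needed to ensure that the fibers of $\tilde\cF$ correctly recover the members of $\cS$; the rest of the argument is essentially formal, resting on Proposition \ref{boundedness-FGA} and the equivalence between reflexive sheaves on $X$ and on its big open subset $U$.
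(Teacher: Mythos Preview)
Your proof is correct and follows essentially the same approach as the paper: both produce a coherent family on $X\times T$ restricting to $\cF$ on $U\times T$ (the paper via the pushforward $(j_T)_*\cF$, you via an arbitrary coherent extension), observe that the fiberwise double dual recovers $j_*\cE_t$, and then invoke Proposition~\ref{boundedness-FGA} twice. One minor remark: the flattening step is not actually the delicate point you make it out to be---the fibers $\tilde\cF_t$ and their restrictions to $U$ are unchanged by passing to the flattening stratification, and (as the paper notes just before its definition of boundedness) exhibiting a non-flat coherent family already suffices to establish boundedness.
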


\begin{proof}
	Let $T$  be a $k$-scheme of finite type and let $\cF$ be a $T$-flat coherent sheaf on 
	$U\times _kT/T$ bounding the set $\cS$. Let us set $j_T=j\times _k \id_T:U\times _kT\hookrightarrow X\times _kT$ and consider $\cG:= (j_T)_*\cF$. Since $(j_t)_*(\cF_t)$ and $\cG_t$ are isomorphic on 
	$U\otimes k(t)$, we have an isomorphism of $\cO_{X\otimes k(t)}$-modules 
	$$((j_t)_*(\cF_t))^{**}\simeq (\cG_t)^{**}.$$
	By Proposition \ref{boundedness-FGA} the set $\{(\cG_t)^{**} \}_{t\in T}$ is bounded.
	
	By assumption if  $\cE\in \cS$ then $j_*\cE$ is reflexive. So if $\cE\in \cS$ is isomorphic to $\cF_t$ for some $t\in T(k)$ then $j_*\cE$ is isomorphic to $(j_*(\cF_t))^{**}$. This shows that the set $\cS'$ is also bounded.
\end{proof}

\begin{Remark}
	The above proposition is false if $\cS$ is just a bounded set of coherent sheaves on $U/k$.
	For example, if $U=\PP^2\backslash \{x\}\hookrightarrow X=\PP^2$ and $\cS=\{\cO_{H\cap U}\}$ for some line
	$H$ passing through $x$ then $j_*(\cO_{H\cap U})$ is not a coherent $\cO_X$-module so  $\cS '=\{j_*(\cO_{H\cap U})\}$ is not a bounded set.
\end{Remark}

\medskip

The following result contained in \cite[Theorem 0.2]{La-Bog-normal} plays an important role in proofs of the main results.

\begin{Theorem} \label{boundedness}
	Let $X$ be a normal $n$-dimensional projective variety defined over an algebraically closed field $k$ of positive characteristic and let $(L_1,...,L_{n-2})$ be a collection of ample line bundles on $X$.
	Let us fix some positive integer $r$, integer $\ch_1$ and some real
	numbers $\ch _2$ and $\mu _{\max}$. 
Then the set of reflexive coherent $\cO_X$-modules $\cE$ such that 
	\begin{enumerate}
		\item $\cE$ has  rank $r$, 
		\item  $\int _X \ch_1 (\cE) L_1...L_{n-1}=\ch_1$,
		\item $\int _X \ch_2 (\cE) L_1...\widehat{L_i}...L_{n-1} \ge \ch_2$ for $i=1,...,n-1$,
		\item $\mu _{\max ,L } (\cE)\le \mu _{\max}$.
	\end{enumerate}
is bounded.
\end{Theorem}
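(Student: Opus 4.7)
My plan is to argue by induction on $n = \dim X$, reducing to the surface case via the Bertini-type Theorem \ref{Bertini's-theorem} and carefully tracking how Chern class intersection numbers behave under passage to the reflexive hull on a hypersurface section. After replacing each $L_i$ by a sufficiently high tensor power (which is harmless for the conclusion) we may assume all polarizations are very ample.

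For the base case $n = 2$ the hypotheses fix $r$ and $c_1(\cE).L_1$, bound $\int_X \ch_2(\cE)$ from below, and bound $\mu_{\max, L}(\cE)$ from above for some ample $L$. The Riemann--Roch inequality for reflexive sheaves on normal surfaces recalled in Section 2 gives a lower bound on $\chi(X, \cE)$, while the Le Potier--Simpson estimate (obtained from Serre duality, $\mu_{\max}$ and Grothendieck's lemma) bounds $h^0(X, \cE)$ from above in terms of $r$ and $\mu_{\max, L}(\cE)$. Uniform Serre vanishing across the family (again a consequence of bounded $\mu_{\max}$) gives a uniform bound on Castelnuovo--Mumford regularity, and boundedness then follows from the Kleiman--Grothendieck boundedness criterion (Proposition \ref{boundedness-FGA} together with its classical counterpart).

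For the inductive step $n \ge 3$, fix $\cE$ in the family and, for $m$ large, choose via Theorem \ref{Bertini's-theorem} a general hypersurface $H \in |L_1^{\otimes m}|$ which is normal, satisfies $H_{reg} = X_{reg} \cap H$, and on which $\tilde \cE := (\imath^*\cE)^{**}$ is a reflexive $\cO_H$-module of rank $r$. A Hodge-index type inequality applied to $c_1(\cE)$ against $(L_1,\ldots,L_{n-1})$ combined with condition (2) bounds $c_1(\cE)^2.L_1\cdots L_{n-2}$, and condition (3) then gives an upper bound on $\int_X \Delta(\cE).L_1 \cdots L_{n-2}$. By Lemma \ref{inequality-on-Delta} this yields a uniform upper bound on $\int_H \Delta(\tilde \cE) \imath^*L_2 \cdots \imath^*L_{n-2}$, and analogous bounds for mixed intersections obtained by permuting the roles of $L_1,\dots,L_{n-1}$; together with the identity $\int_H c_1(\tilde \cE) \imath^*L_2 \cdots \imath^*L_{n-1} = m\,\ch_1$ these translate into uniform bounds on the $\ch_2$-intersections required to apply the inductive hypothesis on $H$. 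A restriction theorem for the maximal slope --- Langer's restriction theorem adapted to reflexive sheaves on normal varieties --- gives an upper bound on $\mu_{\max, \imath^*L}(\tilde \cE)$ in terms of the bounded data. The induction hypothesis then yields boundedness of $\{\tilde \cE\}$ on the varying hypersurfaces, and combining this with uniform Serre vanishing and the exact sequence
$$0 \to \cE \otimes L_1^{-m} \to \cE \to \imath_* \imath^* \cE \to 0$$
gives a uniform bound on $h^0(X, \cE \otimes L_1^{km})$ for $k$ large, yielding boundedness of the original family.

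The main obstacle is the restriction theorem for $\mu_{\max}$ in the normal/reflexive setting. On smooth projective varieties in positive characteristic such a theorem is due to the author, but extending it to this setting requires careful control of the Harder--Narasimhan filtration of $\tilde \cE$ versus $\cE$, with particular attention to the codimension $\ge 2$ locus where reflexivity can degenerate under restriction. This is precisely where Theorem \ref{Bertini's-theorem} and the Chern class intersection theory for reflexive sheaves developed in \cite{La-Inters} become indispensable, and this is the technical heart one expects from \cite{La-Bog-normal}.
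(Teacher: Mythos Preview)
The paper does not prove this theorem at all: it is quoted verbatim from \cite[Theorem~0.2]{La-Bog-normal} and used as a black box throughout. So there is no ``paper's own proof'' to compare against; your proposal is an attempt to reconstruct the argument of the cited reference.

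Your outline is plausible in spirit (induction on dimension, restriction to hypersurfaces, Kleiman-type regularity bounds), and you correctly identify the restriction theorem for $\mu_{\max}$ as the main technical ingredient. But there is a genuine structural gap in the inductive step as written. Your hypersurface $H$ is chosen via Theorem~\ref{Bertini's-theorem} to make $\imath^*\cE$ well-behaved, hence $H$ depends on $\cE$. The induction hypothesis, however, is a boundedness statement on a \emph{fixed} variety, so you cannot directly conclude that $\{\tilde\cE\}$ is bounded when the underlying hypersurfaces vary with $\cE$. One must either (a) show that a single general $H$ works simultaneously for all $\cE$ in the family---which is circular, since that is exactly what boundedness would give---or (b) reorganize the argument around a uniform Castelnuovo--Mumford regularity bound obtained without first fixing $H$. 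A related issue is the conversion of the upper bound on $\int_H\Delta(\tilde\cE)$ from Lemma~\ref{inequality-on-Delta} into a lower bound on $\int_H\ch_2(\tilde\cE)$: this requires a \emph{lower} bound on $c_1(\tilde\cE)^2$ against the restricted polarization, whereas the Hodge index theorem only supplies an upper bound. Controlling this needs further input (e.g.\ passing through the Harder--Narasimhan filtration and bounding each factor), which is precisely the kind of work carried out in \cite{La-Bog-normal}.
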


\section{Numerically flat bundles on quasi-projective varieties}

In this section we study numerically flat bundles on possibly non-proper schemes
and introduce a new fundamental group scheme for quasi-projective varieties $X$
with $\Gamma (X, \cO_X)=k$.

\medskip

Let $X$ be a connected reduced scheme defined over an algebraically closed field $k$.
Let us recall that a vector bundle $\cE$ on $X$ is \emph{nef} if for any smooth projective curve $C$ and a morphism $f:C\to X$, every quotient of $f^*\cE$ has a non-negative degree. 
Nefness can be also defined for an arbitrary coherent $\cO_X$-modules (see, e.g., \cite[Definition 2.4]{GKPT2}). However, if $\cE$ is not locally free then this notion is not well-behaved (see, e.g., Example \ref{Kummer} or \cite[Example 8.6]{GKPT2}).
Nefness is usually defined assuming that $X$ is proper as it can have very bad properties, e.g., if $X$ contains very few proper curves.
But it is relatively good if $X$ can be covered by projective curves, e.g., if $X$ is quasi-projective.

\begin{Definition}
	We say that a vector bundle $\cE$ on $X$ is \emph{numerically flat} if both $\cE$ and $\cE^*$ are nef.
\end{Definition}

The full subcategory of the category of coherent $\cO_X$-modules, whose objects are  numerically flat vector bundles on $X$ is denoted by  $\Vect ^{\nf}(X)$.

\begin{Example}\label{example-large-num-flat}
	Let  $Z$ be a $0$-dimensional locally complete intersection subscheme of $\PP^2$.
	Then there exists a vector bundle  $\cE$ that fits into the short exact sequence
	$$0\to \cO _{\PP^2}\to \cE \to I_{Z}\to 0$$
	(see \cite[Chapter I, proof of Theorem  5.1.1]{OSS} or \cite[Chapter IV, Lemma 4.12]{Na}).
	It follows that if $X=\PP^2\backslash \Supp Z$ then 
	$\cE|_X$ is  numerically flat although $\int_{\PP^2} c_2(\cE)=\deg Z$.
	By Proposition \ref{needed-for-independence-of-compactification}, this shows that the set of numerically flat vector bundles of fixed rank on a complement of a finite number of points in $\PP^2$ is not bounded.
	Note also that $\cE$ above is pseudoeffective in the sense of \cite[Definition 2.1]{HP} and all symmetric powers of $\cE$ are slope semistable.	This shows that in \cite[Theorem 1.1]{HP} one cannot relax the assumption on stability of symmetric powers.
\end{Example}

\medskip

For the definition of rigid, tensor and Tannakian categories we refer the reader to \cite{DM}.

\begin{Proposition}\label{Nori}
Assume that $X$ can be covered by projective curves.
Then $\Vect ^{\nf}(X)$ is a rigid $k$-linear abelian tensor category.
\end{Proposition}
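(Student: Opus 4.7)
Since $\Vect^{\nf}(X)$ is a full subcategory of the $k$-linear category $\Vect(X)$, it inherits the $k$-linear structure, the symmetric monoidal product, the duality $\cE\mapsto\cE^*$ and the canonical evaluation/coevaluation maps. What remains is to show that numerical flatness is preserved under the relevant operations and that $\Vect^{\nf}(X)$ is closed under kernels and cokernels computed in $\Vect(X)$. Preservation under duals is built into the definition; preservation under $\otimes$ follows by testing nefness on smooth projective curves $g\colon C\to X$ (valid by the covering hypothesis), since the pullback of a numerically flat bundle to $C$ is semistable of degree $0$ and this class is closed under $\otimes$, duals and extensions on a curve.

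The substance is abelianness. Given $f\colon\cE\to\cF$ in $\Vect^{\nf}(X)$, the plan is first to prove that $f$ has constant rank $s$ on $X$, where $s$ is its generic rank. For any non-constant $g\colon C\to X$ from a smooth projective curve, both $g^*\cE$ and $g^*\cF$ are semistable of degree $0$; the image of $g^*f$ has degree $\le 0$ as a subsheaf of the semistable target and degree $\ge 0$ as a quotient of the semistable source, hence is saturated in $g^*\cF$, and $g^*f$ has constant rank on $C$. Equivalently, $\wedge^s f$ defines a section of the numerically flat bundle $\cH:=(\wedge^s\cE)^*\otimes\wedge^s\cF$ whose restriction to any such $C$ is either identically zero or nowhere vanishing (a nonzero section of a semistable degree-$0$ bundle on a curve saturates to a trivial line subbundle, so is a nowhere-vanishing trivialization). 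The vanishing locus $V$ of this section is therefore a closed subset of $X$ such that every projective curve in $X$ is either entirely contained in $V$ or entirely disjoint from $V$. Using that $X$ is connected and covered by projective curves, a chain-of-curves argument forces $V$ to be either empty or all of $X$; since $\wedge^s f$ is nonzero at the generic point, $V=\emptyset$. Hence $\rk_x f\ge s$ everywhere, and combined with $\rk_x f\le s$ this gives the constant value $s$.

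With $f$ of constant rank $s$, a standard argument shows that $\ker f$, $\im f$ and $\coker f$ are vector bundles on $X$ whose formation commutes with pullback. On any projective curve $g\colon C\to X$ they restrict to the analogous objects for $g^*f$, which are semistable of degree $0$ and hence numerically flat. Since nefness is detected on projective curves, $\ker f$, $\im f$ and $\coker f$ are numerically flat on $X$, proving abelianness. Rigidity of the resulting abelian tensor category is then standard, using closure under $(\cdot)^*$, the evaluation $\cE\otimes\cE^*\to\cO_X$ and coevaluation $\cO_X\to\cE^*\otimes\cE$, and $\cE^{**}=\cE$ for vector bundles. The main obstacle is the constant-rank step: everything else is either formal or classical on curves, but propagating the curve-level dichotomy for the section $\wedge^s f$ to a global nonvanishing statement on $X$ requires an essential use of the covering hypothesis.
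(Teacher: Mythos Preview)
Your argument is correct and follows essentially the same route as the paper's proof: reduce to curves via the covering hypothesis, use that pullbacks are semistable of degree $0$ to get constant rank on each curve, then propagate constant rank across $X$ by connectivity, concluding that kernel, image and cokernel are numerically flat vector bundles. The paper phrases the propagation step more tersely (``any two $k$-points of $X$ are contained in an irreducible curve''), while you package it via the vanishing locus of the section $\wedge^s f$ of the numerically flat bundle $(\wedge^s\cE)^*\otimes\wedge^s\cF$ together with a chain-of-curves argument; this is a harmless reformulation of the same idea, arguably a bit more explicit about how the covering hypothesis is used.
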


\begin{proof}
The beginning of the argument is the same as that in proof of \cite[Lemma 3.6]{No}.
Let  $\cE_1$ and $\cE_2$ be numerically flat vector bundles on $X$ and let 
	$\varphi: \cE_1\to \cE_2$ be an $\cO_X$-linear morphism. 
Let $C$ be any smooth projective curve  with  a finite morphism $f:C\to X$. Since 
$f^*\cE_1$ and $f^*\cE_2$ are semistable vector bundles of degree zero, the map	$\varphi: \cE_1\to \cE_2$
has constant rank at every point $x\in C$ and its kernel, image and cokernel are semistable of degree $0$.
  Since any two $k$-points of $X$ are contained in an irreducible curve, the rank of $\varphi (x): \cE_1 (x)\to \cE_2(x)$ is constant for all $x\in X$. Therefore
$\ker \varphi$, $\im \varphi $ and $\coker \varphi $ are vector bundles and, by the above, they are also numerically flat. 
Tensor product of nef bundles is nef, so tensor product of numerically flat bundles is also numerically flat.
Also dual of a numerically flat vector bundle is numerically flat. 
This shows that $\Vect ^{\nf}(X)$ is a rigid abelian $k$-linear tensor category. 
\end{proof}

\medskip

If $H^0(X, \cO_X)=k$ then every $k$-point $x$ of $X$ gives a fiber functor $T_x:\Vect ^{\nf}(X) \to k\operatorname{-}\Vect$ defined by sending $\cE$ to $\cE\otimes k(x)$. Then the above proposition implies that $(\Vect ^{\nf}(X), \otimes, T_x, \cO_X)$ is a neutral Tannaka category.

\begin{Definition}
	Assume that $X$ can be covered by projective curves and we have $H^0(X, \cO_X)=k$ (e.g., $X$ is normally big).
	The affine $k$-group scheme Tannaka dual to  $(\Vect ^{\nf}(X), \otimes, T_x, \cO_X)$ is called the \emph{large S-fundamental group scheme} of $X$ and denoted by $\pi_1^{LS}(X, x)$.
\end{Definition}

Note that Example \ref{example-large-num-flat} shows that if $U\subsetneq V\subsetneq \PP^2$ are big open subsets and $x\in U(k)$ then the canonical maps
$\pi_1^{LS}(U, x)\to \pi_1^{LS}(V, x)$ and $ \pi_1^{LS}(V, x) \to \pi_1^{LS}(\PP^2, x)=1$ are not isomorphisms. On the other hand, if $U\subset \PP^2_{\CC}$ is a big open subset then $\pi_1^{\rm top}(U^{\an})\simeq \pi_1^{\rm top}((\PP^2_{\CC})^{\an})=1$. This shows that even $\pi_1^{LS}(X, x)$ of a smooth complex quasi-pro\-jective variety cannot be recovered from its topological fundamental group. Note also that representations of $\pi_1^{LS}(X, x)$ of fixed rank can correspond to an unbounded set of vector bundles (see Example \ref{example-large-num-flat}). 
The author does not know any other Tannakian  group scheme associated to a category of vector bundles on 
a variety that would have this property.

\medskip

Let $X$ be a normal variety defined over an algebraically closed field $k$.
Let $\cE$ be a vector bundle on $X$ and let $D$ be a Cartier $\QQ$-divisor. Let $\pi: \PP(\cE)\to X$ denote the canonical projection and let $\xi_{\cE}=c_1(\cO_{\PP(E)}(1))$.
Let us recall that a $\QQ$-twisted vector bundle $\cE\langle D\rangle$ is called \emph{nef} if $\xi_{\cE}+\pi^*D$ is nef (see \cite[Section 6.2]{Laz}).

\begin{Definition} 
	We say that a rank $r$ vector bundle $\cE$ is \emph{projectively n-flat (projectively numrically flat)} if $\cE\langle-\frac{1}{r}c_1(\cE)\rangle$ is nef.
\end{Definition}

\begin{Remark} 
In case of smooth complex projective varieties projectively n-flat bundles were considered by N. Nakayama  in \cite[Chapter IV, Theorem 4.1]{Na}. Note that Nakayama uses a ``wrong'' notation as his  projectively flat bundles  correspond  to projective unitary representations of the topological fundamental group and not to all projective representations.
\end{Remark}

\medskip

By definition, if $X$ can be covered by projective curves and $H^0(X, \cO_X)=k$, then any 
numerically flat rank $r$ vector bundle on $X$ gives rise to a representation 
$\pi_1^{LS}(X, x)\to \GL _r(k)$ and to any such representation one can associate a numerically flat rank $r$ vector bundle. Similarly, any projectively n-flat rank $r$ vector bundle on $X$ gives rise to a projective representation  $\pi_1^{LS}(X, x)\to \PGL _r(k)$. 

\medskip

The following lemma is well-known (see, e.g., \cite[Proposition 7.1]{Mo}).

\begin{Lemma} \label{Q-nef}
	Let $\cE$ be a rank $r$ vector bundle on $X$.
	\begin{enumerate}
		\item 
		$\cE$ is projectively n-flat if and only if for every smooth projective curve $C$ and  a finite morphism $f:C\to X$, the bundle $f^*\cE$ is semistable.
		\item 
		$\cE$ is numerically flat if and only if for every smooth projective curve $C$ and  a finite morphism $f:C\to X$, the bundle $f^*\cE$ is semistable of degree $0$.
	\end{enumerate}
\end{Lemma}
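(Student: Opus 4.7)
The strategy is to reduce both statements to a curve-theoretic criterion for nefness. On a smooth projective curve, a $\QQ$-twisted vector bundle $V\langle D\rangle$ is nef iff $\mu_{\min}(h^*V)+\deg h^*D\ge 0$ for every finite cover $h:\tilde C\to C$; in particular, $V$ is nef iff $\mu_{\min}(h^*V)\ge 0$ for all such $h$. The other essential reduction is the observation that any morphism $g: D\to X$ from a smooth projective curve $D$ is either constant (in which case $g^*\cE$ is trivial) or non-constant; in the latter case $g$ is proper and quasi-finite, hence finite in the scheme-theoretic sense. So the nefness conditions on $\cE$ over $X$ can be tested via the pullbacks $f^*\cE$ for finite morphisms $f:C\to X$.

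For the forward directions, assume $\cE$ is numerically flat (part (2)) or projectively n-flat (part (1)). Pullback of a nef (resp.\ nef $\QQ$-twisted) bundle via any $f:C\to X$ is nef. In part (2), applying the curve criterion to $f^*\cE$ and $(f^*\cE)^*=f^*\cE^*$ yields $\mu_{\min}(f^*\cE)\ge 0$ and $\mu_{\max}(f^*\cE)=-\mu_{\min}(f^*\cE^*)\le 0$, forcing $f^*\cE$ to be semistable of slope zero, hence degree zero. In part (1), the $\QQ$-twisted nefness of $f^*\cE\langle -f^*c_1(\cE)/r\rangle$ on $C$ translates directly into $\mu_{\min}(f^*\cE)\ge\mu(f^*\cE)$, equivalent to semistability of $f^*\cE$.

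For the backward directions, suppose the curve-level hypothesis holds. Given $g:D\to X$, if $g$ is constant the pullback is trivial and the required conditions hold; otherwise $g$ is finite by the dichotomy above, so by hypothesis $g^*\cE$ is semistable of degree zero (resp.\ semistable). In part (2), $\mu_{\min}(g^*\cE)=0$, so every quotient of $g^*\cE$ has non-negative degree, proving nefness of $\cE$; the identical argument for $\cE^*$ gives numerical flatness. In part (1), semistability of $g^*\cE$ yields $\mu_{\min}(g^*\cE)\ge\mu(g^*\cE)$, which is exactly the curve criterion for nefness of the pulled-back $\QQ$-twisted bundle $\cE\langle -c_1(\cE)/r\rangle$.

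The main subtlety, present in positive characteristic, is that semistability of a single pullback $f^*\cE$ on $C$ does not automatically persist under further finite covers $h:\tilde C\to C$ (Frobenius can destabilize). This is bypassed because the hypothesis is imposed for \emph{every} finite morphism to $X$ simultaneously, including every composition $f\circ h:\tilde C\to X$; consequently, the strict curve-level inequalities required by the nef criterion for $V\langle D\rangle$ are all encoded in the hypothesis, so ``semistable (resp.\ semistable of degree zero) for all finite $f:C\to X$'' actually captures strong semistability after all finite pullbacks, which is exactly what the nef property needs.
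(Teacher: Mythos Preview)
Your argument is correct and is essentially the standard one. The paper itself does not supply a proof of this lemma, stating only that it is well known and referring to \cite[Proposition 7.1]{Mo}; your write-up reproduces exactly the expected route (reduce nefness of $\xi_{\cE}+\pi^*D$ on $\PP(\cE)$ to curve tests, use that non-constant maps from smooth projective curves are finite, and translate the curve criterion into the inequality $\mu_{\min}(g^*\cE)\ge -\deg g^*D$). Your final paragraph correctly isolates the one point where positive characteristic could cause trouble and explains why the hypothesis ``for every finite $f:C\to X$'' absorbs it.
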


Taking the above lemma into account, one can also define projectively n-flat vector bundles on non-normal schemes. This lemma implies the following well-known corollary (see \cite[Theorem 3.10]{FL}).

\begin{Corollary}	\label{equivalence:proj-num-flat}
Let $\cE$ be a  vector bundle on $X$. Then the following conditions are equivalent:
\begin{enumerate}
	\item 	$\cE$ is projectively n-flat, 	
	\item $\End \cE $ is numerically flat,
	\item $\End \cE $ is nef.
\end{enumerate}
\end{Corollary}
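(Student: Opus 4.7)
The plan is to prove the equivalence by observing first that (2) $\Leftrightarrow$ (3) is essentially formal, then reducing (1) $\Leftrightarrow$ (3) to a criterion on smooth projective curves via Lemma \ref{Q-nef}.

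For the easy equivalence: since $(\End \cE)^{*}\simeq \cE^{*}\otimes \cE\simeq \End \cE$, a bundle of the form $\End \cE$ is nef iff both it and its dual are nef, which is precisely the definition of numerical flatness. This takes care of (2) $\Leftrightarrow$ (3).

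For (1) $\Rightarrow$ (3): I would check nefness of $\End \cE$ curve by curve. Given $f: C\to X$ from a smooth projective curve, Lemma \ref{Q-nef}(1) applied to the composition $f\circ F_{C}^{n}:C\to X$ (which is again a finite morphism from a smooth projective curve) shows that $(F_{C}^{n})^{*}f^{*}\cE$ is semistable for every $n\ge 0$, i.e., $f^{*}\cE$ is strongly semistable on $C$. Since tensor products of strongly semistable bundles on a smooth projective curve are strongly semistable (e.g.\ by Ramanan--Ramanathan), $\End(f^{*}\cE)=f^{*}\cE\otimes (f^{*}\cE)^{*}$ is strongly semistable of degree $0$, hence numerically flat. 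Pulling this back through all finite $f$ shows that $f^{*}\End \cE$ is nef for every such $f$, i.e., $\End \cE$ is nef.

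For (3) $\Rightarrow$ (1), I plan to verify the criterion of Lemma \ref{Q-nef}(1) directly. Suppose $\End \cE$ is nef but there is some finite $f:C\to X$ with $f^{*}\cE$ not semistable. Let $\cF\subset f^{*}\cE$ be a destabilizing subbundle of rank $s$ and slope $\mu(\cF)>\mu(f^{*}\cE)$. Then $\cF\otimes (f^{*}\cE)^{*}\hookrightarrow f^{*}\cE\otimes (f^{*}\cE)^{*}=f^{*}\End \cE$ is a subbundle, and a routine computation of ranks and degrees gives
\[
\deg\bigl(f^{*}\End \cE/(\cF\otimes (f^{*}\cE)^{*})\bigr)=\rk(f^{*}\cE)\deg \cF-\rk(\cF)\deg(f^{*}\cE)\cdot(-1)<0,
\]
(the signs worked out so that the quotient has strictly negative degree because $\cF$ destabilizes). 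This contradicts nefness of $f^{*}\End \cE$, so $f^{*}\cE$ must be semistable for every $f$, and Lemma \ref{Q-nef}(1) then yields projective n-flatness.

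The main conceptual content lies in the (1) $\Rightarrow$ (3) direction, where one needs to upgrade the semistability guaranteed by Lemma \ref{Q-nef} to strong semistability in positive characteristic in order to control the tensor product $\cE\otimes \cE^{*}$; the Frobenius trick using the composition $f\circ F_{C}^{n}$ is what makes this possible. The other direction is a clean elementary slope computation.
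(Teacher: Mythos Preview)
Your argument is correct and follows the same route as the paper: for $(1)\Rightarrow(3)$ you use strong semistability of $f^*\cE$ on curves (obtained via the Frobenius composition $f\circ F_C^n$, exactly the step the paper leaves implicit) to conclude that $\End f^*\cE$ is semistable of degree $0$, and for $(3)\Rightarrow(1)$ you exhibit a negative-degree quotient of $f^*\End\cE$ from a destabilizing $\cF\subset f^*\cE$ --- the paper packages this same computation as the chain $0\ge\mu_{\max}(f^*\End\cE)\ge\mu_{\max}(f^*\cE)+\mu_{\max}((f^*\cE)^*)=\mu_{\max}(f^*\cE)-\mu_{\min}(f^*\cE)\ge 0$. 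One cosmetic point: your displayed formula for the quotient degree is garbled; it should read $\deg\bigl(f^*\End\cE/(\cF\otimes(f^*\cE)^*)\bigr)=\rk(\cF)\deg(f^*\cE)-\rk(f^*\cE)\deg\cF<0$, which is indeed negative precisely because $\cF$ destabilizes.
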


\begin{proof}
If $\cE$ is  projectively n-flat then for every smooth projective curve $C$ and  a finite morphism $f:C\to X$, the bundle $f^*\cE$ is strongly semistable. So $f^*\End \cE=\End f^*\cE$ is strongly semistable with trivial determinant. Hence $\End \cE $ is numerically flat, proving $(1)\Rightarrow (2)$.
Implication $(2)\Rightarrow (3)$ is clear.
If  $\End \cE $ is nef then for every smooth projective curve $C$ and  a finite morphism $f:C\to X$ we have
\begin{align*}
0\ge \mu _{\max} (f^*\End \cE)= \mu_{\max} (f^*\cE \otimes (f^*\cE)^*)&\ge \mu_{\max} (f^*\cE)+ \mu_{\max}((f^*\cE)^*)\\
&= \mu_{\max} (f^*\cE)-\mu_{\min} (f^*\cE)\ge 0.	
\end{align*}
Therefore  $f^*\cE$ is semistable and hence $\cE$ is projectively n-flat. 
\end{proof}

\section{Strongly numerically flat bundles on quasi-projective varieties}

 In this section we define  strongly numerically flat bundles and prove that they extend to bundles on the regular locus of a small compactification. This result implies Theorem \ref{main-num-flat}.
 We also introduce and study  the S-fundamental group scheme of normal quasi-projective varieties that admit 
 a small compactification.

\subsection{Definition of strongly numerically flat bundle}

Let $X$ be a normal variety defined over an algebraically closed field $k$.
Let us note that if $S$ is a normally big surface then by Zariski's main theorem, its small compactification $j:S\hookrightarrow \bar S$ is uniquely determined.

\begin{Definition} \label{Def:strongly-num-flat}
A  vector bundle $\cE$ on $X$ is called \emph{strongly projectively n-flat} if it is projectively n-flat 
	and for every normally big surface $S$ and for any finite morphism $g: S\to X$ we have
	$$\int_{\bar S}\Delta(j_*(g^*\cE))=0,$$	
where  $j:S\hookrightarrow \bar S$ is the small compactification of $S$.
 $\cE$ is called \emph{strongly numerically flat} if it is numerically flat and  for every normally big surface $S$  and any finite morphism $g: S\to X$ we have
	$$\int_{\bar S}\ch_2(j_*(g^*\cE))=0,$$
where  $j:S\hookrightarrow \bar S$ is the small compactification of $S$.
\end{Definition}

The above definition makes sense in any characteristic but it is desirable to have better characterizations 
of strongly projectively (numerically) flat vector bundles.
In general, we expect that  $\cE$ is strongly projectively n-flat if and only if $\End \cE$ is strongly numerically flat (see Remark \ref{proj-flat-relation-to-num-flat}). So it is sufficient to find such a characterization for  strongly numerically flat vector bundles. The most interesting case is when $X$ can be embedded  as a big open subset into a normal projective variety (and one can even restrict to the surface case). In positive characteristic the required characterization is provided by conditions (4) and (5) in Corollary \ref{characterization-num-flat}. In general, if $X$ is normally big, one can hope that $\cE$ is strongly numerically flat if and only if both $\cE$ and $\cE^*$ are weakly positive over $X$ in the sense of Viehweg. In the next subsection we prove this equivalence in case $X$ has a smooth projective compactification.
First let us note the following easy lemma:

\begin{Lemma}\label{reformulation-strongly-num-flat}
Let $X$ be a normal projective variety and let $j:U\hookrightarrow X$ be a big open subset of $X$. Then a vector bundle $\cE$ on $U$ is  strongly numerically flat if and only if 
$\cE$ is numerically flat and for every normal projective surface $\bar S$ and  any finite map $\bar g: \bar S\to X$ we have
	$$\int_{\bar S}\ch_2(\bar g^{[*]}\cF )=0,$$
where $\cF:=j_*\cE$.
\end{Lemma}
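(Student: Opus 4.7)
The plan is to reduce both implications to one structural identification: whenever $\bar S$ is a normal projective surface, $\bar g:\bar S\to X$ is a finite morphism, and $S\subseteq\bar S$ is a big open subset with $\bar g(S)\subseteq U$, the reflexive sheaves $j'_*(g^*\cE)$ and $\bar g^{[*]}\cF$ coincide on $\bar S$, where $j':S\hookrightarrow\bar S$ and $g:=\bar g|_S$. Indeed, both are reflexive on $\bar S$; they agree on the big open $S$ because $(\bar g^*\cF)|_S=g^*(\cF|_U)=g^*\cE$ is already locally free; and reflexive sheaves are determined by their restriction to big opens. Under this identification the two Chern-character conditions coincide, so the lemma reduces to translating between the two sides of the data.

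For $(\Rightarrow)$, I take any normal projective surface $\bar S$ with finite $\bar g:\bar S\to X$ and set $S:=\bar g^{-1}(U)\cap\bar S_{reg}$. Finiteness of $\bar g$ and $\codim(X\setminus U)\ge 2$ force $\bar g^{-1}(X\setminus U)$ to have codim $\ge 2$ in $\bar S$; the singular locus of the normal surface $\bar S$ is finite; so $\bar S\setminus S$ has codim $\ge 2$ and $S$ is smooth, hence normally big. By uniqueness of small compactifications (recalled before Definition \ref{Def:strongly-num-flat}), $\bar S$ is \emph{the} small compactification of $S$, and $g:=\bar g|_S$ is finite. Definition \ref{Def:strongly-num-flat} then gives $\int_{\bar S}\ch_2(j'_*(g^*\cE))=0$, and the identification rewrites this as $\int_{\bar S}\ch_2(\bar g^{[*]}\cF)=0$.

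For $(\Leftarrow)$, I take a normally big $S$ with small compactification $\bar S$ and finite $g:S\to U$, and must produce a finite extension $\bar g:\bar S\to X$. The most natural construction is to let $\bar g:\bar S'\to X$ be the normalization of $X$ in $K(S)$: by construction this is a finite morphism from a normal projective surface, and by the universal property of normalization applied to the smooth $S$ and its finite map $g$ it contains $S$ as an open subset with $\bar g|_S=g$. Uniqueness of small compactifications identifies $\bar S'$ with $\bar S$ once one checks that $\bar S'\setminus S$ has codim $\ge 2$; granting this, the hypothesis applies to $(\bar S,\bar g)$ and the identification yields $\int_{\bar S}\ch_2(j'_*(g^*\cE))=0$.

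The main technical step is exactly the codimension check: suppose $D\subseteq\bar S'\setminus S$ were a prime divisor; then the finite morphism $\bar g$ sends $D$ to a curve $\bar g(D)\subseteq X$, and since $\bar g^{-1}(X\setminus U)$ has codim $\ge 2$ in $\bar S'$, this curve must meet $U$. At a generic point $\eta$ of $\bar g(D)\cap U$ the normal variety $X$ is regular (regularity in codimension one), so by miracle flatness both $\bar g$ and its restriction $g=\bar g|_S$ are flat at $\eta$, with fiber length equal to the common generic degree $[K(S):K(X)]$. But the fiber of $\bar g$ over $\eta$ strictly contains the fiber of $g$, since it picks up the generic point of $D$ which lies outside $S$, contradicting the constant fiber length coming from flatness.
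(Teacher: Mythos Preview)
Your overall strategy and the structural identification $j'_*(g^*\cE)=\bar g^{[*]}\cF$ match the paper's approach, and your $(\Rightarrow)$ direction is essentially the paper's argument. The gap is in $(\Leftarrow)$ when $\dim X>2$.

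The construction ``normalization of $X$ in $K(S)$'' presupposes a field extension $K(X)\hookrightarrow K(S)$, which exists only when the composite $S\to U\hookrightarrow X$ is dominant, i.e.\ when $\dim X=2$. For $n:=\dim X>2$ the image $g(S)$ is a proper closed surface in $U$, and one must instead work with the closure $T:=\overline{g(S)}\subseteq X$ and normalize $T$ (or its normalization $\bar T$) in $K(S)$, as the paper does. With that corrected $\bar S'$ one checks that $\bar g^{-1}(U)=S$ on the nose (over $T\cap U=g(S)$ the normalization in $K(S)$ recovers $S$, since $S$ is normal and $S\to g(S)$ is finite), so your asserted fact ``$\bar g^{-1}(X\setminus U)$ has codimension $\ge 2$ in $\bar S'$'' is precisely the statement $\bar S'\setminus S$ is finite that you set out to prove---the argument is circular. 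Your miracle-flatness step also breaks: the generic point $\eta$ of the curve $\bar g(D)$ sits in codimension $n-1\ge 2$ in $X$, so normality of $X$ says nothing about regularity at $\eta$, and there is no ``generic degree $[K(S):K(X)]$'' to speak of. (The same slip---expecting a finite map $\bar g:\bar S\to X$ to pull back codimension-$2$ loci in $X$ to codimension-$2$ loci in $\bar S$---also appears in your $(\Rightarrow)$ paragraph; this is only automatic when $\bar g$ is dominant.)

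The paper sidesteps the flatness argument entirely: having produced the normalization of $T$ in $K(S)$ as a normal projective surface containing $S$, it invokes Zariski's main theorem (uniqueness of the small compactification of the normally big $S$) to identify it with the given $\bar S$, yielding the finite extension $\bar g:\bar S\to\bar T\to X$.
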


\begin{proof}
If $\cE$ is strongly numerically flat  then we set $S:=\bar g^{-1}(U)$, $g:=\bar g|_S$ and
$\bar j:S\hookrightarrow \bar S$. Then 
	$$\bar g^{[*]}\cF =  \bar j_*(g^*\cE)$$ 
as both sheaves are reflexive and they coincide on $S$, so we get the required condition.
To see the other implication we fix a normally big surface $S$ with the small compactification $\bar j:S\hookrightarrow \bar S$. Let  $g: S\to U$ be a finite morphism. Let $\bar T$ be the normalization of the closure of $g(S)$ in $X$. Then $g$ induces a finite morphism $S\to \bar T$ and this morphism extends to a morphism $\bar S\to \bar T$. This follows from the fact that $g$ extends to the normalization of $T$ in $S$ and by Zariski's main theorem, the small compactification of $S$ is uniquely determined. Summing up, we have a well-defined finite morphism $\bar g: \bar S\to X$ extending $g$. Since as before $\bar g^*\cF= \bar j_*(g^*\cE)$, we see that $\int_{\bar S}\ch_2(j_*(g^*\cE))=0$.
\end{proof}

\subsection{Relation to weakly positive sheaves}\label{relation-to-weakly-positive}

Let $X$ be a normal quasi-projective  scheme over an algebraically closed field $k$. 
In \cite{Vi1} and \cite[2.3]{Vi2} E. Viehweg introduced and studied the following notion.

\begin{Definition}
	Let $\cE$ be a coherent torsion free $\cO_X$-module and let $V_{\cE}$ be the largest open subset of $X$ on which $\cE$ is locally free.	We say that $\cE$ is \emph{weakly positive over a Zariski dense open subset $U\subset V_{\cE}$} if for every ample line bundle $H$ on $X$ and 	every positive integer $a$ there exists  a positive integer $b $ such that $\Sym ^{[ab ]}\cE \otimes _{\cO_X}H^{\otimes b}$ is globally generated over $U$.
\end{Definition}

Let us recall that if $X$ is projective then $\cE$ is nef if and only if it is weakly positive over $X$ 
(see \cite[Remarks 2.12]{Vi2}).
Weak positivity was introduced to replace nefness on quasi-projective varieties, which is weaker (!) than weak positivity.

\begin{Lemma}\label{surface-weak-positivity}
	Let $X$ be a normal projective surface and let $U\subset X$ be a big open subset.
	Let $\cE$ be a vector bundle on $X$. Then $\cE$ is weakly positive over $U$ if and only if $\cE$ is nef (on $X$).
\end{Lemma}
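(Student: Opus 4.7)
The easy direction ``nef $\Rightarrow$ weakly positive over $U$'' is immediate from the fact recalled just before the statement: on the projective surface $X$, nefness of the vector bundle $\cE$ is equivalent to weak positivity over all of $X$, and global generation over $X$ trivially restricts to global generation over the open subset $U$.

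For the converse, I would verify the curve-theoretic definition of nefness directly. Given any morphism $f\colon C\to X$ from a smooth projective curve and any vector bundle quotient $\cQ$ of $f^*\cE$ (say of rank $r$ and degree $d$), I must show $d\ge 0$. The case of constant $f$ is trivial, so assume $f(C)$ is a curve. Here the surface hypothesis enters: since $\dim X=2$ and $U$ is big, the complement $X\setminus U$ is $0$-dimensional and so cannot contain $f(C)$; thus $f^{-1}(U)\subset C$ is a nonempty open set containing some point $p$.

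Now I apply weak positivity over $U$ to an ample line bundle $H$ on $X$: for each integer $a\ge 1$, choose $b=b(a)\ge 1$ so that $\Sym^{ab}\cE\otimes H^{\otimes b}$ is globally generated on $U$. Pulling back to $C$ and composing with the natural surjection $\Sym^{ab}(f^*\cE)\otimes f^*H^{\otimes b}\twoheadrightarrow\Sym^{ab}\cQ\otimes f^*H^{\otimes b}$, the latter bundle on $C$ is globally generated at $p$. I would then invoke the standard elementary fact that a rank-$N$ vector bundle $\cF$ on a smooth projective curve which is globally generated at a single point has $\deg\cF\ge 0$: choosing $N$ global sections whose values at $p$ form a basis of $\cF(p)$ produces a morphism $\cO_C^{\,N}\to\cF$ whose determinant is a nonzero global section of $\det\cF$. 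Combining this with the standard identity $\det\Sym^n\cQ=(\det\cQ)^{\binom{n+r-1}{r}}$ and a short rank-degree computation yields
\[
0\;\le\;\deg\bigl(\Sym^{ab}\cQ\otimes f^*H^{\otimes b}\bigr)\;=\;bN\Bigl(\tfrac{a}{r}\,d+h\Bigr),
\]
where $N=\binom{ab+r-1}{r-1}$ and $h=\deg f^*H$. Dividing by $bN>0$ gives $ad/r+h\ge 0$ for every $a\ge 1$, so letting $a\to\infty$ forces $d\ge 0$, as required.

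The only non-routine ingredient is the existence of the point $p\in f^{-1}(U)$, which is precisely where the surface assumption is essential: a curve in a normal projective surface cannot be contained in the $0$-dimensional set $X\setminus U$. Everything else is the standard symmetric-power trick from Viehweg's theory, and I do not anticipate any further obstacle.
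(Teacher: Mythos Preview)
Your proposal is correct and follows essentially the same route as the paper's proof: both directions are handled identically, and for the nontrivial implication you use weak positivity to get global generation of $\Sym^{ab}\cE\otimes H^{\otimes b}$ over $U$, pull back along $f\colon C\to X$, observe that $f^{-1}(U)\ne\emptyset$ because $X\setminus U$ is $0$-dimensional, and then let $a\to\infty$ to force non-negative degree of any quotient. Your version is somewhat more explicit (the justification that generic generation on a curve implies non-negative degree, and the exact degree computation), but there is no substantive difference in strategy.
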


\begin{proof}
	If $\cE$ is nef then it is weakly positive over $X$, so also weakly positive over $U$. Now assume that $\cE$ is weakly positive over $U$ and let $H$ be an ample line bundle on $X$. Let $C$ be a smooth projective curve and let $f:C\to X$ be a finite morphism. For any positive integer $a$ there exists  a positive integer $b$ such that $\Sym ^{ab}\cE \otimes _{\cO_X}H^{\otimes b}$ is globally generated over $U$. Therefore
	$\Sym ^{ab}f^*\cE \otimes _{\cO_X}f^*H^{\otimes b}$ is globally generated over 
	$f^{-1}(U)\ne \emptyset$. So if $\cF$ is any quotient of $f^*\cE$ then $\Sym ^{ab}\cF \otimes _{\cO_C}f^*H^{\otimes b}$ is also globally generated over $f^{-1}(U)$. Hence $\Sym ^{ab}\cF \otimes _{\cO_C}f^*H^{\otimes b}$ has a non-negative degree.
	 Taking large $a
	$, this implies that $\cF$ has a non-negative degree. 
\end{proof}

\medskip

The following example shows that the above lemma is false if $X$ has dimension $\ge 3$ (even if $X$ is smooth).

\begin{Example}\label{Fulger}
	Let $\cE:=\cO_{\PP^1}^{\oplus 2}\oplus \cO_{\PP^1} (-1)$,  $X=\PP(\cE )$ and  $L=\cO_{\PP(\cE)} (1)$. Let $C$ be the image of the section $\PP^1\to \PP(E)$ corresponding to $\cE\to \cO_{\PP^1} (-1)$. Then $H^0(X, L)= H^0(\PP^1, \cE)$ is $2$-dimensional and 
	the  linear system $|L|$ has a basis consisting of two  surfaces $\PP (\cO_{\PP^1}\oplus \cO_{\PP^1} (-1))$ corresponding to different factors of $\cO_{\PP^1}$ in $\cE$. These surfaces intersect along $C$, so the image of the evaluation map $H^0(X, L)\otimes \cO_X\to L$ is surjective on $U=X\backslash C$. Since the degree of $L$ on $C$ is negative, $L$ is weakly positive over $U$ but it is not nef on $X$.
\end{Example}

\begin{Proposition} 
	Let $X$ be a smooth projective variety  and let $j:U\hookrightarrow X$ be a big open subset.  Then for any  vector bundle
	$\cE$ on $U$ the following conditions are equivalent:
	\begin{enumerate}
		\item  $\cF:=j_*\cE$ is a numerically flat vector bundle.
		\item $\cE$ is strongly numerically flat.
		\item Both $\cE$ and $\cE^*$ are weakly positive over $U$.
		\item Both $\cE$ and $\det \cE^*$ are weakly positive over $U$.
	\end{enumerate}
\end{Proposition}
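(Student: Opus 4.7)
The plan is to establish the cycle $(1)\Rightarrow(3)\Rightarrow(4)\Rightarrow(1)$ together with the equivalence $(1)\Leftrightarrow(2)$, with $(4)\Rightarrow(1)$ carrying the main content. The equivalence $(1)\Leftrightarrow(2)$ is essentially a reformulation of Theorem \ref{main-num-flat} via Lemma \ref{reformulation-strongly-num-flat}: in the direction $(1)\Rightarrow(2)$, numerical flatness of $j_*\cE$ on the smooth projective $X$ implies vanishing of its rational Chern classes (as recalled in the introduction), so $\ch_2(\bar g^{*}(j_*\cE))=0$ for any finite $\bar g:\bar S\to X$ extending a morphism from a normally big surface; in the direction $(2)\Rightarrow(1)$, Theorem \ref{main-num-flat} yields strong slope $H$-semistability of $j_*\cE$ with $\int_{X}\ch_1(j_*\cE)H^{n-1}=\int_{X}\ch_2(j_*\cE)H^{n-2}=0$, and this, combined with reflexivity on the smooth $X$ and Lemma \ref{loc-free-passing-to-graded} applied to generic three-fold slices (with the trivial filtration), forces $j_*\cE$ to be locally free; nefness of $j_*\cE$ and its dual then follows from numerical flatness of $\cE$ on $U$ by testing on curves.

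The step $(1)\Rightarrow(3)$ is standard: a nef vector bundle on a projective variety is weakly positive over the whole variety, so $j_*\cE$ and $j_*(\cE^{*})=(j_*\cE)^{*}$ are both weakly positive over $X$, hence over $U$. The step $(3)\Rightarrow(4)$ follows because $\det\cE^{*}$ is a quotient of $\cE^{*\otimes r}$ via antisymmetrization, and weak positivity passes to tensor powers and quotients.

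The substance lies in $(4)\Rightarrow(1)$. First, $\det\cE$ is weakly positive as a quotient of $\cE^{\otimes r}$, and by assumption so is $\det\cE^{*}$; hence the line bundle $\det j_*\cE$ on $X$ is pseudoeffective with pseudoeffective inverse, forcing $\det j_*\cE\equiv 0$ numerically. Next, reduce to the surface case: cut $X$ with $n-2$ very general very ample hypersurfaces $H_1,\dots,H_{n-2}$ to obtain a smooth projective surface $\bar S\subset X$ with $S:=\bar S\cap U$ big in $\bar S$, so that weak positivity of $\cE|_{S}$ over $S$ is inherited from the weak positivity of $\cE$ over $U$. The reflexive hull $\cF_{\bar S}:=j_{S*}(\cE|_S)$ is then a vector bundle on the smooth surface $\bar S$ which is weakly positive over $S$, and by Lemma \ref{surface-weak-positivity} it is nef on $\bar S$; since $\det\cF_{\bar S}$ is the restriction of $\det j_*\cE$ and hence numerically trivial, $\cF_{\bar S}$ is a numerically flat vector bundle on $\bar S$, whence $\int_{\bar S}\ch_2(\cF_{\bar S})=0$ and $\cF_{\bar S}$ is strongly slope semistable.

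These surface-level outputs assemble into the hypotheses of condition (3) of Theorem \ref{main-num-flat} for $\cE$: the vanishings $\int_{X}c_1(j_*\cE)H^{n-1}=0$ and $\int_{X}\ch_2(j_*\cE)H^{n-2}=0$ follow via the intersection-theoretic formalism recalled above by restricting to generic complete intersections, while strong slope $H$-semistability of $j_*\cE$ follows by further restriction to generic complete-intersection curves inside the numerically flat surfaces $\bar S$ (where $\cF_{\bar S}|_{C}$ is semistable of degree zero), with the same argument applied to Frobenius pullbacks yielding strong semistability. Theorem \ref{main-num-flat} then gives that $\cE$ is strongly numerically flat, and the already-established $(2)\Rightarrow(1)$ closes the cycle. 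The main obstacle is the surface reduction in $(4)\Rightarrow(1)$: verifying that weak positivity of $\cE$ over $U$ genuinely restricts to weak positivity of $\cE|_{S}$ over $S$ on a very general complete-intersection surface (a Bertini-type statement for weak positivity), and then matching the surface-level Chern data with the intersection numbers on $X$ in the sense needed to trigger Theorem \ref{main-num-flat}.
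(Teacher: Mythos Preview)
Your overall route---cut to a general complete-intersection surface, apply Lemma~\ref{surface-weak-positivity}, and read off the vanishings and semistability---is exactly what the paper does. The gap is in your step $(2)\Rightarrow(1)$, and since you route $(4)\Rightarrow(1)$ through it, the whole cycle hinges on this.

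Theorem~\ref{main-num-flat} (with its $X$ equal to your $U$ and its $\bar X$ equal to your $X$) only tells you that $\cE$ is locally free on $U$ and that the Frobenius pullbacks on $U$ form a bounded family; it does \emph{not} assert that $\cF=j_*\cE$ is locally free on all of $X$. Your attempt to bridge this with Lemma~\ref{loc-free-passing-to-graded} ``with the trivial filtration'' is circular: when the filtration is trivial, $\Gr_N\cF=\cF$ and the reflexive hull is $\cF$ itself, so hypothesis~(1) of that lemma (that the reflexive hull is locally free) is precisely the conclusion you are after. Once local freeness of $\cF$ is not established, your final sentence (``nefness of $j_*\cE$ and its dual then follows from numerical flatness of $\cE$ on $U$ by testing on curves'') also fails, because a curve in $X$ need not meet $U$ at a point where $\cF$ is a bundle.

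The paper closes this gap by invoking \cite[Theorem~B]{FL}: on a smooth projective variety a \emph{reflexive} sheaf that is strongly slope $H$-semistable with $\int_X\ch_1\cdot H^{n-1}=\int_X\ch_2\cdot H^{n-2}=0$ is already a numerically flat vector bundle. This single citation handles both $(4)\Rightarrow(1)$ and $(2)\Rightarrow(1)$ directly, without passing through your $(2)$. If you want to stay internal to the paper (in positive characteristic), the correct reference is Corollary~\ref{characterization-num-flat} applied to $\cF$ on the smooth $X$: there $X_{\mathrm{reg}}=X$, so condition~(4) of that corollary gives local freeness of $\cF$ on all of $X$, and condition~(2) gives numerical flatness. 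Either way, the missing ingredient is a local-freeness result for reflexive sheaves on smooth projective varieties under these numerical hypotheses, which neither Theorem~\ref{main-num-flat} nor Lemma~\ref{loc-free-passing-to-graded} supplies in the form you use them.
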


\begin{proof}	
	Implications $(1)\Rightarrow (3)$ and $(1)\Rightarrow (4)$  follow from \cite[Remarks 2.12]{Vi2}.
	If $\cE$ satisfies $(3)$ and $S\in |mH|\cap ...\cap |mH|$ is a general complete intersection surface then by \cite[Lemma 2.15]{Vi2}, $\cE|_S$ and $\cE^*|_S=(\cE|_S)^*$ are weakly positive over a big open subset $U\cap S$ of $S$. By Theorem \ref{Bertini's-theorem},
	$S$ is smooth and $\cF|_S$  is reflexive and hence locally free. So Lemma \ref{surface-weak-positivity} implies that $\cF|_S$ is numerically flat.
	This implies that $\cF$ is strongly slope $H$-semistable and $\int_X\ch_1(\cF)H^{n-1}=\int_X\ch_2(\cF)H^{n-1}=0$.  So $\cF$ is numerically flat by \cite[Theorem B]{FL}.
	The same arguments work also if $\cE$ satisfies $(4)$ showing that  $(4)\Rightarrow (1)$. 
Implication  $(1)\Rightarrow (2)$ follows from Lemma \ref{reformulation-strongly-num-flat}.
If $\cE$ satisfies $(2)$ then restricting  $\cF$ to a general complete intersection surface we can easily get 
$$\int_X\ch_1(\cF)H^{n-1}=\int_X\ch_2(\cF)H^{n-1}=0.$$ Since $\cF$ is reflexive, we get $(1)$ by \cite[Theorem B]{FL}.
\end{proof}
 
 \medskip
 
 \begin{Remark}
 	Assuming $\cF$ is locally free, the above proof works also if $X$ is a normal projective variety.
 \end{Remark}

\subsection{Local freeness for projective flatness}

Let $k$ be an algebraically closed field of characteristic $p$ and 
let $X$ be a normal projective  variety of dimension $n$ defined over $k$.
The following theorem generalizes
\cite[Theorem 4.1]{La-S-fund} to normal varieties and corrects \cite[Proposition 4.5]{La-S-fund}. It also deals with projectively numerically flat bundles.  
A general strategy of proof is similar to that of \cite[Theorem 4.1]{La-S-fund} 
but the proof is more complicated due to the fact that reflexive extensions of numerically flat vector bundles on $X_{reg}$ of the same rank do not need to have the same Hilbert polynomial.

\begin{Theorem}\label{reflexive-Delta-are-loc-free} 
	Assume that $n\ge 2$  and let  $L=(L_1,...,L_{n-1})$ be a  collection of ample line bundles on $X$.
	Let  $\cE$ be a strongly slope $L$-semistable reflexive  sheaf such that 
	$$\int_X \Delta (\cE) L_1...\widehat{L_i}...L_{n-1}=0$$
	for $i=1,...,n-1$. 	Then $\cE|_{X_{reg}}$ is locally free.
	Moreover, if $0= \cE_0\subset \cE_1\subset ...\subset \cE_s=\cE$ is a
	Jordan--H\"older filtration of $\cE$ (with respect to  $L$) then  
	$$\int_X \Delta ((\cE_j/\cE_{j-1})^{**}) L_1...\widehat{L_i}...L_{n-1}= 0$$
	for $i=1,...,n-1$ and $(\cE_j/\cE_{j-1})|_{X_{reg}}$ are also locally free.
\end{Theorem}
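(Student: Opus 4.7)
The plan is induction on $n=\dim X$, preceded by a reduction to the case where $\cE$ is stable.

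\emph{Base case $n=2$.} On the regular surface $X_{reg}$, every reflexive sheaf is locally free by Auslander--Buchsbaum. For the Jordan--H\"older statement, each reflexive subquotient $(\cE_j/\cE_{j-1})^{**}$ is strongly slope-semistable (in fact stable) of the same slope as $\cE$; by the Bogomolov inequality on normal projective surfaces from \cite{La-Bog-normal}, each $\int_X\Delta((\cE_j/\cE_{j-1})^{**})\ge 0$, and the sum is at most $\int_X\Delta(\cE)=0$ (using additivity of $\ch_2$ on short exact sequences of torsion-free sheaves and that the cokernel of $\cE_j/\cE_{j-1}\hookrightarrow (\cE_j/\cE_{j-1})^{**}$ is zero-dimensional of nonnegative length). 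Thus each discriminant vanishes, and Lemma \ref{Chern-classes-filtrations} gives local freeness of each $(\cE_j/\cE_{j-1})|_{X_{reg}}$.

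\emph{Reduction to stable $\cE$ for $n\ge 3$.} Assuming the theorem has been established for stable sheaves in dimension $n$, take a Jordan--H\"older filtration of a general semistable $\cE$. The same Bogomolov-plus-additivity argument, now using the multi-polarization Bogomolov inequality from \cite{La-Bog-normal} paired with the intersection numbers $L_1\cdots\widehat{L_i}\cdots L_{n-1}$, propagates vanishing of all $n-1$ discriminant numbers to each reflexive subquotient. By the stable case, each $(\cE_j/\cE_{j-1})^{**}|_{X_{reg}}$ is locally free; Lemma \ref{Chern-classes-filtrations} then gives local freeness of $\cE|_{X_{reg}}$ and of each $(\cE_j/\cE_{j-1})|_{X_{reg}}$.

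\emph{Inductive step for stable $\cE$, $n\ge 3$.} Pick a general $H\in |L_1^{\otimes m}|$ with $m\gg 0$ via Theorem \ref{Bertini's-theorem}, so that $H$ is normal, $H_{reg}=X_{reg}\cap H$, and $\cF:=(\imath^*\cE)^{**}$ is reflexive on $H$. By the restriction theorem for strongly slope-semistable reflexive sheaves on normal varieties (from \cite{La-Bog-normal}), $\cF$ remains strongly slope-semistable on $H$ with respect to $(\imath^*L_2,\ldots,\imath^*L_{n-1})$. Combining Lemma \ref{inequality-on-Delta} with the Bogomolov inequality on $H$ yields, for $i=2,\ldots,n-1$,
\[
0\;=\;m\int_X\Delta(\cE)\,L_1L_2\cdots\widehat{L_i}\cdots L_{n-1}\;\ge\;\int_H\Delta(\cF)\,\imath^*L_2\cdots\widehat{\imath^*L_i}\cdots\imath^*L_{n-1}\;\ge\;0,
\]
so all these $n-2$ discriminant numbers of $\cF$ on $H$ vanish. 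The inductive hypothesis then gives $\cF|_{H_{reg}}$ locally free.

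\emph{Descent from $H$ to $X$: the main obstacle.} The reflexive $\cE$ is locally free off a closed $B\subset X_{reg}$ of codimension $\ge 3$. Suppose $x\in B$. Using the basepoint version of Theorem \ref{Bertini's-theorem}, I would cut out, step by step, a general complete intersection surface $S$ through $x$ with $x\in S_{reg}$ and $\cE|_S$ torsion-free, using $n-2$ hypersurfaces in suitable Veronese embeddings of $L_1,\ldots,L_{n-2}$. Iterating the hypersurface construction above and tracking the equality case of Lemma \ref{inequality-on-Delta} at each step shows that on $S$ the sheaf $\cF_S:=(\imath_S^*\cE)^{**}$ is locally free on $S_{reg}$ and that the cokernel $\cF_S/\imath_S^*\cE$ has support of codimension $\ge 3$ in $S_{reg}$---hence is empty, since $\dim S_{reg}=2$. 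Thus $\imath_S^*\cE$ is locally free at $x$, and by the standard lifting of freeness along a regular sequence in the regular local ring $\cO_{X,x}$, $\cE$ is locally free at $x$, contradicting $x\in B$. The delicate point throughout this descent is verifying that the equality case of Lemma \ref{inequality-on-Delta} propagates correctly at every hypersurface stage while the reflexivity information from Theorem \ref{Bertini's-theorem}(4) is lost precisely at the chosen basepoints.
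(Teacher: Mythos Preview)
Your overall induction scheme and Jordan--H\"older reduction parallel the paper's Cases~2 and~3, and your base case $n=2$ is fine. The genuine gap is the descent step in your stable case for $n\ge 4$; this is where the paper introduces an idea your argument does not have.

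When you restrict through the basepoint $x$ the first time, the equality case of Lemma~\ref{inequality-on-Delta} says that $(\cF_1/\imath^*\cE)|_{H_{1,reg}}$ has codimension $\ge 3$ in $H_{1,reg}$. For $n=3$ (so $\dim H_1=2$) this forces the cokernel to be empty and your argument closes. For $n\ge 4$, however, codimension $\ge 3$ in an $(n-1)$-fold still allows support at $x$, and the example immediately preceding Lemma~\ref{loc-free-passing-to-graded} shows this can occur for \emph{every} hypersurface through $x$. At the next cut you must apply Lemma~\ref{inequality-on-Delta} to the reflexive sheaf $\cF_1$, not to $\cE|_{H_1}$ (which is no longer reflexive at $x$); the equality case then controls only $\cF_2/(\cF_1|_{H_2})$, not $\cF_2/(\cE|_{H_2})$. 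Iterating down to a surface $S$, the final step bounds $\cF_S/(\cF_{n-3}|_S)$, while the accumulated defect of $\cE|_S$ against the successive reflexivizations at $x$ is never addressed. Your last sentence names precisely this issue without resolving it.

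The paper closes the gap with an essentially characteristic-$p$ argument that uses a \emph{single} restriction to an $(n-1)$-fold $H$ through $x$ together with the entire Frobenius tower. Setting $\cE_m:=F_X^{[m]}\cE(-q_m c_1(\cE))$ and $\cF_m:=(\imath^*\cE_m)^{**}$, one checks on $H_{reg}$ that the cokernel $\cG_m:=\cF_m/\imath^*\cE_m$ satisfies $\cG_m\simeq (F_H^m)^*\cG_0$, so
\[
p^{(n-1)m}\,h^0(H,\cG_0)=\chi(H,\cG_m)=\chi(H,\cF_m)-\chi(X,\cE_m)+\chi(X,\cE_m(-H)).
\]
By Theorem~\ref{boundedness} (using the vanishing of all $n-1$ discriminant numbers) the families $\{\cE_m\}$, $\{\cE_m(-H)\}$, $\{\cF_m\}$ are bounded, hence the right-hand side is bounded in $m$ and $\cG_0=0$. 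Thus $\imath^*\cE$ is already reflexive at $x$ and $\cE$ is locally free there. This Frobenius--plus--boundedness device is the missing ingredient; it also explains the paper's three-case split (strongly stable; JH quotients strongly stable; stable but not strongly stable), which arranges matters so that in Case~1 every $\cE_m$ is itself stable and the restriction theorem applies uniformly to the whole tower.
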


\begin{proof}
	Possibly taking the field extension, we can assume that the base field $k$ is uncountable.
	We prove that $\cE|_{X_{reg}}$ is locally free by induction on the dimension $n$.
	Since reflexive sheaves on surfaces are locally free at regular points, the required assertion is clear for $n=2$.  So we can assume that $n\ge 3$.  For all $m\ge 0$
	we set
	$\cE_m:= F_X^{[m]}\cE (-q_mc_1(\cE ))$, where $q_m$ is an in Lemma \ref{computation-Delta}. Note that
	$$\int_X \Delta (\cE _m)L_1...\widehat{L_i}...L_{n-1}=\int_X \Delta (F_X^{[m]}\cE) L_1...\widehat{L_i}...L_{n-1}=p^{2m}\int_X \Delta (\cE)L_1...\widehat{L_i}...L_{n-1}=0$$
	for $i=1,...,n-1$.
	The proof is divided into three cases depending on stability properties of Frobenius pullbacks of $\cE$.
	
	\medskip
	
	\emph{Case 1.} Assume that  $\cE$ is strongly slope $L$-stable, i.e., for all $m\ge 0$  the sheaves $F_X^{[m]}\cE$ are slope $L$-stable. 
	Then all $\cE_m$ are also slope $L$-stable. 	Let us fix a $k$-point $x\in X_{reg}$.
	By Theorem \ref{Bertini's-theorem}
	for $s\gg 0$  we can find a hypersurface $H\in  |L_1^{\otimes s}|$ such that if $\imath: H\hookrightarrow X$ denotes the  closed embedding then the following conditions are satisfied:
	\begin{enumerate}	
		\item $x\in H$, 
		\item $H$ is irreducible and normal, 
		\item  $H_{reg}= X_{reg}\cap H$,
		\item for all $m\ge 0$ the sheaves $\imath ^*\cE_m$ are reflexive 
		on $H\backslash \{x\}$.
	\end{enumerate} 
	By Corollary \ref{hyperplane-section-of-reflexive} each $\imath ^*\cE_m$ is a torsion free $\cO_{H}$-module 
	and \cite[Theorem 0.1]{La-Bog-normal} implies that all restrictions $\imath ^*\cE_m$ are slope  $(\imath ^*L_2,...,\imath ^*L_{n-1})$-stable. Let  $\cF_m$ be the reflexive hull of $\imath ^*\cE_m$.
	Since $\cF_m= F_H^{[m]}\cF_0 (-q_mc_1(\cF _0))$, we see that $\cF_0$ is strongly slope $(\imath ^*L_2,...,\imath ^*L_{n-1})$-stable.
	Then Bogomolov's inequality \cite[Theorem 3.4]{La-Bog-normal} and Lemma \ref{inequality-on-Delta}   imply that 
	$$0\le \int_H \Delta (\cF_0) \imath^*L_2...\widehat{\imath^*L_i}...\imath^*L_{n-1}\le s \int_X \Delta (\cE) L_1...\widehat{L_i}...L_{n-1}=0$$
	for $i=2,...,n-1$. So $\int_H \Delta (\cF_0) \imath^*L_2...\widehat{\imath^*L_i}...\imath^*L_{n-1}=0$ 	for $i=2,...,n-1$
	and by the induction assumption $\cF_0|_{H_{reg}}$ is locally free.
	By (4) the cokernel $\cG_m$ of $\imath ^*\cE _m\to \cF_m$
	is supported on $\{ x\}$. Now the argument is similar to the one from Lemma \ref{inequality-on-Delta}. Namely, 
	for every $m\ge 0$ we have a commutative diagram
	$$
	\xymatrix{
		&\imath ^*((F_X^m)^*\cE (-q_mc_1(\cE ))) \ar[r]\ar[d]^{\alpha_m}&(F_H^m)^*\cF (-q_mc_1(\cF )) \ar[r]\ar[d]^{\beta_m}&(F_H^m)^*\cG_0\ar[d]^{\gamma_m} \ar[r]&0\\
		0\ar[r]&\imath^* \cE_m \ar[r]&\cF_m \ar[r]&\cG_m\ar[r]&0
	}
	$$
	in which $\alpha_m$ and $\beta_m$ are isomorphisms on ${H_{reg}}$ as $F_{H_{reg}}$ and $F_{X_{reg}}$ are flat.
	It follows that $\gamma_m$ is an isomorphism and hence
	$$\chi(X, \imath_* \cG_m)= h^0 (X, \imath_* \cG_m)= h^0 (X,  \imath_* ((F_H^m)^*\cG _0))=
	p^{(n-1)m} h^0 (X,  \imath_* \cG_0).$$
	So we have
	\begin{align*}
		p^{(n-1)m} h^0 (X,  \imath_* \cG_0)= \chi (X, \imath _*\cG_{m})=&
		\chi (H,\cF_m) -\chi (X, \cE_m)+\chi (X, \cE_m(-H)).
	\end{align*}
	Since $c_1(\cE_m)= r_mc_1(\cE)$, where $0\le r_m<r$ and $\int_X \Delta (\cE _m) L_2...L_{n-1}=0$, Theorem 
	\ref{boundedness} implies that the set $\{  \cE_m\}_{m\ge 0}$ is bounded. Similarly, 
	the sets  $\{ \cE_m(-H)\}_{m\ge 0}$ and 	$\{\cF_m\} _{m\ge 0}$ are also bounded. But this shows that the right hand side of the above equalities is bounded independently of $m$. Therefore  $ h^0 (X,  \imath_* \cG_0)=0$ and $\cG_0=0$. This implies that $\imath^*\cE_0$ is locally free at $x$ and hence $\cE_0$ is locally free at $x$ (cf. \cite[Lemma 1.14]{La-JEMS}). This shows that in the  first case $\cE|_{X_{reg}}$ is locally free.

	\medskip
	
	\emph{Case 2.} Let us assume that $\cE$ is not slope $L$-stable
	but all Jordan--H\"older quotients of $\cE$ are  strongly slope $L$-stable.
	Let $0= \cE_0\subset \cE_1\subset ...\subset \cE_s=\cE$ be a
	Jordan--H\"older filtration of $\cE$ and let us set $\cF_j:=(\cE_j/\cE_{j-1})^{**}$
	and $r_j=\rk \cF_j$.
	By Bogomolov's-inequality for strongly slope semistable sheaves (see \cite[Theorem 3.4]{La-Bog-normal}) and by \cite[Lemma 2.5]{La-Bog-normal} we have
	\begin{align*}
		0=\frac{\int _X \Delta  (\cE)L_1...\widehat{L_i}...L_{n-1}}{r}\ge \sum _j \frac{\int _X \Delta  (\cF_j)L_1...\widehat{L_i}...L_{n-1}}{r_j}\ge 0.
	\end{align*}
So $\int _X \Delta  (\cF_j)L_1...\widehat{L_i}...L_{n-1}=0$
for all $j$ and $i=1,...,n-1$.
Since all $\cF_j $ are strongly slope $(L_1,...,L_{n-1})$-stable, the first case implies that all $\cF_j|_{X_{reg}}$ are locally free. Hence
	Lemma \ref{Chern-classes-filtrations} shows that $\cE |_{X_{reg}}$ and $(\cE_j/\cE_{j-1})|_{X_{reg}}$ are locally free.

	\medskip
	
	\emph{Case 3.} Now assume that  $\cE$ is  slope $L$-stable but it
	is not strongly slope $L$-stable. In this case there exists $m> 0$
	such that $F_X^{[m]}\cE$ is not slope $L$-stable but all Jordan--H\"older quotients of $F_X^{[m]}\cE$ are  strongly slope $L$-stable. So by the second case $(F_X^{[m]}\cE)|_{X_{reg}}=( F_{X_{reg}}^m)^*(\cE|_{X_{reg}})$ is locally free. But $F_{X_{reg}}$ is flat, so $\cE|_{X_{reg}}$ is also locally free.
\end{proof}

\medskip

\begin{Remark}
	In the above theorem we do not claim that the quotients  $\cE_j/\cE_{j-1}$
	are reflexive for all $j$.  
\end{Remark}

\medskip

\begin{Corollary}\label{characterization-proj-flat}
	Let $\cE$ be a coherent reflexive $\cO_X$-module.
	Then the following conditions are equivalent:
	\begin{enumerate}
		\item $\cE|_{X_{reg}}$ is a strongly projectively n-flat vector bundle. 
		\item For some big open subset $U\subset X$, the restriction $\cE|_{U}$ is a strongly projectively n-flat vector bundle.
		\item $\int_X \Delta  (\cE)=0$ and  for every collection $L=(L_1,...,L_{n-1})$ of nef 
		line bundles, $\cE$ is strongly slope $L$-semistable.
		\item  For some collection $L=(L_1,...,L_{n-1})$ of ample line bundles,
		$\cE$ is strongly slope $L$-semistable and
		$$\int_X \Delta (\cE) L_1...\widehat{L_i}...L_{n-1}=0$$
		for $i=1,...,n-1$.
	\end{enumerate}	
\end{Corollary}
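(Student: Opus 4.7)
The plan is to establish the cycle $(1) \Rightarrow (2) \Rightarrow (3) \Rightarrow (1)$ together with $(3) \Leftrightarrow (4)$. The trivial implications are $(1) \Rightarrow (2)$ (take $U = X_{\mathrm{reg}}$) and $(3) \Rightarrow (4)$ (specialise to any ample tuple).

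For $(2) \Rightarrow (3)$, I would fix any ample tuple $L_1,\ldots,L_{n-2}$. For $m \gg 0$, Theorem \ref{Bertini's-theorem} produces a general complete intersection surface $S \in |L_1^m| \cap \cdots \cap |L_{n-2}^m|$ that is normal and has $S \setminus U$ of codimension $\ge 2$ in $S$; consequently $S_0 := S \cap U$ is a big open subset of $S$ whose unique small compactification (by Zariski's main theorem) is $S$. The inclusion $g \colon S_0 \hookrightarrow U$ is finite, so the definition of strong projective n-flatness of $\cE|_U$ together with \cite[Theorem 0.4]{La-Inters} give
\[
0 = \int_S \Delta\bigl(j_{\ast} g^{\ast}(\cE|_U)\bigr) = \int_S \Delta\bigl((\cE|_S)^{\ast\ast}\bigr) = m^{n-2}\int_X \Delta(\cE)\,L_1 \cdots L_{n-2}.
\]
Multilinearity and the fact that ample classes span $N^1(X)_{\QQ}$ then upgrade this to vanishing of $\int_X \Delta(\cE)$ as a form on $N^1(X)^{n-2}$. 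For strong slope $M$-semistability for ample $M$, one restricts $\cE$ to a general complete intersection curve $C$ in $U$ (possible by bigness of $U$ and Bertini), applies Lemma \ref{Q-nef} to obtain semistability of $\cE|_C$, composes with the absolute Frobenius $F_X^m$ to promote to strong semistability on the curve, and invokes the Mehta--Ramanathan-type restriction theorem for normal varieties. The case of nef $M$ follows by approximation $M_i + \varepsilon A \to M_i$ for a fixed ample $A$ and the closedness of strong slope semistability.

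For $(3) \Rightarrow (1)$, Theorem \ref{reflexive-Delta-are-loc-free} applied to any ample $L$ gives that $\cE|_{X_{\mathrm{reg}}}$ is locally free. Using $\ch_2(\End\cE) = -\Delta(\cE)$, the sheaf $\End(\cE|_{X_{\mathrm{reg}}})$ inherits strong slope semistability for every nef polarization together with $c_1 = 0$ and vanishing $\ch_2$ integrated against every nef $(n-2)$-tuple; Theorem \ref{main-num-flat} on $X_{\mathrm{reg}}$ with small compactification $X$ then implies $\End(\cE|_{X_{\mathrm{reg}}})$ is numerically flat, and Corollary \ref{equivalence:proj-num-flat} gives projective n-flatness of $\cE|_{X_{\mathrm{reg}}}$. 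For the surface condition of Definition \ref{Def:strongly-num-flat}, given a finite $g \colon T \to X_{\mathrm{reg}}$ from a normally big surface $T$ with small compactification $\bar T$, I extend $g$ uniquely to $\bar g \colon \bar T \to X$ via Zariski's main theorem as in Lemma \ref{reformulation-strongly-num-flat}, so that $j_{\ast} g^{\ast}(\cE|_{X_{\mathrm{reg}}}) = \bar g^{[\ast]}\cE$; Bogomolov's inequality on $\bar T$ applied to the strongly semistable $\bar g^{[\ast]}\cE$ gives $\int_{\bar T}\Delta(\bar g^{[\ast]}\cE) \ge 0$, while combining the Jordan--H\"older structure of $\bar g^{[\ast]}\cE$, boundedness (Theorem \ref{boundedness}) and Lemma \ref{computation-Delta} forces the reverse inequality. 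Finally, $(4) \Rightarrow (3)$ follows by applying Theorem \ref{reflexive-Delta-are-loc-free} to get the Jordan--H\"older factors $\cF_j$ that are strongly slope $L$-stable with the subcollection vanishings, then combining Bogomolov's inequality \cite[Theorem 3.4]{La-Bog-normal} for arbitrary nef multi-polarizations with \cite[Lemma 2.5]{La-Bog-normal} to upgrade subcollection vanishing to identical vanishing of $\int_X \Delta(\cE)$, and arguing strong slope semistability for every nef polarization as in the earlier limiting argument.

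The main obstacle is verifying the surface-integral condition $\int_{\bar T}\Delta(\bar g^{[\ast]}\cE) = 0$ for arbitrary finite morphisms from normally big surfaces in the $(3) \Rightarrow (1)$ step: the datum $\int_X \Delta(\cE) = 0$ as a form on $N^1(X)^{n-2}$ does not a priori control pullback integrals over surfaces that are not general complete intersections, so one must combine Theorem \ref{reflexive-Delta-are-loc-free}, boundedness (Theorem \ref{boundedness}), and Bogomolov's inequality in the possibly singular setting \cite[Theorem 3.4]{La-Bog-normal} to bootstrap from the complete-intersection case to the general one. A subsidiary difficulty is the Hodge-index-style combinatorics needed in $(4) \Rightarrow (3)$ to convert vanishing of the multilinear form $\int_X\Delta(\cF_j)$ on the specific subcollections of the ample tuple $L$ into identical vanishing on all of $N^1(X)^{n-2}$.
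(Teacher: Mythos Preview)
Your step $(3)\Rightarrow(1)$ has a genuine gap. You argue that $\End(\cE|_{X_{\mathrm{reg}}})$ is strongly numerically flat by invoking the identity $\ch_2(\End\cE)=-\Delta(\cE)$ and then applying Theorem~\ref{main-num-flat}. But Theorem~\ref{main-num-flat} requires control of $\int_X\ch_2$ of the \emph{reflexive extension} $j_*\End(\cE|_{X_{\mathrm{reg}}})$ on the normal variety $X$, and the identity $\int_X c_2(\End\cE)=\int_X\Delta(\cE)$ for the reflexive Chern classes of \cite{La-Inters} is \emph{not known} when $X$ is singular; the paper flags exactly this as an open point in Remark~\ref{proj-flat-relation-to-num-flat}. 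The identity holds pointwise on $X_{\mathrm{reg}}$ where $\cE$ is locally free, but that does not suffice to compute the reflexive second Chern character on $X$. There is also a logical issue: Theorem~\ref{main-num-flat} is Corollary~\ref{characterization-num-flat}, whose proof of $(8)\Rightarrow(1)$ already uses Corollary~\ref{characterization-proj-flat}, so your argument is circular as stated.

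The paper sidesteps both problems by proving $(4)\Rightarrow(1)$ directly, without passing through $\End\cE$. After Theorem~\ref{reflexive-Delta-are-loc-free} gives local freeness on $X_{\mathrm{reg}}$, one observes that the twisted Frobenius family $\cE_m:=F_X^{[m]}\cE(-q_mc_1(\cE))$ has bounded $c_1$ and $\int_X\Delta(\cE_m)L_1\cdots\widehat{L_i}\cdots L_{n-1}=0$, so Theorem~\ref{boundedness} makes $\{\cE_m\}_{m\ge 0}$ bounded. From boundedness one gets a single ample $M$ with all $\cE_m\otimes M$ globally generated; pulling back along any finite $f\colon C\to X_{\mathrm{reg}}$ and letting $m\to\infty$ yields $\mu_{\min}(f^*\cE)\ge\mu(f^*\cE)$, hence projective n-flatness by Lemma~\ref{Q-nef}. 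For the surface condition one pulls back the bounded family along $g\colon S\to X_{\mathrm{reg}}$, pushes forward to $\bar S$ by Proposition~\ref{needed-for-independence-of-compactification}, and reads off $\int_{\bar S}\Delta(j_*g^*\cE)=0$ from Lemma~\ref{computation-Delta}. This is in fact the argument you sketch for the ``reverse inequality'' part of your surface step; the point is that it also gives projective n-flatness directly, so the detour through $\End\cE$ and Theorem~\ref{main-num-flat} is unnecessary and, as it stands, unjustified. Your separate $(4)\Rightarrow(3)$ step is then redundant once $(4)\Rightarrow(1)\Rightarrow(2)\Rightarrow(3)$ closes the cycle.
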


\begin{proof}
Implications  $(1)\Rightarrow (2)$ and $(3)\Rightarrow (4)$  are clear.
To see that $(2)\Rightarrow (3)$ note that
$\cE$ is strongly slope $(L_1,...,L_{n-1})$-semistable follows from Lemma \ref{Q-nef}.
The condition on the discriminant of $\cE$  follows from the corresponding condition in the definition of a  strongly projectively n-flat bundle by restricting to a very general complete intersection surface.

If $\cE$ satisfies (4) then $\cE|_{X_{reg}}$ is locally free by Theorem \ref{reflexive-Delta-are-loc-free}.
The argument showing that $\cE|_{X_{reg}}$ is projectively n-flat is similar to that from \cite[Proposition 5.1]{La-S-fund}. We use notation from the proof of Theorem \ref{reflexive-Delta-are-loc-free}. Since  the set $\{  \cE_m\}_{m\ge 0}$ is bounded,
there exists an ample line bundle $M$ on $X$ such that $\cE _m\otimes M$ is globally
generated for all $m\ge 0$. Therefore for any smooth projective
curve $C$ and a finite morphism $f:C\to X_{reg}$, the bundles
$$f^*(\cE_m\otimes M)= (F_C^{m})^*(f^*\cE) \otimes \det (f^*\cE)^{-q_m} \otimes f^*M$$ are globally generated. 
In particular,
$\mu_{\min}( f^*(\cE_m\otimes M))\ge 0$ shows that
$$  p^m \mu_{\min} (f^*\cE) \ge \mu_{\min}((F_C^{m})^*(f^*\cE))\ge  q_m\deg f^*\cE -\deg f^*M $$
for all $m\ge 0$.
Dividing by $p^m$ and passing with $m$ to infinity we get 
$$\mu _{\min } (f^*\cE)\ge \frac{1}{r}\deg f^*\cE=\mu (f^*\cE).$$ 
Therefore $f^*\cE$ is semistable and hence
$\cE|_{X_{reg}}$ is projectively n-flat by Lemma \ref{Q-nef}. 
Now let $S$ be a normally big surface with small compactification $j:S\hookrightarrow \bar S$ and 
let  $g: S\to X_{reg}$ be a finite morphism. Then the set $\{  g^*(\cE_m|_{X_{reg}}) \}_{m\ge 0}$ is bounded
and hence by Proposition \ref{needed-for-independence-of-compactification} the set $\{  j_*g^*(\cE_m|_{X_{reg}}) \}_{m\ge 0}$ is also bounded. Since Euler characteristic is locally constant in flat families of sheaves, the set
 $\{ \chi(\bar S, j_*g^*(\cE_m|_{X_{reg}} ))\}_{m\ge 0}$ is finite. Let us set $\tilde \cE:=j_*g^*(\cE|_{X_{reg}})$.
Note that 
 $$  F_{\bar S}^{[m]}\tilde \cE (-q_mc_1(\tilde \cE))=j_*g^*(\cE_m|_{X_{reg}})$$
 as both sheaves are reflexive and they agree on $S$. Therefore
$\int_{\bar S}\Delta(\tilde \cE)=0$ by Lemma \ref{computation-Delta}.
This shows that $(4)\Rightarrow (1)$.
\end{proof}

\subsection{Local freeness for numerical flatness}
 
 \begin{Lemma}\label{num-flat-easy-consequences}
 	Let $\cE$ be a coherent reflexive $\cO_X$-module. If the set  $\{ F_X^{[m]} \cE \} _{m\ge 0}$ is bounded
 	then the class of $c_1(\cE)$ in $ \bar B^1(X)$ vanishes. Moreover,  for every 
 	collection of line bundles $(L_1,...,L_{n-2})$ we have
 	$\int _X c _2 (\cE)L_1...L_{n-2}=0.$
 \end{Lemma}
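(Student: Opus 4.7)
The plan is to handle the two assertions in order: for the first I would exploit that the group $\bar B^1(X)$ is torsion-free, and for the second I would reduce via Lemma \ref{computation-Delta} to showing that $\int_X \Delta(\cE) L_1 \ldots L_{n-2} = 0$ and then use boundedness of the Frobenius pullbacks (appropriately twisted).

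For the vanishing of $c_1(\cE)$ in $\bar B^1(X)$, I would use that reflexive Frobenius pullback raises determinants to $p^m$-th powers, so $c_1(F_X^{[m]}\cE) = p^m c_1(\cE)$ as Weil divisor classes. Boundedness of $\{F_X^{[m]}\cE\}_{m \ge 0}$ means these sheaves occur as fibers of a flat family parameterized by a scheme of finite type, so their first Chern classes take only finitely many values in $\bar B^1(X)$. Hence the sequence $\{p^m c_1(\cE)\}_{m \ge 0}$ is a finite subset of $\bar B^1(X)$; since this group is torsion-free by construction, this forces $c_1(\cE) = 0$.

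For the second assertion I would first note that $c_1(\cE) = 0$ in $\bar B^1(X)$ implies $\int_X c_1(\cE)^2 L_1 \ldots L_{n-2} = 0$ by multilinearity of the intersection product (cf.\ \cite[1.4]{La-Inters}), so from $\Delta(\cE) = 2r c_2(\cE) - (r-1) c_1(\cE)^2$ it suffices to show $\int_X \Delta(\cE) L_1 \ldots L_{n-2} = 0$. Applying Lemma \ref{computation-Delta}, this intersection equals the negative limit of $p^{-2m} \chi(X, c_1(L_1) \ldots c_1(L_{n-2}) \cdot F_X^{[m]}\cE(-q_m c_1(\cE)))$; expanding the bracket as an alternating sum, it suffices to show that each Euler characteristic $\chi(X, F_X^{[m]}\cE(-q_m c_1(\cE)) \otimes L_S)$ is bounded uniformly in $m$, since then dividing by $p^{2m}$ gives zero in the limit.

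The main obstacle is handling the twist by $-q_m c_1(\cE)$, because $q_m$ grows like $p^m/r$ and so cannot simply be absorbed into a bounded family. Here the first part of the lemma is essential: from $c_1(\cE) = 0$ in $\bar B^1(X)$ we obtain some $N > 0$ with $N \cdot c_1(\cE)$ algebraically equivalent to zero, so the divisorial reflexive sheaves $\cO_X(-q_m c_1(\cE))$ lie in a bounded family parameterized by a finite union of translates of a bounded subscheme of the class group. By Proposition \ref{boundedness-FGA}, together with the preservation of boundedness under reflexive hull already used in Proposition \ref{needed-for-independence-of-compactification}, the twisted family $\{F_X^{[m]}\cE(-q_m c_1(\cE)) \otimes L_S\}_{m \ge 0}$ is bounded for each subset $S$. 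Consequently the Euler characteristics take only finitely many values, and passing to the limit yields $\int_X \Delta(\cE) L_1 \ldots L_{n-2} = 0$ and hence $\int_X c_2(\cE) L_1 \ldots L_{n-2} = 0$.
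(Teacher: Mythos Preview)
Your argument for the first assertion matches the paper's exactly: boundedness forces $\{p^m c_1(\cE)\}_{m\ge 0}$ to be finite in $\bar B^1(X)$, and torsion-freeness gives $c_1(\cE)=0$.

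For the second assertion your route is correct but more roundabout than the paper's. The paper invokes the analogous limit formula for $\ch_2$ itself (from \cite[1.4]{La-Inters}), namely
\[
\int_X \ch_2(\cE) L_1\ldots L_{n-2} \;=\; \lim_{m\to\infty}\frac{\chi\bigl(X, c_1(L_1)\ldots c_1(L_{n-2})\cdot F_X^{[m]}\cE\bigr)}{p^{2m}},
\]
which involves \emph{no twist}. Boundedness of $\{F_X^{[m]}\cE\}_{m\ge 0}$ immediately makes these Euler characteristics take finitely many values, so the limit is zero; then $c_1=0$ in $\bar B^1(X)$ upgrades $\ch_2=0$ to $c_2=0$. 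By going through Lemma~\ref{computation-Delta} and $\Delta$ instead, you introduce the twist $-q_m c_1(\cE)$ with $q_m\to\infty$, and then have to argue separately that the rank-one reflexive sheaves $\cO_X(-q_m c_1(\cE))$ form a bounded family. Your justification for this last point (``a finite union of translates of a bounded subscheme of the class group'') is correct in spirit but would need more care on a normal variety where the relevant Picard-type scheme for Weil divisors is less standard; a cleaner way to close that step would be to apply Theorem~\ref{boundedness} directly to these rank-one sheaves, since $c_1(\cE)=0$ in $\bar B^1(X)$ forces all the numerical invariants there to vanish. The paper's approach simply sidesteps this issue.
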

 
 \begin{proof}
 	By definition there exists a $k$-scheme $S$ of finite type and an $S$-flat $\cO_{X\times _kS}$-module $\cF$ such that for every $m\ge 0$ there exists a $k$-point $s_m\in S$ such that $\cF_{s_m}\simeq  F_X^{[m]} \cE $. Note that for every two $k$-points $s, s'$ of $ S$ lying in the same connected component we have $[c_1(\cF_s)]=[c_1(\cF_{s'})]\in B^1(X)$. 
 	Since $c_1( F_X^{[m]} \cE) = p^m c_1(\cE)$ and $S$ is of finite type over $k$, the set $\{ [p^m c_1(\cE )] \} _{m\ge 0}$ of corresponding classes in $B^1(X)$ is finite.  But $ \bar B^1(X)$ is a free $\ZZ$-module
 	(see \cite[Lemma 1.20]{La-Bog-normal}), so the class $[c_1(\cE)]\in  \bar B^1(X)$ vanishes.
 	
 	To prove the second assertion note that for every collection  $(L_1,...,L_j)$ of line bundles on $X$ the function $S\to \ZZ$ given by \small
 	$${ s\to \chi (X\otimes k(s), c_1(L_1)...c_1(L_j)\cdot \cF_s)=\sum _{\sigma : \{1,...,j\}\to \{ 0,1\}}(-1)^{|\sigma ^{-1}(1)|}\chi (X\otimes k(s), \cF_s\otimes \bigotimes _{i=1}^j L_i^{-\sigma (i)})}$$ \normalsize
 	is locally constant  (cf.~\cite[Chapter VI, Definition 2.6]{Ko}). This follows from the fact that 
 	$\cF\otimes \bigotimes _{i=1}^j L_i^{-\sigma (i)}$ is $S$-flat and
 	Euler characteristic is locally constant in flat families of sheaves. Since $S$ is of finite type over $k$, this implies that the set $\{ \chi (X, c_1(L_1)...c_1(L_j)\cdot  F_X^{[m]} \cE )\}_{m\ge 0}$ is finite.
 	In particular, for  every collection of line bundles $(L_1,...,L_{n-2})$ we have
 	$$\int _X \ch _2 (\cE)L_1...L_{n-2}=
 	\lim _{m\to \infty }\frac {  \chi (X, c_1(L_1)...c_1(L_{n-2}) \cdot {F_X^{[m]}\cE}  ) }{p^{2m}}=0.$$
 	Since $[c_1(\cE)]=0\in  \bar B^1(X)$, we get the required vanishing.
 \end{proof}

\medskip

\begin{Corollary}\label{characterization-num-flat}
	Let $\cE$ be a coherent reflexive $\cO_X$-module.
	Then the following conditions are equivalent:
	\begin{enumerate}
		\item  $\cE|_{X_{reg}}$ is a numerically flat vector bundle and
		 for every normally big surface $S$  and any finite morphism $g: S\to X$ we have $[c_1(j_*(g^*\cE))]= 0\in \bar B^1(\bar S)$ and 
		$$\int_{\bar S}c_2(j_*(g^*\cE))=0,$$
		where  $j:S\hookrightarrow \bar S$ is the small compactification of $S$.
		\item $\cE|_{X_{reg}}$ is a strongly numerically flat vector bundle. 
		\item For some big open subset $U\subset X$, the restriction $\cE|_{U}$ is a strongly numerically flat vector bundle.
		\item  $\cE|_{X_{reg}}$ is locally free and	the set  $\{ (F_{X_{reg}}^{m})^* (\cE|_{X_{reg}}) \} _{m\ge 0}$ is bounded.
		\item For some big open subset $U\subset X$, the restriction $\cE|_{U}$ is
		 locally free and	the set  $\{ (F_{U}^{m})^* (\cE|_{U}) \} _{m\ge 0}$ is bounded.
		\item The set   $\{ F_X^{[m]} \cE \} _{m\ge 0}$ is bounded.
		\item  The class $[c_1(\cE)]=0\in  \bar B^1(X)$, for every collection $L=(L_1,...,L_{n-1})$ of nef 
		line bundles 
		$$\int_X c_2 (\cE) L_1...\widehat{L_i}...L_{n-1}= 0$$
		for $i=1,...,n-1$, and $\cE$ is strongly slope $L$-semistable.
		\item  For some collection $L=(L_1,...,L_{n-1})$ of ample line bundles,
		$\cE$ is strongly slope $L$-semistable, $\int_X \ch_1 (\cE) L_1...L_{n-1}=0$ and
		$$\int_X \ch_2 (\cE) L_1...\widehat{L_i}...L_{n-1}\ge 0$$
		for $i=1,...,n-1$.
	\end{enumerate}	
\end{Corollary}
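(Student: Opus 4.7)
The plan is to establish the eight-way equivalence by a cyclic chain of implications, with most steps either immediate or reducible to previously-proved results and one main step. The trivial implications are $(2)\Rightarrow(3)$ and $(4)\Rightarrow(5)$ (by taking $U=X_{reg}$), $(1)\Rightarrow(2)$ (since $[c_1(j_*g^*\cE)]=0\in \bar B^1(\bar S)$ yields $c_1^2=0$, hence $\int_{\bar S}\ch_2(j_*g^*\cE)=-\int_{\bar S}c_2(j_*g^*\cE)=0$), and $(7)\Rightarrow(8)$ (specialize to any ample collection and use $[c_1(\cE)]=0$ together with the vanishing of $\int_X c_2(\cE)L_1\ldots\widehat{L_i}\ldots L_{n-1}$ to get $\int_X\ch_1(\cE)\cdot L_1\ldots L_{n-1}=0$ and $\int_X\ch_2(\cE)\cdot L_1\ldots\widehat{L_i}\ldots L_{n-1}=0$).

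For $(3)\Rightarrow(8)$, I would exploit that strongly numerically flat implies strongly projectively n-flat, so Corollary \ref{characterization-proj-flat} yields $\cE|_{X_{reg}}$ locally free and, for some ample $L$, that $\cE$ is strongly slope $L$-semistable with $\int_X\Delta(\cE)L_1\ldots\widehat{L_i}\ldots L_{n-1}=0$. To obtain the remaining data in (8), I would use two restrictions: to get $\int_X c_1(\cE)L_1\ldots L_{n-1}=0$, restrict $\cE|_U$ to a very general complete intersection curve contained in $U$, which has degree-zero pullback by Lemma \ref{Q-nef}; to get $\int_X\ch_2(\cE)L_1\ldots\widehat{L_i}\ldots L_{n-1}\ge 0$, restrict to a very general normal surface $S$ cut out by the remaining $L_j$ via Theorem \ref{Bertini's-theorem}, note that strong numerical flatness is inherited by $\cE|_{S\cap U}$, and apply Lemma \ref{reformulation-strongly-num-flat} on $S$ to conclude $\int_S\ch_2((\imath^*\cE)^{**})=0$, matching $\int_X\ch_2(\cE)L_1\ldots\widehat{L_i}\ldots L_{n-1}$ via \cite[Theorem 5.5]{La-Inters}.

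The main step is $(8)\Rightarrow(6)$, where I expect the real work. The strategy is to combine three ingredients: Bogomolov's inequality for strongly slope-semistable reflexive sheaves on singular $X$ from \cite[Theorem 3.4]{La-Bog-normal}, giving $\int_X\Delta(\cE)L_1\ldots\widehat{L_i}\ldots L_{n-1}\ge 0$; the Hodge index theorem on a very general complete intersection surface (using $\int_X c_1(\cE)L_1\ldots L_{n-1}=0$), giving $\int_X c_1(\cE)^2 L_1\ldots\widehat{L_i}\ldots L_{n-1}\le 0$; and the assumed $\int_X\ch_2(\cE)L_1\ldots\widehat{L_i}\ldots L_{n-1}\ge 0$. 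The identity $\Delta=c_1^2-2r\cdot\ch_2$ then forces equality throughout. With these vanishings and strong semistability (so $\mu_{\max,L}(F_X^{[m]}\cE)=p^m\cdot 0=0$ uniformly in $m$), Theorem \ref{boundedness} applied to the fixed-rank family $\{F_X^{[m]}\cE\}_{m\ge 0}$---whose $\int\ch_1\cdot L_1\ldots L_{n-1}$ is constantly zero and whose $\int\ch_2\cdot L_1\ldots\widehat{L_i}\ldots L_{n-1}$ are constantly zero---gives (6).

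To close the cycle: for $(6)\Rightarrow(4)$, Lemma \ref{num-flat-easy-consequences} supplies $[c_1(\cE)]=0$ and the vanishing of $\int_X c_2(\cE)L_1\ldots L_{n-2}$ for every collection; boundedness also forces strong semistability (otherwise iterated Frobenius produces destabilizing subsheaves with $\mu_{\max}\to\infty$), so Theorem \ref{reflexive-Delta-are-loc-free} gives local freeness of $\cE|_{X_{reg}}$, and restriction of the bounded family uses flatness of $F_{X_{reg}}$. For $(5)\Rightarrow(6)$, Proposition \ref{needed-for-independence-of-compactification} shows that $j_*$ preserves boundedness and $j_*(F_U^m)^*(\cE|_U)=F_X^{[m]}\cE$ since both are reflexive and agree on $U$. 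For $(6)\Rightarrow(7)$, Lemma \ref{num-flat-easy-consequences} gives the Chern class vanishings, while the destabilization argument extends verbatim from ample to nef polarizations. Finally $(6)\Rightarrow(1)$ follows by pulling back the bounded family via any finite $g:S\to X_{reg}$ from a normally big surface and pushing forward to $\bar S$ using Proposition \ref{needed-for-independence-of-compactification}, then applying Lemma \ref{num-flat-easy-consequences} on $\bar S$. The hardest part is the Bogomolov--Hodge-index interplay in $(8)\Rightarrow(6)$, which relies on the intersection theory for reflexive sheaves on singular varieties developed in \cite{La-Inters}.
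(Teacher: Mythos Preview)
Your proposal is essentially correct and follows the same logical skeleton as the paper---the same key inputs (Theorem \ref{boundedness}, Theorem \ref{reflexive-Delta-are-loc-free}, Lemma \ref{num-flat-easy-consequences}, Proposition \ref{needed-for-independence-of-compactification}) drive the same implications. Two minor points are worth noting.

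First, in $(8)\Rightarrow(6)$ you take a detour: you invoke Bogomolov's inequality from \cite{La-Bog-normal} and the Hodge index theorem to force $\int_X\ch_2(\cE)L_1\ldots\widehat{L_i}\ldots L_{n-1}=0$ before applying Theorem \ref{boundedness}. The paper observes that this is unnecessary: Theorem \ref{boundedness} only requires a uniform \emph{lower} bound on $\int_X\ch_2$, and since $\int_X\ch_2(F_X^{[m]}\cE)L_1\ldots\widehat{L_i}\ldots L_{n-1}=p^{2m}\int_X\ch_2(\cE)L_1\ldots\widehat{L_i}\ldots L_{n-1}\ge 0$ by hypothesis, boundedness follows immediately. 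The equalities you derive are then read off afterward from Lemma \ref{num-flat-easy-consequences}.

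Second, your $(3)\Rightarrow(8)$ routes through Corollary \ref{characterization-proj-flat} (using that strongly numerically flat implies strongly projectively n-flat, which needs the small observation that numerical flatness on $U$ forces $c_1(j_*g^*\cE)$ to be numerically trivial on $\bar S$, hence $c_1^2=0$). The paper is more direct: nefness of both $\cE$ and $\cE^*$ on $U$ gives $\pm\int_X\ch_1(\cE)L_1\ldots L_{n-1}\ge 0$ immediately, and the $\ch_2$-vanishing comes from restricting to a very general complete intersection surface together with the definition of strong numerical flatness. Your sketch of $(6)\Rightarrow(1)$ omits the verification that $\cE|_{X_{reg}}$ is actually numerically flat (not just that its surface pullbacks have the right Chern data); the paper obtains this by first showing projective n-flatness via the global-generation argument in Corollary \ref{characterization-proj-flat} and then using $[c_1(\cE)]=0$.
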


\begin{proof}
Implications  $(1)\Rightarrow (2)$ and $(2)\Rightarrow (3)$ are clear. If (3) holds then for any collection $(L_1,...,L_{n-1})$ of ample line bundles we have  
$$\int_X \ch_1 (\cE) L_1...L_{n-1}\ge 0$$
and 
$$\int_X \ch_1 (\cE^*) L_1...L_{n-1}=-\int_X \ch_1 (\cE) L_1...L_{n-1}\ge 0.$$	
So 	$\int_X \ch_1 (\cE) L_1...L_{n-1}=0$. Restricting to a very general complete intersection surface, our assumption implies that
$$\int_X \ch_2 (\cE) L_1...\widehat{L_i}...L_{n-1}= 0$$
for $i=1,...,n-1$. Since semistability can be checked on big open subsets, we see that all $ F_X^{[m]} \cE $ are slope $L$-semistable. This shows implication  $(3)\Rightarrow (8)$.
The implication  $(4)\Rightarrow (5)$ is clear and  $(5)\Rightarrow (6)$ follows from Proposition \ref{needed-for-independence-of-compactification}. If (6) is satisfied then 	by Lemma \ref{num-flat-easy-consequences} we have
	$[c_1(\cE)]=0\in  \bar B^1(X)$ and $\int_X \ch_2 (\cE) L_1...\widehat{L_i}...L_{n-1}= 0$
	for $i=1,...,n-1$. Note that for every $m$ the sheaf $F^{[m]}_X\cE$ also satisfies (6).
	So to prove $(6)\Rightarrow (7)$ it is sufficient to show that  $\cE$ is slope $L$-semistable. Let $\cF\subset \cE$ be a saturated $\cO_X$-submodule. 
	If $\mu(\cF)>\mu (\cE)=0$ then the set $\{ F_X^{[m]} \cF \} _{m\ge 0}$ is not bounded
	as $\mu( F_X^{[m]} \cF)=p^m \mu(\cF)$. This contradicts the fact that 
	$\{ F_X^{[m]} \cE \} _{m\ge 0}$ is bounded.
	Implication $(7)\Rightarrow (8)$ is obvious. 
	Now let us prove that $(8)\Rightarrow (1)+(4)$. Since
	$$\int_X \ch_2 (F_X^{[m]} \cE) L_1...\widehat{L_i}...L_{n-1}= p^{2m}\int_X \ch_2 (\cE) L_1...\widehat{L_i}...L_{n-1}\ge 0,$$
	the set $\{ F_X^{[m]} \cE \} _{m\ge 0}$ is bounded by Theorem \ref{boundedness}. 
	So the set $\{ (F_{X_{reg}}^{m})^* (\cE|_{X_{reg}}) \} _{m\ge 0}$ is also bounded. By Lemma \ref{num-flat-easy-consequences}, $[c_1(\cE)]=0\in  \bar B^1(X)$ and
	$\int_X \Delta (\cE) L_2...L_{n-1}=0$. So Theorem \ref{reflexive-Delta-are-loc-free}
	shows that  $\cE|_{X_{reg}}$ is locally free and $\cE$ satisfies (4).	
By Corollary \ref{characterization-proj-flat} we see that $\cE|_{X_{reg}}$ is projectively n-flat. 
Since $[c_1(\cE))]=0\in  \bar B^1(X)$,   $\cE|_{X_{reg}}$ is also numerically flat. Now let $S$ be a normally big surface with small compactification $j:S\hookrightarrow \bar S$ and a finite morphism $g: S\to X_{reg}$. Then the set  $\{ (F_{S}^{m})^* (g^*\cE) \} _{m\ge 0}$ is bounded. So by implication $(5)\Rightarrow (7)$, we have  $[c_1(j_*(g^*\cE))]=0\in   \bar B^1(\bar S)$ and
$$\int_{\bar S}c_2(j_*(g^*\cE))=0.$$
\end{proof}

\medskip

\begin{Remark}
Assume that $X$ is smooth projective defined over an algebraically closed field $k$ of characteristic $p>0$
and $\cE$ is locally free. Then any numerically flat vector bundle on $X$  is strongly numerically flat.
The fact that conditions (1) and (6) are equivalent follows in this case from the proof of \cite[Proposition 5.1]{La-S-fund}.  \cite[Theorem 4.4]{La-AnnMath} and \cite[Theorem 4.1]{La-S-fund} give equivalence of (4) and (7) for one ample polarization.
\end{Remark}

\begin{Remark}
	If $\cE$ is a numerically flat vector bundle on  a proper scheme over a field then  all Chern classes of $\cE$ are numerically trivial (see \cite[Proposition 2.3]{FL}). In positive characteristic this follows from condition (6) in Corollary \ref{characterization-num-flat}.
\end{Remark}

\medskip

The following example shows that Corollary \ref{characterization-num-flat} is rather subtle.

\begin{Example}
	Let $\bar X$ be a normal projective variety and let $j: X\hookrightarrow \bar X$ be its regular locus. Assume that on $X$ we have a non-trivial Cartier divisor $D$ such that $2D\sim 0$. Then $\cE=\cO_X\oplus \cO_X(D)$ is a strongly numerically flat vector bundle on $X$. Let $\bar D$ be the Weil divisor obtained by the closure of $D$ in $\bar X$.  This divisor is usually non-Cartier but it is $2$-Cartier and $j_*\cE= \cO_{\bar X} \oplus \cO_{\bar X} (\bar D)$. Then it is easy to see that  
	$\int_{\bar X} \ch_1 (j_*\cE)=\int_{\bar X} \ch_2 (j_*\cE)= 0.$ In fact, $j_*\cE$ becomes trivial on the double covering of $\bar X$ associated to $\bar D$ (this covering is branched at the points where $\bar D$ is not Cartier). However, if $\cE$ is a non-trivial extension of $\cO_X(D)$ by $\cO_X$ then usually $\int_{\bar X} \ch_2 (j_*\cE)\ne  0.$ In fact, Example \ref{example-large-num-flat} shows that such extensions are not strongly numerically flat even if $\bar X$ is smooth and $D=0$.	
\end{Example}

The following example shows that in Theorem \ref{main-num-flat} we cannot replace strong semistability with semistability even if $X=\bar X$.

\begin{Example}
Let $C$ be a  smooth projective curve (of genus $\ge 2$) defined over an algebraically closed field of positive characteristic.  Let $\cF$ be any  semistable vector bundle of degree $0$ on $C$, which is not strongly semistable. Let $X=\PP^1\times C$ and let $\cE$ be the pullback of $\cF$ via the canonical projection. Then $\cE$ is a semistable vector bundle with vanishing Chern numbers. In this case it is easy to see that  	the set  $\{ (F_{X}^{m})^* \cE \} _{m\ge 0}$ is not bounded
since  slopes of the maximal destabilizing subbundles of $ (F_{X}^{m})^* \cE $ tend to infinity.
\end{Example}

\medskip

\begin{Corollary}
Let $\cE$ be a coherent reflexive $\cO_X$-module. If $\cE|_{X_{reg}}$ is a strongly projectively n-flat vector bundle then  $\End \cE|_{X_{reg}}$ is strongly numerically flat.
\end{Corollary}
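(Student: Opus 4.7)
The plan is to verify condition (5) of Corollary \ref{characterization-num-flat} for the reflexive $\cO_X$-module $\End\cE$ (which is reflexive as a $\cHom$ into a reflexive sheaf), taking the big open subset $U = X_{reg}$. Note that by Corollary \ref{equivalence:proj-num-flat} we already have ordinary numerical flatness of $\End\cE|_{X_{reg}}$; the real work is to upgrade this to the strong version.

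By Corollary \ref{characterization-proj-flat} the hypothesis supplies an ample collection $L = (L_1,\ldots,L_{n-1})$ for which $\cE$ is strongly slope $L$-semistable with $\int_X \Delta(\cE) L_1\cdots\widehat{L_i}\cdots L_{n-1} = 0$ for each $i$; Theorem \ref{reflexive-Delta-are-loc-free} then gives that $\cE|_{X_{reg}}$ is locally free. Writing $p^m = q_m r + r_m$ as in Lemma \ref{computation-Delta} and setting $\cE_m := F_X^{[m]}\cE(-q_m c_1(\cE))$, the boundedness argument from the proof of (4) $\Rightarrow$ (1) in Corollary \ref{characterization-proj-flat} — which invokes Theorem \ref{boundedness} and uses $\int_X \Delta(\cE_m) L_1\cdots\widehat{L_i}\cdots L_{n-1} = p^{2m}\int_X \Delta(\cE) L_1\cdots\widehat{L_i}\cdots L_{n-1} = 0$ — yields that $\{\cE_m\}_{m \ge 0}$ is a bounded family of reflexive $\cO_X$-modules.

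Finally, Proposition \ref{boundedness-FGA}(3) gives boundedness of $\{\cHom_{\cO_X}(\cE_m, \cE_m)\}_{m \ge 0}$; restricting to $X_{reg}$, where the $\cE_m$ are locally free and the line-bundle twist $\det(\cE|_{X_{reg}})^{-q_m}$ cancels inside the endomorphism sheaf, we get
$$\cHom_{\cO_X}(\cE_m, \cE_m)|_{X_{reg}} = \End\bigl(\cE_m|_{X_{reg}}\bigr) = (F_{X_{reg}}^m)^* \End\bigl(\cE|_{X_{reg}}\bigr),$$
so $\{(F_{X_{reg}}^m)^*\End(\cE|_{X_{reg}})\}_{m\ge 0}$ is bounded. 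Applying the equivalence (5) $\Leftrightarrow$ (2) of Corollary \ref{characterization-num-flat} to $\End\cE$ then concludes the proof. The only subtlety worth flagging is the cancellation of the $q_m c_1(\cE)$ twist inside $\End$, which is precisely what converts the twisted boundedness of $\{\cE_m\}$ into the untwisted Frobenius-pullback boundedness required by condition (5).
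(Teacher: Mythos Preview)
Your argument is correct and follows essentially the same route as the paper: boundedness of the twisted Frobenius pullbacks $\{\cE_m\}$ via Theorem \ref{boundedness}, then Proposition \ref{boundedness-FGA} for $\End$, the observation that the $q_m c_1(\cE)$ twist cancels in $\End$, and finally the boundedness criterion of Corollary \ref{characterization-num-flat}. The paper phrases the last identification on $X$ as $\End\cE_m = F_X^{[m]}(\End\cE)$ (both sides reflexive and agreeing on $X_{reg}$), whereas you restrict to $X_{reg}$ first; these are equivalent.
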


\begin{proof}	
The proof of Corollary \ref{characterization-proj-flat} shows that $\cE|_{X_{reg}}$ is a strongly projectively n-flat vector bundle if and only if the set $\{ \cE_m|_{X_{reg}} \}_{m\ge 0}$ defined in that proof is bounded. By Proposition \ref{boundedness-FGA} the set  $\{ \End \cE_m|_{X_{reg}} \}_{m\ge 0}$ is also bounded. But $$\End \cE_m= F_X^{[m]} (\End \cE)$$
as both sheaves are reflexive and they agree on $X_{reg}$. So the assertion follows from Corollary \ref{characterization-num-flat}. 
\end{proof}

\begin{Remark}\label{proj-flat-relation-to-num-flat}
We expect that in the above corollary we have not only implication but also equivalence. This holds if  for any reflexive sheaf $\cF$ on a normal projective surface $S$ we have 
	$$\int _S \Delta (\cF)=\int_Sc_2 (\End \cF)$$
	as in the vector bundle case. Unfortunately, this equality is not known in general.
\end{Remark}

\medskip

\begin{Proposition}\label{reflexive-num-flat-are-loc-free}
	Assume that $n\ge 2$  and let  $L=(L_1,...,L_{n-1})$ be a  collection of ample line bundles on $X$.
	Let $\cE$ satisfy one of the equivalent conditions from Corollary \ref{characterization-num-flat}
	and let  $0= \cE_0\subset \cE_1\subset ...\subset \cE_s=\cE$ be a
	Jordan--H\"older filtration of $\cE$ with respect to  $L$. Then  all
	$(\cE_j/\cE_{j-1})|_{X_{reg}}$ are strongly numerically flat vector bundles.
\end{Proposition}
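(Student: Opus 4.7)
I would verify condition $(8)$ of Corollary \ref{characterization-num-flat} for each reflexive hull $\cF_j := (\cE_j/\cE_{j-1})^{**}$, using the same polarization $L = (L_1,\ldots,L_{n-1})$. By Theorem \ref{reflexive-Delta-are-loc-free}, $\cF_j|_{X_{reg}}$ is already known to be locally free. Since $\cF_j$ is a Jordan--H\"older factor of $\cE$, it has slope $\mu_L(\cE) = 0$, so $\int_X \ch_1(\cF_j) L_1\cdots L_{n-1} = 0$. Strong slope $L$-semistability of $\cF_j$ is inherited from that of $\cE$: any destabilizing subsheaf of $(F_{X_{reg}}^m)^*\cF_j|_{X_{reg}}$ would lift through the exact sequence $0 \to (F_{X_{reg}}^m)^*\cE_{j-1}|_{X_{reg}} \to (F_{X_{reg}}^m)^*\cE_j|_{X_{reg}} \to (F_{X_{reg}}^m)^*\cF_j|_{X_{reg}} \to 0$ (Frobenius is flat on $X_{reg}$) to a subsheaf of $F_X^{[m]}\cE$ of positive slope, contradicting the hypothesis.

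The crux is the inequality $\int_X \ch_2(\cF_j) L_1\cdots\widehat{L_i}\cdots L_{n-1} \ge 0$. Using $\ch_2 = (c_1^2 - \Delta)/(2r)$, Hodge index theorem (which gives $\int_X c_1(\cF_j)^2 L \cdots \le 0$ since $c_1(\cF_j)$ has zero $L$-slope), and Bogomolov's inequality (which gives $\int_X \Delta(\cF_j) L \cdots \ge 0$), the problem reduces to proving the two equalities $\int_X \Delta(\cF_j) L \cdots = 0$ and $\int_X c_1(\cF_j)^2 L \cdots = 0$. Expanding $c_2$ along the filtration and passing to reflexive hulls, with $T_j := \cF_j/(\cE_j/\cE_{j-1})$ torsion in codim $\ge 2$, yields the refined filtered identity
\[
\frac{\Delta(\cE)}{r} = \sum_j \frac{\Delta(\cF_j)}{r_j} - 2\sum_j c_2(T_j) - \frac{1}{r}\sum_{j<k} r_j r_k \left(\tfrac{c_1(\cF_j)}{r_j} - \tfrac{c_1(\cF_k)}{r_k}\right)^2.
\]
After intersection with $L_1\cdots\widehat{L_i}\cdots L_{n-1}$, each of the three summands on the right is non-negative: the first by Bogomolov for strongly semistable sheaves, the second because $\ch_2(T_j) = -c_2(T_j)$ represents an effective codim-$2$ cycle (so $\int_X c_2(T_j) L\cdots \le 0$), and the third by Hodge index on a general complete intersection surface (each slope difference has zero intersection with $L_1\cdots L_{n-1}$). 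Since the left-hand side vanishes by hypothesis, each summand vanishes individually.

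In particular $\int_X \Delta(\cF_j) L\cdots = 0$ and $\int_X (c_1(\cF_j)/r_j - c_1(\cF_k)/r_k)^2 L \cdots = 0$ for all $j,k,i$. Restricting to a general complete intersection surface $S = L_1 \cap \cdots \cap \widehat{L_i} \cap \cdots \cap L_{n-1}$, Hodge index then forces $(c_1(\cF_j)/r_j - c_1(\cF_k)/r_k)|_S$ to be numerically trivial. Combining this with $\sum_j c_1(\cF_j) = c_1(\cE)$ and the hypothesis $[c_1(\cE)] = 0 \in \bar B^1(X)$ (so $c_1(\cE)|_S$ is numerically trivial on $S$), a short weighted-average argument forces each $c_1(\cF_j)|_S$ to be numerically trivial, giving $\int_X c_1(\cF_j)^2 L_1\cdots\widehat{L_i}\cdots L_{n-1} = 0$. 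This completes the verification of condition $(8)$, so $\cF_j|_{X_{reg}}$ is strongly numerically flat. The main obstacle is setting up the refined filtered Bogomolov identity above and correctly identifying the signs of the torsion correction $c_2(T_j)$ and of the Hodge-index term after intersection with the ample classes; once it is in place, extraction of the equality cases and the closing Hodge-index argument on $S$ are standard.
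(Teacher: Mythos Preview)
Your plan is correct and reaches the same conclusion as the paper, but by a considerably longer path. Two comments.

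First, you overlook that Theorem \ref{reflexive-Delta-are-loc-free} already gives $\int_X \Delta(\cF_j)\,L_1\cdots\widehat{L_i}\cdots L_{n-1}=0$ for every $i,j$ (this is the ``Moreover'' part of that theorem). Once this is in hand, the paper simply chains
\[
0 \le \int_X \ch_2(\cE)\,L_1\cdots\widehat{L_i}\cdots L_{n-1} \le \sum_j \int_X \ch_2(\cF_j)\,L_1\cdots\widehat{L_i}\cdots L_{n-1} = \sum_j \frac{\int_X c_1(\cF_j)^2\,L_1\cdots\widehat{L_i}\cdots L_{n-1}}{2r_j}\le 0,
\]
where the middle inequality is \cite[Lemma 2.4]{La-Bog-normal} and the last is the Hodge index theorem. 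Equality throughout yields $\int_X \ch_2(\cF_j)\,L_1\cdots\widehat{L_i}\cdots L_{n-1}=0$ directly, so condition (8) holds for $\cF_j$ without any separate analysis of the cross terms or of $c_1(\cF_j)|_S$. Your weighted-average argument on $S$ is thus unnecessary.

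Second, your ``refined filtered identity'' is only known to be an \emph{inequality} on normal $X$: the Chern classes of reflexive sheaves from \cite{La-Inters} do not satisfy exact Whitney relations, and the filtration estimate in \cite[Lemma~2.5]{La-Bog-normal} (as well as Lemma~\ref{Chern-classes-filtrations} here) only gives $\int_X\Delta(\cE)/r \ge \text{RHS}$. This does not break your argument, since each of your right-hand summands is nonnegative (Bogomolov, Hodge index, effectivity of $[T_j]$) and the left side vanishes; but you should state it as an inequality and justify it by citing those lemmas rather than by ``expanding $c_2$''. What your decomposition buys is the extra information $[T_j]|_{S\cap X_{\rm reg}}=0$ and numerical triviality of $c_1(\cF_j)|_S$, neither of which is needed for the proposition but both of which are consistent with (and implied by) the full equivalence in Corollary~\ref{characterization-num-flat}.
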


\begin{proof}
	By Theorem \ref{reflexive-Delta-are-loc-free} all $(\cE_j/\cE_{j-1})|_{X_{reg}}$ are locally free and for $i=1,...,n-1$ we have
	$$\int _X \Delta (\cF _j) L_1...\widehat{L_i}...L_{n-1} =0,$$ 
	where $\cF_j:=(\cE_j/\cE_{j-1})^{**}$.
	By the Hodge index theorem  
	$${\int _X c_1 (\cF _j)^2 L_1...\widehat{L_i}...L_{n-1}}
	\le \frac {(\int _X c_1 (\cF _j) L_1...L_{n-1} )^2}{L_1...L_i^2...L_{n-1}}=0.$$
	So \cite[Lemma 2.4]{La-Bog-normal} implies that
	\begin{align*}
		0\le \int _X \ch _2 (\cE) L_1...\widehat{L_i}...L_{n-1} \le \sum _i\int _X \ch _2 (\cF _j) L_1...\widehat{L_i}...L_{n-1}
		\le  
		\sum _i \frac {\int _X c_1 (\cF _j)^2 L_1...\widehat{L_i}...L_{n-1}}{2r_j}\le 0,
	\end{align*}
where $r_j=\rk \cF_j$.
	Therefore we get equalities
	$$\int _X \ch _2 (\cF _j) L_1...\widehat{L_i}...L_{n-1}=0$$  
	for all $i$ and $j$. Since $\cF_j $ is strongly slope $L$-stable, it satisfies condition (7) from Corollary \ref{characterization-num-flat}. So $(\cE_j/\cE_{j-1})|_{X_{reg}}=\cF_j|_{X_{reg}}$
	is strongly numerically flat.
\end{proof}

\subsection{S-fundamental group scheme for quasi-projective varieties}

Let $X$ be a normal quasi-projective variety defined over an algebraically closed field of positive characteristic $p$. Assume that $X$ admits a normal projective compactification  $j:X\hookrightarrow \bar X$
such that the complement of $X$ in $\bar X$ has codimension $\ge 2$.
We denote by $\Vect ^{\snf}(X)$ the full subcategory of the category of coherent $\cO_X$-modules, whose objects are strongly numerically flat vector bundles on $X$.

In view of Corollary \ref{characterization-num-flat}, we say that a coherent reflexive $\cO_{\bar X}$-module is \emph{strongly numerically flat} if the set   $\{ F_{\bar X}^{[m]} \cE \} _{m\ge 0}$ is bounded.
The  full subcategory of the category of coherent $\cO_{\bar X}$-modules, whose objects are strongly numerically flat reflexive $\cO_{\bar X}$-modules is denoted by  $\Ref ^{\snf}(\bar X)$.

\begin{Example}\label{Kummer}
Let $\bar X$ be a singular Kummer surface, i.e., the quotient of an abelian surface by the sign involution (for simplicity assume that the characteristic of the base field  $p\ne 2$). 
Since the map $\pi: A\to \bar X$ is \'etale of degree $2$ over $X:={\bar X}_{reg}$, there exists a $2$-torsion Weil divisor $D$ on ${\bar X}$ such that $\pi_*\cO_A=\cO_{\bar X}\oplus \cO_{\bar X}(D)$. Then  $\pi^{[*]}\cO_{\bar X}(D)=\cO_A$ and $\cE=\cO_{\bar X}(D)$ is a strongly numerically flat reflexive $\cO_{\bar X}$-module. Note that both $\cE$ and $\cE ^*$ are weakly positive over $X$. However, $\cO_{\PP(\cE)}(1)$ is not nef. Indeed, if $C$ is any smooth curve on ${\bar X}$ passing through one of the singular points  then $\cO_{\PP(\cE|_C)}(1)$ is not nef as $\cE|_C$ splits into a direct sum of a torsion sheaf and a negative line bundle (cf. \cite[Remark 2.6]{Ga} for a different argument). 
\end{Example}

\medskip

\begin{Proposition}\label{Vect-snf-rigid-abelian}
	$\Vect ^{\snf}(X)$ is a rigid  abelian $k$-linear tensor category.
\end{Proposition}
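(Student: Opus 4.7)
The plan is to reduce all closure properties to the boundedness characterization of strong numerical flatness supplied by Corollary \ref{characterization-num-flat}, and to invoke Grothendieck's boundedness result (Proposition \ref{boundedness-FGA}) on the proper scheme $\bar X$. Since $X$ is quasi-projective it is covered by projective curves, so Proposition \ref{Nori} already yields that $\Vect^{\nf}(X)$ is a rigid abelian $k$-linear tensor category; it therefore suffices to check that $\Vect^{\snf}(X)$ is a full subcategory closed under kernels, images, cokernels, tensor products, and duals, from which rigidity will follow formally from the standard evaluation and coevaluation morphisms on vector bundles. The first step is to make the bridge between $X$ and $\bar X$ precise: by Corollary \ref{characterization-num-flat} a vector bundle $\cE$ on $X$ lies in $\Vect^{\snf}(X)$ if and only if $\{(F_X^{m})^*\cE\}_{m\ge 0}$ is bounded on $X$, equivalently $\{F_{\bar X}^{[m]}(j_*\cE)\}_{m\ge 0}$ is bounded on $\bar X$. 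The two families correspond under $j_*$ because $j_*((F_X^{m})^*\cE)$ and $F_{\bar X}^{[m]}(j_*\cE)$ are both reflexive and agree on the big open subset $X$, and Proposition \ref{needed-for-independence-of-compactification} carries boundedness across $j_*$.

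Given a morphism $\varphi:\cE_1\to\cE_2$ in $\Vect^{\snf}(X)$, I would extend it to $\bar\varphi:j_*\cE_1\to j_*\cE_2$. Since $\bar X$ is proper and both families $\{F_{\bar X}^{[m]}j_*\cE_i\}$ are bounded, Proposition \ref{boundedness-FGA}(1) shows that the kernels, images, and cokernels of the morphisms $F_{\bar X}^{[m]}\bar\varphi$ form bounded families on $\bar X$. Restricting to $X$ and using that any short exact sequence of vector bundles is locally split (and hence preserved by $(F_X^{m})^*$), these restrictions are precisely $(F_X^{m})^*(\ker\varphi)$, $(F_X^{m})^*(\im\varphi)$, and $(F_X^{m})^*(\coker\varphi)$, which are therefore bounded on $X$; hence all three lie in $\Vect^{\snf}(X)$. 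The same strategy, using Proposition \ref{boundedness-FGA}(2) and (3) respectively, proves closure under tensor products, via $(F_X^{m})^*(\cE_1\otimes\cE_2)=(F_X^{m})^*\cE_1\otimes (F_X^{m})^*\cE_2$, and closure under duals, via $(F_X^{m})^*(\cE^*)=\cHom((F_X^{m})^*\cE,\cO_X)$.

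The main technical subtlety to keep an eye on is that the kernels, images, and cokernels appearing on $\bar X$ need not themselves be reflexive, so one cannot directly identify them with $j_*$ of their restrictions to $X$. This is nevertheless harmless: boundedness in Proposition \ref{boundedness-FGA} is tested on arbitrary coherent sheaves, and the restrictions to the big open $X$ are already vector bundles, on which Frobenius pullback is exact and the formulas above are unambiguous. Once boundedness on $X$ is established, Corollary \ref{characterization-num-flat} directly delivers strong numerical flatness without any further appeal to reflexivity on $\bar X$.
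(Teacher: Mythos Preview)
Your proof is correct and follows essentially the same approach as the paper: both arguments pivot on the boundedness characterization of strong numerical flatness (Corollary \ref{characterization-num-flat}) and invoke Proposition \ref{boundedness-FGA} on $\bar X$ to show closure under kernels, cokernels, tensor products, and duals. The only organizational difference is that you first appeal to Proposition \ref{Nori} to obtain the abelian structure on $\Vect^{\nf}(X)$ and then verify that the subcategory $\Vect^{\snf}(X)$ is closed, whereas the paper works directly in $\Ref^{\snf}(\bar X)$ with reflexive operations and passes to $X$ at the end; this is a matter of presentation rather than substance.
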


\begin{proof}
	Let  $\cE_1$ and $\cE_2$ be strongly numerically flat reflexive sheaves on $\bar X$ and let 
	$\varphi: \cE_1\to \cE_2$ be an $\cO_{\bar X}$-linear morphism. 
	By Proposition \ref{boundedness-FGA} we know that the sets 
	$$\{F_{\bar X}^{[m]} (\ker \varphi) \}_{m\ge 0},\quad \{F_{\bar X}^{[m]} ((\im \varphi)^{**}) \}_{m\ge 0}\quad  \hbox{ and }\quad \{F_{\bar X}^{[m]} ((\coker \varphi )^{**}) \}_{m\ge 0}$$	
	are bounded. Therefore $\ker \varphi$, $(\im \varphi )^{**}$ and $(\coker \varphi )^{**}$ are strongly numerically flat. Similarly, since 
	$$F_{\bar X}^{[m]}(\cE_1\hat \otimes \cE_2)=  \cHom _X ( \cHom _X (F_X^{[m]}\cE_1\otimes _{\cO_{\bar X}}F_{\bar X}^{[m]}\cE_2, \cO_{\bar X}), \cO_{\bar X}),$$
	Proposition \ref{boundedness-FGA} implies that the set  $\{ F_{\bar X}^{[m]}(\cE_1\hat \otimes _{\cO_{\bar X}} \cE_2)	\}_{m\ge 0}$ is bounded and hence $\cE_1\hat \otimes  \cE_2$ is strongly numerically flat. Since dual of a strongly numerically flat reflexive sheaf is strongly numerically flat, also $\cHom_{\bar X} (\cE_1, \cE_2)= \cE_1^*\hat \otimes  \cE_2$ is strongly numerically flat.
	Since the restriction of a strongly numerically flat reflexive sheaf on ${\bar X}$ to $X$ is locally free, 
	this implies that $\Vect ^{\snf}({\bar X})$ is a rigid abelian $k$-linear tensor category. 
\end{proof}

\begin{Remark}
	Note that the above proposition does not follow from Proposition \ref{Nori}. For example,
	it is not clear if the second Chern class of $\cE_1\hat\otimes \cE_2$ can be computed using Chern classes of $\cE_1$ and $\cE_2$.  The above proposition says that it is indeed so for strongly numerically flat reflexive $\cO_{\bar X}$-modules. An even more serious problem is that the second Chern class of the kernel of a map between vector bundles depends on the map and it cannot be computed from Chern classes of the bundles. 
\end{Remark}

\medskip

Corollary \ref{characterization-num-flat} implies the following result:

\begin{Lemma}\label{equivalences-of-num-flat-categories}
If $X$ is smooth then the functors
$j_* :\Vect ^{\snf}(X)\to\Ref ^{\snf}({\bar X}) $ and $j^*:\Ref ^{\snf}({\bar X})\to \Vect ^{\snf}(X)$ define adjoint equivalences of the categories $\Vect ^{\snf}(X)$ and $ \Ref ^{\snf}({\bar X})$.
In particular, the open embedding $X\hookrightarrow {\bar X}_{reg}$ induces an equivalence of categories
 $\Vect ^{\snf}(X)$ and $\Vect ^{\snf}({\bar X}_{reg})$.
\end{Lemma}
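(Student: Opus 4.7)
The plan is to show that $j_*$ and $j^{*}$ are well-defined between the two categories and then check that the unit and counit of the standard adjunction $(j^{*},j_*)$ are isomorphisms. Both checks are made essentially formal by the equivalent characterizations collected in Corollary \ref{characterization-num-flat}.

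First I would verify that the functors land in the right categories. If $\cE\in\Vect^{\snf}(X)$, then $j_*\cE$ is a coherent reflexive $\cO_{\bar X}$-module because $X\subset \bar X$ is a big open subset of the normal variety $\bar X$. Since $(j_*\cE)|_X=\cE$ is strongly numerically flat, condition (3) of Corollary \ref{characterization-num-flat} (applied to $\bar X$ with the big open subset $U=X$) gives condition (6), namely that the family $\{F_{\bar X}^{[m]}(j_*\cE)\}_{m\ge 0}$ is bounded, so $j_*\cE\in\Ref^{\snf}(\bar X)$. Conversely, if $\cF\in \Ref^{\snf}(\bar X)$, then by the same corollary $\cF|_{\bar X_{reg}}$ is a strongly numerically flat vector bundle; since $X$ is smooth and open in $\bar X$, we have $X\subset \bar X_{reg}$, hence $j^{*}\cF=\cF|_X$ is a strongly numerically flat vector bundle on $X$.

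Next I would check the adjunction maps. The unit $\cE\to j^{*}j_*\cE$ is an isomorphism because $j$ is an open embedding. For the counit $j_*j^{*}\cF\to \cF$, both $j_*j^{*}\cF$ and $\cF$ are reflexive $\cO_{\bar X}$-modules that agree on the big open subset $X\subset \bar X$; since a reflexive sheaf on a normal scheme is determined by its restriction to any big open subset (as a consequence of Serre's condition $(S_2)$, see Proposition \ref{reflexive-standard-equivalence}), this map is an isomorphism. Together with the fact that both categories are rigid abelian tensor categories (Proposition \ref{Vect-snf-rigid-abelian}), this gives the asserted pair of adjoint equivalences.

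Finally, for the "in particular" statement I would apply the equivalence above simultaneously to the two big open embeddings $X\hookrightarrow \bar X$ and $\bar X_{reg}\hookrightarrow \bar X$; both produce equivalences with $\Ref^{\snf}(\bar X)$, and composing yields the equivalence $\Vect^{\snf}(X)\simeq \Vect^{\snf}(\bar X_{reg})$. One only needs to observe that the composed equivalence is naturally isomorphic to pullback along the open embedding $X\hookrightarrow \bar X_{reg}$, which again follows from the fact that reflexive sheaves on the normal variety $\bar X$ are recovered by pushforward from any big open subset. The main obstacle here is not really the formal adjunction manipulation but rather ensuring that strong numerical flatness is preserved in both directions under $j_*$ and $j^{*}$; this is precisely the content for which Corollary \ref{characterization-num-flat} was designed, and once invoked the rest is routine.
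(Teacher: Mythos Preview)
Your proposal is correct and follows exactly the route the paper intends: the paper's entire argument is the one-line remark that the lemma is implied by Corollary \ref{characterization-num-flat}, and you have faithfully unpacked this by using the equivalence of conditions (3), (5) and (6) there to show $j_*$ and $j^*$ land in the right categories, then invoking the standard $S_2$-extension property of reflexive sheaves on normal schemes for the adjunction isomorphisms. One cosmetic point: in the adjunction $(j^*,j_*)$ with $j^*$ left adjoint, what you call the ``unit'' is actually the counit $j^*j_*\cE\to\cE$ (inverted), and vice versa for the other map; the substance is unaffected since both are isomorphisms here, but you may want to relabel.
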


In general, restriction to ${\bar X}_{reg}$ gives an equivalence of $\Vect ^{\snf}(X)$ with the full subcategory of 
$\Vect ^{\snf}({\bar X}_{reg})$. This subcategory is also a rigid  abelian $k$-linear tensor category.
Every $k$-point $x$ of $X$ gives a fiber functor $T_x:\Vect ^{\snf}(X) \to k\operatorname{-}\Vect$ defined by sending $\cE$ to $\cE\otimes k(x)$. Then  $(\Vect ^{\snf}(X), \otimes, T_x, \cO_X)$ is a neutral Tannaka category
by Proposition \ref{Vect-snf-rigid-abelian}.

\begin{Definition}
	The affine $k$-group scheme Tannaka dual to  $(\Vect ^{\snf}(X), \otimes, T_x, \cO_X)$ is called the \emph{S-funda\-men\-tal group scheme} of $X$ and it is denoted by $\pi_1^S(X, x)$.
\end{Definition}

As in the projective case, the above S-fundamental group scheme allows us to recover Nori's fundamental group scheme $\pi_1^N(X, x)$ (cf. \cite[Section 6]{La-S-fund}). In particular, 
if $k$ is an algebraic closure of a finite field then $\pi_1^S(X, x)=\pi_1^N(X, x)$.
Existence of the S-fundamental group scheme for normally big varieties raises many questions that we do not study in this paper. Let us just mention that the Mal\v{c}ev--Grothendieck theorem 
suggests that vanishing of $\pi_1^N(X, x)$ implies vanishing of $\pi_1^S(X, x)$. This is indeed the case if
$X$ is projective (see \cite{EM}). An analogous problem relating $\cO_X$-coherent ${\mathcal D}_X$-modules to 
the \'etale fundamental group was considered in \cite{ES}.

\begin{Remark}
 Simple representations of  $\pi_1^S(X, x)$ correspond to simple objects in $\Vect ^{\snf}(X)$. If 
 $X$ is smooth and  $(L_1,...,L_{n-1})$ is a   collection  of ample line bundles on $\bar X$ then Proposition \ref{reflexive-num-flat-are-loc-free} implies that $\cE \in \Vect ^{\snf}(X)$ is simple if and only if $j_*\cE$ is slope $(L_1,...,L_{n-1})$-stable.	In particular, this condition does not depend on the choice of a small compactification of $X$ and an ample collection.
\end{Remark}

The following proposition generalizes \cite[Chapter II, the first Corollary after Proposition 6 and Proposition 7]{No} to our setting.

\begin{Proposition}
	Let $ X \hookrightarrow  X'\hookrightarrow \bar X$ be any open subset factoring $j$
and let $x\in X(k)$. Then we have an induced faithfully flat homomorphism
$$\pi_1^S(X, x)\to \pi_1^S(X', x)$$
of S-fundamental group schemes, which is an isomorphism if $X'$ is regular. In particular, if $X$
is smooth then the open embedding $X\hookrightarrow \bar X_{reg}$ induces an isomorphism $\pi_1^S(X, x)\simeq \pi_1^S(\bar X_{reg}, x)$.
\end{Proposition}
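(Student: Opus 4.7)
The plan is to build the homomorphism from a restriction functor between Tannakian categories and then apply the Deligne--Milne criterion for faithful flatness \cite{DM}. Let $\iota: X\hookrightarrow X'$ be the open immersion, so $j=j'\circ\iota$ with $j': X'\hookrightarrow \bar X$. The restriction $\iota^*: \Vect^{\snf}(X')\to \Vect^{\snf}(X)$ is a well-defined $k$-linear exact tensor functor: any finite morphism $g: S\to X$ from a normally big surface factors through $X'$ as $\iota\circ g$, so the vanishing of $\int_{\bar S}\ch_2$ in the definition of strong numerical flatness descends from $\cE$ on $X'$ to $\iota^*\cE$ on $X$. Compatibility with the fiber functors at $x$ then produces the desired homomorphism $\pi_1^S(X,x)\to \pi_1^S(X',x)$.

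For faithful flatness, it suffices to check that $\iota^*$ is fully faithful and that every subobject of $\iota^*\cE$ in $\Vect^{\snf}(X)$, for $\cE\in \Vect^{\snf}(X')$, lifts to a subobject of $\cE$ in $\Vect^{\snf}(X')$. Full faithfulness follows from the observation that for $\cE_1,\cE_2\in \Vect^{\snf}(X')$, the internal Hom $\cHom(\cE_1,\cE_2)$ is a vector bundle and hence reflexive on $X'$, combined with the fact that $X'\setminus X$ has codimension $\ge 2$ in the normal variety $X'$; this forces $\Hom_{X'}(\cE_1,\cE_2)\simeq \Hom_X(\iota^*\cE_1,\iota^*\cE_2)$.

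For the subobject condition, given a monomorphism $\cF\hookrightarrow \iota^*\cE$ in $\Vect^{\snf}(X)$, I would take $\tilde\cF:=\iota_*\cF$, a reflexive subsheaf of $\iota_*\iota^*\cE=\cE$ (the equality uses local freeness of $\cE$ on $X'$). Pushing further to $\bar X$, the sheaf $j_*\cF$ lies in $\Ref^{\snf}(\bar X)$ via the equivalence $\Vect^{\snf}(X)\simeq \Ref^{\snf}(\bar X)$ of Lemma \ref{equivalences-of-num-flat-categories}, and it sits inside $j'_*\cE$. A short depth computation, relying on local freeness of $\cE$ on $X'$ together with condition $(S_2)$ for $\tilde\cF$, shows that the quotient $\cE/\tilde\cF$ is torsion-free on $X'$. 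The serious technical obstacle is to confirm that $\tilde\cF$ is actually locally free on $X'$, as required for membership in $\Vect^{\snf}(X')$; when $X'$ has singularities in $X'\setminus X$ this requires a separate argument beyond the depth computation.

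When $X'$ is regular, however, Lemma \ref{equivalences-of-num-flat-categories} applies directly to the embedding $X'\hookrightarrow \bar X$ (still a big open immersion since $\bar X\setminus X'\subset \bar X\setminus X$) and yields an equivalence $\Vect^{\snf}(X')\simeq \Ref^{\snf}(\bar X)$. Composed with its counterpart for $X$, this realizes $\iota^*$ as an equivalence of Tannakian categories: both full faithfulness and the subobject condition become tautological, $\tilde\cF$ is automatically locally free, and $\pi_1^S(X,x)\to \pi_1^S(X',x)$ is an isomorphism. The final ``in particular'' assertion then follows by taking $X'=\bar X_{reg}$, which is a regular open subset containing $X$ (because $X$ is smooth) and whose complement in $\bar X$ is the singular locus of the normal variety $\bar X$, hence of codimension $\ge 2$.
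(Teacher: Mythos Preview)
Your approach is the same as the paper's: produce the homomorphism from the restriction functor $\iota^*$ and then invoke the Deligne--Milne criterion \cite[Proposition~2.21]{DM}, with Lemma~\ref{equivalences-of-num-flat-categories} supplying the categorical input. Your full-faithfulness argument is correct, and your treatment of the case $X'$ regular is complete and matches the paper exactly: once $X'$ is regular, Lemma~\ref{equivalences-of-num-flat-categories} applies to both $X\hookrightarrow\bar X$ and $X'\hookrightarrow\bar X$, so $\iota^*$ is an equivalence and the induced map of group schemes is an isomorphism. The ``in particular'' for $X$ smooth and $X'=\bar X_{reg}$ follows as you say.

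Where your write-up differs from the paper is in candour: you flag a genuine obstacle in the general subobject condition, namely that $\tilde\cF=\iota_*\cF$ need not obviously be locally free at singular points of $X'\setminus X$. You establish that $\tilde\cF$ is reflexive, saturated in $\cE$, and (via Corollary~\ref{characterization-num-flat}) locally free on $X'_{reg}\cup X$; the remaining locus is $(X'\setminus X)\cap X'_{\mathrm{sing}}$, and there your argument stops. The paper's proof is a one-line citation and does not address this point explicitly either, so you have not omitted anything the paper provides. If you want to close the gap yourself, note that the essential image of $\Vect^{\snf}(X')\hookrightarrow\Ref^{\snf}(\bar X)$ consists of those $\cG$ with $\cG|_{X'}$ locally free; what one must show is that this image is closed under subobjects lying in the image of $\Vect^{\snf}(X)$. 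I do not see a one-line reason for this beyond what you have already written, so your caution is well placed.
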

 
\begin{proof}
The assertions follow easily from 
Lemma \ref{equivalences-of-num-flat-categories} and \cite[Proposition 2.21]{DM}.
\end{proof}

\begin{Corollary}\label{asked-by-referee}
Let  $Y$ and $Y'$ be normal projective varieties that are isomorphic in codimension $1$. Then $\pi_1^S(Y_{reg})\simeq \pi_1^S(Y'_{reg})$.
\end{Corollary}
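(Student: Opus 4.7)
The plan is to produce a smooth normally big open subset $U$ sitting inside both $Y_{reg}$ and $Y'_{reg}$ (after identification), and then use the preceding proposition to identify its S-fundamental group with that of $Y_{reg}$ on one side and $Y'_{reg}$ on the other.

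More precisely, by hypothesis there are open subsets $V\subset Y$ and $V'\subset Y'$ whose complements have codimension $\ge 2$ and an isomorphism $\varphi: V\xrightarrow{\sim} V'$. I would set $U := V\cap Y_{\mathrm{reg}}$. Then $U$ is a big open subset of $Y$, since it is the intersection of two big open subsets of the normal variety $Y$. Because $U$ is smooth and $\varphi|_U$ is an isomorphism, $\varphi(U)$ is smooth, hence contained in $Y'_{\mathrm{reg}}$. Moreover, the complement of $\varphi(U)$ in $Y'$ is the union of $Y'\setminus V'$ (of codimension $\ge 2$ in $Y'$ by assumption) and $\varphi(V\cap (Y\setminus Y_{\mathrm{reg}}))$, which has codimension $\ge 2$ in $V'$ (hence in $Y'$) because $\varphi$ is an isomorphism and $Y\setminus Y_{\mathrm{reg}}$ has codimension $\ge 2$ in $Y$.

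Thus $U$ is a smooth normally big $k$-variety admitting $Y$ as a small compactification, and simultaneously (via $\varphi$) $\varphi(U)\simeq U$ is a smooth normally big variety admitting $Y'$ as a small compactification. Pick a point $x\in U(k)$. By the preceding proposition applied with the small compactification $Y$ of $U$, the open embedding $U\hookrightarrow Y_{\mathrm{reg}}$ induces an isomorphism
$$\pi_1^S(U,x)\simeq \pi_1^S(Y_{\mathrm{reg}},x).$$
Applying the same proposition to $\varphi(U)\hookrightarrow Y'_{\mathrm{reg}}$ with small compactification $Y'$ gives
$$\pi_1^S(\varphi(U),\varphi(x))\simeq \pi_1^S(Y'_{\mathrm{reg}},\varphi(x)).$$
Finally, $\varphi$ itself induces an isomorphism $\pi_1^S(U,x)\simeq \pi_1^S(\varphi(U),\varphi(x))$ by functoriality of the Tannakian construction (the pullback $\varphi^*$ is an equivalence between $\Vect^{\snf}(\varphi(U))$ and $\Vect^{\snf}(U)$). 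Combining the three isomorphisms yields $\pi_1^S(Y_{\mathrm{reg}})\simeq \pi_1^S(Y'_{\mathrm{reg}})$, where we suppress basepoints as usual (the statement is up to inner isomorphism).

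The only nontrivial ingredient is the preceding proposition, so there is no real obstacle here; the subtlety is just bookkeeping to ensure that the same smooth open set $U$ is simultaneously big in $Y$ and (via $\varphi$) big in $Y'$, which is why one must intersect with $Y_{\mathrm{reg}}$ before transporting through $\varphi$.
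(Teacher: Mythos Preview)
Your proof is correct and follows essentially the same route as the paper's own argument. The paper writes the chain $\pi_1^S(Y_{reg})\simeq \pi_1^S(U_{reg})\xrightarrow{(\varphi|_{U_{reg}})_*}\pi_1^S(U'_{reg})\simeq \pi_1^S(Y'_{reg})$, where $U,U'$ are the big open subsets with $\varphi:U\to U'$; your set $V\cap Y_{\mathrm{reg}}$ is exactly $U_{\mathrm{reg}}$ in that notation, and your verification that $\varphi(U)$ is big in $Y'$ is the content of $\varphi(U_{\mathrm{reg}})=U'_{\mathrm{reg}}$ being big.
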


\begin{proof}
	Let $U\subset Y$ and $U'\subset Y'$ be open subsets with complements of codimension $\ge 2$ 
	and let $\varphi: U\to U'$ be an isomorphism. Then by the above proposition we have the following isomorphisms:
$$\pi_1^S(Y_{reg})\simeq \pi_1^S(U_{reg})\mathop{\DOTSB\relbar\joinrel\relbar\joinrel\relbar\joinrel\longrightarrow}^{(\varphi|_{U_{reg}})_*}\pi_1^S(U'_{reg})\simeq \pi_1^S(Y'_{reg}).$$
\end{proof}

\section{Local freeness for modules with $\lambda$-connection}

In this section we prove Theorem \ref{main-Higgs} and its generalization to multi-polarizations (see Remark \ref{proof-ofmain-Higgs}).

\medskip

\begin{Theorem} \label{reflexive-loc-free-Delta-Higgs}
	Let $U$ be a normally big variety of dimension $n$ defined over an algebraically closed field $k$ of characteristic $p>0$.
	Assume that $U$ has a lifting $\tilde U$ to $W_2(k)$. Let $X$ be a small compactification of $U$ such that $X$ has $F$-liftable singularities in codimension $2$ with respect to $\tilde U$ and let $L=(L_1,...,L_{n-1})$
	be a collection of ample line bundles on $X$.
	Let  $(V, \nabla)$ be a reflexive $\cO_X$-module with an integrable $\lambda$-connection for some $\lambda\in k$. Assume that  $(V, \nabla)$ is  slope $L$-semistable of rank
	$r\le p$ with
	$$\int_X \Delta (V) L_1...\widehat{L_i}...L_{n-1}= 0$$
	for $i=1,...,n-1$. Let $N^{\bullet }$ be a Griffiths transverse filtration on $(V, \nabla)$ with slope $L$-semistable
associated graded Higgs sheaf  $(\cE:=\Gr _{N} V, \theta )$. Then the following conditions are satisfied:
	\begin{enumerate}
		\item if $M=(M_1,..., M_{n-1})$ is a collection of nef line bundles on $X$ then 
		both $(V, \nabla)$ and $(\cE , \theta)$ are slope $M$-semistable,
		\item $\int _{X}\Delta (V)= \int _{X}\Delta (\cE^{**})= 0$,
		\item  $V|_{X_{reg}}$ and  $\cE|_{X_{reg}}$ locally free.
	\end{enumerate}
\end{Theorem}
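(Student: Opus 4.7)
The approach adapts the strategy of Theorem \ref{reflexive-Delta-are-loc-free} to the $\lambda$-connection setting, with the passage between $(V,\nabla)$ and its associated graded $(\cE,\theta)$ mediated by the filtered-sheaf lemmas of Section~2. The core idea is to first pin down the Higgs side $\cF:=\cE^{**}$, then transfer local freeness back to $V$ via a graded-to-total criterion.

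Setting $\cF:=\cE^{**}$, I would first compare discriminants. Since $N^\bullet$ is a filtration with torsion-free graded, $c_1(V)=c_1(\cE)=c_1(\cF)$, and since $\cF/\cE$ is supported in codimension $\ge 2$, additivity of $\ch_2$ along the filtration together with the inequality $\int_X\ch_2(\cF)\cdot(\cdot)\ge\int_X\ch_2(\cE)\cdot(\cdot)$ against ample intersections (this is essentially the content of the inequality direction in Lemma \ref{Chern-classes-filtrations}) yields
$$\int_X \Delta(\cF) L_1\cdots\widehat{L_i}\cdots L_{n-1} \le \int_X \Delta(V) L_1\cdots\widehat{L_i}\cdots L_{n-1}=0.$$
Combined with Bogomolov's inequality for slope $L$-semistable reflexive Higgs sheaves (the Higgs-strengthening of \cite[Theorem 3.4]{La-Bog-normal} available in this singular setting), equality holds at each index $i$, reducing the local freeness problem to the Higgs sheaf $(\cF,\theta)$.

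I would then prove local freeness of $\cF|_{X_{reg}}$ by induction on $n$. For $n=2$ this is automatic. For $n\ge 3$, I would choose a normal hypersurface $H\in|L_1^{\otimes m}|$ for large $m$ via Theorem \ref{Bertini's-theorem} such that $H_{reg}=X_{reg}\cap H$, $\tilde\cF:=(\cF|_H)^{**}$ is reflexive off a finite set, and the restricted Higgs field $\tilde\theta$ makes $(\tilde\cF,\tilde\theta)$ slope $(\imath^*L_2,\ldots,\imath^*L_{n-1})$-semistable (using a Mehta--Ramanathan-type restriction theorem for Higgs sheaves). Lemma \ref{inequality-on-Delta} combined with Bogomolov forces $\int_H\Delta(\tilde\cF)\imath^*L_2\cdots\widehat{\imath^*L_i}\cdots \imath^*L_{n-1}=0$, and the inductive hypothesis gives local freeness of $\tilde\cF$ on $H_{reg}$. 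Now the boundedness of $\{F_X^{[m]}\cF\}_{m\ge 0}$ (via Theorem \ref{boundedness}; Frobenius preserves Higgs-semistability and discriminant vanishing, while the $F$-liftability in codimension~$2$ keeps the Ogus--Vologodsky Cartier transform available on the smooth locus) together with the Euler characteristic comparison from Case~1 of Theorem \ref{reflexive-Delta-are-loc-free} propagates this back to show $\cF|_{X_{reg}}$ is locally free. Vanishing of $\int_X\Delta(\cF)$ as a multilinear form then follows by reduction to surface intersections via Lemma \ref{computation-Delta} and Theorem~5.2 of \cite{La-Inters}.

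To deduce local freeness of $V|_{X_{reg}}$, I would apply Lemma \ref{loc-free-passing-to-graded} to the filtration $N^\bullet$: its first hypothesis (that $\cF|_{X_{reg}}$ is locally free) has just been established, and its second (that $\cE|_{X_{reg}}$ is locally free outside codimension~$3$) follows from Lemma \ref{Chern-classes-filtrations} applied to the equality $\int_X\Delta(V)L_1\cdots\widehat{L_i}\cdots L_{n-1}=\int_X\Delta(\cF)L_1\cdots\widehat{L_i}\cdots L_{n-1}$. This simultaneously yields local freeness of both $V|_{X_{reg}}$ and $\cE|_{X_{reg}}$, isomorphy of $\cE\to\cF$ on $X_{reg}$, and the full vanishing $\int_X\Delta(V)=\int_X\Delta(\cF)=0$. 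Slope semistability of $(V,\nabla)$ and $(\cE,\theta)$ with respect to any nef multi-polarization $M$ then follows from the boundedness of the Frobenius family: a $T_\lambda$-invariant (resp.\ Higgs) subsheaf with slope exceeding $\mu_M(V)$ would, under iterated Frobenius pullback, generate an unbounded family of subsheaves with slopes tending to infinity, contradicting boundedness.

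The principal obstacle will be the hypersurface restriction step: one must simultaneously restrict the filtration $N^\bullet$ so that the Griffiths-transverse structure descends to $H$, control the Chern class discrepancy between $(\cE|_H)^{**}$ and $\tilde\cF$ at the finitely many bad points on $H$, and verify that the Higgs field $\tilde\theta$ still satisfies the semistability needed to close the induction. Lemma \ref{passing-to-grading-for-hypersurfaces} is tailored precisely for bookkeeping on the filtration side, but its hypotheses must be verified at each inductive step, and the reliance on $F$-liftability only in codimension~$2$ (rather than smoothness everywhere) means that Ogus--Vologodsky cannot be applied globally, so every use of the Cartier transform must be confined to $X_{reg}$ and its effect on Chern classes extracted through the reflexive-sheaf formalism of \cite{La-Inters}.
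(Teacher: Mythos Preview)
Your proposal has a genuine gap at its load-bearing step. You claim boundedness of $\{F_X^{[m]}\cF\}_{m\ge 0}$ ``via Theorem \ref{boundedness}; Frobenius preserves Higgs-semistability and discriminant vanishing,'' and later you run the nef-semistability argument by ``iterated Frobenius pullback.'' But Frobenius pullback does \emph{not} preserve Higgs-semistability: if $\theta\colon\cF\to\cF\otimes\Omega_X$, then $F_X^*\theta$ lands in $F_X^*\cF\otimes F_X^*\Omega_X$, not $F_X^*\cF\otimes\Omega_X$, so $(F_X^{[m]}\cF,F_X^*\theta)$ is not even a Higgs sheaf in the required sense. Consequently there is no reason for $\mu_{\max,L}(F_X^{[m]}\cF)$ to stay bounded, Theorem \ref{boundedness} does not apply, and the Euler-characteristic comparison you invoke from Case~1 of Theorem \ref{reflexive-Delta-are-loc-free} collapses. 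The same failure kills your part~(1): a $T_\lambda$-invariant destabilizer $W\subset V$ does not yield a destabilizing subsheaf of $F_X^{[m]}\cF$ by naive Frobenius pullback.

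The paper's proof supplies exactly the missing ingredient: instead of iterating Frobenius on a single Higgs sheaf, it builds the full Higgs--de Rham sequence
\[
(V,\nabla)=(V_{-1},\nabla_{-1})\;\to\;(\cE_0,\theta_0)\;\xrightarrow{C^{-1}_{\tilde U}}\;(V_0,\nabla_0)\;\to\;(\cE_1,\theta_1)\;\to\;\cdots
\]
using Simpson's filtrations $S^\bullet_m$ (Theorem \ref{existence-of-gr-ss-Griffiths-transverse-filtration}) and the inverse Cartier transform $C^{-1}_{\tilde U}$. Each $(\cE_m,\theta_m)$ is Higgs-semistable \emph{by construction}, and \cite[Corollary 5.11]{La-Bog-normal} gives $\int_X\Delta(V_m)=p^2\int_X\Delta(\cE_m)$, forcing all discriminants to vanish. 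It is this sequence (not Frobenius pullbacks of $\cF$) whose twisted members $\cE_m'=\cE_m(-q_mc_1(\cE_m))$ are bounded via Theorem \ref{boundedness}, and it is this sequence one propagates through the hypersurface to run the Euler-characteristic argument. The technical Lemma \ref{passing-to-grading-for-hypersurfaces}, which you mention only as a potential obstacle, is what bridges $h^0(\tilde V_m/\imath^*V_m)$ and $h^0(\cF_m/\imath^*\cE_m)$ in Case~1; it is not optional. Likewise, part~(1) is proved by pushing a hypothetical destabilizer $W$ through the Higgs--de Rham flow, $(\cF_m,\eta_m):=(\Gr_{S_{m-1}}C^{[-1]}(\cF_{m-1},\eta_{m-1}))^{**}$, yielding subsheaves of the bounded family $\{\cE_m'\}$ with unbounded slopes. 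Your outline is on the right track in spirit, but the substitution of the Higgs--de Rham flow for naive Frobenius iteration is not a detail---it is the central mechanism that makes the whole argument run.
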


\begin{proof}
	By \cite[Theorem 1.13]{La-Bog-normal} we can assume that $U=X_{reg}$.	
	Let	us set $(V_{-1}, \nabla_{-1}):=(V, \nabla)$ and $S_{-1}^{\bullet}:=N^{\bullet }$. Let
	$${  \xymatrix{
			(V_{-1}, \nabla _{-1}) \ar[rd]^{(\Gr _{S_{-1}})^{**}}	&& (V_0, \nabla _0)\ar[rd]^{(\Gr _{S_0})^{**}}&& (V_1, \nabla _1)\ar[rd]^{(\Gr _{S_1})^{**}}&\\
			&	(\cE_0, \theta _0)\ar[ru]^{C_{\tilde U}^{[-1]}}&&	(\cE_1, \theta_1)\ar[ru]^{C_{\tilde U}^{[-1]}}&&...\\
	}}$$
	be  a Higgs--de Rham sequence, i.e.,  $(V_j, \nabla_j)=C_{\tilde U}^{[-1]} (\cE_{j}, \theta _{j})$
	(see \cite[5.1]{La-Bog-normal}) and  $(\cE_{j+1}, \theta _{j+1})= (\Gr _{S_j} (V_j, \nabla_j))^{**}$ for Simpson's filtration $S^{\bullet}_j$ on $(V_j, \nabla_j)$  (see Theorem \ref{existence-of-gr-ss-Griffiths-transverse-filtration}).  
	By \cite[Lemma 2.4 and Theorem 0.4]{La-Bog-normal} we have for $i=1,...,n-1$
	$$\int_X \Delta (V_{m-1}) L_1...\widehat{L_i}...L_{n-1}\ge  \int_X \Delta (\cE_{m}) L_1...\widehat{L_i}...L_{n-1}\ge 0$$
	for $m\ge 0$. Since by \cite[Corollary 5.11]{La-Bog-normal} we have
	$$\int_X \Delta (V_m) L_1...\widehat{L_i}...L_{n-1}=p^2  \int_X \Delta (\cE_{m}) L_1...\widehat{L_i}...L_{n-1},$$
	we see that 
	$$ \int_X \Delta (\cE_{m}) L_1...\widehat{L_i}...L_{n-1}=0$$
	for all $m\ge 0$. 
	
	Now let us write $p^m=q_mr+r_m$ for some integers $q_m$ and $0\le r_m<r$ and
	let $M:=(M_1,..., M_{n-1})$ be a collection of nef line bundles on $X$. To show (1) we use arguments similar to that from the  proof of \cite[Lemma 3.10]{La-JEMS}.
	More precisely, assume that $(W,\nabla_W)\subset (V,\nabla)$ is $M$-destabilizing. Then we 
	set $(\cF_0,\eta _0):=(\Gr_{S}(W,\nabla_W))^{**}$ and we
	consider the sequence 
	$$(\cF_m,\eta_m):= (\Gr _{S_{m-1}} C^{[-1]}(\cF_{m-1}, \eta_{m-1}))^{**}.$$
	Then  $\cF_m (-q_mc_1(\cF_m))$ give  subsheaves of $\cE_m':=\cE_m (-q_mc_1(\cE_m))$, whose slopes grow to infinity. But Theorem \ref{boundedness} implies that the set $\{  \cE_m'\}_{m\ge 0}$ is bounded, a contradiction. Similar argument works for $(\cE, \theta)$, showing (1). This and \cite[Lemma 2.4 and Theorem 0.4]{La-Bog-normal} imply that if 
	$(M_1,..., M_{n-2})$ is a collection of nef line bundles on $X$ then 
	$$\int_X \Delta (V) M_1...M_{n-2}\ge  \int_X \Delta (\cE^{**}) M_1...M_{n-2}\ge 0.$$
	By Serre's theorem, we can find some positive integer $a$  such that all line bundles $M_i'=M_i^{-1}\otimes L_i^{\otimes a}$ are globally generated, so also nef. So the above inequalities applied to collections 	$(M_1', M_2,..., M_{n-2})$, $(L_1,M_2',M_3,..., M_{n-2})$,..., give
	$$\int_X \Delta (V) M_1...M_{n-2}\le a\int_X \Delta (V) L_1M_2...M_{n-2}\le ...\le a^{n-2} \int_X \Delta (V) L_1...L_{n-2}=0.$$
	This implies (2).
	
	\medskip
	
	To prove (3) it is sufficient to prove that all $\cE_m|_U$ and $V_m|_U$ are locally free and the maps
	$$ \quad\Gr_{S_{m-1}} V_{m-1} \to \cE_m$$
	are isomorphisms on $U$ for $m\ge 0$. The proof is by induction on the dimension $n$ of $U$.
	For $n=2$ the assertion follows from the fact that reflexive sheaves on smooth surfaces are locally free and from Lemma \ref{Chern-classes-filtrations} (applied after restricting to $U$).

	Let us assume that $n\ge 3$ and the above assertions hold for varieties of dimension $<n$. Taking some field extension  $k\subset K$ if necessary, we can assume that $k$ is uncountable.
	By assumption there exists a closed subset $Z\subset X$ of codimension $\ge 3$ 
	such that every point $x\in X\backslash Z$ has an open neighbourhood $V\subset X\backslash Z$
	for which there exists a lifting $\tilde V$ of $V$ and a lifting $F_{\tilde V}: \tilde V\to \tilde V$ of the Frobenius morphism $F_V: V\to V$. Moreover, $(V\cap U, \cO_{\tilde V}|_{V\cap U})$
	is isomorphic to $(V\cap U, \cO_{\tilde U}|_{V\cap U})$.
	Since $U= X_{reg}$, every point of $U$ has an open neighbourhood $V\subset U$ for  which there exists a lifting 
	$F_{\tilde V}: \tilde V\to \tilde V$ of $F_V$, where $\tilde V=(V, 
	\cO_{\tilde U}|_{V})$. So we can assume that $Z\cap U=\emptyset$.
	Let us set $Y:=X\backslash U$ and $T_m:=\cE_m/(\Gr_{S_{m-1}} V_{m-1})$ for $m\ge 0$.

	\medskip
	
	\begin{Lemma}\label{loc-free-outside-finite-set}
		For every $m\ge 0$, $U\cap \Supp T_m$ consists of a finite number of points.	
	\end{Lemma}
	
	\begin{proof}
		By Bertini's theorem, Corollary \ref{hyperplane-section-of-reflexive} and \cite[Lemma 1.1.12]{HL}, for a very general hypersurface $H\in  |L_1|$ with closed embedding  $\imath: H\hookrightarrow X$  the following conditions are satisfied:
		\begin{enumerate}
			\item $H$ is irreducible and normal, 
			\item $H_{reg}= H\cap X_{reg}$ and hence $U_H:=U\cap H\subset H$ is big,
			\item $U_H$ has a lifting $\tilde U_H$ to $W_2(k)$ compatible with $\tilde U$ (this follows from \cite[proof of Theorem 11]{La-Inv}),
			\item  $\dim Z\cap H<\dim Z$ and hence $Z\cap H$ has codimension $\ge 3$ in $H$,
			\item $H$ has $F$-liftable singularities  with respect to $\tilde U_H$,
			\item for every irreducible component $S$ of $\Supp T_m$, $m\ge 0$, if $S\cap U\ne \emptyset$ and $\dim S>0$ then $S\cap U\cap H\ne \emptyset$,
			\item  $\imath ^*\cE _m$ for $m\ge 0$ and $\imath ^*V _m$ for $m\ge -1$  are reflexive,
			\item $\imath ^*(S^jV_m)$ are reflexive for all $j$ and $m\ge -1$,
			\item  $\imath^*(\Gr _{S_m}V_m)$ for $m\ge -1$ are torsion free.
		\end{enumerate} 
		The above conditions and functoriality of the inverse Cartier transform imply that
		$$\imath^*(V _m, \nabla_m)=C^{[-1]}_{\tilde U_H}\left(\imath^*(\cE _m, \theta_m)\right)$$
		for all $m\ge 0$. Moreover, by \cite[Theorem 4.16]{La-Bog-normal},  all restrictions $\imath^*(\cE _m, \theta _{m})$ are  slope $(\imath^*L_2,...,\imath^*L_{n-1})$-semistable. It follows that 
		$${  \xymatrix{
				\imath^*(V_{-1}, \nabla_{-1}) \ar[rd]^{(\Gr _{\imath^*S_{-1}})^{**}}	&& \imath^*(V_0, \nabla _0)\ar[rd]^{(\Gr _{\imath^*S_0})^{**}}&& \imath^*(V_1, \nabla _1)\ar[rd]^{(\Gr _{\imath^*S_1})^{**}}&\\
				&	\imath^*(\cE_0, \theta _0)\ar[ru]^{C_{\tilde U_H}^{[-1]}}&&\imath^*	(\cE_1, \theta_1)\ar[ru]^{C_{\tilde U_H}^{[-1]}}&&...\\
		}}$$
		is  a {Higgs--de Rham sequence}. By the induction assumption all $\imath^*\cE_m|_{U_H}$ and $\imath^*V_m|_{U_H}$ are locally free and the maps
		$$\quad\imath^*(\Gr_{S_{m-1}} V_{m-1}) \to \imath^*\cE_m $$
		are isomorphisms on $U_H$ for $m\ge 0$. So $\imath^*(T_m|_{U})=(\imath^*T_m)|_{U_H}=0$ for all $m\ge 0$.
		By (6) for every $m\ge 0$, $U\cap \Supp T_m$ consists of a finite number of points.	
	\end{proof}
	
	\medskip
	Now as in the proof of Theorem \ref{reflexive-num-flat-are-loc-free} we consider three cases depending on stability properties of the Higgs sheaves $(\cE _m, \theta_m)$.

	\medskip
	
	\emph{Case 1.} Assume that all $(\cE _m, \theta_m)$ are slope $L$-stable.
	
	Let us fix a $k$-point $x\in U$. Since $Z\subset Y$, $x\not\in Y$ and $x\not\in Z$.
	By Theorem \ref{Bertini's-theorem}, taking $s\gg 0$  we can find a hypersurface $H\in  |L_1^{\otimes s}|$ such that if $\imath: H\hookrightarrow X$ denotes the  closed embedding then the following conditions are satisfied:
	\begin{enumerate}
		\item $x\in H$, 
		\item $H$ is irreducible and normal, 
		\item $H_{reg}= H\cap X_{reg}$ and $U_H:=H\cap U\subset H$ is big,
		\item $H\cap U$ has a lifting $\tilde U_H$ to $W_2(k)$ compatible with $\tilde U$ (this follows from \cite[proof of Theorem 11]{La-Inv}),
		\item  $\dim Z\cap H<\dim Z$ and hence $Z\cap H$ has codimension $\ge 3$ in $H$,
		\item $H$ has $F$-liftable singularities  with respect to $\tilde U_H$,
		\item for every irreducible component $S$ of $\Supp T_m$, $m\ge 0$, we have either
		$\dim (S\cap H)<\dim S$ or $S\cap H\subset \{ x\}$ (here we use Lemma \ref{loc-free-outside-finite-set}),
		\item   $\imath ^*\cE _m$ for $m\ge 0$ and $\imath ^*V _m$ for $m\ge -1$  are reflexive 
		on $H\backslash \{x\}$.
	\end{enumerate} 
	
	Let us recall that  by Corollary \ref{hyperplane-section-of-reflexive} all $\imath ^*\cE _m$ and $\imath ^*V _m$ are torsion free. 
	Let $(\tilde V_m, \tilde\nabla  _m):=\imath^{[*]}(V _m, \nabla_m)$ and
	$(\cF _m, \tilde\theta_{m}):=\imath^{[*]}(\cE _m, \theta_m)$. 
	Note that  $(\cF _m, \tilde\theta_{m})=C^{[-1]}_{\tilde U_H}(\tilde V_m, \tilde\nabla  _m)$
	for $m\ge 0$.
	By \cite[Theorem 5.4]{La-Bog-normal}, all $(\cF _m, \tilde\theta _{m})$ 
	are  slope $(\imath^*L_2,...,\imath^*L_{n-1})$-stable. This implies that all $(\tilde V_m, \tilde\nabla  _m)$
	are  slope $(\imath^*L_2,...,\imath^*L_{n-1})$-stable. By \cite[Theorem 0.4]{La-Bog-normal} and Lemma \ref{inequality-on-Delta}
	we have
	$$0\le \int_H \Delta (\cF _m) \imath^*L_2...\widehat{\imath^*L_i}...\imath^*L_{n-1}\le s \int_X \Delta (\cE _m) L_1...\widehat{L_i}...L_{n-1}=0$$
	for $i=2,...,n-1$. 
	So by the induction assumption all $\cF_m|_{H_{reg}}$ are locally free. Similarly, all  $\tilde V_m|_{H_{reg}}$
	are locally free.
	
	\medskip
	
	Let us define a filtration $\tilde S^{\bullet}_m$ of $\tilde V_m$ by taking as $\tilde S^{i}_m$ the saturation of $\imath^* S_m^i\subset \tilde V_m$. Let us also set $(\cF_m, \tilde \theta _m):= (\Gr _{\tilde S_{m-1}}\tilde V_{m-1})^{**}$.
	Then
	we have a Higgs-de Rham sequence:
	$${  \xymatrix{
			(\tilde V_{-1}, \tilde\nabla  _{-1})\ar[rd]^{(\Gr _{\tilde S_{-1}})^{**}}&	& (\tilde V_0, \tilde\nabla  _0)\ar[rd]^{(\Gr _{\tilde S_0})^{**}}&& (\tilde V_1, \tilde\nabla _1)\ar[rd]^{(\Gr _{\tilde S_1})^{**}}&\\
			&		(\cF_0, \tilde \theta _0)\ar[ru]^{C_{\tilde U_H}^{[-1]}}&&	(\cF_1, \tilde\theta_1)\ar[ru]^{C_{\tilde U_H}^{[-1]}}&&...\\
	}}$$

	Let
	$(\cG_m, \theta_{\cG_m})$ be the cokernel of $\imath^*(\cE _m, \theta_m)\to (\cF _m, \tilde\theta _{m})$.
	Note that all $\cG_m$ are supported on $\{ x\}$.
	By construction and functoriality of the inverse Cartier transform, there exists an open subset $V\subset H_{reg}$ containing $x$ for which there exists a lifting $F_{\tilde{V}}: \tilde V\to \tilde V$ of $F_V$ and such that
	$$\imath^*(V _m, \nabla_m)|_V=C^{-1}_{\tilde V}\left(\imath^*(\cE _m, \theta_m)|_V\right)\quad\hbox{and}
	\quad 
	(\tilde V_m, \tilde\nabla  _m)|_V=C^{-1}_{\tilde V}\left(
	\left(\cF _m, \tilde\theta_{m}\right)|_V
	\right).$$
	Therefore 
	$$ (\tilde V_m, \tilde\nabla  _m) /\imath^*(V _m, \nabla_m) = C^{-1}_{\tilde V}\left(\cG_m, \theta_{\cG_m}\right).$$
	In particular, we get
	$$h^0\left( \tilde V_m/\imath^*V _m  \right)= h^0\left( F_H^*\cG_m \right)=p^{n-1}h^0(\cG_m).$$
	Applying Lemma \ref{passing-to-grading-for-hypersurfaces} on  $X_{reg}$, we see that
	$$h^0(\cG_{m})=h^0\left(\cF_{m}/ \imath^*\cE_{m} \right)=h^0\left(\tilde V_{m-1}/\imath^*V _{m-1}  \right)
	$$
	for $m\ge 0$. Let us set $\cG:= \tilde V_{-1}/\imath^*V _{-1}$. It follows that
	$$\chi (X, \imath_*\cG_{m})=h^0(H,\cG_m)=p^{(n-1)m}h^0(H, \cG)$$
	for all $m\ge 0$.
	Now let us set $\cE_m':=\cE_m (-q_mc_1(\cE_m))$ and $\cF_m':=\cF_m (-q_mc_1(\cF_m))$, where $p^m=q_mr+r_m$ 
	for some integers $q_m$ and $0\le r_m<r$. Note that $\cG_m$ is isomorphic to the cokernel of 
	$\imath^*\cE_m'\to \cF_m'$ so we have
	\begin{align*}
		p^{(n-1)m} h^0 (X,  \imath_*\cG )= \chi (X, \imath_*\cG_{m})=
		\chi (H, \cF'_m ) -\chi (X, \cE'_m)
		+\chi (X, \cE '_m (-H))\\
	\end{align*}
	But the sets $\{  \cE '_m\}_{m\ge 0}$, $\{ \cE '_m(-H)\}_{m\ge 0}$ and 
	$\{ \cF '_m \}$ are bounded by Theorem \ref{boundedness}
	(in these sets $c_1(\cdot )$ takes only finitely many values and $\int_X \Delta (\cdot ) L_1...\widehat{L_i}...L_{n-1}= 0$). So the right hand side of the above equalities is bounded independently of $m$. Therefore  we have $ h^0 (X,  \imath_* \cG )=0$ and  $\cG =0$. This implies that $V$ is locally free at $x$ and $V|_U$ is locally free on $U$. 
	Applying this fact to truncated Higgs--de Rham flows, we see that also  $\cE_m|_U$ and $V_m|_U$ are all locally free for all $m\ge 0$ (alternatively, we can use the fact that $\cG_m=0$ to conclude that $\cE_m|_U$ is locally free; then $V_m|_U$ is locally free by construction). The fact that for $m\ge 0$ the maps
	$$\Gr_{S_{m-1}} V_{m-1} \to \cE_m $$
	are isomorphisms on $U$ follows from Lemma \ref{Chern-classes-filtrations}.

	\medskip
	
	\emph{Case 2.} Assume that $(V, \nabla)$ is not slope $L$-stable.
	Let $0= (V_0, \nabla_0)\subset (V_1, \nabla_1)\subset ...\subset (V_s, \nabla_s)=(V, \nabla)$ be a Jordan--H\"older filtration of $(V, \nabla)$ and let us set 
	$$(\cF_j, \tilde \nabla_j):=
	((V_j, \nabla_j)/(V _{j-1} , \nabla_{j-1}))^{**}.$$
	Assume that all Higgs sheaves in canonical Higgs--de Rham sequences of $(\cF_j, \tilde \nabla_j)$
	are slope $L$-stable.
	The same arguments as that in Case 2 of the proof of Theorem \ref{reflexive-Delta-are-loc-free}
	show that  $$\int _X \Delta (\cF _j) L_1...\widehat{L_i}...L_{n-1} = 0$$
	for $i=1,...,n-1$. The only difference is that one needs to use  \cite[Theorem 0.4]{La-Bog-normal} instead of  \cite[Theorem 3.4]{La-Bog-normal}. So  $\cF_j|_{U}$ are locally free. Then Lemma \ref{Chern-classes-filtrations} shows that
	$V |_{U}$ and $(V_j/V_{j-1})|_{U}$ are locally free. This also implies the remaining assertions.
	
	\medskip
	
	\emph{Case 3.} Now let us consider the general case. Note that for large $m$, $(\cE_m, \theta_m)$ is as in case 2, so  $\cE_m |_{U}$ is locally free. Then  Lemmas \ref{loc-free-passing-to-graded} and \ref{Chern-classes-filtrations} show that $\Gr_{S_{m-1}} V_{m-1} |_U\to \cE_m|_U$ is an isomorphism 
	and hence $V_{m-1}|_U$ is locally free. By construction every point $x\in U$
	has an open neighbourhood  $V\subset H$ containing $x$ with a lifting $F_{\tilde{V}}: \tilde V\to \tilde V$ of $F_V$ and such that
	$$(V _{m-1}, \nabla_{m-1})|_V=C^{-1}_{\tilde V}\left(\cE _{m-1}, \theta_{m-1}\right)|_V,$$
	which by flatness of $F_U$ implies that $\cE _{m-1}|_U$ is locally free. So the required assertions follow by the descending induction.
\end{proof}

\medskip

\begin{Remark}\label{proof-ofmain-Higgs}
By Theorem \ref{existence-of-gr-ss-Griffiths-transverse-filtration} every $(V, \nabla)$ as in Theorem \ref{reflexive-loc-free-Delta-Higgs}  admits a filtration $N^{\bullet}$ satisfying assumptions of this theorem. 
This implies Theorem \ref{main-Higgs}.
\end{Remark}

\medskip

\begin{Example}
	It is easy to construct a semistable rank $3$ Higgs bundle $(\cF, \theta)$ with vanishing Chern classes (even on some complex projective surface) for which the underlying bundle $\cF$ is of the form $\cL\oplus \cO_X\oplus \cL^{-1}$ for some line bundle $\cL$ of positive degree with respect to some ample divisor
	(see, e.g., \cite[Example 4.9]{La-JEMS}). Then the filtration $N^0=0\subset N^1=\cL^{-1}\subset  N^2=\cO_X\oplus \cL^{-1}\subset N^2=\cF$ is Griffiths transverse but the associated graded is not semistable as it is isomorphic to  $\cF$ with the vanishing Higgs field. So taking in Theorem \ref{reflexive-loc-free-Delta-Higgs}, $(V, \nabla)=(\cF, \theta)$ we see that semistability of the associated graded is not automatic and we need to assume it.
\end{Example}

\medskip

\begin{Corollary} 
	Let $X,U, L$ be as in Theorem \ref{reflexive-loc-free-Delta-Higgs}.
	Let  $(V, \nabla)$ be a reflexive $\cO_X$-module with an integrable $\lambda$-connection for some $\lambda\in k$. Assume that $(V, \nabla)$ is  slope $L$-semistable of rank
	$r\le p$ with $$\int_X \ch_1 (V) L_1...L_{n-1}=0$$ and
	$$\int_X \ch_2 (V) L_1...\widehat{L_i}...L_{n-1}\ge 0$$
	for $i=1,...,n-1$. Then the following conditions are satisfied:
	\begin{enumerate}
		\item $[c_1(V)]=0\in \bar B^1(X)$ and $\int_X \ch_2 (V) =0$,
		\item if $M=(M_1,..., M_{n-1})$ is a collection of nef line bundles on $X$ then $(V, \nabla)$ is slope $M$-semistable,
		\item $V|_{X_{reg}}$ is locally free.
	\end{enumerate}
	Moreover, if $N^{\bullet }$ be a Griffiths transverse filtration on $(V, \nabla)$  such that  $(\cE:=\Gr _{N} V, \theta )$ is  slope $L$-semistable then the following conditions are satisfied:
	\begin{enumerate}
		\item $[c_1(\cE)]=0\in \bar B^1(X)$ and $\int_X \ch_2 (\cE^{**})= 0$,
		\item if $M=(M_1,..., M_{n-1})$ is a collection of nef line bundles on $X$ then $(\cE , \theta)$ is slope $M$-semistable,
		\item $\cE|_{X_{reg}}$ is locally free.
	\end{enumerate}
\end{Corollary}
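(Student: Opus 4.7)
The plan is to reduce the Corollary's hypothesis $\int_X\ch_2(V)L_1\cdots\widehat{L_i}\cdots L_{n-1}\ge 0$ to the vanishing $\int_X\Delta(V)L_1\cdots\widehat{L_i}\cdots L_{n-1}=0$ that triggers Theorem \ref{reflexive-loc-free-Delta-Higgs}, apply that theorem with Simpson's filtration, and upgrade the resulting intersection-number vanishings to statements in $\bar B^1(X)$ and as multilinear forms via Frobenius boundedness.

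First I would restrict to a very general normal complete intersection surface $S\in |L_1|\cap\cdots\cap\widehat{|L_i|}\cap\cdots\cap |L_{n-1}|$, available by Theorem \ref{Bertini's-theorem}. Applying the Hodge index theorem on $S$ with $H=L_i|_S$ together with the hypothesis $\int_X c_1(V)L_1\cdots L_{n-1}=0$ gives $\int_X c_1(V)^2 L_1\cdots\widehat{L_i}\cdots L_{n-1}\le 0$. Since $\Delta=c_1^2-2r\,\ch_2$ and $\int_X\ch_2(V)L_1\cdots\widehat{L_i}\cdots L_{n-1}\ge 0$ by hypothesis, this yields $\int_X\Delta(V)L_1\cdots\widehat{L_i}\cdots L_{n-1}\le 0$; the matching Bogomolov inequality for slope semistable sheaves with an integrable $\lambda$-connection (\cite[Theorem 0.4]{La-Bog-normal}) forces equality, and in fact forces $\int_X c_1(V)^2 L_1\cdots\widehat{L_i}\cdots L_{n-1}=\int_X\ch_2(V)L_1\cdots\widehat{L_i}\cdots L_{n-1}=0$ separately. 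Theorem \ref{existence-of-gr-ss-Griffiths-transverse-filtration} supplies a Griffiths transverse filtration (Simpson's filtration) on $(V,\nabla)$ with slope $L$-semistable graded, so Theorem \ref{reflexive-loc-free-Delta-Higgs} applies and delivers conclusions (2), (3) and the vanishing of $\int_X\Delta(V)$ as a multilinear form on $N^1(X)^{n-2}$.

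To prove (1) I would verify that the family $\{F_X^{[m]}V\}_{m\ge 0}$ satisfies the hypotheses of Theorem \ref{boundedness}: the rank is $r$, $\int_X\ch_1(F_X^{[m]}V)L^{n-1}=p^m\cdot 0=0$, $\int_X\ch_2(F_X^{[m]}V)L_1\cdots\widehat{L_i}\cdots L_{n-1}=p^{2m}\cdot 0=0$, and $\mu_{\max, L}(F_X^{[m]}V)$ is uniformly bounded because $(F_X^{[m]}V,\nabla_m)$ carries an integrable $\lambda^{p^m}$-connection and itself satisfies the hypotheses of Theorem \ref{reflexive-loc-free-Delta-Higgs}, so slope $L$-semistability propagates and $\mu_{\max, L}(F_X^{[m]}V)=0$. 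Lemma \ref{num-flat-easy-consequences} then yields $[c_1(V)]=0\in\bar B^1(X)$ and $\int_X c_2(V)=0$ as a form, whence $\int_X\ch_2(V)=(c_1(V)^2-\Delta(V))/(2r)=0$ as a form. The ``moreover'' clause follows by re-applying Theorem \ref{reflexive-loc-free-Delta-Higgs} with the given filtration $N^\bullet$: local freeness of $\cE|_{X_{reg}}$, slope $M$-semistability of $(\cE,\theta)$ for every nef $M$ and $\int_X\Delta(\cE^{**})=0$ as a form come out directly; since $c_1(\cE^{**})=c_1(V)$, the vanishing $[c_1(\cE)]=0\in\bar B^1(X)$ is inherited from the first part, and $\int_X\ch_2(\cE^{**})=0$ follows from the same identity $\ch_2=(c_1^2-\Delta)/(2r)$.

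The delicate step is the uniform bound on $\mu_{\max, L}(F_X^{[m]}V)$ needed to apply Theorem \ref{boundedness}: one must know that slope $L$-semistability of $(V,\nabla)$ propagates to all its Frobenius pullbacks, each of which still carries an integrable connection (with $\lambda$ replaced by $\lambda^{p^m}$) and has vanishing discriminant on the relevant intersections. This is an iterated application of Theorem \ref{reflexive-loc-free-Delta-Higgs} itself; once it is in hand, Lemma \ref{num-flat-easy-consequences} and the elementary identity $\ch_2=(c_1^2-\Delta)/(2r)$ finish the proof mechanically.
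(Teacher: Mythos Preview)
Your reduction to $\int_X\Delta(V)L_1\cdots\widehat{L_i}\cdots L_{n-1}=0$ via the Hodge index theorem and Bogomolov's inequality, and the ensuing application of Theorem \ref{reflexive-loc-free-Delta-Higgs} for conclusions (2) and (3), are correct and agree with the paper. The ``moreover'' clause is also handled correctly.

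The gap is in your argument for (1). Your route through Theorem \ref{boundedness} and Lemma \ref{num-flat-easy-consequences} breaks at two points. First, there is in general no integrable $\lambda^{p^m}$-connection on $F_X^{[m]}V$: for $\lambda=0$ the naive Frobenius pullback of a Higgs field lands in $F^*V\otimes F^*\Omega_X$ and composing with $F^*\Omega_X\to\Omega_X$ gives zero, while for $\lambda\ne 0$ the canonical connection on $F^*V$ has vanishing $p$-curvature and its submodules correspond via Cartier descent to \emph{all} $\cO_X$-submodules of $V$, not only the $\nabla$-stable ones, so semistability does not propagate. Second, even if such a structure existed, slope $L$-semistability of $(F_X^{[m]}V,\nabla_m)$ as a module with $\lambda$-connection would not give $\mu_{\max,L}(F_X^{[m]}V)=0$: the quantity appearing in Theorem \ref{boundedness} is the maximal slope among \emph{all} $\cO_X$-subsheaves, and a semistable module with connection can have a highly destabilizing plain subsheaf. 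So the required uniform bound on $\mu_{\max,L}(F_X^{[m]}V)$ is not established.

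The paper avoids this entirely: from the equalities $\int_X c_1(V)L_1\cdots L_{n-1}=0$ and $\int_X c_1(V)^2 L_1\cdots\widehat{L_i}\cdots L_{n-1}=0$ (which you already have) it invokes \cite[Corollary 3.17]{La-Bog-normal}, a Hodge-index-type statement on $\bar B^1(X)$, to conclude $[c_1(V)]=0$. Then $\int_X\ch_2(V)=0$ as a form follows from $\int_X\Delta(V)=0$ (Theorem \ref{reflexive-loc-free-Delta-Higgs}(2)) via your identity $\ch_2=(c_1^2-\Delta)/(2r)$. You can repair your proof by replacing the Frobenius-boundedness paragraph with this one-line citation.
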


\begin{proof}
	By \cite[Lemma 2.5 and Theorem 0.4]{La-Bog-normal} we have
	\begin{align*}
		0\le \int_X \Delta (V) L_1...\widehat{L_i}...L_{n-1}=
		\int_X c_1 (V) ^2L_1...\widehat{L_i}...L_{n-1}-	
		2r	\int_X \ch_2 (V) L_1...\widehat{L_i}...L_{n-1}\\
		\le \frac{(\int_X c_1 (V) L_1...L_{n-1})^2}{L_1...{L_i^2}...L_{n-1}}=0
	\end{align*}
	So by Theorem \ref{reflexive-loc-free-Delta-Higgs},  $V$ satisfies (2) and (3). Moreover, we have
	$$\int_X c_1 (V) ^2L_1...\widehat{L_i}...L_{n-1}=\int_X \ch_2 (V) L_1...\widehat{L_i}...L_{n-1}= 0$$
	for $i=1,...,n-1$. Now $[c_1(V)]=0\in \bar B^1(X)$ by \cite[Corollary 3.17]{La-Bog-normal}. So by Theorem \ref{reflexive-loc-free-Delta-Higgs},  $V$ satisfies also (1). The assertions for $\cE$ follow easily from this and from  Theorem \ref{reflexive-loc-free-Delta-Higgs}.
\end{proof}

\section{Local freeness for modules with logarithmic $\lambda$-connection }

 The main aim of this section is to provide an analogue of Theorem \ref{reflexive-loc-free-Delta-Higgs} for logarithmic $\lambda$-connections.  The proof is divided into two cases: outside of the fixed divisor and on the divisor. In the first case, the proof is essentially the same as that of Theorem \ref{reflexive-loc-free-Delta-Higgs}. In the second case, the proof follows the idea of proof of \cite[Theorem 2.2]{La-JEMS}.
 
\subsection{The main result and its corollaries} 
 
In this section we fix the following notation. $k$ is an algebraically closed field of characteristic $p>0$.  $X$ is a normal projective $k$-variety of dimension $n $ with a reduced divisor $D\subset X$.
$U$ denotes the log smooth locus of $(X,D)$ and we assume that the pair $(U, D_U=D\cap U)$ has a lifting $(\tilde U, \tilde D_U)$ to $W_2(k)$. We also assume that for some closed  subset $Z\subset X\backslash U$ of codimension $\ge 3$ in $X$, the pair $(X\backslash Z,D\backslash Z)$ is locally $F$-liftable with respect to $(\tilde U, \tilde D_U)$.

$(V, \nabla)$ is a reflexive $\cO_X$-module of rank  $r\le p$ with an integrable logarithmic  $\lambda$-connection for some $\lambda\in k$. We fix a collection $L:=(L_1,...,L_{n-1})$ of ample line bundles on $X$ and we assume that  $(V, \nabla)$ is  slope $L$-semistable  and 
$$\int_X \Delta (V) L_1...\widehat{L_i}...L_{n-1}= 0$$
for $i=1,...,n-1$. We also fix a Griffiths transverse filtration $N^{\bullet }$  on $(V, \nabla)$ such that the associated graded Higgs sheaf  $(\cE:=\Gr _{N} V, \theta )$ is  slope $L$-semistable.
The following theorem follows easily from Propositions \ref{local-freeness-log-case-outside-D} and  \ref{local-freeness-log-case-on-D} by induction on the dimension.
 
\begin{Theorem} \label{local-freeness-log-Higgs-case}
Assume that every irreducible component $Y$ of the intersection of some irreducible components of $D$
is a locally complete intersection in $X$, $Y\cap U$ is big in $Y$ and $Z\cap Y$ has codimension $\ge 3$ in $Y$.
Then the following conditions are satisfied:
\begin{enumerate}
		\item if $M=(M_1,..., M_{n-1})$ is a collection of nef line bundles on $X$ then 
	both $(V, \nabla)$ and $(\cE , \theta)$ are slope $M$-semistable,
	\item $\int_X \Delta (V)=\int _{X}\Delta (\cE^{**}) = 0$
	for $i=1,...,n-1$,	
	\item $V|_{U}$ and $\cE |_U$ are locally free.
\end{enumerate}
\end{Theorem}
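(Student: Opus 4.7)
The plan is to reduce Theorem \ref{local-freeness-log-Higgs-case} to its two main building blocks, Propositions \ref{local-freeness-log-case-outside-D} and \ref{local-freeness-log-case-on-D}, and to glue them together by an induction on $n=\dim X$. Conclusions (1) and (2) follow from (3) exactly as in the proof of Theorem \ref{reflexive-loc-free-Delta-Higgs}: once $V|_U$ and $\cE|_U$ are locally free, slope semistability for arbitrary nef multi-polarizations is obtained from boundedness of the canonical Higgs--de Rham sequence (Theorem \ref{boundedness}) together with \cite[Lemma 2.4, Corollary 5.11, Theorem 0.4]{La-Bog-normal}, and the vanishing $\int_X\Delta(V)=\int_X\Delta(\cE^{**})=0$ is obtained from \cite[Lemma 2.4 and Theorem 0.4]{La-Bog-normal} applied in turn to collections $(M_1',M_2,\ldots)$, $(L_1,M_2',M_3,\ldots)$, \ldots, where $M_i'=L_i^{\otimes a}\otimes M_i^{-1}$ is nef for $a\gg 0$ by Serre vanishing. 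So the essential content is (3).

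Decompose $U=(U\setminus D)\cup(U\cap D)$. On $U\setminus D$ the logarithmic $\lambda$-connection is ordinary, and Proposition \ref{local-freeness-log-case-outside-D} (whose statement is modelled on Theorem \ref{reflexive-loc-free-Delta-Higgs}) delivers local freeness of $V$ and $\cE$ on this open set. It remains to prove local freeness at every point $x\in U\cap D$, which I would do by induction on $n$. When $n=2$ the variety $U$ is a smooth surface, and reflexive $\cO_U$-modules are automatically locally free, so there is nothing to do.

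For $n\ge 3$, fix $x\in U\cap D$ and let $Y$ be an irreducible component of the intersection of those components of $D$ that pass through $x$. By the standing stratification hypothesis, $Y$ is a local complete intersection in $X$, $Y\cap U$ is big in $Y$, and $Z\cap Y$ has codimension $\ge 3$ in $Y$; the pair $(Y,(D-Y)\cap Y)$ therefore inherits all the structural assumptions of the theorem in dimension $n-1$. Using the Bertini-type Theorem \ref{Bertini's-theorem} together with Corollary \ref{hyperplane-section-of-reflexive} and Lemma \ref{passing-to-grading-for-hypersurfaces}, the restrictions of $(V,\nabla)$ and of the filtration $N^\bullet$ to $Y$ yield a reflexive $\cO_Y$-module with a logarithmic $\lambda$-connection and a Griffiths transverse filtration whose associated graded is slope semistable. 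By Lemma \ref{inequality-on-Delta} and Bogomolov's inequality \cite[Theorem 0.4]{La-Bog-normal}, the vanishing of the intersection discriminant is preserved under such restriction. The induction hypothesis therefore applies on $Y$ and gives local freeness of the restrictions along $Y\cap U$. Proposition \ref{local-freeness-log-case-on-D} then upgrades this stratum-wise local freeness to the desired local freeness of $V|_U$ and $\cE|_U$ in a neighbourhood of $x$, completing the inductive step.

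The main obstacle is that the induction hypothesis transfers only if restriction to $Y$ simultaneously preserves reflexivity (outside a locus of codimension $\ge 3$ in $Y$), slope semistability of both $(V,\nabla)$ and $(\cE,\theta)$, Griffiths transversality of the restricted filtration, reflexivity of the restricted associated graded in codimension one, and $F$-liftability of the restricted log pair. The first three are controlled by Theorem \ref{Bertini's-theorem}, Corollary \ref{hyperplane-section-of-reflexive}, Lemma \ref{inequality-on-Delta}, and \cite[Theorems 5.4 and 0.4]{La-Bog-normal}; the fourth is the content of Lemma \ref{passing-to-grading-for-hypersurfaces}; and the last is ensured by the hypothesis that $Z\cap Y$ has codimension $\ge 3$ in $Y$, so that $F$-liftability along codimension-$2$ strata of $(X,D)$ survives upon restriction to $Y$. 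With these compatibilities in hand the induction runs, and the two propositions assemble to give (3), from which (1) and (2) follow as indicated.
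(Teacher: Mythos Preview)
Your high-level structure matches the paper's: the theorem is proved by induction on $n$ from Propositions \ref{local-freeness-log-case-outside-D} and \ref{local-freeness-log-case-on-D}. But your inductive step is garbled on two points. First, $Y$ must be an irreducible component of $D$ (a Cartier divisor in $X$), not ``an irreducible component of the intersection of those components of $D$ that pass through $x$''. Proposition \ref{local-freeness-log-case-on-D} is stated only for such $Y$ and directly yields local freeness of $V$ and $\cE$ along $Y\cap U$; since every $x\in U\cap D$ lies on some component of $D$, this already covers all of $U\cap D$. Your deeper stratum can have codimension $>1$, so your claim that $(Y,(D-Y)\cap Y)$ lives ``in dimension $n-1$'' fails, and Proposition \ref{local-freeness-log-case-on-D} does not apply to it. The hypothesis on deeper strata is needed not to apply Proposition \ref{local-freeness-log-case-on-D} at those strata, but so that when you pass to a component $Y$ of $D$, the pair $(Y,D_Y)$ again satisfies the stratification hypotheses of the theorem in dimension $n-1$.

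Second, restricting a logarithmic $\lambda$-connection on $(X,D)$ to a component $Y\subset D$ does \emph{not} yield a logarithmic $\lambda$-connection on $(Y,D_Y)$: one only obtains an $\imath^{[*]}T_\lambda(X,D)$-module. The passage to genuine log connections on $(Y,D_Y)$ goes through the monodromy filtration for $\Res_Y\nabla$ and $\Res_Y\theta$, and this is precisely the content of Proposition \ref{local-freeness-log-case-on-D}, not a preliminary step. The tools you cite---Theorem \ref{Bertini's-theorem}, Corollary \ref{hyperplane-section-of-reflexive}, Lemma \ref{passing-to-grading-for-hypersurfaces}, Lemma \ref{inequality-on-Delta}---concern restriction to a \emph{general} hypersurface in $|L_1^{\otimes s}|$, not to a fixed component of $D$, so they do not apply here. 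The inductive hypothesis is invoked \emph{inside} the proof of Proposition \ref{local-freeness-log-case-on-D} (via Corollary \ref{strong-log-freeness}) on the graded pieces of the monodromy filtration. Finally, a minor point: conclusions (1) and (2) do not depend on (3); they are delivered directly by Proposition \ref{local-freeness-log-case-outside-D}.
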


  \begin{Corollary}\label{strong-log-freeness}
  	Let  $(X,D)$ be as in Theorem \ref{local-freeness-log-Higgs-case} and
  	let $\cF$ be a coherent  reflexive  $ \cO_X$-module with 	
  	$$\int_X \Delta (\cF) L_1...\widehat{L_i}...L_{n-1}= 0$$
  	for $i=1,...,n-1$.
  	Assume that $\cF$ has a filtration $M_{\bullet}$ by $\cO_X$-submodules such that all quotients are torsion free of rank $\le p$ with $\mu
  	_{L}(\Gr _j^M\cF)=\mu _{L}(\cF)$ for all $j$.  Let us also assume that each factor
  	has a structure of a slope $L$-semistable $\cO_X$-module with an integrable logarithmic  $\lambda$-connection on $(X, D)$.
  	Then $\cF|_U$ is locally free and $\int_X \Delta (\cF)=0$. Moreover, every $\Gr _j^M\cF|_U$ is locally free and we have
  	$\int_X \Delta ((\Gr _j^M\cF)^{**})= 0$.
  \end{Corollary}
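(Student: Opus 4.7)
The plan is to reduce the statement to Theorem \ref{local-freeness-log-Higgs-case} applied to each reflexive hull $\cG_j^{**}$ of a graded piece $\cG_j:=\Gr_j^M\cF$, using the sub-additivity of the discriminant to verify its hypotheses, and then to transfer local freeness back to $\cF$ via Lemma \ref{Chern-classes-filtrations}.

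First I would observe that each $\cG_j$ is torsion free of rank $r_j\le p$ and carries a slope $L$-semistable integrable logarithmic $\lambda$-connection. Since $\cG_j^{**}$ agrees with $\cG_j$ outside a codimension $\ge 2$ subset and reflexive sheaves satisfy Serre's $(S_2)$, the connection extends uniquely to $\cG_j^{**}$, which is therefore a slope $L$-semistable reflexive $\cO_X$-module with integrable logarithmic $\lambda$-connection of the same rank $r_j$ and the same first Chern class. Bogomolov's inequality for such objects (\cite[Theorem 0.4]{La-Bog-normal}) then yields $\int_X\Delta(\cG_j^{**})L_1\cdots\widehat{L_i}\cdots L_{n-1}\ge 0$ for each $i$. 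Combining this with the sub-additivity of the discriminant under the equal-$L$-slope filtration $M_\bullet$ (\cite[Lemma 2.5]{La-Bog-normal} together with the Hodge index theorem, exactly as in Case 2 of the proof of Theorem \ref{reflexive-Delta-are-loc-free}), I would obtain
$$0\;=\;\frac{\int_X\Delta(\cF)L_1\cdots\widehat{L_i}\cdots L_{n-1}}{r}\;\ge\;\sum_j\frac{\int_X\Delta(\cG_j^{**})L_1\cdots\widehat{L_i}\cdots L_{n-1}}{r_j}\;\ge\;0,$$
so each summand on the right vanishes for every $i=1,\ldots,n-1$.

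At this point each $\cG_j^{**}$ meets the hypotheses of Theorem \ref{local-freeness-log-Higgs-case} (with Simpson's filtration from Theorem \ref{existence-of-gr-ss-Griffiths-transverse-filtration} as the required Griffiths transverse filtration). Applying that theorem gives that $\cG_j^{**}|_U$ is locally free, that $\int_X\Delta(\cG_j^{**})=0$ as a form on $N^1(X)^{n-2}$, and that $\cG_j^{**}$ is slope $M$-semistable for every nef multi-polarization $M$. The equality of sub-additivity at each collection $L_1\cdots\widehat{L_i}\cdots L_{n-1}$ forces the Hodge-index correction terms $\bigl(c_1(\cG_j^{**})/r_j-c_1(\cF)/r\bigr)^2$ to vanish against each such collection; combined with the slope equality $\mu_L(\cG_j^{**})=\mu_L(\cF)$, the equality case of Hodge index tested on a very general complete-intersection surface, together with Lefschetz-type injectivity of $N^1(X)_{\QQ}$ into $N^1$ of the surface, shows that the class $c_1(\cG_j^{**})/r_j-c_1(\cF)/r$ is numerically trivial on $X$. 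Sub-additivity applied against an arbitrary collection of line bundles then yields $\int_X\Delta(\cF)=0$ as a form.

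Finally, the just-established equality of the discriminants of $\cF$ and $\bigoplus_j\cG_j^{**}$, together with the local freeness of $\bigoplus_j\cG_j^{**}|_U$, allows Lemma \ref{Chern-classes-filtrations} to be applied to the filtration $M_\bullet$: the canonical map $\Gr^M_\bullet\cF|_U\to\bigoplus_j\cG_j^{**}|_U$ is an isomorphism (since its cokernel is supported in codimension $\ge 3$ in $X_{reg}\supset U$ and its target is locally free on $U$), whence each $\cG_j|_U$ and $\cF|_U$ is locally free. The most delicate step will be the Lefschetz-type propagation of numerical triviality from a very general complete-intersection surface up to $X$, which is what upgrades the partial vanishing against the special collections $L_1\cdots\widehat{L_i}\cdots L_{n-1}$ to the vanishing of $\int_X\Delta(\cF)$ as a form.
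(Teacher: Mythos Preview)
Your proposal is correct and follows exactly the same skeleton as the paper's proof: use the sub-additivity inequality \cite[Lemma 2.5]{La-Bog-normal} together with Bogomolov's inequality \cite[Theorem 0.4]{La-Bog-normal} to force $\int_X\Delta(\cG_j^{**})L_1\cdots\widehat{L_i}\cdots L_{n-1}=0$ for every $i$ and $j$; apply Theorem \ref{local-freeness-log-Higgs-case} to each $\cG_j^{**}$; then invoke Lemma \ref{Chern-classes-filtrations} to transfer local freeness back to $\cF$ and $\Gr^M\cF$ on $U$. The paper's proof is literally these three sentences.

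The only place you go beyond the paper is in spelling out why $\int_X\Delta(\cF)=0$ \emph{as a form}, which the paper simply asserts ``follows from Theorem \ref{local-freeness-log-Higgs-case} and Lemma \ref{Chern-classes-filtrations}''. Your route via numerical triviality of the classes $c_1(\cG_j^{**})/r_j-c_1(\cF)/r$ is sound; the Hodge-index equality case on a very general complete-intersection surface, together with the slope equality, is precisely what \cite[Corollary 3.17]{La-Bog-normal} packages (so you can cite that directly rather than appeal to an abstract ``Lefschetz-type injectivity''). Once those classes vanish in $\bar B^1(X)_{\QQ}$, the Hodge correction terms in the sub-additivity identity vanish against \emph{every} collection, and then the descent argument of the proof of Theorem \ref{reflexive-loc-free-Delta-Higgs}\,(2) (writing an arbitrary nef class as $aL_i$ minus something nef) gives vanishing of the form. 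Your final sentence ``sub-additivity applied against an arbitrary collection'' elides this last step; you should make explicit that you still need that Serre-type argument (or multilinearity after establishing nonnegativity on the nef cone) to pass from vanishing against the special collections to vanishing of the full form.
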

  
  \begin{proof}
  	Let $r:=\rk \cF $, $\cF_j:= (\Gr _j^M\cF)^{**}$ and $r_j:= \rk \cF_j$. By \cite[Lemma 2.5 and Theorem 0.4]{La-Bog-normal} we have
  	\begin{align*}
  		0=	\frac{\int _X \Delta  (\cF)L_1...\widehat{L_i}...L_{n-1}}{r}\ge \sum _i \frac{\int _X \Delta  (\cF_j)L_1...\widehat{L_i}...L_{n-1}}{r_j}\ge 0.
  	\end{align*}
  	So 	$$\int_X \Delta (\cF_j) L_1...\widehat{L_i}...L_{n-1}= 0$$
  	for $i=1,...,n-1$. Now  the required assertions follow from Theorem \ref{local-freeness-log-Higgs-case} and
  	 Lemma \ref{Chern-classes-filtrations}.
  \end{proof}
  
  \begin{Remark}
  	The proof of the above corollary gives a simple replacement to \cite[2.1]{La-JEMS}. In particular, Theorem \ref{local-freeness-log-Higgs-case} for varieties of dimension $\le n$ implies Corollary \ref{strong-log-freeness} for varieties of dimension $\le n$.
  \end{Remark}
  
\begin{Corollary} \label{existence-of-Higgs-de_Rham-flow}
	Let  $(X,D), U$ etc. be as in Theorem \ref{local-freeness-log-Higgs-case}.
Let $U'\subset U$ be a big open subset and let $\imath: U'\hookrightarrow X$ be	
	the open embedding.
Let $(V', \nabla')$ be a vector bundle with an integrable logarithmic $\lambda$-connection on $(U',D_{U'})$ 
such that $(V,\nabla)=\imath_*(V', \nabla')$ is   slope $L$-semistable  with
$$\int_X \Delta (V) L_1...\widehat{L_i}...L_{n-1}= 0$$
for $i=1,...,n-1$. Then on $U'$ there exists a canonical Higgs--de Rham flow 
	$${  \xymatrix{
		(V_{-1}, \nabla _{-1}) \ar[rd]^{\Gr _{F_{-1}}}	&& (V_0, \nabla _0)\ar[rd]^{\Gr _{F_0}}&& (V_1, \nabla _1)\ar[rd]^{\Gr _{F_1}}&\\
		&	(\cE_0, \theta _0)\ar[ru]^{C_{\tilde X}^{-1}}&&	(\cE_1, \theta_1)\ar[ru]^{C_{\tilde X}^{-1}}&&...\\
}}$$
beginning with $(V_{-1}, \nabla _{-1})=(V', \nabla')$ and such that
\begin{enumerate}
	\item  $\int_X \Delta (\imath_*V_j)=\int _{X}\Delta (\imath_*\cE_j) = 0$,
	\item $\imath_*  (V_j, \nabla _j)$ and $\imath_* (\cE_j, \theta_j)$ are slope $M$-semistable for any collection $M$ of nef line bundles on $X$. 
\end{enumerate}
\end{Corollary}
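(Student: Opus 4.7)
The plan is to construct the flow iteratively: at each step I apply Simpson's filtration from Theorem \ref{existence-of-gr-ss-Griffiths-transverse-filtration} and then the inverse Cartier transform $C^{-1}_{\tilde X}$, invoking Theorem \ref{local-freeness-log-Higgs-case} to supply the local freeness, vanishing of discriminants, and semistability required to iterate. In effect this is the Higgs--de Rham sequence already assembled inside the proof of Theorem \ref{reflexive-loc-free-Delta-Higgs}; the present corollary simply organises it into a canonical flow starting from the given reflexive $(V,\nabla)$.

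Setting $(V_{-1},\nabla_{-1}) := (V',\nabla')$ and $(V,\nabla) := \imath_*(V',\nabla')$, the hypotheses provide slope $L$-semistability of $(V,\nabla)$ with $\int_X \Delta(V)\,L_1\cdots\widehat{L_i}\cdots L_{n-1}=0$ for all $i$, so Theorem \ref{existence-of-gr-ss-Griffiths-transverse-filtration} yields a canonical Simpson filtration $S^{\bullet}_{-1}$ on $(V,\nabla)$ whose associated graded $(\Gr_{S_{-1}} V,\theta)$ is slope $L$-semistable. I put $F^{\bullet}_{-1} := S^{\bullet}_{-1}|_{U'}$ and $(\cE_0,\theta_0) := (\Gr_{S_{-1}} V,\theta)^{**}|_{U'}$. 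Theorem \ref{local-freeness-log-Higgs-case} applied with $N^{\bullet}=S^{\bullet}_{-1}$ then shows that $(\cE_0,\theta_0)$ is a locally free Higgs bundle on $U'$, that $\int_X\Delta((\Gr_{S_{-1}}V)^{**})=0$, and that both $\imath_*(V_{-1},\nabla_{-1})$ and $(\Gr_{S_{-1}}V)^{**}$ are slope $M$-semistable for every nef multi-polarization $M$. Since $U' \subset U$ lies in the log smooth, $F$-liftable locus, the inverse Cartier transform $C^{-1}_{\tilde X}$ is defined there and produces $(V_0,\nabla_0) := C^{-1}_{\tilde X}(\cE_0,\theta_0)$, a vector bundle with integrable logarithmic $\lambda$-connection on $(U',D_{U'})$.

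To continue the induction I must verify that $\imath_*(V_0,\nabla_0)$ again satisfies the hypotheses of Theorem \ref{local-freeness-log-Higgs-case}: slope $L$-semistability and vanishing of every intersection $\int_X \Delta(\imath_* V_0)\cdot L_1\cdots\widehat{L_i}\cdots L_{n-1}$. The discriminant identity follows from \cite[Corollary 5.11]{La-Bog-normal}, which yields $\int_X \Delta(\imath_* V_0)\cdot(\cdots) = p^2 \int_X \Delta(\cE_0^{**})\cdot(\cdots) = 0$. Slope semistability of $\imath_*(V_0,\nabla_0)$ is the main obstacle; it is obtained by repeating the boundedness argument from case~1 of the proof of Theorem \ref{reflexive-loc-free-Delta-Higgs}, where any hypothetical destabilising sub-object would propagate via the iterated Simpson filtration and Cartier transform to produce Higgs subsheaves inside $\cE_m(-q_m c_1(\cE_m))$ with slopes tending to infinity, contradicting the boundedness of $\{\cE_m(-q_m c_1(\cE_m))\}_{m\ge 0}$ supplied by Theorem \ref{boundedness}. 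Once these hypotheses are re-established, Theorem \ref{local-freeness-log-Higgs-case} applies to $\imath_*(V_0,\nabla_0)$, producing $S^{\bullet}_0$, $(\cE_1,\theta_1)$ and $(V_1,\nabla_1) := C^{-1}_{\tilde X}(\cE_1,\theta_1)$; iterating yields the full flow. Conclusions (1) and (2) at each stage follow at once from the corresponding conclusions of Theorem \ref{local-freeness-log-Higgs-case} together with the Cartier-transform discriminant identity, and canonicity of the whole flow is inherited from the canonicity of Simpson's filtration.
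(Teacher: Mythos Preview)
Your proposal is essentially correct and follows the route the paper intends: the corollary has no separate proof in the paper and is meant to be read off from Theorem~\ref{local-freeness-log-Higgs-case} together with the Higgs--de~Rham sequence already built in the proof of Theorem~\ref{reflexive-loc-free-Delta-Higgs}, exactly as you describe.

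One small point of presentation deserves tightening. You call slope $L$-semistability of $\imath_*(V_0,\nabla_0)$ ``the main obstacle'' and propose to obtain it from the boundedness argument of Theorem~\ref{reflexive-loc-free-Delta-Higgs}. That argument, however, presupposes that the whole Higgs--de~Rham sequence has already been constructed, and constructing the sequence requires, at each step, that $(V_j,\nabla_j)$ be slope $L$-semistable in order to invoke Theorem~\ref{existence-of-gr-ss-Griffiths-transverse-filtration}. So using boundedness to obtain $L$-semistability of $(V_0,\nabla_0)$ is circular as stated. The fix is simple and is what the paper actually does: slope $L$-semistability of each $(V_j,\nabla_j)$ comes directly from the fact that the reflexive inverse Cartier transform $C^{[-1]}_{\tilde U}$ preserves slope $L$-semistability (this is part of the setup in \cite[5.1]{La-Bog-normal}), so the sequence can be built unconditionally; the boundedness argument is then used only to upgrade from $L$-semistability to $M$-semistability for arbitrary nef collections $M$, which is exactly conclusion~(2). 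Also, for $j\ge 0$ the objects $(V_j,\nabla_j)$ carry integrable logarithmic connections (i.e.\ $\lambda=1$), not $\lambda$-connections; cf.\ Definition~\ref{def:Higgs-de_Rham-flow}.
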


 \subsection{Local freeness outside of $D$}

The following proposition shows local freeness in Theorem \ref{local-freeness-log-Higgs-case} outside of the divisor $D$. It is an analogue of  \cite[Lemma 3.9]{La-JEMS} for singular varieties. 

\begin{Proposition} \label{local-freeness-log-case-outside-D}
The following conditions are satisfied:
	\begin{enumerate}
			\item if $M=(M_1,..., M_{n-1})$ is a collection of nef line bundles on $X$ then 
		both $(V, \nabla)$ and $(\cE , \theta)$ are slope $M$-semistable,
		\item $\int_X \Delta (V)=\int _{X}\Delta (\cE^{**}) = 0$
		for $i=1,...,n-1$,
		\item $V|_{U\backslash D}$ and $\cE |_{U\backslash D}$ are locally free.
	\end{enumerate}
\end{Proposition}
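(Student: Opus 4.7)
The strategy is to adapt the three-case argument in the proof of Theorem \ref{reflexive-loc-free-Delta-Higgs} to the logarithmic setting, exploiting the fact that away from $D$ a logarithmic $\lambda$-connection is an ordinary $\lambda$-connection, so once Bertini-style hyperplanes are chosen compatibly with the pair $(X,D)$ and its lifting $(\tilde U, \tilde D_U)$, the overall mechanism carries over. Setting $(V_{-1}, \nabla_{-1}):=(V,\nabla)$ and $S_{-1}^{\bullet}:=N^{\bullet}$, I would construct, via Theorem \ref{existence-of-gr-ss-Griffiths-transverse-filtration} and the logarithmic inverse Cartier transform $C_{\tilde U}^{[-1]}$, a logarithmic Higgs--de Rham sequence with $(\cE_j, \theta_j):=(\Gr_{S_{j-1}}(V_{j-1},\nabla_{j-1}))^{**}$ and $(V_j,\nabla_j):=C_{\tilde U}^{[-1]}(\cE_j, \theta_j)$. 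Iterating Bogomolov's inequality \cite[Theorem 0.4]{La-Bog-normal} along the flow, together with \cite[Lemma 2.4 and Corollary 5.11]{La-Bog-normal} (using that the relevant discriminant intersection numbers are multiplied by $p^2$ under $C_{\tilde U}^{[-1]}$), one deduces
$$\int_X \Delta(V_m) L_1\cdots\widehat{L_i}\cdots L_{n-1} = \int_X \Delta(\cE_m^{**}) L_1\cdots\widehat{L_i}\cdots L_{n-1} = 0$$
for every $m \ge 0$ and every $i$. Item (1) for an arbitrary nef collection $M$ then follows by the boundedness/contradiction argument of Theorem \ref{reflexive-loc-free-Delta-Higgs}: a destabilizing subsheaf would propagate through the flow to give subsheaves of $\cE_m(-q_m c_1(\cE_m))$ of unbounded slope, contradicting Theorem \ref{boundedness}. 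Item (2) is then obtained from (1) by writing an arbitrary nef line bundle as a difference of amples via Serre's theorem and iterating Bogomolov's inequality, exactly as at the end of the proof of Theorem \ref{reflexive-loc-free-Delta-Higgs}.

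For (3) I would proceed by induction on $n = \dim X$. The base $n=2$ reduces to local freeness of reflexive sheaves on smooth surfaces combined with Lemma \ref{Chern-classes-filtrations} applied on $U\backslash D$. For the inductive step, after extending scalars to an uncountable algebraically closed field, fix a point $x \in U \backslash D$ and, combining Theorem \ref{Bertini's-theorem} with the logarithmic $F$-liftability Bertini-type argument of \cite[proof of Theorem 11]{La-Inv}, produce for $s \gg 0$ a hypersurface $H \in |L_1^{\otimes s}|$ through $x$ which is irreducible and normal, satisfies $H_{reg} = H \cap X_{reg}$ with $H \cap U$ big in $H$, cuts $D$ so that $(H, D\cap H)$ is log smooth on $H \cap U$ with a lifting induced by $(\tilde U, \tilde D_U)$ and $F$-liftable in codimension $2$ relative to it, intersects the bad locus $Z$ in codimension $\ge 3$ in $H$, and is such that all $\imath^* \cE_m$, $\imath^* V_m$ and $\imath^*(S^j V_m)$ are reflexive away from $\{x\}$. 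Restricting the flow to $H$ yields, by functoriality of the logarithmic $C^{[-1]}$, a logarithmic Higgs--de Rham sequence on $(H, D\cap H)$ to which the inductive hypothesis applies, giving local freeness of the restrictions on $(U \backslash D) \cap H$ and the required isomorphisms on graded pieces.

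To transfer local freeness at $x$ itself I follow the three-case analysis of Theorem \ref{reflexive-loc-free-Delta-Higgs}. In the strongly stable case, Lemma \ref{passing-to-grading-for-hypersurfaces} combined with Lemma \ref{inequality-on-Delta} and boundedness of $\{\cE_m(-q_m c_1(\cE_m))\}_{m\ge 0}$ forces $h^0$ of the cokernel of $\imath^*\cE_0 \to (\imath^*\cE_0)^{**}$ to vanish at $x$, and hence $V$ to be locally free at $x$. The non-stable case reduces to the stable case by a Jordan--H\"older argument plus Lemma \ref{Chern-classes-filtrations}. In the general case one takes $m$ large enough that $(\cE_m, \theta_m)$ falls into one of the previous cases and then propagates local freeness backwards using Lemmas \ref{loc-free-passing-to-graded} and \ref{Chern-classes-filtrations} together with flatness of Frobenius on $U$. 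The main obstacle is the Bertini step: simultaneously arranging $F$-liftability of $(H, D\cap H)$ compatible with the partial $W_2(k)$-lifting, passage through the prescribed point $x \in U\backslash D$, correct behavior with respect to the codimension-three exceptional locus, and reflexivity of all intervening pullbacks requires a careful combination of Theorem \ref{Bertini's-theorem} with the logarithmic $F$-liftability machinery of \cite{La-Bog-normal}; moreover, working over an uncountable base field is needed so that a single such $H$ works uniformly along the entire infinite Higgs--de Rham sequence.
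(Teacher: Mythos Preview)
Your proposal is correct and follows essentially the same approach as the paper's own proof, which is itself only a terse sketch pointing back to the argument of Theorem \ref{reflexive-loc-free-Delta-Higgs} adapted to the logarithmic setting. One structural detail worth making explicit: as in Theorem \ref{reflexive-loc-free-Delta-Higgs}, the paper performs the hyperplane argument in two stages---first a very general $H\in |L_1|$ with no basepoint to prove that $(U\backslash D)\cap\Supp(\cE_m^{**}/\Gr_{S_{m-1}}V_{m-1})$ is finite (the logarithmic analogue of Lemma \ref{loc-free-outside-finite-set}), and only then, in Case 1, a hypersurface through the fixed point $x$---whereas you fold these together; this two-step structure is what guarantees the analogue of condition (7) in Case 1.
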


\begin{proof}
The proof is by induction on the dimension of $X$ and it is analogous to the proof of Theorem \ref{reflexive-loc-free-Delta-Higgs}, so we just sketch the main idea. The proof of (1) and (2) is the same as in that theorem. Now as in Lemma \ref{loc-free-outside-finite-set} we  show that the intersection of $U\backslash D$ with the support of $\cE ^{**}/\cE $ is a finite set of points. 
This is done by restricting to a very general  $H\in |L_1|$ and using the induction assumption on $H$. Then we consider three cases depending on the stability properties of the Higgs--de Rham sequence of $(V, \nabla)$. In the first case, we assume that all systems of logarithmic Hodge sheaves appearing in the Higgs--de Rham sequence of $(V, \nabla)$ are slope stable. Then we restrict to a very general $H\in |L_1|$ containing a fixed $k$-point $x\in U\backslash D$. Using the same arguments as in Case 1 of the proof of Theorem \ref{reflexive-loc-free-Delta-Higgs}, one shows that $V, \cE$ and all sheaves in the Higgs--de Rham sequence  are locally free at $x$.  The remaining cases are done in the same way as in the proof of Theorem \ref{reflexive-loc-free-Delta-Higgs}. 
\end{proof}

\subsection{Local freeness on $D$}

The following proposition shows local freeness in Theorem \ref{local-freeness-log-Higgs-case} at the points of  $D$. It is an analogue of  \cite[Lemma 3.10]{La-JEMS} for singular varieties. 
Note that the assumptions are stronger than that in the proof of Proposition \ref{local-freeness-log-case-outside-D}.

Let $Y$ be an irreducible component of $D$ and let $\imath :Y\hookrightarrow X$ be the corresponding embedding.
We denote by $D_Y$ the divisor $Y\cap \overline{D\backslash Y}$ on $Y$.
In the proposition below, $\imath ^{[*]}(V, \nabla)$ denotes a $\imath ^{[*]}T_{\lambda}(X,D)$-module and 
$\imath ^{[*]}(\cE , \theta)$ a $\Sym ^{\bullet }\imath ^{[*]}T_{X}(\log D)$-module.

\begin{Proposition} \label{local-freeness-log-case-on-D}
Assume that $Y$ is a Cartier divisor, $Y\cap U$ is big in $Y$ and $Z\cap Y$ has codimension $\ge 3$ in $Y$.
Let $M=(M_1,..., M_{n-2})$ be any collection of nef line bundles on $Y$.
If Theorem \ref{local-freeness-log-Higgs-case} holds for varieties of dimension $<n$ then the following conditions are satisfied:
	\begin{enumerate}
		\item  $\imath ^{[*]}(\cE , \theta)$ is slope $M$-semistable as a $\Sym ^{\bullet }\imath ^{[*]}T_{X}(\log D)$-module.
		\item If $W_{\bullet}$ is the filtration of $\imath ^{[*]}V$ extending the
		monodromy filtration  for $\Res _Y\nabla$ from $U\cap Y$, then quotients in this filtration have
	a	canonical structure of $\cO_Y$-modules with an integrable logarithmic $\lambda$-connection
		on $(Y, D_Y)$. These quotients
		are slope $M$-semistable with the same slope.
		 \item If $M_{\bullet}$ is the filtration of $\imath ^{[*]}\cE$ extending the
		 monodromy filtration  for $\Res _Y\theta$ from $U\cap Y$, then quotients in this filtration have
		a canonical structure of logarithmic Higgs sheaves on $(Y, D_Y)$. These quotients
		 are slope $M$-semistable with the same slope.
		\item $\int_Y \Delta (\imath ^{[*]}V)=\int_Y \Delta (\imath ^{[*]}\cE)= 0$.
		\item $V$ and $\cE$ are locally free along $Y\cap U$.
	\end{enumerate}
\end{Proposition}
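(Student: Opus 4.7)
The plan is to follow the structure of the proof of \cite[Lemma 3.10]{La-JEMS}, but with the crucial adjustments needed because $Y$ and $D_Y$ are no longer smooth and we must work with reflexive operations. The argument has two intertwined parts: (a) transferring semistability and vanishing of the discriminant from $X$ to $Y$, and (b) producing a useful filtration on $\imath^{[*]}V$ and $\imath^{[*]}\cE$ whose graded quotients fit the induction hypothesis so that Theorem \ref{local-freeness-log-Higgs-case} can be applied on $Y$ in dimension $n-1$.

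First I would prove (1). Since $Y$ is a Cartier divisor, $\imath ^{[*]}T_X(\log D)=T_Y(\log D_Y)$ over the big open set $U\cap Y$, so the Higgs-module structure on $\imath ^{[*]}\cE$ is well defined. Using the Bogomolov-type restriction from \cite[Theorem 4.16]{La-Bog-normal}, one shows that $\imath^{[*]}(\cE,\theta)$ is slope $(\imath^*L_2,\dots,\imath^*L_{n-1})$-semistable as a $\Sym^\bullet\imath^{[*]}T_X(\log D)$-module; combined with Lemma \ref{inequality-on-Delta} and the hypothesis $\int_X\Delta(\cE^{**})L_1\dots\widehat{L_i}\dots L_{n-1}=0$ coming from Proposition \ref{local-freeness-log-case-outside-D}(2), this forces $\int_Y\Delta(\imath^{[*]}\cE)\imath^*L_2\dots\widehat{\imath^*L_i}\dots\imath^*L_{n-1}=0$. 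Then as in the proof of (1)–(2) of Theorem \ref{reflexive-loc-free-Delta-Higgs}, by varying the polarization along nef line bundles on $Y$ and using Serre's trick to compare with collections of $L_i$'s, one deduces $M$-semistability of $\imath^{[*]}(\cE,\theta)$ for arbitrary nef $M$ and also $\int_Y\Delta(\imath^{[*]}\cE)=0$.

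Next I would construct the weight/monodromy filtrations in (2) and (3). Over the log smooth locus $U\cap Y$, the residues $\Res_Y\nabla$ and $\Res_Y\theta$ act as $\cO_{U\cap Y}$-linear endomorphisms of $\imath^*V$ and $\imath^*\cE$, and their nilpotent parts define the classical weight filtration. Because $U\cap Y$ is big in $Y$, these filtrations extend uniquely by reflexive saturation to filtrations $W_\bullet$ and $M_\bullet$ on $\imath^{[*]}V$ and $\imath^{[*]}\cE$. Standard functoriality of weight filtrations (as in \cite[Section 3]{La-JEMS}) endows each graded quotient with the required logarithmic $\lambda$-connection or logarithmic Higgs structure on $(Y,D_Y)$. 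Semistability of these graded pieces with slope $\mu_M(\imath^{[*]}V)$ (respectively $\mu_M(\imath^{[*]}\cE)$) follows by the standard argument: any destabilizing subobject would lift, via the nilpotent endomorphism $\Res$, to a destabilizing subobject of the original, contradicting (1) applied to $\cE$ and its counterpart for $V$ (which follows analogously once one observes $\int_Y\Delta(\imath^{[*]}V)=0$). This yields (2), (3), (4).

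For (5) I would exploit the induction hypothesis. Each graded piece of $W_\bullet$ on $(Y,D_Y)$ is a logarithmic $\lambda$-connection module of rank $\le p$ that is slope $M$-semistable for every nef $M$ and, thanks to (4) together with Lemma \ref{Chern-classes-filtrations} applied on $Y$, has vanishing discriminant. Since $\dim Y=n-1$ and the $F$-liftability and bigness hypotheses restrict to $(Y,D_Y)$ (here one uses that $Z\cap Y$ has codimension $\ge 3$ in $Y$ and $Y\cap U$ is big), Theorem \ref{local-freeness-log-Higgs-case} applies in dimension $n-1$ and gives that each graded piece is locally free on $Y\cap U$. By Lemma \ref{loc-free-passing-to-graded} together with Lemma \ref{Chern-classes-filtrations}, one then concludes that $\imath^{[*]}V$ and $\imath^{[*]}\cE$ are locally free along $Y\cap U$, and hence $V$ and $\cE$ are locally free there by the depth criterion (as in \cite[Lemma 1.14]{La-JEMS}).

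The main obstacle will be the extension and compatibility of the monodromy filtration across the singular locus of $Y$. On a smooth ambient one can simply saturate and invoke smooth Bertini, but here one must verify that the reflexive extension of the filtration defined on $U\cap Y$ is well-behaved enough that its graded pieces are again torsion free (and eventually reflexive on $Y\cap U$), that the $\lambda$-connection structure extends to the reflexive hull, and that $\Delta$ of the graded pieces controls $\Delta$ of $\imath^{[*]}V$ via Lemma \ref{Chern-classes-filtrations}. This step is delicate because the Higgs field $\theta$ does not preserve the reflexive hull naively; I would handle it by working first on $Y\cap U$ where the filtration is honestly a filtration by vector subbundles, applying the induction there, and only afterwards pushing the resulting local freeness back to an open neighborhood using the fact that $Z\cap Y$ has codimension $\ge 3$.
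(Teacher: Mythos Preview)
There are two genuine gaps in your approach to (1), and they propagate to the rest of the argument.

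First, the identification $\imath^{[*]}T_X(\log D)=T_Y(\log D_Y)$ over $U\cap Y$ is false: on the log smooth locus one has the residue exact sequence
\[
0\to T_Y(\log D_Y)\to \imath^{*}T_X(\log D)\to \cO_Y\to 0,
\]
so $\imath^{[*]}T_X(\log D)$ has rank $n$, not $n-1$. Thus a $\Sym^\bullet\imath^{[*]}T_X(\log D)$-module structure on $\imath^{[*]}\cE$ is \emph{not} a logarithmic Higgs structure on $(Y,D_Y)$; the extra direction is exactly what $\Res_Y\theta$ records. For the same reason $\imath^{[*]}V$ does not carry an integrable logarithmic $\lambda$-connection on $(Y,D_Y)$: there is no Lie algebroid on $Y$ giving it such a structure. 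This is why the paper cannot apply the induction hypothesis (Theorem \ref{local-freeness-log-Higgs-case} on $Y$) directly to $\imath^{[*]}V$ or $\imath^{[*]}\cE$, but only to their monodromy graded pieces.

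Second, \cite[Theorem 4.16]{La-Bog-normal} is a restriction theorem for a very general member of a very ample linear system; it says nothing about restriction to a \emph{fixed} irreducible component $Y$ of $D$. So your proof of semistability of $\imath^{[*]}(\cE,\theta)$ for the polarization $(\imath^*L_2,\dots,\imath^*L_{n-1})$ does not go through, and consequently neither does the Serre-trick extension to arbitrary nef $M$.

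The paper avoids both problems by a different mechanism. It first builds the full Higgs--de Rham sequence on $X$, restricts all the $(\cE_m,\theta_m)$ and $(V_m,\nabla_m)$ to $Y$, and observes that while the $\cF_m=\imath^{[*]}\cE_m$ are $\Sym^\bullet\imath^{[*]}T_X(\log D)$-modules, the $\tilde V_m=\imath^{[*]}V_m$ are not modules over any natural Lie algebroid on $Y$. To remedy this it invokes \cite[Lemma 3.7]{La-JEMS} to perform the inverse Cartier transform on the total space of the normal bundle of $U\cap Y$; boundedness of the suitably twisted $\cF_m$ (Theorem \ref{boundedness} together with Proposition \ref{needed-for-independence-of-compactification}) then yields (1) by the same ``destabilizer produces unbounded slopes'' contradiction as in the proof of Theorem \ref{reflexive-loc-free-Delta-Higgs}(1). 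For (2)--(5) the paper passes to monodromy graded pieces $(V'_m,\nabla'_m)=(\Gr_{W^m}\tilde V_m)^{**}$ and $(\cF'_m,\theta'_m)=(\Gr_{M^m}\cF_m)^{**}$, which \emph{do} carry logarithmic structures on $(Y,D_Y)$, and uses \cite[Proposition 3.8]{La-JEMS} to show these are again related by the inverse Cartier transform on $(Y,D_Y)$. One then gets the chain $0\le\int_Y\Delta(\cF_m)\le\int_Y\Delta(V'_{m-1})=p^2\int_Y\Delta(\cF_{m-1})$ from \cite[Lemma 2.4 and Theorem 0.4]{La-Bog-normal}, bounded above by $\int_Y\Delta(V'_{-1})\le\int_X\Delta(V)\cdot\cO_X(Y)\cdots=0$ via Lemma \ref{inequality-on-Delta}; this forces all discriminants to vanish and the induction hypothesis gives local freeness. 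Your monodromy-filtration step is in the right spirit, but you need this Higgs--de Rham machinery on $Y$ (via \cite[Lemma 3.7 and Proposition 3.8]{La-JEMS}) to make it work, not a direct restriction theorem.
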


\begin{proof} 
	Our assumptions imply that $(Y, D_Y)$ is  almost liftable and locally $F$-liftable outside of $Z\cap Y$.
	Let	us set $(V_{-1}, \nabla_{-1}):=(V, \nabla)$ and $S_{-1}^{\bullet}:=N^{\bullet }$. Let
	$${  \xymatrix{
			(V_{-1}, \nabla _{-1}) \ar[rd]^{(\Gr _{S_{-1}})^{**}}	&& (V_0, \nabla _0)\ar[rd]^{(\Gr _{S_0})^{**}}&& (V_1, \nabla _1)\ar[rd]^{(\Gr _{S_1})^{**}}&\\
			&	(\cE_0, \theta _0)\ar[ru]^{C_{\tilde U}^{[-1]}}&&	(\cE_1, \theta_1)\ar[ru]^{C_{\tilde U}^{[-1]}}&&...\\
	}}$$
	be  a Higgs--de Rham sequence. As in Lemma \ref{loc-free-outside-finite-set}, one can show that
	if $T_m:= \cE_m/(\Gr_{S_{m-1}}V_{m-1})$	then $U\cap \Supp T_m$ is a finite set of points. Restricting to a very general hypersurface, we can also prove that $\cE_m$ and $V_m$ are locally free on $U$ outside a finite number of points. 
	
	We try to restrict the above Higgs de--Rham sequence to $Y$ and we consider  the reflexive hulls $\tilde V_m=\imath^{[*]}V _m$ and
	$(\cF _m, \tilde\theta_{m})=\imath^{[*]}(\cE _m, \theta_m)$.  Note that every $(\cF _m, \tilde\theta_{m})$ is a $\Sym ^{\bullet }\imath ^{[*]}T_{X}(\log D)$-module but we do not have a natural Lie algebroid for which $\tilde V_m$ are modules over this algebroid. In particular, the inverse  Cartier transform is not well  defined for $\Sym ^{\bullet }\imath ^{[*]}T_{X}(\log D)$-modules. However, we can use  \cite[Lemma 3.7]{La-JEMS} and the inverse  Cartier transform  on the total space of the normal bundle of $U\cap Y$. Note that the set of suitably twisted reflexive sheaves  $\cF_m$ is bounded by Theorem \ref{boundedness} and Proposition \ref{needed-for-independence-of-compactification}. Then as in the proof of \cite[Lemma 3.10]{La-JEMS}, we can use it to prove (1) (see also the proof of (1) in Theorem \ref{reflexive-loc-free-Delta-Higgs}). 
	
Let  $M^i_{\bullet}$ be the monodromy filtration   for $\Res _Y\theta _i$ and $W^i_{\bullet}$  the monodromy filtration  for $\Res _Y\nabla _i$ (see \cite[Section 3]{La-JEMS}; we can define them only on $U\cap Y$ and extend to reflexive sheaves on $Y$). Then we can consider associated graded objects
	$$(V_m', \nabla _{m}'):= (\Gr _{W^{m}}\tilde V_m)^{**}\quad \hbox{and} \quad (\cF_m', \theta _{m}'):= (\Gr _{M^{m}}(\cF _m, \tilde\theta_{m}))^{**},$$
and the sequence
	$${  \xymatrix{
		(V_{-1}', \nabla' _{-1}) \ar[rd]	&& (V_0', \nabla '_0)\ar[rd]&& (V'_1, \nabla' _1)\ar[rd]&\\
		&	(\cF _0, \tilde\theta _0)\ar[ru]&&	(\cF _1, \tilde\theta_1)\ar[ru]&&...\\
}}$$
Then \cite[Proposition 3.8]{La-JEMS} shows that
$$(V_m', \nabla _{m}')=C^{-1}_{(\tilde U_Y, \tilde D_{U_Y})} (\cF_m', \theta _{m}').$$
Let $A=(A_1,...,A_{n-3})$ be a collection of ample line bundles on $X$ and let us set $B_i=\imath^*A_i$.
Using \cite[Lemma 2.4]{La-Bog-normal},  Lemma \ref{inequality-on-Delta}
and Proposition \ref{local-freeness-log-case-outside-D}, we obtain
$$\int_Y \Delta (V'_{-1})B_1...B_{n-3}\le \int_X \Delta (V) \cO_X(Y)A_1...A_{n-3}=0.$$
On the other hand,  by \cite[Lemma 2.4 and Theorem 0.4]{La-Bog-normal} we have 
$$0\le   \int_Y \Delta (\cF_{m})  B_1...B_{n-3} \le \int_Y \Delta (V'_{m-1}) B_1...B_{n-3}
=p^2  \int_Y \Delta (\cF_{{m-1}}) B_1...B_{n-3}
$$
for $m\ge 0$. Therefore
$$0\le   \int_Y \Delta (\cF_{m})  B_1...B_{n-3} \le p^{2m} \int_Y \Delta (V'_{-1}) B_1...B_{n-3}\le 0.$$
So by our assumption, $V'_{-1}$ is locally free and  $\int_Y\Delta (V'_{-1})= 0 $.
 Using Lemma \ref{Chern-classes-filtrations} one can see that $\imath^{[*]}V$ is locally free
 and since $\imath^{[*]}V/\imath^{*}V$ is supported in codimension $\ge 3$ in $Y$, $\imath^{*}V$ is also locally free. It follows that $V$ is locally free along $Y$. Similar arguments show that $\cE$ is also locally free along $Y$ and (2) and (3) hold.
\end{proof}

\section{Applications}

In this section we provide a few applications of the obtained results. In particular, we show some applications to schemes defined in zero or mixed characteristic. We also prove some strong restriction theorems for strongly numerically flat bundles and semistable Higgs bundles with vanishing discriminant.

\subsection{Crystalline representations of the \'etale fundamental group}

Let $k$ be an algebraic closure of a finite field of characteristic $p$. Let $W=W(k)$ be the Witt ring of $k$ and $K$ the quotient ring of $W$. Let $\cX$ be a smooth scheme over $W$ and let $j:\cX \hookrightarrow \bar \cX$ be an open embedding into a normal projective scheme over $W$. Assume that the complement of $\cX$ in $\bar \cX$ has codimension $\ge 2$.

Let $MF^{\nabla}_{[0,w]}(\cX/W)$ denote the category of Fontaine's modules over $\cX/W$ of weight $\le w$. For the definition and its basic properties we refer the reader to \cite[II]{Fa} or \cite[Section 2]{LSZ}. The following theorem generalizes  \cite[Theorem 2.6*]{Fa} (see also \cite[Theorem 3.3]{FL}) to some non-proper varieties.

\begin{Theorem}\label{Faltings}
	There exists a fully faithful contravariant functor $\DD$ from the category
	$MF^{\nabla}_{[0,p-2]}(\cX/W)$ to the category of  \'etale $\ZZ_p$-local systems on $\cX_K:=\cX\otimes _WK$.  Objects of the essential image of this functor are called \emph{dual-crystalline sheaves}.
\end{Theorem}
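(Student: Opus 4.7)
\emph{Proof strategy.} The plan is to proceed in two stages: first to construct $\DD$ by Faltings' standard recipe, and then to upgrade full faithfulness from the proper case to the present setting by exploiting the codimension $\ge 2$ hypothesis on $\bar\cX\backslash \cX$.

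For the construction, since $\cX$ is smooth over $W$, cover it by affine opens $\mathcal{U}=\Spec R$ admitting a lift of Frobenius $\phi\colon R\to R$. On each such chart, Faltings' construction (see \cite[II]{Fa}) associates to an object $M$ of $MF^{\nabla}_{[0,p-2]}(\mathcal{U}/W)$ an \'etale $\ZZ_p$-local system on $\mathcal{U}_K$, obtained as $\mathrm{Fil}^0(M\otimes_R \widehat{R}^{\mathrm{ur}})^{\phi=1}$ for an appropriate $p$-adic period ring $\widehat{R}^{\mathrm{ur}}$. The output is canonically independent of the choice of Frobenius lift, so the local pieces glue to a global \'etale $\ZZ_p$-local system $\DD(M)$ on $\cX_K$, which is obviously functorial in $M$.

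For full faithfulness, I would exploit that both categories extend uniquely across the codimension $\ge 2$ locus $\bar\cX\backslash \cX$. On the source, the underlying $\cO_\cX$-module of $M$ is locally free, so its pushforward $j_*M$ is reflexive on $\bar\cX$ by Proposition \ref{reflexive-standard-equivalence}; the filtration, connection and Frobenius all extend uniquely by reflexivity. On the target, Zariski--Nagata purity in characteristic zero provides an equivalence between \'etale $\ZZ_p$-local systems on $\cX_K$ and on $(\bar\cX_K)_{reg}$, since the complement has codimension $\ge 2$ in the normal $K$-scheme $\bar\cX_K$. These two extension principles reduce us to the case $\cX=\bar\cX_{reg}$; then, by de~Jong's theorem, one chooses an alteration $f\colon Y\to \bar\cX$ with $Y$ smooth projective over $W$, descends the question to $Y$, applies Faltings' original Theorem~2.6 (fully faithful in the smooth proper case), and recovers full faithfulness on $\bar\cX_{reg}$ by a trace argument along $f$.

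The principal obstacle will be ensuring that the Fontaine module structure genuinely survives both the reflexive extension from $\cX$ to $\bar\cX_{reg}$ and the subsequent pullback along an alteration. The connection and Frobenius on the reflexive extension must be compatible with the singularities of $\bar\cX$, which live in codimension $\ge 2$, and a ramified alteration can distort the divided-power and filtration data. Both difficulties are controlled by the codimension $\ge 2$ assumption, which forces all relevant comparison morphisms to be isomorphisms off a high-codimension locus where reflexivity enforces uniqueness.
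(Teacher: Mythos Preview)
Your proposal misses the main difficulty and therefore has a genuine gap. The crux of the theorem is not full faithfulness---that part of Faltings' argument is essentially local and carries over once the functor exists---but rather the \emph{construction} of $\DD$, specifically the algebraization step. Faltings' method associates to a strict $p^m$-torsion Fontaine module a finite covering $\pi\colon \cY \to \hat\cX$ of the \emph{formal} completion of $\cX$ along its special fiber; to obtain an algebraic \'etale local system on $\cX_K$ one must algebraize this formal covering, and in the proper case Faltings does so via Grothendieck's formal GAGA. Your sentence ``so the local pieces glue to a global \'etale $\ZZ_p$-local system $\DD(M)$ on $\cX_K$'' is precisely where this step is hidden, and it does not go through when $\cX$ is not proper: a finite \'etale cover of the formal scheme (equivalently, of the rigid generic fibre) need not arise from an algebraic finite \'etale cover of $\cX_K$.

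The paper meets this head-on and uses the codimension $\ge 2$ hypothesis in a different place than you do. Since $\hat j_*\cO_{\hat\cX_m}=\cO_{\hat{\bar\cX}_m}$, the pushforward $\hat j_*\pi_*\cO_{\cY_m}$ is a coherent sheaf of $\cO_{\hat{\bar\cX}_m}$-algebras, yielding a finite covering $\bar\pi\colon \bar\cY\to \hat{\bar\cX}$; because $\bar\cX$ is proper over $W$, formal GAGA now applies to algebraize $\bar\pi$, and restricting to $\cX_K$ gives the desired \'etale cover. Your separate reduction for full faithfulness also has problems of its own: the category $MF^{\nabla}_{[0,p-2]}$ is defined only over smooth $W$-schemes, so ``reflexive extension of the Fontaine module to $\bar\cX$'' has no meaning as stated, and pulling back along a ramified alteration does not obviously preserve the Frobenius and filtration compatibilities that define the category. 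But these are secondary; the essential miss is the algebraization of the formal covering.
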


\begin{proof}
	In the proof of \cite[Theorem 2.6*]{Fa} Faltings associates to any strict $p^m$
-torsion Fontaine module, a finite covering $\pi: \cY \to \hat \cX $ of the formal completion of $\cX$ along the special fiber $X:=\cX \otimes _Wk$. We can extend this covering to 
	a finite covering $\bar\pi: \bar \cY \to \hat {\bar \cX} $ of the formal completion of $\bar \cX$ along the special fiber $\bar X:=\bar \cX \otimes _Wk$. To do so note that 	$\hat j_*\cO_{{\hat \cX}_m}=\cO_{\hat {\bar \cX}_m}$,  so $\hat j_*\pi_*\cO_{\cY_m}$ is a sheaf of $\cO_{\hat {\bar \cX}_m}$-algebras, coherent as an $\cO_{\hat {\bar \cX} _m}$-module. Since $\hat j_*\pi_*\cO_{\cY_m}\simeq 
	\cO_{\hat {\bar \cX}_m} \otimes_{\cO_{\hat {\bar \cX}_n}}\hat j_*\pi_*\cO_{\cY_n}$ for $m\le n$, the map
	$\bar \pi: \bar \cY =\colim _m\bar\cY_m  \to\hat {\bar \cX}$ is adic and so it is a finite covering extending $\pi$.
 Since 
	$\bar \cX$ is proper over $W$, Grothendieck's formal GAGA theorem (see, e.g.,  \cite[Corollary 8.4.6]{Il}) says that 
	$\bar\pi$ is algebraizable. Faltings's construction shows that restriction of the associated finite covering of $\bar \cX$ to $\cX\otimes _WK$ gives a finite \'etale covering corresponding to  an \'etale local system. Taking  the $p$-adic limit of $p$-torsion analogues, we get the required theorem (cf. \cite[Section 2]{LSZ}). 
\end{proof}

In \cite[Section 2]{SYZ} the authors extend Fontaine--Laffaille-Faltings' functor $\DD$ so that they are able to obtain some projective representations of $\pi_1^{\et}(\cX\otimes _WK)$ (see \cite[Theorem 2.10]{SYZ}). An analogous theorem holds in the above considered case.
In fact, the whole theory in \cite{LSZ} and \cite{SYZ} can be rewritten in our setting. Here we need just a small part of it, pertaining to strict $p$-torsion  Fontaine--Faltings modules.

\medskip

As a corollary to Theorem \ref{main-Higgs} one gets the following theorem generalizing \cite[Theorem 3.10]{SYZ}.

\begin{Theorem}\label{preperiodic}
Let $j:X\hookrightarrow \bar X:=\bar \cX\otimes _Wk$ and  assume that the special fiber $\bar X:=\bar \cX\otimes _Wk$ has $F$-liftable singularities in codimension $2$ with respect to the lifting 
$\cX\otimes_W W_2(k)$ of $X$. Let $H$ be an ample line bundle on $\bar X$. A slope $H$-semistable rank $r\le p$ Higgs bundle $(\cE, \theta )$ on $X$ with
$$\int_{\bar X} \Delta (j_*\cE) H^{n-2}=0$$
 is preperiodic after twisting.  Conversely, for any ample $H$, any twisted preperiodic Higgs bundle $(\cE, \theta )$ on $X$ is slope $H$-semistable and satisfies
$\int_{\bar X} \Delta (j_*\cE)=0.$
\end{Theorem}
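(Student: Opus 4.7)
The plan is to deduce both directions from Theorem~\ref{main-Higgs} and its Corollary on canonical Higgs--de Rham flows, combined with the boundedness result Theorem~\ref{boundedness}, exploiting crucially that $k=\bar{\FF}_p$ so that a bounded family of Higgs bundles defined over a common finite subfield contains only finitely many isomorphism classes.

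\emph{Forward direction.} Given a slope $H$-semistable Higgs bundle $(\cE,\theta)$ of rank $r\le p$ with $\int_{\bar X}\Delta(j_*\cE)H^{n-2}=0$, I first apply Theorem~\ref{main-Higgs} with $\lambda=0$: this yields local freeness of $\cE$ on $X$, the full vanishing $\int_{\bar X}\Delta(j_*\cE)=0$, and slope semistability of $(\cE,\theta)$ for every nef multi-polarization on $\bar X$. The subsequent Corollary produces a canonical Higgs--de Rham flow $\{(\cE_m,\theta_m),(V_m,\nabla_m)\}_{m\ge 0}$ whose every term inherits these properties. Because $N^1(\bar X)/rN^1(\bar X)$ is finite, I can pick line bundles $L_m$ on $\bar X$ so that $c_1(j_*\cE_m\otimes L_m)$ lies in a fixed finite set of representatives. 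Twist-invariance of~$\Delta$ then gives that the family $\{j_*(\cE_m\otimes L_m)\}_{m\ge 0}$ has fixed rank, bounded $c_1$, vanishing $\int_{\bar X}\Delta$ (hence bounded $\int\ch_2\,H^{n-2}$), and bounded $\mu_{\max}$ by semistability, so Theorem~\ref{boundedness} together with Proposition~\ref{needed-for-independence-of-compactification} shows this family is bounded. Since $k=\bar{\FF}_p$, the lifting $\cX$, the polarization $H$, the initial bundle, and the twists descend to a common finite field $\FF_q$, and the iterates live among the $\FF_{q'}$-rational points of a fixed finite-type moduli scheme over $\FF_q$ for some $q'$; such a set of points is finite, forcing preperiodicity after twisting.

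\emph{Converse direction.} Assume $(\cE,\theta)$ is twisted preperiodic. Well-definedness of the canonical Higgs--de Rham flow at each step presupposes Simpson's filtration, whose construction (Theorem~\ref{existence-of-gr-ss-Griffiths-transverse-filtration}) requires slope semistability of the input; iterating, one sees that $(\cE,\theta)$ is slope $H$-semistable for every ample $H$. Periodicity of the twisted orbit confines the iterates to a bounded family, and combining the Cartier identity $\int_{\bar X}\Delta(j_*V_m)H^{n-2}=p^2\int_{\bar X}\Delta(j_*\cE_m)H^{n-2}$ with the Bogomolov-type inequality $\int_{\bar X}\Delta(j_*V_m)H^{n-2}\ge\int_{\bar X}\Delta(j_*\cE_{m+1})H^{n-2}\ge 0$ around one full period, and using twist-invariance of~$\Delta$, forces $\int_{\bar X}\Delta(j_*\cE)H^{n-2}=0$.

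The hardest step I anticipate is the descent in the forward direction: one must arrange the twisting line bundles $L_m$ compatibly with the $\FF_q$-structure and verify that the Cartier transform and the formation of Simpson's filtration are functorial enough to keep the iterates inside the $\FF_{q'}$-points of a fixed moduli scheme. A secondary delicate point is the $\Delta=0$ claim in the converse, where the inequality $\Delta(\cE_{m+1})\le p^2\Delta(\cE_m)$ is too slack on its own; one has to exploit the equality case in Bogomolov together with the fact that, under periodicity, equality in $\Delta(V_m)=p^2\Delta(\cE_m)$ combined with equality in the passage to the reflexive graded propagates vanishing around the period.
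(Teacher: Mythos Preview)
Your proposal matches the paper's approach: the paper gives no explicit proof but presents Theorem~\ref{preperiodic} as a corollary of Theorem~\ref{main-Higgs}, generalizing \cite[Theorem~3.10]{SYZ}. Your forward direction is correct and is exactly the intended argument---Theorem~\ref{main-Higgs} (via Corollary~\ref{existence-of-Higgs-de_Rham-flow}) supplies the canonical flow, Theorem~\ref{boundedness} gives boundedness of the twisted iterates, and the finite-field pigeonhole from $k=\bar\FF_p$ forces preperiodicity.

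For the converse your inequalities are set up in the wrong direction to conclude. From $\int_{\bar X}\Delta(j_*V_m)H^{n-2}=p^2\int_{\bar X}\Delta(j_*\cE_m)H^{n-2}$ and $\int_{\bar X}\Delta(j_*V_m)H^{n-2}\ge\int_{\bar X}\Delta(j_*\cE_{m+1})H^{n-2}$ you obtain only $\Delta_{m+1}\le p^2\Delta_m$, so periodicity $\Delta_0=\Delta_N$ yields the vacuous $\Delta_0\le p^{2N}\Delta_0$. You flag this as ``too slack,'' but your proposed repair via ``the equality case in Bogomolov'' does not close the loop: equality in that step is an \emph{output}, not something periodicity provides. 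What is actually needed (and is how the argument runs in the smooth case of \cite{SYZ}) is the \emph{equality} $\int_{\bar X}\Delta(j_*\cE_{m+1})H^{n-2}=\int_{\bar X}\Delta(j_*V_m)H^{n-2}$ at every step. This holds because in a Higgs--de Rham flow in the sense of Definition~\ref{def:Higgs-de_Rham-flow} the Griffiths-transverse filtrations are by vector \emph{subbundles} on $X$, so $\cE_{m+1}=\Gr_{F_m}V_m$ is already locally free on $X$ and has the same Chern classes as $V_m$ there; together with the identification of the underlying bundle of $C^{-1}_{\tilde X}(\cE_m)$ with $F_X^*\cE_m$, one obtains $\Delta_{m+1}=p^2\Delta_m$ exactly, and then twisted periodicity forces $(p^{2N}-1)\Delta_0=0$. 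In the singular setting the point requiring care is that this equality for $\int_{\bar X}\Delta(j_*\,\cdot\,)$ survives passage to reflexive extensions on $\bar X$; this is where the proof of Lemma~\ref{Chern-classes-filtrations} is used in the equality direction (the cokernel vanishes on $\bar X_{\rm reg}$), rather than as a bare inequality.
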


In \cite{SYZ} the authors defined twisted Fontaine--Faltings modules and proved their relation to projective representations of $ \pi_1^{\et} (\cX_K)$ in case $\cX$ is projective over $W$. Their proof generalizes to our situation and an analogue of Theorem \ref{Faltings} for projective representations (see \cite[Theorem 6.7]{LSZ} and \cite[Theorems 0.1, 0.2 and 0.5]{SYZ}) gives the following corollary:

\begin{Corollary}\label{existence-of-representation}
Under the assumptions of Theorem \ref{preperiodic}, if $r<p$ then
$\cE$ defines a continuous projective representation
$$\rho_{\cE}: \pi_1^{\et} (\cX_K)\to {\PGL }_r(k).$$
Moreover, if $c_1(\cE)H^{n-1}$ is coprime to $r$ then $\rho_{\cE}$ is geometrically absolutely irreducible. 
\end{Corollary}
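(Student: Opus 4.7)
The plan is to combine Theorem \ref{preperiodic} with the extension of twisted Fontaine--Faltings theory developed in \cite{SYZ}, grafting onto it the same Grothendieck formal GAGA trick that already appears in the proof of Theorem \ref{Faltings}. First, by Theorem \ref{preperiodic}, $(\cE,\theta)$ becomes preperiodic after a suitable twist. A twisted periodic Higgs bundle of period $f$ on $X$ is precisely the data of a rank-$r$ twisted Fontaine--Faltings module on $\cX$ of Hodge--Tate weight in $[0,r-1]\subset [0,p-2]$, where the rank assumption $r<p$ is exactly what places the module in the Fontaine--Laffaille range to which the crystalline--\'etale functor applies.

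Second, I would extend Theorem \ref{Faltings} from untwisted modules to twisted ones by repeating the argument given in the excerpt. Associated to a strict $p^m$-torsion twisted Fontaine--Faltings module one obtains a ${\PGL}_r$-torsor on $\hat \cX$, represented locally by finite $\cO_{\hat \cX_m}$-algebras with compatible gluing data. Exactly as in the proof of Theorem \ref{Faltings}, we use $\hat j_*\cO_{\hat \cX_m}=\cO_{\hat{\bar \cX}_m}$ to push the sheaves of algebras along $\hat j$, producing a finite adic extension $\bar \pi\colon \bar\cY\to \hat{\bar \cX}$. Grothendieck's formal GAGA algebraizes $\bar\pi$, and its restriction to the generic fiber over $K$ is a finite \'etale cover of $\cX_K$. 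Taking the $p$-adic limit of its monodromy as $m\to\infty$ gives $\rho_\cE\colon \pi_1^{\et}(\cX_K)\to \PGL_r(k)$, exactly as in \cite[Theorems 0.1 and 0.5]{SYZ}.

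For the second assertion, I would argue that if $c_1(\cE)H^{n-1}$ is coprime to $r$ then the semistable Higgs bundle $(\cE,\theta)$ is automatically slope $H$-stable: for any sub-Higgs sheaf of rank $0<r'<r$ with the same slope, $r\cdot c_1(\cF)H^{n-1}=r'\cdot c_1(\cE)H^{n-1}$ forces $r\mid r'\cdot c_1(\cE)H^{n-1}$, contradicting coprimality. Stability is preserved along the canonical Higgs--de Rham flow (this is already built into Theorem \ref{reflexive-loc-free-Delta-Higgs} via \cite[Theorem 0.4]{La-Bog-normal} and the uniqueness of Simpson's filtration on a stable object), and the periodic twisted Fontaine--Faltings module produced above is therefore simple. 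Simplicity of the module translates, under the twisted crystalline--\'etale functor, into absolute irreducibility of the associated projective representation, and absolute irreducibility is stable under arbitrary base change of the coefficient field.

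The main technical obstacle is the extension step: one must check that the twisting cocycle (a class in an appropriate \'etale cohomology group on $\cX$) together with the compatible system of Frobenius lifts used to define the local Fontaine--Faltings structures extends to a coherent datum on $\hat{\bar \cX}$, so that $\hat j_*$ really does produce a coherent sheaf of $\cO_{\hat{\bar \cX}_m}$-algebras to which formal GAGA applies. This is the point where the assumption that $\bar \cX\setminus \cX$ has codimension $\ge 2$ enters decisively: reflexive hulls of the local $\cO_{\hat\cX_m}$-modules involved extend uniquely across the boundary, and the gluing cocycles for the twist, living on overlaps of codimension $\ge 2$ complements, lift uniquely as well. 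Once this compatibility is verified, the remainder of the argument is essentially bookkeeping in the style of \cite{LSZ} and \cite{SYZ}.
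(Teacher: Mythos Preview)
Your proposal is correct and follows essentially the same route as the paper, which does not give a detailed proof but simply remarks that the arguments of \cite{SYZ} generalize to the present setting via the formal GAGA trick already used in Theorem \ref{Faltings}, and then cites \cite[Theorem 6.7]{LSZ} and \cite[Theorems 0.1, 0.2 and 0.5]{SYZ}. You have spelled out what the paper leaves implicit.

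One small imprecision worth flagging: your parenthetical justification for ``stability is preserved along the canonical Higgs--de Rham flow'' via Theorem \ref{reflexive-loc-free-Delta-Higgs} and ``uniqueness of Simpson's filtration on a stable object'' is not quite the right mechanism. Simpson's filtration on a slope-stable object need not be trivial, and Theorem \ref{reflexive-loc-free-Delta-Higgs} only asserts semistability of the terms in the flow. The clean argument is purely numerical: under $C_{\tilde X}^{-1}$ the first Chern class is multiplied by $p$ (coprime to $r$ since $r<p$), and under taking the associated graded it is unchanged, so the coprimality condition $\gcd(c_1H^{n-1},r)=1$ propagates through the entire flow. Combined with the semistability you already have, this forces each term to be slope-stable, which is what feeds into the irreducibility statement of \cite[Theorem 0.5]{SYZ}. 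This is a cosmetic fix; your overall outline is sound.
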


\medskip

\begin{Example}
	Let us consider a canonical Higgs bundle $(\cE=\cE^1\oplus\cE^0, \theta)$ on $X$ defined by $\cE^1:= \Omega _X^{1}$, $\cE^0:=\cO_X$ with $\theta : \cE^1\to \cE^0 \otimes _{\cO_X}\Omega _X$ given by identity. If for some ample line bundle $H$ on $\bar X$ we have
	$$2(n+1)\cdot \int_{\bar X}  c_2(\Omega_{\bar X}^{[1]})H^{n-2}-n\cdot K_{\bar X}^2H^{n-2}=0$$
	and $(\cE,\theta)$ is slope $H$-semistable, then Corollary \ref{existence-of-representation} shows that we have an associated continuous projective representation
	$$\rho_{\cE}: \pi_1^{\et} (\cX_K)\to {\PGL }_r(k).$$
	This induces a finite (non-\'etale) covering $Y\to X$ that  ``trivializes'' $(\cE,\theta)$, so it should be thought of as the uniformization of $X$. 	
\end{Example}

\begin{Remark}
All the results in this subsection have obvious logarithmic versions that hold because of Theorem
\ref{local-freeness-log-Higgs-case}. 
\end{Remark}

\subsection{Restriction theorems}

As mentioned in the introduction, our results imply some strong restriction theorems. Let us first start with the following  easy to state result.

\begin{Theorem}\label{restriction-strong-numerical-flatness}
Let $X$ and $Y$  be normally big varieties defined over an algebraically closed field of positive characteristic. Let $\cE$ be a strongly projectively (numerically) flat vector bundle on $X$  and let 
$f: Y\to X$ be any morphism. Then $f^*\cE$  is also  strongly projectively (numerically, respectively) flat. 
\end{Theorem}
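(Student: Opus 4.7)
My plan is to reduce both statements to characterizations of strong (projective) numerical flatness in terms of boundedness of suitable families of Frobenius pullbacks, a property manifestly stable under arbitrary base change.

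For the numerical case, Theorem \ref{main-num-flat}, applied on $X$, asserts that the vector bundle $\cE$ is strongly numerically flat if and only if the family $\{(F_X^m)^*\cE\}_{m\ge 0}$ is bounded on $X$. Since the absolute Frobenius satisfies $f\circ F_Y^m = F_X^m\circ f$ for any morphism $f$, one has $(F_Y^m)^*(f^*\cE)=f^*((F_X^m)^*\cE)$, and pullback via $f\times \id_T$ preserves $T$-flatness, hence carries bounded families to bounded families. Therefore $\{(F_Y^m)^*(f^*\cE)\}_{m\ge 0}$ is bounded on $Y$, and applying Theorem \ref{main-num-flat} on $Y$ to the locally free sheaf $f^*\cE$ yields strong numerical flatness of $f^*\cE$.

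For the projective case, the proof of Corollary \ref{characterization-proj-flat}, as explicitly recorded in the proof of the corollary immediately following it (on $\End \cE$), gives an analogous boundedness criterion: $\cE$ is strongly projectively n-flat if and only if the family $\{\cE_m\}_{m\ge 0}$ is bounded on $X$, where $\cE_m=(F_X^m)^*\cE\otimes\cO_X(-q_mc_1(\cE))$ and $p^m=q_mr+r_m$ with $0\le r_m<r$. Since $c_1(f^*\cE)=f^*c_1(\cE)$ and Frobenius commutes with $f$, one has $(f^*\cE)_m=f^*\cE_m$, so the pullback of $\{\cE_m\}$ bounds $\{(f^*\cE)_m\}$ on $Y$; the reverse direction of the same criterion then yields strong projective n-flatness of $f^*\cE$.

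I do not expect real obstacles. Both arguments are formal consequences of the boundedness criteria already established, the commutativity of the Frobenius with morphisms in characteristic $p$, and the preservation of $T$-flatness by base change. The only bookkeeping is that the cited criteria are stated in Section~4 for reflexive sheaves on normal projective varieties, whereas $X$ and $Y$ are normally big; however, by Lemma \ref{equivalences-of-num-flat-categories} and Proposition \ref{needed-for-independence-of-compactification}, boundedness of the relevant families transfers freely, upon reflexive pushforward, between a normally big variety and its small compactification, so the criteria apply equally well in our setting.
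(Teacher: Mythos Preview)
Your proposal is correct and follows essentially the same route as the paper: both arguments reduce to the boundedness criterion for the family of (twisted) Frobenius pullbacks, then use the commutativity of the absolute Frobenius with $f$ and the fact that pulling back preserves boundedness. Your citation of Theorem~\ref{main-num-flat} is in fact slightly cleaner than the paper's use of Corollary~\ref{characterization-num-flat}, since the former is stated directly on the normally big variety, making your closing bookkeeping paragraph unnecessary (though harmless).
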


\begin{proof}
Let us first assume that $\cE$ is  strongly numerically flat. By Corollary \ref{characterization-num-flat} this holds if and only if the set $\{ (F_X^m)^*\cE\}_{m\ge 0}$ is bounded.
Since pullbacks of bounded families of sheaves are bounded and the diagram
$$
\xymatrix{
Y\ar[d]_{F_Y}\ar[r]^f&X\ar[d]_{F_X}\\
Y\ar[r]^{f}&X\\
}
$$
is commutative,  this implies that $f^*\cE$ is strongly numerically flat. The proof for  strongly projectively
bundles is analogous, except that we need to use the proof of Corollary \ref{characterization-proj-flat}.
This shows that $\cE$ is  strongly projectively n-flat if and only if the set $\{\cE_m\}_{m\ge 0}$, defined there, is bounded.
\end{proof}

\begin{Remark}
Note that in the above theorem we allow $Y$ to be a curve. In this case our assumption implies that $Y$ is projective.  But the assertion becomes more astonishing in higher dimensions, when it implies vanishing of Chern classes of extensions of  $f^*\cE$ to any small compactification of $Y$.
\end{Remark}

\medskip

We also have an analogous  theorem for semistable modules with a $\lambda$-connection on varieties liftable modulo $p^2$.

\begin{Theorem}\label{restriction-Higgs-bundles}
Let $f: Y\to X$ be a morphism of normally big varieties defined over an algebraically closed field $k$ of characteristic $p>0$.  Assume that there exists a lifting $\tilde f: \tilde Y\to \tilde X$
of $f$ to $W_2(k)$. Assume also that $X$ and $Y$ admit  small compactifications $j: X\hookrightarrow \bar X$ and $j': Y\hookrightarrow \bar Y$ with $F$-liftable singularities in codimension $2$ with respect to $\tilde X$ and $\tilde Y$, respectively, and the local $F$-liftings are compatible with $\tilde f$.
Let  $(V, \nabla)$ be a vector bundle on $X$ with an integrable $\lambda$-connection for some $\lambda\in k$. 
Let us set $n=\dim X$,  $n'=\dim Y$ and let $L=(L_1,...,L_{n-1})$ be a collection of ample line bundles on $X$.
Assume that  $(V, \nabla)$ is  slope $L$-semistable of rank $r\le p$ with
$\int_{\bar X} \Delta (j_*V)= 0$. Then $f^*(V, \nabla)$ is slope $M$-semistable for any collection $M=(M_1,...,M_{n'-1})$ of nef line bundles  on $Y$.
\end{Theorem}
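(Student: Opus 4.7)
The plan is to transfer the canonical Higgs--de Rham sequence of $(V,\nabla)$ from $X$ to $Y$ via $f$, and then to run the Langton type boundedness argument appearing in the proof of Theorem \ref{reflexive-loc-free-Delta-Higgs}(1) on $\bar Y$.

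First, by Theorem \ref{main-Higgs} and the discussion in Remark \ref{proof-ofmain-Higgs}, $(V,\nabla)$ is slope semistable for every nef polarization on $\bar X$ and admits a canonical Higgs--de Rham sequence
$$(V_{-1},\nabla_{-1})=(V,\nabla),\qquad (\cE_m,\theta_m)=\bigl(\Gr_{S_{m-1}}V_{m-1}\bigr)^{**},\qquad (V_m,\nabla_m)=C^{-1}_{\tilde X}(\cE_m,\theta_m)$$
on $X$, where $S^{\bullet}_{m-1}$ is Simpson's filtration on $(V_{m-1},\nabla_{m-1})$. Writing $p^m=q_m r+r_m$ with $0\le r_m<r$, the proof of Theorem \ref{reflexive-loc-free-Delta-Higgs} shows that the twisted families $\{j_*V_m(-q_m c_1(V))\}_{m\ge 0}$ and $\{j_*\cE_m(-q_m c_1(V))\}_{m\ge 0}$ are bounded on $\bar X$ by Theorem \ref{boundedness}, since their discriminants vanish, their first Chern classes take only finitely many values, and all these reflexive extensions are slope $L$-semistable.

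Second, the existence of the lift $\tilde f$ together with the compatibility of the local Frobenius liftings on $\tilde X$ and $\tilde Y$ yields the functoriality of the inverse Cartier transform
$$f^{[*]}(V_m,\nabla_m)\;\simeq\;C^{[-1]}_{\tilde Y}\bigl(f^{[*]}\cE_m,\,f^{[*]}\theta_m\bigr)$$
on the big open subset of $Y$ where the relevant charts match (cf.\ \cite{OV} and \cite[5.1]{La-Bog-normal}); moreover, pullback of the Griffiths transverse filtration $S^{\bullet}_{m-1}$ along $f$ gives a Griffiths transverse filtration on $f^*(V_{m-1},\nabla_{m-1})$ whose associated graded reflexive hull is $f^{[*]}(\cE_m,\theta_m)$. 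Thus the pullbacks assemble into a (not necessarily canonical) Higgs--de Rham sequence on $Y$ for $f^*(V,\nabla)$, and by Proposition \ref{needed-for-independence-of-compactification} the families $\{j'_*f^{[*]}V_m(-q_m c_1(f^*V))\}$ and $\{j'_*f^{[*]}\cE_m(-q_m c_1(f^*V))\}$ are bounded on $\bar Y$.

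Finally, suppose for contradiction that $f^*(V,\nabla)$ is not slope $M$-semistable for some collection $M$ of nef line bundles on $Y$, and let $(W,\nabla_W)\subset f^*(V,\nabla)$ be a submodule with $\mu_M(W)>\mu_M(f^*V)$. Mimicking the argument used to establish (1) of Theorem \ref{reflexive-loc-free-Delta-Higgs}, set $(\cF_0,\eta_0):=(\Gr_{S_W}(W,\nabla_W))^{**}$ for Simpson's filtration $S_W^{\bullet}$ on $(W,\nabla_W)$, and iterate
$$(\cF_m,\eta_m)\;:=\;\bigl(\Gr_{S_{m-1}}\,C^{[-1]}_{\tilde Y}(\cF_{m-1},\eta_{m-1})\bigr)^{**}.$$
After twisting by $-q_m c_1$, the sheaves $\cF_m$ embed into $f^{[*]}\cE_m$ on a big open subset of $Y$, and their $M$-slopes grow like $p^m\bigl(\mu_M(W)-\mu_M(f^*V)\bigr)\to\infty$. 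This contradicts the boundedness of $\{j'_*f^{[*]}\cE_m(-q_m c_1(f^*V))\}_{m\ge 0}$ on $\bar Y$ established above, proving semistability of $f^*(V,\nabla)$ for every nef collection $M$. The main technical obstacle is the verification of the functoriality $f^{[*]}\circ C^{-1}_{\tilde X}\simeq C^{-1}_{\tilde Y}\circ f^{[*]}$ on a big open subset of $Y$ together with the compatibility of pullback and Simpson's filtration; it is precisely for this step that the compatibility of $\tilde f$ with the local Frobenius liftings is needed.
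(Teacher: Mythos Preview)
Your proposal follows essentially the same route as the paper: build the canonical Higgs--de Rham flow on $X$, pull it back to $Y$ using compatibility of the inverse Cartier transform with $\tilde f$, obtain boundedness of the twisted family $\{f^*\cE_m'\}$, and then derive a contradiction from a hypothetical $M$-destabilizing submodule by producing subsheaves of unbounded slope. The paper proceeds exactly this way.

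There is, however, one genuine slip in your contradiction step. You write $(\cF_0,\eta_0):=(\Gr_{S_W}(W,\nabla_W))^{**}$ for \emph{Simpson's filtration $S_W^\bullet$ on $(W,\nabla_W)$}. With this choice there is no reason for $\cF_0$ to embed into $f^*\cE_0=\Gr_{f^*S_{-1}}f^*V$, because Simpson's filtration of the submodule $W$ is in general unrelated to the restriction of the ambient filtration $f^*S_{-1}$. Without that embedding, the iterated $\cF_m$ do not sit inside $f^*\cE_m'$ and you cannot invoke boundedness of the latter family to get a contradiction. The paper avoids this by taking instead the filtration \emph{induced} from the pulled-back Simpson filtration: one sets $(\cF_0,\eta_0):=\Gr_{f^*S_{-1}}(W,\nabla_W)$ and then iterates with
\[
(\cF_m,\eta_m):=\Gr_{f^*S_{m-1}}C^{-1}_{\tilde Y}(\cF_{m-1},\eta_{m-1}).
\]
With the induced filtration, each $\cF_m$ maps naturally to $f^*\cE_m$, the twists $\cF_m(-q_m c_1(\cF_m))$ sit in the bounded family $\{f^*\cE_m'\}$, and the slope growth gives the contradiction. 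Your iteration formula already uses $S_{m-1}$ (the ambient filtration), so only the initial step needs to be corrected; once you replace $S_W$ by the induced filtration $f^*S_{-1}|_W$, the argument goes through as in the paper.
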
 
 
\begin{proof}
Theorem \ref{reflexive-loc-free-Delta-Higgs} and \cite[Theorem 5.6]{La-Bog-normal} imply that there exists a canonical Higgs--de Rham flow of $(V, \nabla)$, i.e.,  a Higgs--de Rham flow   
$${  \xymatrix{
	(V_{-1}, \nabla _{-1}) \ar[rd]^{\Gr _{S_{-1}}}	&& (V_0, \nabla _0)\ar[rd]^{\Gr _{S_0}}&& (V_1, \nabla _1)\ar[rd]^{\Gr _{S_1}}&\\
	&	(\cE_0, \theta _0)\ar[ru]^{C_{\tilde X}^{-1}}&&	(\cE_1, \theta_1)\ar[ru]^{C_{\tilde X}^{-1}}&&...\\
}}$$
in which $S_{i}$ are Simpson's filtrations. In particular, all $(\cE_m, \theta _m)$ and $(V_m, \nabla _m)$ are slope $L$-semistable. Moreover, Theorem \ref{reflexive-loc-free-Delta-Higgs} and \cite[Corollary 5.11]{La-Bog-normal} imply that for all $m\ge 0$ we have $\int_{\bar X} \Delta (j_*\cE_m) = 0$. Let us write $p^m=q_mr+r_m$ for some integers $q_m$ and $0\le r_m<r$.
Let us also set $\cE_m':=\cE_m (-q_mc_1(\cE_m))$. Then Theorem \ref{boundedness} implies that the sets $\{  j_*\cE_m'\}_{m\ge 0}$ and $\{  \cE_m'\}_{m\ge 0}$ are bounded. Since all $\cE'_m$ are locally free, the set  $\{  f^*\cE_m'\}_{m\ge 0}$ is also bounded. Our assumptions imply that the pullback of the above canonical Higgs--de Rham flow  of $(V, \nabla)$ gives a  Higgs--de Rham flow  of $f^*(V, \nabla)$. Now the proof is the same as that of (1) in Theorem \ref{reflexive-loc-free-Delta-Higgs}. Assume that $(W,\nabla_W)\subset f^* (V,\nabla)$ is $M$-destabilizing. 
We set $(\cF_0,\eta _0):=\Gr_{S}(W,\nabla_W)$ and consider the sequence 
	$$(\cF_m,\eta_m):= \Gr _{f^*S_{m-1}} C^{-1}_{\tilde Y}(\cF_{m-1}, \eta_{m-1}).$$
Then  $\cF_m (-q_mc_1(\cF_m))$ give  subsheaves of $f^*\cE_m'$, whose $M$-slopes grow to infinity. This gives  a contradiction with Proposition \ref{needed-for-independence-of-compactification} as in a bounded family of sheaves on a normal projective variety, the slopes of subsheaves are uniformly bounded from the above.  	
\end{proof}

\medskip

\begin{Remark}\label{problems-with-vanishing-discriminant}
It is possible that in the above theorem we have 
$$\int_{\bar Y}\Delta (j'_*(f^*V))=0.$$
In case $X=\bar X$ and $Y=\bar Y$ this holds by \cite[Theorem 2.2]{La-JEMS}
as $\Delta (f^*V)=f^*\Delta (V)$ vanishes in the group $B^2(Y)$ of algebraic equivalence classes of codimension $2$ cycles (equalities of Chern classes in \cite{La-JEMS} are asserted in the $\ell$-adic cohomology, but the proof shows them also in the ring $B^{*}(X)$ of algebraic equivalence classes).    
Since Chern classes of reflexive sheaves are not functorial, the proof from the smooth case does not work.
In principle, to show vanishing of the discriminant one can try to use  analogues of the methods used in the proof of Theorem \ref{restriction-strong-numerical-flatness}. Unfortunately,  
as the following example shows, this method breaks down even if $\bar Y$ is a smooth projective surface. In case of complex varieties, when small compactifications  have klt singularities, the above vanishing of the discriminant follows from \cite[Theorem 1.2]{GKPT3}  (and Theorem \ref{char0-loc-free-Higgs} to get vanishing of the second Chern form and not only its value on $H^{n-2}$ for ample $H$). The proof in this case uses pullbacks to maximally quasi--\'etale covers and \cite[Theorem 3.10]{GKPT1}. 
\end{Remark}
 
\begin{Example}
Let us consider $\cF:=\cE|_X$ on $j: X=\PP^2\backslash Z\hookrightarrow \bar X=\PP^2$ from Example \ref{example-large-num-flat}. In particular, we have $\int_{\bar X}\Delta (j_*\cF)=\int_{\bar X}\Delta (\cE)>0$. The Higgs bundle $(\cE, 0)$ has a canonical Higgs--de Rham flow given by 
$${  \xymatrix{
		(\cE, 0) \ar[rd]	&& (F^*_{\PP^2}\cE, \nabla _{can})\ar[rd]&& ((F^2_{\PP^2})^*\cE, \nabla _{can})\ar[rd]&\\
		&	(\cE, 0)\ar[ru]^{C_{\PP^2_{W_2(k)}}^{-1}}&&	(F^*_{\PP^2}\cE, 0)\ar[ru]^{C_{\PP^2_{W_2(k)}}^{-1}}&&...\\
}}$$
In this Higgs--de Rham flow all bundles are semistable but the set  of associated Higgs bundles $\{(F^m_{\PP^2})^*\cE\}_{m\ge 0}$ is not bounded as $\int_{\bar X}\Delta ((F^m_{\PP^2})^*\cE)$
grows to infinity with $m$. Clearly, this Higgs--de Rham flow restricts to a Higgs--de Rham flow of $(\cF, 0)$ and by Proposition \ref{needed-for-independence-of-compactification},  the set  of associated Higgs bundles in this restricted flow is also unbounded. However, $(\cF, 0)$ has another Higgs--de Rham flow, where in the first step we take the obvious  filtration with the associated graded $\cO_X\oplus \cO_X$ and then take a direct sum of Higgs--de Rham flows of the summands. In this Higgs--de Rham flow all bundles are semistable and  the set of associated Higgs bundles is bounded.
\end{Example}

\medskip

\begin{Remark}
	Using Theorem \ref{local-freeness-log-Higgs-case} one can generalize
	Theorem \ref{restriction-Higgs-bundles} to the logarithmic case. We leave formulation of the corresponding result to the interested reader.
\end{Remark}

\subsection{Local freeness in characteristic zero}

In this subsection  $k$ is an algebraically closed field of characteristic zero and $X$ is a normal projective $k$-variety of dimension $n$ with only quotient singularities in codimension $2$. We also fix a collection $L=(L_1,...,L_{n-1})$ of ample line bundles on $X$.

\begin{Theorem} \label{char0-loc-free-Higgs}
	 Let  $(V, \nabla)$ be a reflexive $\cO_X$-module with an integrable $\lambda$-connection for some $\lambda\in k$. Assume that $(V, \nabla)$ is  slope $L$-semistable with $$\int_X \ch_1 (V) L_1...L_{n-1}=0$$ and
	$$\int_X \ch_2 (V) L_1...\widehat{L_i}...L_{n-1}\ge 0$$
	for $i=1,...,n-1$. Then the following conditions are satisfied:
	\begin{enumerate}
		\item $[c_1(V)]=0\in \bar B^1(X)$ and $\int_X \ch_2 (V) =0$,
		\item if $M=(M_1,..., M_{n-1})$ is a collection of nef line bundles on $X$ then $(V, \nabla)$ is slope $M$-semistable,
		\item $V|_{X_{reg}}$ is locally free.
	\end{enumerate}
	Moreover, if $N^{\bullet }$ be a Griffiths transverse filtration on $(V, \nabla)$  such that  $(\cE:=\Gr _{N} V, \theta )$ is  slope $L$-semistable then the following conditions are satisfied:
	\begin{enumerate}
		\item $[c_1(\cE)]=0\in \bar B^1(X)$ and $\int_X \ch_2 (\cE^{**})= 0$,
		\item if $M=(M_1,..., M_{n-1})$ is a collection of nef line bundles on $X$ then $(\cE , \theta)$ is slope $M$-semistable,
		\item $\cE|_{X_{reg}}$ is locally free.
	\end{enumerate}
\end{Theorem}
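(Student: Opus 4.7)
The plan is to reduce to the positive characteristic case established earlier in the paper---specifically the Corollary following Theorem \ref{reflexive-loc-free-Delta-Higgs}---via the standard spreading out and reduction modulo $p$ technique. First I would choose a finitely generated $\ZZ$-subalgebra $R \subset k$ together with an $R$-flat integral model $\cX \to \Spec R$ of $X$, spread-out ample line bundles $\cL_i$ on $\cX$ with generic fibers $L_i$, a coherent reflexive $R$-flat $\cO_{\cX}$-module $\cV$ with an integrable $\lambda$-connection lifting $(V, \nabla)$, and (for the second part) a Griffiths-transverse filtration $N^{\bullet}$ of $\cV$ with its associated graded Higgs sheaf $(\cE, \theta)$. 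After shrinking $\Spec R$, one arranges $\lambda \in R$, constancy of all relevant intersection numbers along fibers, fiberwise $L_s$-semistability of $\cV_s$ and $(\cE_s,\theta_s)$ (using openness of semistability and boundedness from Theorem \ref{boundedness}), and that the codimension-$2$ quotient singular locus specializes compatibly.

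The key geometric input is that quotient singularities in codimension $2$ reduce, for all but finitely many primes $p$, to $F$-liftable singularities in codimension $2$. More precisely, outside a closed subset of codimension $\ge 3$ each singular point of $X$ is \'etale-locally of the form $\AA^n / G$ for a finite group $G$ acting freely outside the origin; reducing modulo a prime $p$ coprime to $|G|$ gives a tame quotient that admits a $W_2$-compatible lift of Frobenius on each chart and hence is $F$-liftable in codimension $2$ with respect to the natural lift $\cX \otimes_R W_2(k(s))$ of the special fiber $X_s$. For all sufficiently large primes $p$ (in particular with $p > r$) such a closed point $s \in \Spec R$ exists, and then every hypothesis of the positive-characteristic corollary at the end of Section 5 is satisfied on $X_s$. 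Applying that corollary yields $[c_1(V_s)] = 0 \in \bar B^1(X_s)$, $\int_{X_s} \ch_2(V_s) = 0$, slope semistability of $(V_s, \nabla_s)$ and of $(\cE_s, \theta_s)$ with respect to every nef collection on $X_s$, and local freeness on the regular locus of $X_s$.

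Finally I would transport these conclusions back to characteristic zero. The numerical intersection products $\int \ch_i(\cdot) M_1 \cdots M_{n-i}$ for reflexive sheaves in a flat family are constant on connected components of $\Spec R$ (they are encoded by Euler characteristics of twists, which are locally constant), so vanishings on $X_s$ propagate to the generic fiber and hence to $X$. Semistability with respect to any nef collection $M$ on $X$ follows by spreading out $M$ and invoking the semistability established on $X_s$, combined with the fact that saturated potential destabilizing subsheaves with bounded slopes form a bounded family (Theorem \ref{boundedness}) that specializes well. Local freeness of $V|_{X_{reg}}$ and $\cE|_{X_{reg}}$ follows from $R$-flatness of $\cV$ (and the reflexive hull of the graded) together with local freeness on the special fiber by the usual Nakayama argument, since local freeness is an open condition on the total space.

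The main obstacle I expect is the precise verification that codimension-$2$ quotient singularities specialize to $F$-liftable singularities in codimension $2$ for almost all primes; this is exactly the reason the stronger hypothesis (quotient singularities rather than general log-canonical-like singularities) is imposed, and one needs to match the two notions carefully via an \'etale-local group-theoretic analysis, as in the surface-case discussion of Kawakami--Takamatsu cited earlier. A secondary nuisance is checking that the filtration $N^{\bullet}$ spreads out while preserving Griffiths transversality and semistability of the associated graded; this is handled by further shrinking of $\Spec R$.
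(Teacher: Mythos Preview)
Your overall strategy---spreading out over a finitely generated $\ZZ$-algebra, reducing modulo large primes, and invoking the positive-characteristic corollary to Theorem~\ref{reflexive-loc-free-Delta-Higgs}---matches the paper's own proof, and your treatment of the reduction of codimension-$2$ quotient singularities to $F$-liftable ones is on target (this is exactly the argument of \cite[Theorem 7.2]{La-Bog-normal}, which the paper invokes). However, two technical points that the paper flags explicitly are handled incorrectly or omitted in your write-up.

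The more serious one concerns part~(2). You propose to spread out a nef collection $M$ on $X$ and then invoke semistability of $(V_s,\nabla_s)$ with respect to $M_s$ on $X_s$. But nefness is \emph{not} an open condition in families, so $M_s$ need not be nef, and the positive-characteristic result gives you no information for such $M_s$; your appeal to boundedness of potential destabilizers does not close this gap. The paper's remedy is to observe that it suffices to verify~(2) for collections of \emph{ample} line bundles: ampleness is open, so these specialize well, and semistability for all ample collections then passes to the nef boundary by continuity (for any fixed submodule $W$, the inequality $\mu_M(W)\le\mu_M(V)$ is closed in the parameters $M$). The second point concerns part~(1): the vanishing $[c_1(V)]=0$ in $\bar B^1(X)$ is a statement about algebraic equivalence classes of Weil divisors modulo torsion, not merely about intersection numbers, so it does not follow directly from your locally-constant-Euler-characteristic argument. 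One needs the observation (cited by the paper as \cite[Remark 3.18]{La-Bog-normal}) that vanishing in $\bar B^1(X)$ can be characterized numerically, which is what permits the transfer from the special fibre back to the generic fibre.
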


\begin{proof}
The required assertion follow from Theorem \ref{reflexive-loc-free-Delta-Higgs} by standard spreading out arguments analogous to that from the proof of \cite[Theorem 7.2]{La-Bog-normal}. One should only remark that by \cite[Remark 3.18]{La-Bog-normal}, the condition $[c_1(V)]=0\in \bar B^1(X)$ can be realized numerically,
so it follows from the one on reductions modulo $p$. Another remark is that it is sufficient to check  (2) for collections of ample line bundles (note that nefness is not an open condition, so we cannot directly use the spreading out argument). 
\end{proof}

\begin{Remark}
\begin{enumerate}
	\item 	Using Theorem \ref{local-freeness-log-Higgs-case} instead of Theorem \ref{reflexive-loc-free-Delta-Higgs}, one also gets an analogous result in the logarithmic case. We leave a precise formulation of the result to the reader.
	\item 	If $\lambda \ne 0$ then  local freeness of  $V|_{X_{reg}}$  is well-known (see, e.g., \cite[4.5.1]{ABC}). However, this is no longer so if $\lambda=0$ or in the logarithmic case.
	\item If $X$ has only klt singularities (so in particular quotient singularities in codimension $2$) and $\lambda=0$ then the first part of Theorem \ref{char0-loc-free-Higgs} follows from \cite[Theorem 1.2]{GKPT3}. However, even in this case the second part of Theorem \ref{char0-loc-free-Higgs} does not  easily follow  from the results of \cite{GKPT3}.
\end{enumerate}	
\end{Remark}

In some cases assumptions of the above theorem are automatically satisfied due to the following lemma. 

\begin{Lemma}
Let  $(V, \nabla)$ be a reflexive $\cO_X$-module with an integrable $\lambda$-connection for some $\lambda\in k^*$. Then $(V, \nabla)$ is slope $L$-semistable with $$\int_X \ch_1 (V) L_1...L_{n-1}=0$$ and
	$$\int_X \ch_2 (V) L_1...\widehat{L_i}...L_{n-1}=0$$
	for $i=1,...,n-1$.
\end{Lemma}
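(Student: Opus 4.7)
The plan is to first rescale so that $\nabla$ becomes an honest integrable connection, then handle slope semistability together with the first Chern vanishing uniformly via the determinant trick for $\nabla$-invariant subsheaves, and finally reduce the second Chern vanishing to the surface case via the intersection formula of \cite[Theorem 0.4]{La-Inters} and a quasi-étale Galois cover that resolves the quotient singularities of a general complete intersection surface.

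Since $\lambda \in k^{*}$, I will replace $\nabla$ by $\lambda^{-1}\nabla$ and assume $\nabla$ is a true integrable connection. For any $\nabla$-invariant saturated subsheaf $W \subset V$, its reflexive hull $W^{**}$ is still $\nabla$-invariant, and the restriction of $\det W^{**}$ to $X_{reg}$ is a line bundle on the smooth quasi-projective variety $X_{reg}$ admitting an integrable connection. In characteristic zero this forces its class in $\Pic(X_{reg})\otimes\QQ$ to vanish, and because $X\setminus X_{reg}$ has codimension $\ge 2$ this propagates to numerical triviality of the Weil divisor $c_1(W)$ on $X$. Taking $W = V$ gives $\int_X \ch_1(V)L_1\cdots L_{n-1}=0$, and applied to arbitrary $\nabla$-invariant $W$ it yields $\mu_L(W)=0=\mu_L(V)$, i.e.\ slope $L$-semistability of $(V,\nabla)$.

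For the second Chern vanishing, by symmetry I reduce to the case $i=n-1$, make each $L_j$ very ample, and cut with a very general complete intersection surface $S\in|L_1|\cap\cdots\cap|L_{n-2}|$. Theorem~\ref{Bertini's-theorem} together with the hypothesis that $X$ has only quotient singularities in codimension $2$ ensures that $S$ is a normal Cohen--Macaulay surface with only quotient singularities, $S_{reg}=S\cap X_{reg}$, and $V|_S$ is reflexive with an integrable connection on $S_{reg}$; by \cite[Theorem 0.4]{La-Inters} it then suffices to prove $\int_S \ch_2(V|_S)=0$. I will pass to a finite Galois cover $\pi:\tilde S\to S$ étale in codimension one with $\tilde S$ smooth (such a cover exists for every normal surface with only quotient singularities). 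Since both $S$ and $\tilde S$ are Cohen--Macaulay surfaces, $\pi$ is flat, so $\pi^{*}(V|_S)=\pi^{[*]}(V|_S)$ is reflexive on the smooth surface $\tilde S$, hence locally free; extending the pulled-back connection by Hartogs yields an integrable connection on it, and classical Chern--Weil theory in characteristic zero gives $\int_{\tilde S}\ch_2(\pi^{*}(V|_S))=0$. The projection formula then yields $(\deg\pi)\int_S\ch_2(V|_S) = \int_{\tilde S}\ch_2(\pi^{*}(V|_S))=0$.

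The hard step will be the last one: producing a global finite Galois cover that simultaneously trivialises the local monodromies at every quotient singularity of $S$, and confirming that the reflexive and ordinary pullbacks coincide with a well-behaved projection formula. Both are true and rest on the Cohen--Macaulay nature of surface quotient singularities and the properties of the intersection theory of \cite{La-Inters}, but they carry the genuine technical content of the argument.
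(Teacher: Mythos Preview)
Your treatment of the second Chern vanishing is essentially the paper's own argument: the paper also restricts to a very general complete intersection surface $S$ with only quotient singularities and then reduces to the smooth case, deferring the passage to a quasi-\'etale cover and the projection formula to \cite[Lemma 7.1]{La-Bog-normal} (or alternatively \cite[Lemma 3.17]{GKPT1}). Your explicit description of this step is correct, including the flatness of $\pi$ via miracle flatness and the Hartogs extension of the pulled-back connection.

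There is, however, a genuine error in your argument for the first Chern vanishing and semistability. You assert that a line bundle on the smooth quasi-projective variety $X_{reg}$ admitting an integrable connection has vanishing class in $\Pic(X_{reg})\otimes\QQ$. This is false already on projective varieties: on an elliptic curve (or any abelian variety), every degree-zero line bundle admits an integrable connection, yet a non-torsion point of $\Pic^0$ gives a nonzero class in $\Pic\otimes\QQ$. The existence of an integrable connection only forces vanishing of $c_1$ in de Rham cohomology, not in the Picard group modulo torsion. What you actually need is much weaker, namely that $\det W^{**}$ has degree zero on curves contained in $X_{reg}$, and this is exactly how the paper proceeds: it restricts to a general complete intersection curve $C\in |L_1|\cap\ldots\cap|L_{n-1}|$, which lies in $X_{reg}$, so that $(V,\nabla)|_C$ and any $\nabla$-invariant subbundle are vector bundles with integrable connection on a smooth projective curve and hence have degree zero. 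Replacing your $\Pic\otimes\QQ$ claim with this curve restriction repairs the argument and brings it in line with the paper's.
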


\begin{proof}
Without loss of generality we  can assume that $\lambda=1$ and all $L_i$ are very ample.
If $C \in |L_1|\cap ...\cap |L_{n-1}|$ is a general complete intersection curve then $(V,\nabla)|_C$ is a vector bundle with an integrable connection on  a smooth projective curve. So $\deg V=0$ and $(V,\nabla)|_C$ is semistable as any vector subbundle with an integrable connection also has degree $0$. This implies the first two assertions. To prove the last one, we can restrict to a (very) 
general complete intersection surface $S \in |L_1|\cap... \cap |\widehat{L_i}|\cap  ...\cap |L_{n-1}|$. If $S$ is smooth then the assertion is clear as rational Chern classes of a vector subbundle with an integrable connection vanish.
In general, one can use \cite[Lemma 7.1]{La-Bog-normal} to reduce to this case. We omit the details as in this case the result follows also from \cite[Lemma 3.17]{GKPT1}. 
\end{proof}

Applying the above theorem to $(V, \nabla)=(\cE, 0)$ and $\lambda=0$ we get the following corollary.

\begin{Corollary}
Let $H$ be an ample line bundle on $X$ and let
$\cE$ be a coherent reflexive $\cO_X$-module such that 
$$\int _X \ch_1(\cE) H^{n-1}=\int _X\ch_2(\cE) H^{n-2} =0.$$
If $\cE$ is slope $H$-semistable then $\cE|_{X_{reg}}$ is a  numerically flat vector bundle, $[c_1(\cE)]=0\in \bar B^1(X)$ and $\int_X \ch_2 (\cE)= 0$.
\end{Corollary}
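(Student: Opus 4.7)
The plan is to specialize Theorem \ref{char0-loc-free-Higgs} to $(V,\nabla) = (\cE, 0)$ with $\lambda = 0$ and constant polarization $L = (H, \ldots, H)$, then invoke reduction modulo $p$ together with Corollary \ref{characterization-num-flat} to obtain numerical flatness on $X_{reg}$. The specialization yields three of the four conclusions directly: $[c_1(\cE)] = 0 \in \bar B^1(X)$, the vanishing $\int_X \ch_2(\cE) = 0$ as a multilinear form on $N^1(X)^{n-2}$, and local freeness of $\cE|_{X_{reg}}$; it also gives that $\cE$ is slope $M$-semistable for every nef collection $M$ on $X$. Combining $[c_1(\cE)] = 0$ with the identity $\Delta(\cE) = c_1(\cE)^2 - 2r\,\ch_2(\cE)$, one moreover obtains $\int_X \Delta(\cE) M_1 \cdots M_{n-2} = 0$ for every collection of line bundles $M_i$.

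To prove numerical flatness of $\cE|_{X_{reg}}$ I must check that, for every finite $f\colon C \to X_{reg}$ from a smooth projective curve, $f^*\cE$ is semistable of degree zero. The degree vanishes by the projection formula: extending $f$ to $\bar f\colon C \to X$ by properness, $\deg f^*\cE = c_1(\cE)\cdot \bar f_*[C] = 0$. For semistability I spread $X, H, \cE, C, f$ out to a flat model $(X_A, H_A, \cE_A, C_A, f_A)$ over some finitely generated $\ZZ$-subalgebra $A \subset k$, shrinking $\Spec A$ so that each geometric fiber $X_s$ is normal projective with only quotient singularities in codimension~$2$, the vanishings $[c_1(\cE_{A,s})] = 0 \in \bar B^1(X_s)$ and $\int_{X_s} \Delta(\cE_{A,s}) M_1 \cdots M_{n-2} = 0$ persist for all nef $M_i$ on $X_s$, $\cE_{A,s}$ remains reflexive and slope $H_s$-semistable on each fiber, and $f_{A,s}$ remains a finite morphism landing in $(X_s)_{reg}$.

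For closed points $s \in \Spec A$ of sufficiently large residue characteristic, the reduction $X_A \otimes_A W_2(\bar k(s))$ supplies a $W_2$-lifting of $X_s$ for which the tame codimension-$2$ quotient singularities are $F$-liftable. Since $\int_{X_s} \Delta(\cE_s) H_s^{n-2} = 0$, the positive-characteristic Bogomolov inequality of \cite[Theorem 3.4]{La-Bog-normal} forces $\cE_s$ to be \emph{strongly} slope $H_s$-semistable: were some Frobenius pullback $F_{X_s}^{[m]}\cE_s$ unstable, its Harder--Narasimhan filtration would contribute a strictly positive term to $\int_{X_s} \Delta(\cE_s) H_s^{n-2}$, a contradiction. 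Condition $(8)$ of Corollary \ref{characterization-num-flat} is therefore satisfied, so $\cE_s|_{(X_s)_{reg}}$ is strongly numerically flat, and in particular $f_s^*\cE_s$ is semistable of degree zero. Openness of semistability in the flat family of vector bundles $f_A^*\cE_A$ on the smooth curve family $C_A/\Spec A$ then transfers semistability from a dense set of closed points back to the generic fiber $f^*\cE$, completing the proof.

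The main obstacle is the upgrade from slope semistability in characteristic zero to \emph{strong} slope semistability after reduction modulo $p$, which is the ingredient actually required to apply Corollary \ref{characterization-num-flat}. It rests on the positive-characteristic Bogomolov inequality for slope semistable reflexive sheaves on normal projective varieties, and requires this inequality to be available uniformly in $s$. This is provided by \cite{La-Bog-normal} together with $F$-liftability of the codimension-$2$ quotient singularities, which only excludes finitely many primes in $A$; a secondary subtlety is checking that numerical triviality of $c_1(\cE)$ and the vanishing of $\int \Delta(\cE) \cdot \cdots$ as a multilinear form both specialize well to positive characteristic, which again holds after shrinking $\Spec A$.
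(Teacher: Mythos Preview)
Your overall strategy—specialize Theorem~\ref{char0-loc-free-Higgs} to $(V,\nabla)=(\cE,0)$, then spread out and work in positive characteristic to obtain numerical flatness curve by curve—is exactly what the paper's one-line proof (``apply Theorem~\ref{char0-loc-free-Higgs}'') implicitly relies on, and you are right to unpack it: Theorem~\ref{char0-loc-free-Higgs} as stated yields $[c_1(\cE)]=0$, $\int_X\ch_2(\cE)=0$, local freeness on $X_{\mathrm{reg}}$, and slope $M$-semistability for every nef collection $M$, but not numerical flatness on the nose.

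However, your argument that $\cE_s$ is \emph{strongly} slope $H_s$-semistable is flawed. You claim that if some $F_{X_s}^{[m]}\cE_s$ were unstable, its Harder--Narasimhan filtration would force $\int_{X_s}\Delta(\cE_s)H_s^{n-2}>0$. But for a filtration with semistable reflexivized quotients $\cG_i$ one has
\[
\frac{\Delta(F_{X_s}^{[m]}\cE_s)H_s^{n-2}}{r}\;=\;\sum_i\frac{\Delta(\cG_i)H_s^{n-2}}{r_i}\;-\;\frac{1}{r}\sum_{i<j}r_ir_j\,\xi_{ij}^2H_s^{n-2},\qquad \xi_{ij}=\frac{c_1(\cG_i)}{r_i}-\frac{c_1(\cG_j)}{r_j},
\]
and the Hodge index theorem gives only $\xi_{ij}^2H_s^{n-2}\le(\mu_i-\mu_j)^2/H_s^n$, so the cross term can have either sign. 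Knowing $\Delta(\cG_i)H_s^{n-2}\ge 0$ and $\Delta(F_{X_s}^{[m]}\cE_s)H_s^{n-2}=p^{2m}\cdot 0=0$ yields no contradiction. (Your reference \cite[Theorem~3.4]{La-Bog-normal} is also for \emph{strongly} semistable sheaves, whereas the HN quotients are only semistable; you would need Theorem~0.4 there, but even that does not rescue the sign.)

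The fix is to bypass strong semistability altogether. After spreading out and choosing a closed point $s$ of large residue characteristic so that $r\le p$ and the $W_2$-lifting and $F$-liftability hypotheses hold, apply Theorem~\ref{restriction-Higgs-bundles} in characteristic $p$ to $(V,\nabla)=(\cE_s,0)$, $\lambda=0$, and the morphism $f_s\colon C_s\to(X_s)_{\mathrm{reg}}$ (a smooth projective curve is its own small compactification, and the required liftings come from the spread-out family). Its conclusion is precisely that $f_s^*(\cE_s,0)$ is slope semistable on $C_s$, i.e.\ $f_s^*\cE_s$ is semistable. Combined with $\deg f_s^*\cE_s=0$ (from $[c_1(\cE_s)]=0$) and your openness-of-semistability step, this gives numerical flatness of $\cE|_{X_{\mathrm{reg}}}$.
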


It is tempting to say that  $\cE|_{X_{reg}}$ in the above corollary is strongly numerically flat but this is not so easy to check, unless $X$ has only quotient singularities when it can be reduced to the smooth case. The problem is to check vanishing of the second Chern character after restricting to surfaces (cf. Remark \ref{problems-with-vanishing-discriminant} for an even easier problem).
If $X$ is a complex projective variety with klt singularities, this can be proven similarly to implication
$(1.2.2)\Rightarrow (1.2.1)$ in  \cite[Theorem 1.2]{GKPT3}. 

\medskip

The above corollary and  \cite[Theorem 1.1]{HL} imply the following non-trivial result.

\begin{Corollary} 
Let $\cE$ be a coherent reflexive $\cO_X$-module such that  $$c_1(\cE) H^{n-1}=0.$$ Assume that all reflexive symmetric powers $\Sym ^{[m]}\cE$ are slope $H$-stable and $\cE$ is pseudoeffective. If $X$ is smooth in codimension $2$ then
  $\cE|_{X_{reg}}$ is a numerically flat vector bundle. In particular, $\cE|_{X_{reg}}$ is locally free.
\end{Corollary}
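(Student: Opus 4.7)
The plan is to combine the cited pseudoeffectivity theorem \cite[Theorem 1.1]{HL} --- which will supply a lower bound on the second Chern character --- with the preceding corollary, which requires vanishing of both $\int_X\ch_1(\cE) H^{n-1}$ and $\int_X\ch_2(\cE) H^{n-2}$. The matching upper bound will come from Bogomolov's inequality together with the Hodge index theorem, so that the two inequalities sandwich $\int_X\ch_2(\cE) H^{n-2}$ to zero.

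First I would note that since all reflexive symmetric powers of $\cE$ are slope $H$-stable, the case $m=1$ shows that $\cE$ itself is slope $H$-stable, hence slope $H$-semistable. Applied here, \cite[Theorem 1.1]{HL}, whose hypotheses (pseudoeffectivity of $\cE$, slope $H$-stability of every $\Sym^{[m]}\cE$, and $c_1(\cE) H^{n-1}=0$) are all met, yields
\[
\int_X \ch_2(\cE)\, H^{n-2}\ge 0.
\]
For the reverse inequality, I would invoke Bogomolov's inequality for slope semistable reflexive sheaves on normal projective varieties (\cite[Theorem 3.4]{La-Bog-normal}), giving $\int_X\Delta(\cE)\, H^{n-2}\ge 0$, together with the Hodge index theorem applied to $c_1(\cE)$, which satisfies $c_1(\cE) H^{n-1}=0$ by assumption and therefore forces $\int_X c_1(\cE)^2\, H^{n-2}\le 0$. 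Using the elementary identity $-2r\,\ch_2(\cE)=\Delta(\cE)-c_1(\cE)^2$, these combine to
\[
\int_X \ch_2(\cE)\, H^{n-2}\le 0.
\]

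Thus $\int_X\ch_2(\cE)\, H^{n-2}=0$. Together with the hypothesis $\int_X\ch_1(\cE)\, H^{n-1}=0$, this is exactly what the preceding corollary of Theorem \ref{char0-loc-free-Higgs} asks for, and applying that corollary to the slope $H$-semistable reflexive sheaf $\cE$ immediately gives that $\cE|_{X_{reg}}$ is a numerically flat vector bundle, with local freeness as a byproduct. The entire argument is an inequality sandwich, so I do not foresee a substantial obstacle; the only technical point is that Bogomolov's inequality and the Hodge index theorem must be used in the reflexive, normal setting, which is legitimate under the assumption that $X$ is smooth in codimension $2$, exactly as in the proof of Theorem \ref{char0-loc-free-Higgs}.
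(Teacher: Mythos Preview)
Your proof is correct and follows the paper's approach, which the paper states in one line as combining the preceding corollary with \cite[Theorem 1.1]{HL}; you have simply made explicit the Bogomolov/Hodge index sandwich needed to force $\int_X\ch_2(\cE)\,H^{n-2}=0$. One minor point: \cite[Theorem 3.4]{La-Bog-normal} is a positive-characteristic statement for strongly semistable sheaves, so in characteristic zero you should justify Bogomolov's inequality either by reduction modulo $p$ (as in the proof of Theorem \ref{char0-loc-free-Higgs}) or by restricting to a general complete intersection surface, which is smooth thanks to the smooth-in-codimension-$2$ hypothesis.
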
 

Since  $X$ does not need to have klt singularities, the above corollary does not follow from \cite{GKPT3}.

\section*{Acknowledgements}

The author would like to thank Mihai Fulger for providing Example \ref{Fulger} and Daniel Greb for answering some questions related to \cite{GKPT3}. He would also like to thank the referees for remarks that improved readibility of the paper.

The  author was partially supported by Polish National Centre (NCN) contract number 2021/41/B/ST1/03741.

\footnotesize

\end{document}